\newtheorem{thm}{Theorem}[section]
\newtheorem{cor}[thm]{Corollary}
\newtheorem{lem}[thm]{Lemma}
\newtheorem{prop}[thm]{Proposition}
\newtheorem{defn}[thm]{Definition}
\newtheorem{rem}[thm]{Remark}
\newcommand{\mb}{\mathbf}
\newcommand{\mc}{\mathcal}
\newcommand{\mf}{\mathfrak}
\newcommand{\Lb}{\mathbf{L}}
\newcommand{\ep}{\epsilon}
\begin{document}
\title[Spectral transfer morphisms]
{Spectral transfer morphisms for unipotent affine Hecke algebras}
\author{Eric Opdam}
\address{Korteweg de Vries Institute for Mathematics\\
University of Amsterdam\\
Science Park 107\\
1018TV Amsterdam\\
The Netherlands\\
email: e.m.opdam@uva.nl}
\date{\today}
\keywords{Affine Hecke algebra, formal dimension, L-packet}
\subjclass[2000]{Primary 20C08; Secondary 22D25, 43A30}

\thanks{This research was supported  by 
ERC-advanced grant no. 268105. It is a pleasure to thank Joseph Bernstein, 
Dan Ciubotaru, David Kazhdan and Mark Reeder for useful discussions and 
comments.}
\begin{abstract}
We classify the spectral transfer morphisms (cf. \cite{Opd4}) 
between affine Hecke algebras associated to the unipotent types of the various inner 
forms of a given quasi-split, unramified absolutely simple algebraic group $G$ defined 
over a non-archimidean local field $k$. 
This characterizes Lusztig's classification \cite{Lu4}, \cite{Lu6} 
of unipotent 
characters of $G$ in terms of the Plancherel measure, up to diagram automorphisms.  
As an application of these results, the spectral correspondences associated with such 
morphisms \cite{Opd4}, and some results of Ciubotaru, Kato and Kato [CKK] we prove 
a conjecture of 
Hiraga, Ichino and 
Ikeda [HII] on the formal degrees and adjoint gamma factors for all unipotent discrete series 
characters of unramified simple groups of adjoint type defined over $k$.
\end{abstract}
\maketitle
\tableofcontents
\section{Introduction}\label{sec:intro}
Recall from \cite{Opd4} that a normalized affine Hecke algebra $\mathcal{H}$  is essentially 
determined by a complex torus $T$ and a meromorphic function $\mu$ on $T$.  
A spectral transfer morphism (see \cite{Opd4}) 
$\phi:\mathcal{H}_1\leadsto\mathcal{H}_2$ between normalized affine Hecke algebras expresses 
the fact that $\mu_1$ is equal to a residue of $\mu_2$ along a certain 
coset of a subtorus of $T_2$. This turns out to be a convenient tool to compare 
formal degrees 
of discrete series representations of different affine Hecke algebras. 

The notion is based on the special properties of the $\mu$-function of 
an affine Hecke algebra \cite{Opd1}, \cite{Opd3} which are intimitely related 
to its basic role in the derivation of the Plancherel
formula for affine Hecke algebras via residues \cite{HOH}, \cite{Opd1}, \cite{OpdSol}. 
This approach to the computation of formal degrees has its origin in the theory of 
spherical functions for $p$-adic reductive groups \cite{Ma1}, and was further inspired 
by early observations of Lusztig \cite{Lu0}, \cite{Lu10} and of Reeder \cite{Re0}, \cite{Re}
on the behaviour of formal degrees within unipotent $L$-packets. 

In the present paper we classify the spectral transfer morphisms 
(STMs in the sequel) between the unipotent affine Hecke algebras of the various inner 
forms of a given 
quasi-split absolutely simple algebraic group $G$ of adjoint type, defined and unramified over a 
non-archimidean local  field $\bf{k}$. 
In particular we will show, for any unipotent 
type $\tau=(\mathbb{P},\sigma)$ of an inner form of $G$,  \emph{existence} and \emph{uniqueness} 
(up to diagram automorphisms) 
of such STM of the Hecke algebra of 
$\tau$ to the Iwahori Hecke algebra $\mathcal{H}^{IM}(G)$ of $G$.
The STMs of this kind turn out to correspond exactly to the arithmetic-geometric
correspondences of Lusztig \cite{Lu4}, \cite{Lu6}. 

As application of this classification, using the basic properties of STMs discussed in 
\cite{Opd4}, we prove
the conjecture 
\cite[Conjecture 1.4]{HII} of Hiraga, Ichino and Ikeda expressing the formal degree of a 
discrete series representation in terms of the adjoint gamma factor of its (conjectural) 
local Langlands parameters and an explicit rational constant factor, 
for all unipotent discrete series representations of inner forms of $G$ 
(where we accept Lusztig's parameters for the unipotent discrete series representations as 
conjectural Langlands parameters). It should be mentioned that it was already known 
from Reeder's work \cite{Re0}, \cite{Re} (see also \cite{HII}) that this conjecture holds for 
the unipotent discrete series characters of split exceptional groups of adjoint type, and 
for some small rank classical groups. It should be mentioned that the stability of Lusztig's 
packets of unipotent representations was shown by Moeglin and Waldspurger 
for odd orthogonal groups \cite{MW} and by Moeglin for unitary groups \cite{Moe}.

Throughout this paper we use the normalization of Haar measures
as in \cite{DeRe}.   
Let $q=v^2$ denote the cardinality of the residue field of $\bf{k}$. 
The formal degree of a unipotent discrete series representation then factorizes 
uniquely as a product of a $q$-rational number (which we define as a fraction of products of 
$q$-numbers of the form $[n]_q:=\frac{(v^n-v^{-n})}{v-v^{-1}}$ with $n\geq 2$) and a positive 
rational number.
Our proof of conjecture \cite[Conjecture 1.4]{HII} involves the verification 
of the $q$-rational factors, which rests on the existence of Plancherel measure preserving 
correspondences for STMs as discussed in \cite{Opd4}, 
and the verification of the rational constants.  
The latter uses the knowledge of these rational constants 
from \cite{Re} for the case of equal parameter exceptional Hecke algebras, and  continuity principles 
due to \cite{CKK} and \cite{OpdSol2} (also \cite{CiuOpd2}) 
which imply roughly that we can compute these rational 
constant factors in the formal degrees of discrete series of non-simply laced affine Hecke 
algebras at any point in the parameter space of the affine Hecke algebra once we know 
these rational constants in one \emph{regular} point (in the sense of \cite{OpdSol2}) 
of the parameter space. In particular, for classical affine 
Hecke algebras of type $\textup{C}_n^{(1)}$ it was shown in \cite{CKK} that at a generic 
point in the parameter space, 
the rational constants for all generic families of discrete series characters are equal. 
The constants at special 
parameters follow then by a continuity principle in the formal degree due 
to \cite{OpdSol2}. 

An alternative approach to the conjecture \cite[Conjecture 1.4]{HII}, restricted to the case of  
formal degrees of unipotent discrete series representations, was 
formulated in \cite{CiuOpd1}. A conjectural formula for the formal degrees of unipotent 
discrete series characters is proposed in \cite{CiuOpd1}, which involves Lusztig's 
non-abelian Fourier transform matrix for families of unipotent representations \cite{L1}, \cite{L4}, \cite{L5} 
and a notion of the ``elliptic fake degree" of a unipotent discrete series character in the unramified 
minimal principal series of $G$. In this approach the formula for the rational constant factors 
of the formal degrees appears in a very natural way from the basic properties of the non-abelian 
Fourier transform.

 The notion of spectral transfer morphism is based on a certain heuristic 
idea on the behavior of $L$-packets under ordinary parabolic induction (see 
\ref{subsub:indres} for a more detailed discussion of this heuristic idea). The fact 
that this principle turns out to hold for all unipotent representations is striking.
Also striking is the fact that the isomorphism class of the Iwahori Hecke algebra 
$\mc{H}^{IM}(G)$ of $G$ is the least element in the poset of isomorphism classes 
of normalized affine Hecke algebras in the full subcategory of $\mf{C}_{es}(G)$   
whose objects are the Hecke algebras of unipotent types $(\mathbb{P},\sigma)$ of 
the inner forms of $G$, in 
the sense of \cite[Paragraph 7.1.5]{Opd4}.
Moreover, if $\mc{H}$ is such a unipotent affine Hecke algebra of an inner form of $G$, 
then the STM $\phi:\mc{H}\leadsto \mc{H}^{IM}(G)$ (which exists by the above) is 
essentially unique, and such STMs exactly match Lusztig's 
arithmetic/geometric correspondences. The proof of these statements reduces, 
as explaned in this paper,  
to the supercuspidal case \cite{FO} 
in combination with the above principle that one 
can parabolically induce 
unipotent supercuspidal STMs from Levi subalgebras to yield new STMs.  

It is 
quite clear that the definition of the notion of STM could be generalized to 
Bernstein components \cite{BD}, \cite{H1}, \cite{H2}, \cite{H3} 
in greater generality than only for the unipotent Bernstein 
components. It would be interesting to investigate the above mentioned induction principle 
in general. In view of our results, this could provide a clue how L-packets are  
partitioned by the Bernstein center beyond Lusztig's unipotent L-packets for 
simple groups of adjoint type.

In the first section of this paper we will review the theory of unipotent 
representations of $G$ with an emphasis on its harmonic analytic aspects.
The results here are all due to \cite{Mo1}, \cite{Mo2}, \cite{Lu4}, \cite{Lu6}
and \cite{DeRe}. This section serves an important purpose of reviewing 
the relevant facts on unipotent representations for this paper in the 
appropriate context of harmonic analysis, and fixing notations.
We kept the setup in this section more general than necessary for the remainder 
of the paper, since this does not complicate matters too much and this may be 
useful for later applications. 
In the second section we will describe the structure of the STMs between 
the normalized unipotent Hecke algebras of the inner forms of $G$, and 
discuss the applications of this result.
\section{Unipotent representations of quasi-simple p-adic groups}\label{sect:unip}
The category of unipotent representations of an unramified absolutely quasi-simple 
p-adic group $G$ is Morita equivalent to the category of representations of a finite 
direct sum of finitely many normalized affine Hecke algebras (called ``unipotent Hecke algebras") 
in such a way that the Morita equivalence respects the tempered spectra and 
the natural Plancherel measures on both sides.

Therefore it is an interesting problem to classify all the STMs as defined in 
\cite[Definition 5.1]{Opd4}  
between these  unipotent normalized affine Hecke algebras.
It will turn out that this task to classify these STMs essentially 
reduces to the task of finding all STMs from the rank $0$ unipotent 
affine Hecke algebras to the Iwahori-Matsumoto Hecke algebra $\mc{H}^{IM}(G')$ 
of the quasi-split $G'$ such that $G$ is an inner form of $G'$. 
In turn this reduces to solving equation \cite[equation (55)]{Opd4} where
$d^0$ denotes the formal degree of a unipotent supercuspidal representation.
The latter part of this task, the classification of the rank $0$ unipotent 
STMs, will be discussed in a second paper (joint 
with Yongqi Feng \cite{FO}). It should be remarked that the results of 
the present paper, in which the existence of certain spectral transfer 
morphisms is established, plays a role in the proof of the classification 
result in \cite{FO}.
\subsection{Unramified reductive p-adic groups}
Let $k$ be non-archimedean local field.
Fix a separable algebraic closure $\overline{k}$ of $k$, and let
$K\subset \overline{k}$ be the maximal unramified extension of $k$
in $\overline{k}$. 
Let $\mathbb{K}=\mathcal{O}/\mathcal{P}$ be the residue field of $K$, 
and let $p$ denote its characteristic. 
Let $\Gamma=\textup{Gal}(\overline{k}/k)$
denote the absolute Galois group of $k$, and let
$\mathcal{I}=\operatorname{Gal}(\overline{k}/K)\subset\Gamma$
be the inertia subgroup.
Let ${\textup{Frob}}$ be the geometric Frobenius element of
$\textup{Gal}(K/k)=\Gamma/\mathcal{I}\simeq\hat{\mathbb{Z}}$, i.e. the topological 
generator which induces the \emph{inverse} of the automorphism $x\to x^q$
of $\mathbb{K}$. Here $\mathbf{q}=p^n$ denotes the cardinality of the
residue field $\mathbf{k}:={\mathbb{K}^{{\textup{Frob}}}}$ of $k$.
We denote by $\mathbf{v}$ the positive square root of $\mathbf{q}$.

Let $\mathbf{G}$ be a connected reductive algebraic group defined over $k$, and 
split over $K$.
We denote by $G^\vee$ be the neutral component of a Langlands dual
group ${}^LG$ for $G$ (see \cite{Bo}). The construction of ${}^LG$
presupposes the choice of a maximal torus $\mathbf{S}$ and a
Borel subgroup $\mathbf{B}$ of $\mathbf{G}$ whose Levi-subgroup is $\mathbf{S}$,
and the choice of an \'epinglage for $(\mathbf{G},\mathbf{B},\mathbf{S})$,
in order to define a splitting of $\textup{Aut}(\mathbf{G})$.
Let $X^*(Z({G^\vee}))$ be the character group of the center $Z(G^\vee)$ of 
${G^\vee}$. The natural $\Gamma$-action on this space factors through the 
quotient $\textup{Gal}(K/k)$ since we are assuming that $G$ is $K$-split. 
Observe that the action of $\textup{Frob}$ on $X^*(Z({G^\vee}))$ is independent of 
the choice of a splitting of $\textup{Aut}(\mathbf{G})$.

We will always denote the group
$\mathbf{G}(K)$ of $K$-rational points of $\mathbf{G}$
by the corresponding non-boldface letter, i.e. $G=\mathbf{G}(K)$.
Kottwitz \cite[Section 7]{Ko3} has defined a $\Gamma$-equivariant 
functorial exact sequence 
\begin{equation}\label{eq:kotsurK}
1\to {G_1}\to G\stackrel{w_G}{\longrightarrow}X^*(Z({G^\vee}))\to 1
\end{equation}
In our situation there is a continuous equivariant action of the group
$\Gamma/\mathcal{I}$ on this sequence.
We denote by $F$ the action of $\textup{Frob}$ on ${G_1}$ and $G$, and
by $\theta_F$ the automorphism of $X^*(Z({G^\vee}))$ defined
by $F$.
This sequence has the property that the associated long exact sequence
in continuous nonabelian cohomology yields an exact sequence
\begin{equation}\label{eq:kotsurk}
1\to {G_1}^\textup{F}\to G(k)\to X^*(Z({G^\vee}))^{\langle\theta_F\rangle}\to 1
\end{equation}
and an isomorphism
\begin{equation}\label{eq:kotiso}
H^1(F,G)\stackrel{\sim}{\longrightarrow}X^*(Z({G^\vee}))_{\langle\theta_F\rangle}
\end{equation}

Now assume that $G$ is semisimple.
In this situation the above sequences simplify
as follows. Let $\mathbf{S}$ be a maximal $K$-split torus of $\mathbf{G}$, and
let $X:=X_*(\mathbf{S})$ be its cocharacter lattice. Let $Q:=X_{sc}=X_*(\mathbf{S}_{sc})$
be the cocharacter lattice of the inverse image of $\mathbf{S}$ in the simply
connected cover $\mathbf{G}_{sc}\to\mathbf{G}$ of $\mathbf{G}$ (hence $Q\subset X$
is the coroot lattice of $(\mathbf{G},\mathbf{S})$; we warn the reader that we call
the roots of ${G^\vee}$ ``roots'' and the roots of $(\mathbf{G},\mathbf{S})$ ``coroots''.
We apologize for this admittedly awkward convention).
Let $\Omega$ be the finite abelian group $\Omega=X/Q$. Then we may canonically
identify $X^*(Z({G^\vee}))$ with $\Omega$. Hence (\ref{eq:kotsurk})
becomes
\begin{equation}\label{eq:kotsurksimple}
1\to {G_1}^\textup{F}\to G(k)\to\Omega^{\theta_F}\to 1
\end{equation}
and (\ref{eq:kotiso}) becomes
\begin{equation}\label{eq:kotisosimple}
H^1(F,G)\stackrel{\sim}{\longrightarrow}\Omega/(1-\theta_F)\Omega
\end{equation}
We remark that $G_{der}\subset {G_1}\subset G$, and that
it can be shown that $G_{der}={G_1}$ if and only if $p$ does
not divide the order $|\Omega|$ of $\Omega$.
\emph{We will from now on always assume that $\mathbf{G}$ is absolutely quasi-simple 
and $K$-split, unless otherwise stated}.
\subsubsection{Inner $k$-rational structures of $G$}
The $k$-rational structures of $\mathbf{G}$ which
are inner forms of $G$ are parameterized by $H^1(k,\mathbf{G}_{ad})$.
By Steinberg's Vanishing Theorem it follows that
all inner $k$-forms of $G$ are $K$-split and that
$H^1(k,\mathbf{G}_{ad})=H^1(\operatorname{Gal}(K/k),G_{ad})$
(see \cite[Section 5.8]{Se2}).
\emph{We will from now on reserve the notation $G$ for a $k$-quasi-split
rational structure in this inner class}. We let $F$ be the automorphism
of $G_{ad}$ (or $G$) corresponding to the action of $\textup{Frob}$, and
$\theta=\theta_F$. We then denote the nonabelian cohomology
$H^1(\operatorname{Gal}(K/k),G_{ad})$ by $H^1(F,G_{ad})$.

For $G$ semisimple and not necessarily of adjoint type, Vogan has conjectured a 
refined Langlands parameterization 
of the irreducible tempered unipotent representations of \emph{pure inner forms}
of $G$ \cite{V}.

\emph{Pure} inner form of $G$ correspond by definition to
cocycles $z\in Z^1(F,G)$ \cite{V}, \cite{DeRe}.
Such a cocycle is determined by the image
$u:=z(\textup{Frob})\in G$. The corresponding inner $k$-form of
$G$ is defined by the functorial image $z^{ad}\in Z^1(F,G_{ad})$ of $z$.
This ``pure'' inner form is defined by the twisted Frobenius action $F_u$ on $G$ given 
by $F_u=\operatorname{Ad}(u)\circ F$, and is denoted by $G^u$. 
The cocycle $z$ determines a class in $[z]\in H^1(F,G)$. We say that two 
pure inner forms $z_1$ and $z_2$ of $G$ are equivalent iff $[z_1]=[z_2]$.
The $k$-rational isomorphism class of the inner form $G^u$ is
determined by the image $[z^{ad}]$ of $[z]$ via the natural map
$H^1(F,G)\to H^1(F,G_{ad})$. The reader be warned however, that view of 
(\ref{eq:kotisosimple}) this map is neither surjective in general (this is obvious,  
$G=SL_2$ provides an example) nor injective  
(however, if $G$ is $k$-split and semisimple then the map is injective). 
In other words, not all $k$-rational equivalence classes of inner forms of 
$G$ can be represented by a pure inner form, and if $G$ is not $k$-split and 
semisimple then an inner form of $G$ may be represented by several inequivalent 
pure inner forms.

It is in principle possible to compute with our methods the formal degrees of the elements of 
L-packets according to this refined form of the Langlands parameterization,
or even to check examples of the conjecture \cite[Conjecture 1.4]{HII} beyond the 
case of pure inner forms. For later reference we will formulate matters in this more 
general setup where possible, even though we will in present paper limit ourselves in 
the applications to the case where $G$ is of adjoint type.
\subsubsection{The affine Weyl group}
There exists a maximal $K$-split torus $\mathbf{S}$
defined over $k$ and maximally $k$-split \cite[5.1.10]{BT2}. We
fix such a maximal torus $S$ of $G$, and denote by $S_{sc}$ its
inverse image for the covering $G_{sc}\to G$. Recall that $G$ is
$k$-quasisplit, and that
$F$ defines an automorphism on the lattices $X$ and $X_{sc}=Q$
denoted by $\theta$.
The extended affine Weyl group $W$ of $(G,S)$ is defined by
\begin{equation}
W=N_G(S)/S_\mathcal{O}
\end{equation}
The group $W$ acts faithfully on the apartment $\mathcal{A}$ as an extended
affine Coxeter group.

We denote by $S_\mathcal{O}=\mathcal{O}^\times\otimes X$ the maximal
bounded subgroup of $S$. Then $X=S/S_\mathcal{O}$, and we define the
associated $F$-stable apartment $\mathcal{A}=\mathcal{A}(G,S)$
of the building of $G$ by $\mathcal{A}(G,S)=\mathbb{R}\otimes X$.
As explained in \cite[Corollary 2.4.3]{DeRe}, the isomorphism
(\ref{eq:kotisosimple}) can be made explicit by a canonical bijection
\begin{equation}\label{eq:inner}
\Omega/(1-\theta)\Omega\stackrel{\sim}{\longrightarrow} H^1(F,G)
\end{equation}
sending $[\omega]\in\Omega/{(1-\theta)\Omega}$
to the cohomology class of the cocycle $z_u$ which
maps $\textup{Frob}$ to $F_u$, where $uS_{\mathcal{O}}=x\in X$ and
$x$ is a representative of $\omega\in X/Q$.

Let $C$ be an $F$-stable alcove in $\mathcal{A}$
(such alcoves exist, see \cite{Tits}).
Let $1\to\mathbf{N}\to\mathbf{G}_{sc}\to\mathbf{G}\to 1$ be the
simply connected cover of $\mathbf{G}$, and let $\mathbf{S}_{sc}$
be the inverse image of $\mathbf{S}$.
\begin{prop}
The image of $G_{sc}\to G$ is equal to
the derived group $G_{der}$ of $G$, and
we have $G/G_{der}\stackrel{\sim}{\longrightarrow}
H^1(K,\mathbf{N})=K^\times\otimes\Omega$.
\end{prop}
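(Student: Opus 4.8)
The plan is to establish the two assertions separately, in both cases using the structure theory of the split semisimple group $\mathbf{G}$ over the infinite field $K$: being absolutely quasi-simple and $K$-split, $\mathbf{G}$ has an irreducible root system, and $\phi\colon\mathbf{G}_{sc}\to\mathbf{G}$ is the simply connected central isogeny with finite central kernel $\mathbf{N}$.

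For the first assertion, that the image $\phi(G_{sc})$ equals $G_{der}$, I would work with the subgroup $E\subseteq G$ generated by the root subgroups $\mathbf{U}_\alpha(K)$ relative to the split torus $\mathbf{S}$, and invoke three standard facts about split groups over $K$. First, the simply connected group $G_{sc}$ is generated by its own root subgroups, which $\phi$ maps isomorphically onto the $\mathbf{U}_\alpha(K)$; hence $\phi(G_{sc})=E$. Second, by the Bruhat decomposition $E$ is normal in $G$ and $G=\mathbf{S}(K)\cdot E$, so $G/E$ is abelian and therefore $G_{der}\subseteq E$. Third, the commutator relations among the $\mathbf{U}_\alpha(K)$, together with $|K|=\infty$, show that $E$ is perfect, so $E=[E,E]\subseteq G_{der}$. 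Combining the three gives $\phi(G_{sc})=E=G_{der}$.

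For the second assertion I would feed $1\to\mathbf{N}\to\mathbf{G}_{sc}\xrightarrow{\phi}\mathbf{G}\to1$ into cohomology over $K$: the connecting map $\delta\colon G\to H^1(K,\mathbf{N})$ has kernel $\phi(G_{sc})=G_{der}$ by the first part, so it descends to an injection $\bar\delta\colon G/G_{der}\hookrightarrow H^1(K,\mathbf{N})$. To identify the target and obtain surjectivity I would pass to the induced short exact sequence of $K$-split groups of multiplicative type $1\to\mathbf{N}\to\mathbf{S}_{sc}\to\mathbf{S}\to1$, where $\mathbf{S}_{sc}=\phi^{-1}(\mathbf{S})$ has cocharacter lattice $Q\subseteq X$. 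Since $H^1(K,\mathbf{S}_{sc})=0$ by Hilbert 90, the connecting map for this sequence identifies $H^1(K,\mathbf{N})$ with $\mathbf{S}(K)/\phi(\mathbf{S}_{sc}(K))$; writing $\mathbf{S}(K)=X\otimes K^\times$, $\mathbf{S}_{sc}(K)=Q\otimes K^\times$ and applying $-\otimes_{\mathbb{Z}}K^\times$ to $0\to Q\to X\to\Omega\to0$, this cokernel is $\Omega\otimes K^\times=K^\times\otimes\Omega$. Functoriality of connecting maps along $\mathbf{S}\hookrightarrow\mathbf{G}$ then shows that the surjection $\mathbf{S}(K)\twoheadrightarrow H^1(K,\mathbf{N})$ just described factors through $\mathbf{S}(K)\twoheadrightarrow G/G_{der}\xrightarrow{\bar\delta}H^1(K,\mathbf{N})$ (here $\mathbf{S}(K)\twoheadrightarrow G/G_{der}$ because $G=\mathbf{S}(K)\cdot E$ and $E=G_{der}$), so $\bar\delta$ is also surjective; hence $G/G_{der}\xrightarrow{\sim}H^1(K,\mathbf{N})=K^\times\otimes\Omega$. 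One caveat: when $p$ divides $|\Omega|=|\mathbf{N}|$ the scheme $\mathbf{N}$ is not \'etale, so $H^1(K,\mathbf{N})$ should be read as fppf cohomology throughout (Hilbert 90 for the split torus $\mathbf{S}_{sc}$ and the cokernel computation are unaffected); alternatively one can deduce surjectivity of $\delta$ directly from Steinberg's vanishing theorem $H^1(K,\mathbf{G}_{sc})=1$.

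The cohomological part is essentially bookkeeping; the genuine content is the first assertion, i.e. the classical but non-formal fact that the image of the $K$-points of the simply connected cover is exactly the abstract commutator subgroup of $G$. That is the step (resting on Steinberg's generation theorem, the Bruhat decomposition and the commutator relations for split groups) where a reader will most want the details, and it is the one I would expect to be the main obstacle to a fully self-contained argument.
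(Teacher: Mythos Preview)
Your proposal is correct. For the second assertion (identifying $H^1(K,\mathbf{N})$ with $K^\times\otimes\Omega$), your approach via the torus exact sequence $1\to\mathbf{N}\to\mathbf{S}_{sc}\to\mathbf{S}\to1$ and Hilbert~90 is essentially identical to the paper's.

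For the first assertion, however, you take a different route. The paper argues as follows: (i) the image lies in $G_{der}$ because $G_{sc}$ is its own derived group (citing Steinberg); (ii) for the reverse inclusion, the long exact sequence in nonabelian cohomology for $1\to\mathbf{N}\to\mathbf{G}_{sc}\to\mathbf{G}\to1$, combined with Steinberg's vanishing theorem $H^1(K,\mathbf{G}_{sc})=1$, gives $G/\phi(G_{sc})\cong H^1(K,\mathbf{N})$; since this is abelian, $G_{der}\subseteq\phi(G_{sc})$. You instead establish $\phi(G_{sc})=E$ (the subgroup generated by root groups), then use the Bruhat decomposition $G=\mathbf{S}(K)\cdot E$ for the inclusion $G_{der}\subseteq E$, and commutator relations for $E\subseteq G_{der}$. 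Both arguments are valid; the paper's cohomological shortcut is quicker and avoids the structure theory you invoke, while your argument is more elementary and makes the group-theoretic content explicit. You in fact mention the paper's approach yourself as the ``alternative'' via Steinberg's vanishing theorem at the end of your proposal, so you are aware of it. Your caveat about reading $H^1(K,\mathbf{N})$ as fppf cohomology when $p\mid|\Omega|$ is a point the paper leaves implicit.
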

\begin{proof}
Indeed, it is clear that
the image is contained in $G_{der}$ because $G_{sc}$ is its own
derived group \cite{Stein}. The other inclusion follows by applying
the long exact sequence in nonabelian cohomology to the central isogeny
$\mathbf{G}_{sc}\to\mathbf{G}$
and again appealing to Steinberg's Vanishing Theorem. It follows that
the quotient of $G$ by the image of $G_{sc}$ is the abelian group
$H^1(K,\mathbf{N})$, whence the result. On the other hand, we 
have the obvious exact sequence 
\begin{equation}\label{eq:galcohtor}
1\to\operatorname{Hom}(\Omega^*,K^\times)\to
S_{sc}\to S \to K^\times\otimes\Omega\to 1
\end{equation}
which we can compare 
to the long exact sequence in cohomology (with respect
to $\mathcal{I}$) associated to the canonical
exact sequence of diagonalizable groups
$1\to\mathbf{N}\to\mathbf{S}_{sc}\to\mathbf{S}\to 1$.
\end{proof}
We denote by $W^a_C$ the $F$-stable normal subgroup
of $W$ generated by the reflections in the walls of $C$.
This normal subgroup is independent of the choice of $C$ and
can be canonically identified with
$N_{G_{der}}(S)/S_\mathcal{O}\cap{G_{der}}
\stackrel{\sim}{\longrightarrow}W^a\subset W$,
the affine Weyl group of $(G_{sc},S_{sc})$.

Returning to Kottwitz's homomorphism we obtain the following result
(compare with \cite[5.2.11]{BT2}).
\begin{cor}\label{cor:G1}
We have $G_1=\langle S_\mathcal{O}, G_{der}\rangle$.
\end{cor}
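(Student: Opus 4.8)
The plan is to read off the identity $G_1=\langle S_\mathcal{O}, G_{der}\rangle$ directly from the Kottwitz homomorphism $w_G\colon G\to X^*(Z(G^\vee))=\Omega$ of (\ref{eq:kotsurK}), using the functoriality of that homomorphism together with the structural facts just established. First I would recall that $G_1=\ker(w_G)$ by definition of (\ref{eq:kotsurK}). The inclusion $\langle S_\mathcal{O},G_{der}\rangle\subseteq G_1$ is the easy direction: by functoriality of $w_G$ and the fact that $Z(G_{sc}^\vee)$ maps trivially (equivalently, the Kottwitz homomorphism of $G_{sc}$ has target $X^*(Z(G_{sc}^\vee))=0$ since $G_{sc}^\vee=G_{ad}^\vee$ is adjoint... more precisely $G_{sc}$ is simply connected so $G^\vee_{sc}$ is adjoint and $Z(G^\vee_{sc})=1$), the subgroup $G_{der}$, being the image of $G_{sc}$ by the Proposition, lies in $\ker(w_G)=G_1$; and $S_\mathcal{O}\subseteq G_1$ because $S_\mathcal{O}=\mathcal{O}^\times\otimes X$ is the maximal bounded subgroup of the torus $S=\mathbf S(K)$, on which the Kottwitz homomorphism $w_S\colon S\to X=X_*(\mathbf S)$ is (up to sign) the valuation map $S\to S/S_\mathcal{O}=X$, hence $S_\mathcal{O}=\ker(w_S)$; compatibility $w_G|_S = (\text{projection } X\to\Omega)\circ w_S$ then forces $S_\mathcal{O}\subseteq G_1$.

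For the reverse inclusion I would argue that the composite
\[
S\hookrightarrow G\stackrel{w_G}{\longrightarrow}\Omega
\]
is already surjective: it factors as $S\to S/S_\mathcal{O}=X\to X/Q=\Omega$, which is visibly onto. Therefore $G=S\cdot G_1$, and consequently $G_1=(S\cap G_1)\cdot\langle S_\mathcal{O},G_{der}\rangle$ would suffice — but in fact one can do better. Take $g\in G_1$. Since $G=S\cdot\langle S_\mathcal{O},G_{der}\rangle$ already (as $G/\langle S_\mathcal{O},G_{der}\rangle$ is a quotient of $G/G_{der}=K^\times\otimes\Omega$ by the Proposition, on which $S$ surjects via $S\to S/S_\mathcal{O}\to\Omega$ ... one must be slightly careful here), write $g=s\cdot h$ with $s\in S$, $h\in\langle S_\mathcal{O},G_{der}\rangle\subseteq G_1$. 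Then $w_G(s)=w_G(g)w_G(h)^{-1}=1$, so $s\in S\cap G_1$. But $w_G|_S$ has kernel exactly $S_\mathcal{O}$ inside $S$ — this is the crucial point: the torus version of the Kottwitz sequence, $1\to S_\mathcal{O}\to S\to X\to 1$, combined with the commuting square relating $w_S$ and $w_G$, shows $S\cap G_1 = \ker(w_G|_S)$ contains $S_\mathcal{O}$; one needs that it equals $S_\mathcal{O}$, i.e. that $w_S$ factors $w_G|_S$ through $S/S_\mathcal{O}=X$ injectively into $\Omega$ — which is false for $X\to\Omega$, so instead one argues that $S\cap G_{der}$ picks up the kernel: $S\cap G_1$ is generated by $S_\mathcal{O}$ together with those elements of $X$ lying in $Q=X_{sc}$, and these lift to $S_{sc}\subseteq G_{sc}$, hence land in $G_{der}$. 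Thus $s\in\langle S_\mathcal{O}, S\cap G_{der}\rangle\subseteq\langle S_\mathcal{O},G_{der}\rangle$, giving $g\in\langle S_\mathcal{O},G_{der}\rangle$.

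The main obstacle is the bookkeeping in that last paragraph: identifying $S\cap G_{der}$ (equivalently the image of $S_{sc}\to S$) with the sublattice $Q\subseteq X$ modulo $S_\mathcal{O}$, and checking the compatibility of the three Kottwitz homomorphisms (for $S$, for $G_{der}$ or $G_{sc}$, and for $G$) under the maps $S_{sc}\to S$, $G_{sc}\to G$, $S\hookrightarrow G$. All of this is functoriality of Kottwitz's construction (\cite[Section 7]{Ko3}) applied to the commutative diagram of the isogeny $\mathbf G_{sc}\to\mathbf G$ and the torus inclusion $\mathbf S\hookrightarrow\mathbf G$, together with the exact sequence (\ref{eq:galcohtor}) already used in the proof of the Proposition; no genuinely new input is needed. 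Once the diagram is in place, the equality $G_1=\langle S_\mathcal{O},G_{der}\rangle$ drops out by the surjectivity of $X\to\Omega$ and the exactness of $1\to Q\to X\to\Omega\to 1$.
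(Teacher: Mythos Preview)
Your argument is correct, and the underlying content is the same as the paper's, but the packaging differs. You work element-wise: write $G=S\cdot G_{der}$ (via the surjection $S=X\otimes K^\times\twoheadrightarrow\Omega\otimes K^\times=G/G_{der}$ from the Proposition), decompose $g\in G_1$ as $s\cdot h$, observe that $s\in\ker(w_G|_S)$ is the preimage of $Q\subset X$ under the valuation map, and then lift via $S_{sc}$ to land in $\langle S_\mathcal{O},G_{der}\rangle$. The paper instead computes the quotient $G/G_1'$ directly: since $G/G_{der}=K^\times\otimes\Omega$ by the Proposition and the image of $S_\mathcal{O}=\mathcal{O}^\times\otimes X$ therein is $\mathcal{O}^\times\otimes\Omega$, one gets $G/G_1'=(K^\times/\mathcal{O}^\times)\otimes\Omega=\mathbb{Z}\otimes\Omega=\Omega$, hence $G_1'=G_1$. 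The paper also obtains $S_\mathcal{O}\subset G_1$ from the Haines--Rapoport description $\mathbb{B}=\operatorname{Fix}(C)\cap G_1$ rather than from functoriality of $w_G$ for tori as you do.

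What each buys: your route makes the role of Kottwitz functoriality (for $S\hookrightarrow G$ and $G_{sc}\to G$) completely explicit and avoids the external reference to \cite{HR}, at the cost of the ``bookkeeping'' you flag---identifying $S\cap G_1$ with the preimage of $Q$ and then noting that any such element is $S_\mathcal{O}$ times something in the image of $S_{sc}$. The paper's quotient argument is shorter and requires no element chasing, but tacitly uses that the image of $S_\mathcal{O}$ in $G/G_{der}$ is exactly $\mathcal{O}^\times\otimes\Omega$, which is the same computation you do in dual form. There is no substantive gap in your plan; the hesitation around ``one must be slightly careful here'' is resolved exactly as you indicate, by the surjectivity of $X\otimes K^\times\to\Omega\otimes K^\times$.
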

\begin{proof}
Let $\mathbb{B}$ be the Iwahori subgroup of $G$ associated
with $C$ \cite[5.2.6]{BT2}.
By \cite[Proposition 3]{HR} we have $\mathbb{B}=\operatorname{Fix}(C)\cap {G_1}$.
In particular we have $S_\mathcal{O}\subset G_1$, so that we have
$G_{der}\subset G_1^\prime:=\langle S_\mathcal{O}, G_{der}\rangle\subset G_1$
Hence by (\ref{eq:kotsurksimple}), the equality $G_1^\prime=G_1$ is equivalent
to showing that $G/G_1^\prime=G/G_1=\Omega$.
By the previous Proposition we have
$G/G_{der}=K^\times\otimes\Omega$.
Since $S_\mathcal{O}/S_\mathcal{O}\cap{G_{der}}=\mathcal{O}^\times\otimes \Omega$
the result follows from $K^\times/\mathcal{O}^\times\simeq\mathbb{Z}$.
\end{proof}
Let $\Omega_C$ be the subgroup of $W$ which stabilizes $C$. This subgroup
may be identified with a subgroup of the group of special automorphisms
(in the sense of \cite[paragraph 1.11]{Lu4})
of the affine diagram associated with the choice of $C$.
We have a semi-direct product decomposition $W = W^a\rtimes\Omega_C$,
and thus a canonical isomorphism
$\Omega_C\stackrel{\sim}{\longrightarrow}\Omega$
for any choice of $C$.
\begin{cor}\label{cor:Wa}
We have $N_{G_1}(S)/S_\mathcal{O}
\stackrel{\sim}{\longrightarrow}W^a$, $N_{G_1}(\mathbb{B})=\mathbb{B}$ and
$N_G(\mathbb{B})/\mathbb{B}\stackrel{\sim}{\longrightarrow}\Omega$.
\end{cor}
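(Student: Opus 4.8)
The plan is to establish the three assertions in turn, the second --- that the Iwahori is self-normalizing in $G_1$ --- being the decisive one; the first and third should then follow by essentially formal bookkeeping with Corollary \ref{cor:G1}, the decomposition $W=W^a\rtimes\Omega_C$, and Kottwitz's homomorphism $w_G$. For the first assertion I would note that, by Corollary \ref{cor:G1} and the normality of $G_{der}$ in $G$, one has $G_1=S_\mathcal{O}\cdot G_{der}$ as a product of subgroups; so if $n\in N_{G_1}(S)$ and we write $n=sg$ with $s\in S_\mathcal{O}\subset S$ and $g\in G_{der}$, then $g=s^{-1}n$ normalizes $S$, i.e. $g\in N_{G_{der}}(S)$, while conversely $S_\mathcal{O}$ and $N_{G_{der}}(S)$ both lie in $N_{G_1}(S)$. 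Hence $N_{G_1}(S)=S_\mathcal{O}\cdot N_{G_{der}}(S)$, and since $N_{G_{der}}(S)\cap S_\mathcal{O}=G_{der}\cap S_\mathcal{O}$ (because $S_\mathcal{O}\subset N_G(S)$), the canonical map $N_{G_1}(S)/S_\mathcal{O}\to W$ has image $N_{G_{der}}(S)S_\mathcal{O}/S_\mathcal{O}\isom N_{G_{der}}(S)/(G_{der}\cap S_\mathcal{O})\isom W^a$, which is the first claim.

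For the second assertion I would use the identification $W^a\isom N_{G_1}(S)/S_\mathcal{O}$ just obtained to choose, for each $w\in W^a$, a representative $\dot w\in N_{G_1}(S)$, and then invoke the Iwahori--Bruhat decomposition of $G_1$ from Bruhat--Tits theory (\cite{BT2}; see also \cite{HR}): $G_1=\bigsqcup_{w\in W^a}\mathbb{B}\dot w\mathbb{B}$, with the standard property that for $w\neq 1$ the double coset $\mathbb{B}\dot w\mathbb{B}$ is not a single $\mathbb{B}$-coset on either side. Then for $g\in N_{G_1}(\mathbb{B})$ with $g\in\mathbb{B}\dot w\mathbb{B}$, the relation $g\mathbb{B}=\mathbb{B}g\mathbb{B}=\mathbb{B}\dot w\mathbb{B}$ exhibits $\mathbb{B}\dot w\mathbb{B}$ as a single right $\mathbb{B}$-coset, forcing $w=1$ and hence $g\in\mathbb{B}$; thus $N_{G_1}(\mathbb{B})=\mathbb{B}$.

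For the third assertion I would pick, for each $\omega\in\Omega_C\subset W$, a representative $n_\omega\in N_G(S)$. Conjugation by $n_\omega$ stabilizes the apartment $\mathcal{A}$ and acts on it through $\omega$, which stabilizes the alcove $C$; since $G_1$ is normal in $G$ and $\mathbb{B}=\operatorname{Fix}(C)\cap G_1$ (\cite[Prop.~3]{HR}), this gives $n_\omega\mathbb{B}n_\omega^{-1}=\operatorname{Fix}(n_\omega C)\cap G_1=\mathbb{B}$, so $n_\omega\in N_G(\mathbb{B})$. On the other hand $w_G$ kills $S_\mathcal{O}\subset G_1$ and kills $N_{G_{der}}(S)\subset G_{der}\subset G_1$, hence descends to a homomorphism $W/W^a\to\Omega$; by the standard functoriality of the Kottwitz homomorphism, $w_G|_S$ is the composite of the valuation $S\twoheadrightarrow X$ with $X\twoheadrightarrow X/Q=X^*(Z(G^\vee))=\Omega$, so $w_G(N_G(S))=\Omega$ and the descended map is surjective. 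Composing with $\Omega_C\isom W/W^a$, the images $\{w_G(n_\omega)\}_{\omega\in\Omega_C}$ therefore exhaust $\Omega=G/G_1$. Consequently $N_G(\mathbb{B})\cdot G_1=G$, so $N_G(\mathbb{B})/(N_G(\mathbb{B})\cap G_1)\isom\Omega$, and combined with $N_G(\mathbb{B})\cap G_1=N_{G_1}(\mathbb{B})=\mathbb{B}$ from the second assertion this yields $N_G(\mathbb{B})/\mathbb{B}\isom\Omega$, the $n_\omega$ matching the canonical isomorphism $\Omega_C\isom\Omega$. The only genuinely non-formal input is the Iwahori--Bruhat decomposition of $G_1$ used in the second assertion; everything else is group-theoretic bookkeeping together with the functoriality of $w_G$ for the torus $S$, and that is where I expect the real content to sit.
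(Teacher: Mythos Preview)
Your proposal is correct. The first assertion matches the paper's argument essentially verbatim (both reduce to $N_{G_1}(S)=S_\mathcal{O}\cdot N_{G_{der}}(S)$ via Corollary~\ref{cor:G1}). For the second and third assertions, however, you take genuinely different routes from the paper.

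For $N_{G_1}(\mathbb{B})=\mathbb{B}$, the paper does \emph{not} invoke the Iwahori--Bruhat decomposition of $G_1$. Instead it argues exactly as in the first step: from $G_1=S_\mathcal{O}\cdot G_{der}$ and $S_\mathcal{O}\subset\mathbb{B}$ one gets $N_{G_1}(\mathbb{B})=S_\mathcal{O}\cdot N_{G_{der}}(\mathbb{B})$, and then uses that an Iwahori subgroup of $G_{sc}$ is self-normalizing to conclude $N_{G_{der}}(\mathbb{B})=\mathbb{B}\cap G_{der}$, whence $N_{G_1}(\mathbb{B})=S_\mathcal{O}\cdot(\mathbb{B}\cap G_{der})=\mathbb{B}$. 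Your Bruhat-decomposition argument is equally valid and perhaps more self-contained, but the paper's reduction to $G_{sc}$ is shorter and keeps the logic parallel to the first assertion.

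For $N_G(\mathbb{B})/\mathbb{B}\simeq\Omega$, the paper avoids your explicit construction of the $n_\omega$ and the functoriality of $w_G$ altogether. It simply observes that $\Omega_C=(N_G(\mathbb{B})\cap N_G(S))/S_\mathcal{O}$ and that $\mathbb{B}\cap N_G(S)=S_\mathcal{O}$, giving an \emph{injection} $\Omega_C\hookrightarrow N_G(\mathbb{B})/\mathbb{B}$; the second assertion gives an \emph{injection} $N_G(\mathbb{B})/\mathbb{B}\hookrightarrow G/G_1=\Omega$; and since $|\Omega_C|=|\Omega|<\infty$, both injections are forced to be isomorphisms. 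This counting argument is slicker than your surjectivity check via $w_G|_S$, though yours has the advantage of exhibiting the isomorphism concretely.
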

\begin{proof}
By Corollary \ref{cor:G1} it follows that
$N_{G_1}(S)=N_{G_{der}}(S).S_\mathcal{O}$.
This implies the first assertion, since $W^a$
is the affine Weyl group of $(G_{sc},S_{sc})$ and
$G_{der}$ is the homomorphic image of $G_{sc}$.
Since an Iwahori-subgroup of $G_{sc}$ is self-normalizing, we have
similarly $N_{G_1}(\mathbb{B})=N_{G_{der}}(\mathbb{B}).S_\mathcal{O}
=(\mathbb{B}\cap G_{der}).S_\mathcal{O}=\mathbb{B}$, proving the second
assertion. For the third assertion, observe that
$\Omega_C=(N_G(\mathbb{B})\cap N_G(S))/S_\mathcal{O}$.
It is well known that $\mathbb{B}\cap N_G(S)=S_\mathcal{O}$, hence
$\Omega_C$ maps injectively into $N_G(\mathbb{B})/\mathbb{B}$. By
the second assertion this group maps injectively into $G/G_1=\Omega$.
Since $\Omega\simeq\Omega_C$ are finite the two
injective homomorphisms are in fact isomorphisms.
\end{proof}
Since $G$ is unramified there exist hyperspecial points in the apartment
$\mathcal{A}$ \cite{Tits}. A choice of a hyperspecial point $a_0\in\mathcal{A}$,
induces a semi-direct product decomposition $W=W_0\ltimes X$, where $W_0$ denotes
the isotropy subgroup of $a_0$ in $W$. The $k$-structure of $G$ defined by $F$
is quasi-split, which implies that there exists a hyperspecial point
$a_0\in\mathcal{A}(G,S)$ which is $F$-fixed.
In this case we denote by $\theta$ the automorphism of $W$
(and of $\mathcal{A}$) induced by $F$.
We fix $a_0$, an $F$-fixed hyperspecial point, and an
$F$-stable alcove $C$ having $a_0$ in its closure. Observe that
the subgroup $\Omega_C$ depends on the choice of $C$, not of the
hyperspecial point $a_0$. Recall we have a canonical isomorphism
$\Omega_C\stackrel{\sim}{\longrightarrow}\Omega=X/Q$, which we will often
use to identify these two groups.
Observe that $\theta$ stabilizes the subgroups $W^a$, $\Omega_C$,
$X$ and $W_0$ of $W$.
\subsection{Unipotent representations}
\subsubsection{Parahoric subgroups}
Recall the explicit representation of pure inner forms $G^u$
as discussed in (\ref{eq:inner}).
Fix a representative $u=\dot\omega\in N_G(S)$ with
$\omega\in\Omega_C\subset W$.
Then $F_u$ acts on the apartment $\mathcal{A}(G,S)$ by means of the
finite order automorphism $\omega\theta$.
Since $F_u$ stabilizes $C$
the Iwahori subgroup $\mathbb{B}$ is $F_u$ stable.
Recall that the group $\Omega_C$
can be canonically identified with the group $N_G(\mathbb{B})/\mathbb{B}$.
Since $\Omega$ is abelian it is clear that the subgroup
$\Omega^{F_u}_C=\Omega^{\omega\theta}_C$ of $F_u$-invariant elements is
independent of $\omega\in\Omega_C$.

Following \cite{HR} we may define
a ``standard parahoric subgroup of $G$'' as a subgroup of the form
$\textup{Fix}(F_P)\cap{G_1}$ where $F_P\subset C$ denotes a facet of $C$.
By \cite[Proposition 3]{HR} this definition coincides with the definition
in \cite{BT2}. In particular, a standard parahoric subgroup of $G$ is a connected
pro-algebraic group. A parahoric subgroup of $G$ is a subgroup conjugate to
a standard parahoric subgroup.

It is well known (by ``Lang's theorem for connected proalgebraic groups'',
see \cite[1.3]{Lu4})
that any $F_u$-stable parahoric subgroup of $G$ is
$G^{F_u}$-conjugate to a ``standard'' $F_u$-stable parahoric subgroup, i.e.
an $F_u$-stable parahoric subgroup containing $\mathbb{B}$. It follows that
the $G^{F_u}$-conjugacy classes of $F_u$-stable parahoric subgroups
are in one-to-one correspondence with the set of $\Omega^\theta$-orbits of
$\omega\theta$-stable facets in the closure of $C$. Similarly, a parahoric
subgroup $\mathbb{P}$ or a double coset of a parahoric subgroup is
$F_u$-stable iff it contains points of $G^{F_u}$.
Let $\mathbb{P}$ be an $F_u$-stable parahoric subgroup of $G$.
We call $\mathbb{P}^{F_u}$ a parahoric subgroup of $G^{F_u}$.

We record two important properties of
$F_u$-stable parahoric subgroups which follow
easily from Corollary \ref{cor:Wa}. First of all,
parahoric subgroups are self-normalizing in $G_1$, i.e.
\begin{equation}\label{eq:ratparaselfnormal}
(N_G\mathbb{P})^{F_u}\cap G_1=\mathbb{P}^{F_u}
\end{equation}
Secondly, for an $F_u$-stable
standard parahoric
subgroup $\mathbb{P}$ corresponding to a $\omega\theta$-stable 
facet $C_P$ of $C$
we have
\begin{equation}\label{eq:normalizerratpara}
(N_G\mathbb{P})^{F_u}/\mathbb{P}^{F_u}=\Omega^{\mathbb{P},\theta}
\end{equation}
where $\Omega^{\mathbb{P}}\subset \Omega_C$ is the subgroup stabilizing $C_P$,
and
$\Omega^{\mathbb{P},\theta}\subset \Omega^{\mathbb{P}}$
its fixed point group for the action of $\theta$ (or $F_u=\omega\theta$, 
which amounts to the same since $\Omega_C$ is abelian). 
We define an exact sequence
\begin{equation}\label{eq:Om}
1\to\Omega^{\mathbb{P},\theta}_1\to\Omega^{\mathbb{P},\theta}\to
\Omega^{\mathbb{P},\theta}_2\to1
\end{equation}
where $\Omega^{\mathbb{P},\theta}_1$ is the subgroup of elements
which fix the set of $F_u$-orbits of vertices of $C$ not in $C_P$ pointwise.
\subsubsection{Normalization of Haar measures}\label{par:ber}
Let $G$, $F$, and $F_u$ be as in the previous paragraph.
Then $G^{F_u}$ is a locally compact group.
For any $F_u$-stable parahoric subgroup $\mathbb{P}$ of $G$ we
denote by $\overline{\mathbb{P}^{F_u}}$ the reductive quotient of
$\mathbb{P}^{F_u}$. This is the group of $\mathbf{k}$-points of a
connected reductive group over $\mathbf{k}$.
In particular this is a finite group.

Following \cite[Section 5.1]{DeRe} we normalize the Haar measure of $G^{F_u}$
uniquely such that for all $F_u$-stable parahoric subgroups $\mathbb{P}$ of $G$
one has
\begin{equation}\label{eq:haar}
\textup{Vol}(\mathbb{P}^{F_u})=\mathbf{v}^{-a}|\overline{\mathbb{P}^{F_u}}|
\end{equation}
where $a\in\mathbb{Z}$ is equal to the dimension of $\overline{\mathbb{P}}$
over $\mathbb{K}$.
It is well known that the right hand side
is a product of powers of $\mathbf{v}$ and cyclotomic polynomials in
$\mathbf{v}$. 
\subsubsection{The anisotropic case}\label{subsub:aniso}
It is useful to discuss the case where $G$ is anisotropic explicitly.
It is well known that an anisotropic absolutely simple unramified group $G$ is isomorphic to  
$\textup{PGL}_1(\mathbb{D}):=\mathbb{D}^\times/k^\times$, where $\mathbb{D}$ is 
an unramified central division algebra over $k$ of degree $m+1$, rank $(m+1)^2$ 
(see for instance \cite{CH}).
We choose a uniformizer $\pi$ of $k$.
$\mathbb{D}$ contains an unramified extension $l$ of degree $m+1$ over $k$, 
and we may choose a uniformizer $\Pi$ of $\mathbb{D}$ which 
normalizes $l$, such that conjugation by $\Pi$ restricted to $l$ yields a  
generator for $\textup{Gal}(l:k)$, and such that $\Pi^{m+1}=\pi$. 
The group $\mathbb{P}:=G_1$ 
is the only parahoric subgoup in this situation, and obviously $G=N\mathbb{P}$.
By (\ref{eq:kotsurksimple}) we have 
$\Omega:=G(k)/G_1^{F_u}\approx \frac{\langle\Pi\rangle}{\langle\pi\rangle}\approx \textup{Gal}(l:k)$,
a cyclic group of order $m+1$.
$G$ contains a 
maximal prounipotent subgroup  $G_+$ (denoted by $V_1$ in \cite{CH}) 
and we have $G=C.G_+$, where $C$ is generated by the anisotropic torus 
$T:=l^\times/k^\times$ and $\Pi$. 
We see that the reductive quotient $\mathbb{P}/(\mathbb{P}\cap G_+)$ is 
an anisotropic torus $\overline{\mathbb{T}}$ of rank $m$ over $\mathbf{k}$, and that 
$\overline{\mathbb{T}^{F_u}}$ can be identified with the group of roots of 
unity of order prime to $p$ in $l$ modulo the subgroup of those roots of unity in $k$. 
Hence $\textup{Vol}(G(k))=v^{-m}|\Omega||\overline{\mathbb{T}^{F_u}}|=(m+1)[m+1]_q$ 
(with $[m+1]_q$ the $q$-integer associated to $m+1\in\mathbb{N}$ (see Definition \ref{defn:q-rat})).
\subsubsection{Unipotent representations and affine Hecke algebras}\label{par:uniprep}
Let $G$ be a quasi-simple linear algebraic group, defined and quasi-split over $k$
and $K$-split as above. Recall that the automorphism induced by the Frobenius $F$
on the building of $G$ was denoted by $\theta$.
Recall that the inner forms of $G$ are canonically parameterized
by the abelian group $\Omega/(1-\theta)\Omega$.
Let $u=\dot\omega\in N\mathbb{B}$ be a representative of an element
$\bar\omega\in\Omega/(1-\theta)\Omega$ and let $F_u$ denote the corresponding pure
inner twist of $F$. We denote by $G^u$ the pure inner form of $G$ defined by this twisted
$k$-structure (in particular $G^1=G$).

A representation $(E,\delta)$ of a parahoric subgroup $\mathbb{P}^{F_u}$
of $G^{F_u}$ (where $\mathbb{P}$ is an $F_u$-stable parahoric subgroup of $G$) is
called \emph{cuspidal unipotent} if it is the lift to $\mathbb{P}^{F_u}$ of
a cuspidal unipotent representation of the reductive quotient
$\overline{\mathbb{P}^{F_u}}$. An $F_u$-stable parahoric subgroup is
called cuspidal unipotent if it has cuspidal unipotent representations.

Lusztig \cite{Lu4} introduced the category $R(G^{F_u})_{uni}$ of unipotent
representations
of $G^{F_u}$. A smooth representation $(V,\pi)$ of $G^{F_u}$ is called unipotent
if $V$ is generated by a sum of cuspidal unipotent isotypical components
of restrictions of $(V,\pi)$ to various parahoric
subgroups of $G^{F_u}$.
As a generalization of Borel's theorem on Iwahori-spherical
representations, $R(G^{F_u})_{uni}$ is an abelian subcategory of the category
$R(G^{F_u})$ of smooth $G^{F_u}$ representations.
It is central to the approach in this paper
that this category is equivalent to the module category of an explicit finite
direct sum of normalized Hecke algebras in the sense of paragraph
\cite[3.1.2]{Opd4}, in a way which is compatible with harmonic analysis.
Let us therefore describe this in detail.

A cuspidal unipotent pair $(\mathbb{P},\delta)$ consists of an
$F_u$-stable parahoric subgroup $\mathbb{P}$
of $G$ and an irreducible cuspidal unipotent representation $\delta$
of $\mathbb{P}^{F_u}$. We say that $(\mathbb{P},\delta)$ is standard if
$\mathbb{P}$ is standard.
Let $R(G^{F_u})_{(\mathbb{P},\delta)}$ denote the subcategory of
$R(G^{F_u})$ consisting of the smooth representations $(V,\pi)$ such that
$V$ is generated by the isotypical component $(V|_{\mathbb{P}^{F_u}})_\delta$.
According to \cite{Lu4}, given two cuspidal unipotent pairs
$(\mathbb{P}_i,\delta_i)$ (with $i\in\{1,2\}$) the subcategories
$R(G^{F_u})_{(\mathbb{P}_i,\delta_i)}$ are either disjoint or equal, and
this last alternative occurs if and only if the pairs $(\mathbb{P}_i,\delta_i)$
are $G^{F_u}$-conjugates (and not just associates).
It follows that a smooth representation $(V,\pi)$ is unipotent iff $V$
is generated by
$\oplus (V|_{\mathbb{P}^{F_u}})_{\delta}$, where the direct sum is taken over
a complete set of representatives $(\mathbb{P},\delta)$ of the finite set
of $\Omega^\theta$-orbits of standard cuspidal unipotent pairs.

For each standard cuspidal unipotent pair $\mathfrak{s}=(\mathbb{P},\delta)$
we consider the algebra
$\mathcal{H}^{u,\mathfrak{s}}_{\mathbf{v}}$ of $\mathfrak{s}$-spherical
$\operatorname{End}(E)$-valued functions on $G^{F_u}$, equipped with
a trace $\tau(f):=\operatorname{Tr}_{V}(f(e))$ and $*$ defined by
$f^*(x):=f(x^{-1})^*$. This algebra turns out to be the specialization
at $\mathbf{v}$ of a finite direct sum of mutually
isomorphic normalized (in the sense of \cite[paragraph 3.1.2]{Opd4})
affine Hecke algebras (called unipotent affine Hecke algebras) defined
over $\mathbf{L}=\mathbb{C}[v^{\pm 1}]$,
and has been explicitly determined in all cases \cite{Mo2}, \cite{Lu4}.
The following general result from the theory of types due to \cite{BHK} 
(also see \cite{HO}) 
is fundamental to the approach in this paper: 
\begin{thm}\label{thm:typeplancherel}
The assignment
$(V,\pi)\to V^{(\mathbb{P},\delta)}:=\textup{Hom}_{\mathbb{P}}(\delta,V|_{\mathbb{P}^{F_u}})$ 
establishes an equivalence of categories
from $R(G^{F_u})_{(\mathbb{P},\delta)}$ to the category of
$\mathcal{H}^{u,\mathfrak{s}}_{\mathbf{v}}$-modules which respects the notion
of temperedness and which is Plancherel measure preserving on the level
of irreducible tempered representations. 
\end{thm}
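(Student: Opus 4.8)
The plan is to reduce the statement to known results from the theory of types, specifically the general covering-algebra formalism of Bushnell--Kutzko--Howlett as formulated in \cite{BHK}, \cite{BK} (see also \cite{HO}), and then to verify the extra harmonic-analytic compatibility by hand. First I would recall that a standard cuspidal unipotent pair $\mathfrak{s}=(\mathbb{P},\delta)$ gives, after inflating $\delta$ from the reductive quotient $\overline{\mathbb{P}^{F_u}}$ to $\mathbb{P}^{F_u}$, an $\mathfrak{s}$-type in the sense of Bushnell--Kutzko: one must check that $(\mathbb{P}^{F_u},\delta)$ is indeed a type for the Bernstein component cut out by $R(G^{F_u})_{(\mathbb{P},\delta)}$. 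This is essentially Lusztig's analysis in \cite{Lu4} together with the fact, recorded above, that the subcategories $R(G^{F_u})_{(\mathbb{P}_i,\delta_i)}$ are either disjoint or equal, the latter exactly under $G^{F_u}$-conjugacy. Granting this, the theory of types gives an equivalence $(V,\pi)\mapsto V_{(\mathbb{P},\delta)}:=\operatorname{Hom}_{\mathbb{P}^{F_u}}(E,V)\cong (V|_{\mathbb{P}^{F_u}})^\delta$ between $R(G^{F_u})_{(\mathbb{P},\delta)}$ and the category of modules over the Hecke algebra $\mathcal{H}(G^{F_u},\mathbb{P}^{F_u},\delta)$ of $\mathfrak{s}$-spherical $\operatorname{End}(E)$-valued functions, which is precisely the algebra $\mathcal{H}^{u,\mathfrak{s}}_{\mathbf v}$ in the statement. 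That this algebra is the specialization at $\mathbf v$ of a finite direct sum of normalized affine Hecke algebras is the structural result of Morris \cite{Mo2} and Lusztig \cite{Lu4}, which I would simply cite.

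The remaining content is the harmonic-analytic assertion: the equivalence respects temperedness and is Plancherel-measure preserving on irreducible tempered objects. For temperedness I would argue that, under a type equivalence, the module $V_{(\mathbb{P},\delta)}$ is tempered (in the sense appropriate to the affine Hecke algebra, i.e. its matrix coefficients decay suitably with respect to the weight filtration on the Bernstein subalgebra) if and only if the matrix coefficients of $\pi$ are square-integrable modulo center / tempered; this is because the support condition defining temperedness on the Hecke side translates under the isomorphism $\mathcal{H}^{u,\mathfrak{s}}_{\mathbf v}\cong \mathcal{H}(G^{F_u},\mathbb{P}^{F_u},\delta)$ into the corresponding growth condition on $G^{F_u}$, the point being that the normalization of $\mathcal{H}^{u,\mathfrak{s}}_{\mathbf v}$ with its trace $\tau(f)=\operatorname{Tr}_V(f(e))$ and involution $f^*(x)=f(x^{-1})^*$ is compatible with the $C^*$-structure coming from $L^2(G^{F_u})$. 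For the Plancherel statement I would invoke the fact that the functor $V\mapsto (V|_{\mathbb{P}^{F_u}})^\delta$ is implemented by convolution with an idempotent $e_{\mathfrak{s}}$ (essentially $e_{\delta}$, the $\delta$-isotypic projector on $\mathbb{P}^{F_u}$ scaled by $\dim\delta/\operatorname{Vol}(\mathbb{P}^{F_u})$), so that the Plancherel measure of the Bernstein component of $R(G^{F_u})$ is the push-forward of the operator-valued measure $\pi\mapsto \pi(e_{\mathfrak{s}})$; by the trace-Paley--Wiener theorem and the explicit normalization \eqref{eq:haar} of the Haar measure on $G^{F_u}$, this operator-valued Plancherel measure matches, under the equivalence, the Plancherel measure of the normalized Hecke algebra $\mathcal{H}^{u,\mathfrak{s}}_{\mathbf v}$ attached to its canonical trace $\tau$.

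The main obstacle is matching the normalizations exactly, i.e. showing that the idempotent $e_{\mathfrak{s}}$ has the right total mass so that no spurious scalar appears between the two Plancherel measures: this is where the specific Haar-measure convention \eqref{eq:haar}, $\operatorname{Vol}(\mathbb{P}^{F_u})=\mathbf v^{-a}|\overline{\mathbb{P}^{F_u}}|$, and the "normalized" structure of $\mathcal{H}^{u,\mathfrak{s}}_{\mathbf v}$ in the sense of \cite[paragraph 2.2.2]{Opd4} must be made to agree. Concretely, one needs $\tau$ to coincide (after the isomorphism) with the trace $f\mapsto \operatorname{Vol}(\mathbb{P}^{F_u})^{-1}\int_{G^{F_u}}\operatorname{Tr}(f(g))\,e_{\delta}(g^{-1})\,dg$ restricted to $\mathfrak{s}$-spherical functions, and this is essentially the computation already carried out in \cite{BHK}, \cite{HO} combined with Lusztig's explicit determination of the parameters; for the unipotent case it was verified in \cite{DeRe}. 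Thus the proof is really an assembly of \cite{BHK}, \cite{Lu4}, \cite{Mo2}, \cite{HO} and \cite{DeRe}, with the only genuinely new point being the bookkeeping that shows the harmonic-analytic structures line up on the nose, and I would present it as such rather than reproving the type-theoretic equivalence from scratch.
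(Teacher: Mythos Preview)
Your proposal is correct and aligns with the paper's treatment: the paper does not give its own proof of this theorem but simply attributes it to the theory of types, citing \cite{BHK} (and \cite{HO}) immediately before the statement. Your write-up is essentially an unpacking of what those references contain, together with the structural input from \cite{Lu4} and \cite{Mo2} that the paper already records in the surrounding discussion, so there is nothing to correct or contrast.
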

\subsubsection{The group of unramified characters}
Recall that we have a canonically identification
of $\Omega$ with $N_{G}(\mathbb{B})/\mathbb{B}$.

By application of Lang's Theorem for proalgebraic groups
\cite[paragraph 1.8]{Lu4} one sees that the $F_u$ stable double 
$\mathbb{B}$-cosets $\Theta$ in $G$ are precisely those which 
intersect with $G^{F_u}$, in which case $\Theta\cap G^{F_u}$ is a single 
double coset of $\mathbb{B}^{F_u}$.

Let $u=\dot\omega$ be a representative of an element
$\omega\in\Omega/(1-\theta)\Omega$.
We see that the double $\mathbb{B}^{F_u}$-cosets of $G^{F_u}$ are
parameterized by the $\omega\theta$ fixed group $W^{\omega\theta}$, and
that $\Omega^{\omega\theta}=N_{G^{F_u}}\mathbb{B}^{F_u}/\mathbb{B}^{F_u}$.
Because $\Omega$ is abelian we actually have
$\Omega^{\omega\theta}=\Omega^\theta$.
We have $W^{\omega\theta}=W^{\omega\theta,a}\rtimes\Omega^\theta$. By
\cite{Lu4} this extended affine Weyl group is the underlying affine
Weyl group of the Iwahori-Hecke algebra $\mc{H}^{u,IM}:=\mc{H}^{u,(\mathbb{B},1)}$
of the group $G^{F_u}$. When $[\omega]=1$ we denote this algebra simply by
$\mc{H}^{IM}$, the \emph{generic} Iwahori-Hecke algebra.

The Pontryagin dual $(\Omega^\theta)^*$ of $\Omega^\theta$
can be viewed canonically as the group of unramified complex linear characters
$X^*_{un}(G^{F_u})$ of $G^{F_u}$. This defines a natural functorial action of
$(\Omega^\theta)^*$ on the category $R(G^{F_u})_{uni}$ (by taking tensor
products). These functors are Plancherel measure preserving, as we will see,
and play an important role.
\subsection{Unramified local Langlands parameters}\label{subsub:loclang}
The based root datum of the connected component $G^\vee$ of the Langlands dual group
${{}^LG}$ of $G^u$ is defined by $\mc{R}=(X,R_0,Y,R_0^\vee,F_0)$.
The dual Langlands group of $G^u$ is independent of $u$ and defined by
\begin{equation}
{}^LG:=G^\vee\rtimes\langle\theta\rangle
\end{equation}
where $\theta$ denotes the outer automorphism of $G^\vee$ arising from $F$.
Let $S^\vee\subset G^\vee$ be a maximal torus of
$G^\vee=G^\vee(\mathbb{C})$.
Let $Z(G^\vee)$ be the center of the neutral component $G^\vee$ of ${}^LG$.
Then $Z(G^\vee)\simeq P^\vee/Y=\Omega^*\subset S^\vee$.
We will denote by ${}^LZ$ the central subgroup ${}^LZ:=Z(G^\vee)^\theta\subset{}^LG$,
so that ${}^LZ$ is canonically equal to
$(\Omega^*)^\theta\subset S^{\vee,\theta}$.
It follows that we can canonically identify the group $\Omega/(1-\theta)\Omega$ with the
Pontryagin dual group of ${}^LZ$, which is the version of Kottwitz's
Theorem as explained in detail in \cite{DeRe}.

Let us recall the space of unramified local Langlands parameters for $G^{F_u}$
for later reference. Let $\mathcal{W}_k$ denote the Weil group of $k$ \cite{Tate},
with inertia subgroup $\mathcal{I}\subset\mathcal{W}_k$, and let $\textup{Frob}$ denote
a generator of $\mathcal{W}_k/\mathcal{I}$. An  \emph{unramified local Langlands parameter}
is a homomorphism
\begin{equation}\label{eq:qsLP}
\lambda:\langle\textup{Frob}\rangle\times\textup{SL}_2(\mathbb{C})\to {}^LG
\end{equation}
such that $\lambda(\textup{Frob}\times \operatorname{id})=s\times\theta$ with
$s\in G^\vee$ semisimple and such that $\lambda$ is algebraic on the
$\textup{SL}_2(\mathbb{C})$-factor. Given an unramified Langlands parameter 
$\lambda$ we denote by $[\lambda]$ its orbit for
the action of $G^\vee$ by conjugation. We will write $\Lambda$
for the set of orbits $[\lambda]$ of unramified Langlands parameters.

If $\lambda$ is an unramified Langlands parameter, 
let $A_\lambda:=\pi_0(C_{G^\vee}(\lambda))$
be the component group of the centralizer of $\lambda$ in $G^\vee$.
We call $\lambda$ \emph{elliptic} (or discrete) if $C_{G^\vee}(\lambda)$ is finite,
and denote by $\Lambda^e$ the space of $G^\vee$-orbits of 
unramified elliptic Langlands parameters.

Let $\lambda$ be an unramified elliptic Langlands parameter.
Observe that ${}^LZ=Z(G^\vee)^\theta\subset Z(G^\vee)\subset A_\lambda$. 
The inner forms $G^u$ of $G$ are canonically parameterized via Kottwitz's Theorem by 
the character group of ${}^LZ_{sc}$, the center of the $L$-group of $G_{ad}=G/Z(G)$. 
Given an inner form $G^u$, we choose once and for all 
a character $\zeta_u\in \textup{Irr}(Z_{sc})$ 
(with $Z_{sc}:=Z(G^\vee)_{sc})$) which restricts to the character 
$\omega_u\in \Omega_{ad}/(1-\theta)\Omega_{ad}$ of ${}^LZ_{sc}$
that is represented by $u=\dot\omega\in N_{G_{ad}}(\mathbb{B})$.
Following \cite[Section 7.2]{GR} (see also \cite{A}) 
we consider the group $A_\lambda/Z(G^\vee)\subset (G^\vee)_{ad}$, and let 
$\mc{A}_\lambda\subset (G^\vee)_{sc}$ be its full preimage in the simply connected 
cover $(G^\vee)_{sc}$ of $(G^\vee)_{ad}$. Thus $Z_{sc}\subset \mc{A}_\lambda$, 
and $\mc{A}_\lambda$ is a central extension of $A_\lambda/Z(G^\vee)$ by $Z_{sc}$.
We denote by $\operatorname{Irr}^u(\mc{A}_{\lambda})$ the set
of irreducible characters $\rho$ of $\mc{A}_\lambda$ on which $Z_{sc}$ acts by 
a multiple of $\zeta_u$.

The space of ($G^\vee$-orbits of) unramified discrete Langlands data for $G^u$ is defined by
\begin{equation}
\tilde{\Lambda}^u:=\{(\lambda,\rho)\mid[\lambda]\in\Lambda^e, \rho\in\operatorname{Irr}^u(\mc{A}_{\lambda})\}/{G^\vee}
\end{equation}
and denote its elements by $[\lambda,\rho]$.
For fixed $\lambda$ with $[\lambda]\in\Lambda^e$ we denote by $\tilde{\Lambda}^u_\lambda$
the fiber of $\tilde{\Lambda}^u$ above $[\lambda]$ 
(with respect to the projection of $\tilde{\Lambda}^u$ to the first factor).
We will often simply write ${\tilde{\Lambda}}$ if we refer to
the space of (orbits of) unramified local Langlands data of the quasi-split
group $G=G^1$.

The isomorphism classes of pure inner forms $G^u$ are parametrized canonically by 
$\omega_u\in \textup{Irr}({}^LZ_{sc})$. In the refined version of the local Langlands correspondence 
where we restrict ourselves to pure inner forms of $G$ it is therefore more natural to work 
with pairs $(\lambda,\rho)$ with $\rho\in \operatorname{Irr}^u(A_{\lambda})$, the set 
of irreducible characters of $A_\lambda$ which restrict to a multiple of $\omega_u$ on 
${}^LZ$ (hence there is no need to make choices of the extensions $\zeta_u$ in this case).

It is well known \cite[Paragraph 6.7]{Bo} that we have a canonical isomorphism
\begin{equation}
\beta:(G^\vee\times\theta)/\textup{Int}(G^\vee)\stackrel{\sim}{\longrightarrow}
\operatorname{Hom}(X^\theta,\mathbb{C}^\times)/W_0^\theta
\end{equation}
Observe that the group $(\Omega^\theta)^*$ of unramified characters
on $G^{F_u}$ is exactly the ``central subgroup''
of the complex torus $T_{\mb{v}}(\mathbb{C})
:=\operatorname{Hom}(X^\theta,\mathbb{C}^\times)$, i.e. the
subgroup of $W_0^\theta$-invariant elements.
Here we consider $T$ as the diagonalizable group scheme with character
lattice $\mathbb{Z}\times X^\theta$ over the ring
$\mathbf{L}=\mathbb{C}[v^{\pm 1}]$
and we use the notation $T_\mathbf{v}$ to denote its fiber over
$\mathbf{v}\in\mathbb{C}^\times$.

We have natural compatible
actions of $X^*_{un}(G^{F_u})=(\Omega^\theta)^*$ on the sets $\Lambda$ and
$\tilde{\Lambda}^u$ defined by $\omega[\lambda]=[\omega\lambda]$ and
$\omega[\lambda,\rho]=[\omega\lambda,\rho]$ respectively, provided that we choose the 
extensions $\zeta_u$ in a compatible way within each orbit under $X^*_{un}(G^{F_u})$ 
(for pure inner forms we do not need to worry about this).

We remark that $W_0\backslash T_{\mb{v}}(\mathbb{C})$ can be identified with
the maximal spectrum
$S_{\mathbf{v}}^{IM}$ of the center $\mathcal{Z}_{\mathbf{v}}^{IM}$ of the Iwahori Hecke
algebra $\mathcal{H}^{IM}_{\mathbf{v}}=\mathcal{H}^{(\mathbb{B},1)}(G^{F})$
of the group of points of the $k$-quasisplit group $G^F=G(k)$.
By the Kazhdan-Lusztig correspondence
\cite{KL} there exists a canonical bijection between the set of central characters
$W_0r_\mb{v}\in S_{\mathbf{v}}^{IM}$ supporting discrete series representations of
$\mathcal{H}^{IM}_{\mathbf{v}}$ and the set of $G^\vee$-orbits of unramified
elliptic local Langlands parameters.
This bijection $[\lambda]\to W_0r_{\lambda}$ is defined by
\begin{equation}\label{eq:llprespt}
W_0r_{\lambda,\mb{v}}=\beta\left(G^\vee\cdot
\lambda
\left(\textup{Frob},
\left(
\begin{matrix}
\mb{v}&0\\
0&\mb{v}^{-1}\\
\end{matrix}
\right)
\right)
\right)
\end{equation}
This map is equivariant with respect to the natural action of the group
$X^*_{un}(G^{F_u})$ of unramified characters of $G^{F_u}$.
\subsection{Unipotent affine Hecke algebras}
According to \cite[1.15, 1.16, 1.17, 1.20]{Lus2} we can decompose for
each cuspidal unipotent pair $(\mathbb{P},\delta)$ of $G^u$
the algebra
$\mathcal{H}^{u,\mathfrak{s}}$ of $\mathfrak{s}$-spherical functions
on $G^u$ explicitly as a direct sum
of mutually isomorphic extended affine Hecke algebras as follows.

Let us use the shorthand notation $N\mathbb{B}$ for $N_G(\mathbb{B})$ etc.
Recall that, since
Borel subgroups of a connected reductive group are mutually conjugate and
self normalizing, the group $\Omega^\mathbb{P}=N\mathbb{P}/\mathbb{P}$
is naturally a subgroup of the finite abelian group
$\Omega=N\mathbb{B}/\mathbb{B}$ (see (\ref{eq:normalizerratpara})).
It is known that the group $\Omega^{\mathbb{P},\theta}$ (see (\ref{eq:normalizerratpara}))
acts trivially on
the set of irreducible unipotent cuspidal representations of $\mathbb{P}^{F_u}$.
Even more is true \cite{Lu4}: for every cuspidal unipotent representation $(E,\delta)$
of $\mathbb{P}^{F_u}$ there exists an extension $(E,\tilde\delta)$ of
$(E,\delta)$ to the normalizer $N\mathbb{P}^{F_u}:=N_{G^{F_u}}(\mathbb{P}^{F_u})$ of
$\mathbb{P}^{F_u}$ in $G^{F_u}$. We denote the group
$\Omega^{\mathbb{P},\theta}$ by $\Omega^{\mf{s},\theta}$ to stress the invariance of the
cuspidal pair $\mf{s}=(\mathbb{P}^{F_u},\delta)$.
One observes
that the set of such extensions is a torsor for the group
$(\Omega^{\mf{s},\theta})^*$ of irreducible characters of
$\Omega^{\mf{s},\theta}=N\mathbb{P}^{F_u}/\mathbb{P}^{F_u}$ by tensoring.
Hence the group $X^*_{un}(G^{F_u})=(\Omega^{\theta})^*$
of unramified characters of $G^{F_u}$ acts transitively on the set of
extensions of $(E,\delta)$ to $N\mathbb{P}^{F_u}$, and
the kernel of this action is equal to the subgroup
$(\Omega^\theta/\Omega^{\mf{s},\theta})^*$
of $(\Omega^{\theta})^*$ of unramified characters of $G^{F_u}$ which restrict
to $1$ on $N\mathbb{P}^{F_u}$.
Lusztig showed that the $\mf{s}$-spherical Hecke algebra
$\mc{H}^{u,\mathfrak{s}}$ is of the form
\begin{equation}
\mc{H}^{u,\mathfrak{s}}=\mc{H}^{u,\tilde{\mf{s}},a}\rtimes\Omega^{\mathfrak{s},\theta}
\end{equation}
where $\mc{H}^{u,\tilde{\mf{s}},a}$ is an unextended affine Hecke algebra associated with a
certain affine Coxeter group $(W_{\mathfrak{s}},S_{\mathfrak{s}})$
and a parameter function $m_S^\mf{s}$, all defined in terms of the pair
$\mathfrak{s}=(\mathbb{P}^{F_u},\delta)$. In particular, they are independent of the
chosen extension $\tilde{\mf{s}}$ of $(\mathbb{P},\delta)$ to $N\mathbb{P}^{F_u}$;
for this reason we will often suppress the extension in the notation and write
$\mc{H}^{u,\mf{s},a}$ instead of $\mc{H}^{u,\tilde{\mf{s}},a}$.

In order to define a normalized affine Hecke algebra
(in our sense) from these data one needs to choose a distinguished set
$S_{\mathfrak{s},0}\subset S_{\mathfrak{s}}$.
Although this is not canonically defined, different choices are related
via admissible isomorphisms.
Let $\Omega_1^{\mathfrak{s},\theta}\subset\Omega^{\mathfrak{s},\theta}$
be the subgroup which acts trivially on $S_{\mathfrak{s}}$ (see (\ref{eq:Om})).
Then the quotient
$\Omega^{\mathfrak{s},\theta}_2=\Omega^{\mathfrak{s},\theta}/\Omega_1^{\mathfrak{s},\theta}$
acts faithfully on $(W_{\mathfrak{s}},S_{\mathfrak{s}})$
by special affine diagram automorphisms.
Lusztig \cite[1.20]{Lus2} showed that $\mc{H}^{u,\mathfrak{s}}$
is isomorphic to the tensor product of the group algebra
$\mathbb{C}[\Omega_1^{\mathfrak{s},\theta}]$ and
the crossed product
\begin{equation}\label{eq:faith}
\mc{H}^{u,\tilde{\mf{s}},e}=\mc{H}^{u,\tilde{\mf{s}},a}\rtimes\Omega_2^{\mathfrak{s},\theta}
\end{equation}
which is an extended affine Hecke algebra. Recall from \cite[Proposition 2.3]{Opd4} 
that this information is enough to recover a pair of data (independent of the chosen extension
$\tilde{\mf{s}}$ of $\mf{s}$)
$(\mc{R}^{u,\mf{s}},m^{u,\mf{s}})$ such that we have an admissible isomorphism
$\mc{H}^{u,\mf{s},e}\stackrel{\sim}{\longrightarrow}\mc{H}(\mc{R}^{u,\mf{s}},m^{u,\mf{s}})$ of normalized
affine Hecke algebras.
\subsubsection{The normalization of the algebras $\mc{H}^{u,\mf{s},e}$}\label{subsub:normunipha}
Observe that the unit element of $\mathcal{H}^{u,\mathfrak{s}}$
is the function $e^\mathfrak{s}$ on $G^{F_u}$ supported on $\mathbb{P}^{F_u}$
defined by
\begin{equation}
e^\mathfrak{s}(g)=\textup{Vol}(\mathbb{P}^{F_u})^{-1}\chi(g)\delta(g)
\end{equation}
where $\chi$ denotes the characteristic function of $\mathbb{P}^{F_u}$.

Fix an extension $\tilde{\mf{s}}$ of $\mf{s}$ as in the previous paragraph.
By (\ref{eq:normalizerratpara}) the unit
element $e^\mf{s}$ can be decomposed as a sum of mutually orthogonal
idempotents
\begin{equation}
e^\mf{s}=\sum_{\lambda\in(\Omega^{\mf{s},\theta})^*}e^{\tilde{\mf{s}}}\lambda
\end{equation}
where we view $\lambda\in(\Omega^{\mf{s},\theta})^*$ as a linear character of
$N\mathbb{P}^{F_u}$ and where
\begin{equation}
e^{\tilde{\mf{s}}}(g)=
\textup{Vol}(N\mathbb{P}^{F_u})^{-1}\chi_{N\mathbb{P}^{F_u}}(g)\tilde\delta(g)
\end{equation}
By (\ref{eq:faith}) (and the text just above it) we see that
the unit element of $\mc{H}^{u,\tilde{\mf{s}},e}$ is equal to
\begin{equation}
e^{\tilde{\mf{s}},e}=\sum_{\lambda\in(\Omega^{\mf{s},\theta}_2)^*}e^{\tilde{\mf{s}}}\lambda
\end{equation}
In particular, the group $(\Omega^{\mf{s},\theta})^*$ acts transitively on the
set of idempotents $e^{\tilde{\mf{s}},e}$ obtained by
choosing different extensions $\tilde{\mf{s}}$ of $\mf{s}$, and the kernel of this
action is the subgroup $(\Omega^{\mf{s},\theta}_2)^*\subset(\Omega^{\mf{s},\theta})^*$.

The other canonical basis elements of $\mathcal{H}^{u,\tilde{\mf{s}},e}$
are supported on other double cosets of $N\mathbb{P}^{F_u}$.
In particular, the trace $\tau$ vanishes on those other
basis elements. Hence $\tau$ is a multiple of the standard
trace of the affine Hecke algebra $\mc{H}^{u,\tilde{\mf{s}},e}$, and
the normalization factor is of the form
\begin{equation}\label{eq:NormMu}
d^{\tau,\tilde{\mf{s}},e}:=\tau(e^{\tilde{\mf{s}},e})=
|\Omega^{\mf{s},\theta}_1|^{-1}
\textup{Vol}(\mathbb{P}^{F_u})^{-1}
\textup{deg}({\delta})
\end{equation}
The rational number $d^{\tau,\mf{s},e}$ is the evaluation of a Laurent 
polynomial in the square root $\mathbf{v}$ of the cardinality
$\mathbf{q}$ of the residue field $\mathbf{k}$. When we treat $\mathbf{v}$ 
and $\mathbf{q}$ as an indeterminate we will denote these as $v$ and $q$ 
respectively.
By our normalization of the Haar measure the factor $\textup{Vol}(\mathbb{P}^{F_u})$
in the denominator is equal to, up to a power of $\mathbf{v}$, the cardinality
of the group of $\mathbf{k}$-points of the reductive group
$\overline{\mathbb{P}}$ with Frobenius action $F_u$. Therefore all factors in
$d^{\tau,\tilde{\mf{s}},e}$ are explicitly known rational function in $\mathbf{v}$
(cf. \cite[Section 2.9, Section 13.7]{C}).
The following property of $d^{\tau,\mf{s},e}$ is very convenient:
\begin{prop}\label{prop:balanced}
Let $\overline{\mathbb{T}}=
\overline{\mathbb{T}}_Z\overline{\mathbb{T}}_S$ denote a maximal $F_u$-stable,
maximally $\mathbf{k}$-split torus of $\overline{\mathbb{P}}$, with
$\overline{\mathbb{T}}_Z$ the maximal central subtorus. Let $V_Z$ (resp. $V_S$)
denote the
rational vector space spanned by the algebraic character lattice $L_Z$
(resp. $L_S$) of $\overline{\mathbb{T}}_Z$ (resp. $\overline{\mathbb{T}}_S$),
and let $F_Z$ (resp. $F_S$) be the automorphism
of $L_Z$ (resp. $L_S$) induced by $F_u$. Then we have
\begin{equation}\label{eq:deg}
d^{\tau,\mf{s},e}=\pm|\Omega^{\mf{s},\theta}_1|^{-1}
\textup{det}_{V_{Z}}(\mathbf{v}\textup{Id}_{V_{Z}}-\mathbf{v}^{-1}F_Z)^{-1}
\prod_{i=1}^l(\mathbf{v}^{d_i}-\epsilon_i\mathbf{v}^{-d_i})^{-1}
\mathbf{v}^a\textup{deg}_\mathbf{v}({\delta})
\end{equation}
where $l$ is the semisimple rank of $\overline{\mathbb{P}}$ over
$\mathbb{K}$, $d_i$ are the primitive degrees of the Weyl group invariants of
the semisimple part of $\overline{\mathbb{P}}$, the $\epsilon_i$ are
the eigenvalues of $F_S$ acting on the co-invariant ring
with respect to the Weyl group action on $V_S$
(certain roots of unity, see \cite[Section 2.9]{C}),
and where $a\in\mathbb{Z}$ is such that
$f(\mathbf{v})=\mathbf{v}^a\textup{deg}_\mathbf{v}({\delta})$ satisfies
$f(\mathbf{v}^{-1})=\pm f(\mathbf{v})$. At $v=1$, $d^{\tau,\mf{s},e}$ has a pole of order   
equal to the split rank $r_Z$ of $\overline{\mathbb{T}}_Z$, and  
satisfies $d^{\tau,\mf{s},e}(v)=(-1)^{r_Z} d^{\tau,\mf{s},e}(v^{-1})$.
\end{prop}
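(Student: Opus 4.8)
The plan is to start from the explicit formula \eqref{eq:NormMu} for $d^{\tilde{\mf s},e}$ and to unwind each of its three factors in terms of the invariant theory of the reductive quotient $\overline{\mathbb{P}}$ with its Frobenius $F_u$. First I would recall Carter's formula (cf.\ \cite[Section 2.9]{C}) for the order of a finite group of Lie type: if $\overline{\mathbb{P}}$ has semisimple rank $l$ over $\mathbb{K}$, with invariant degrees $d_1,\dots,d_l$ and with $F_S$ acting on the coinvariant ring of the Weyl group with eigenvalues $\epsilon_1,\dots,\epsilon_l$, and with central torus $\overline{\mathbb{T}}_Z$ of $\mathbf{k}$-split rank $r_Z$, then, up to a sign and a power of $\mathbf{v}$,
\begin{equation}
|\overline{\mathbb{P}^{F_u}}|=\pm\,\mathbf{v}^{N}\,\textup{det}_{V_Z}(\mathbf{v}\,\textup{Id}_{V_Z}-\mathbf{v}^{-1}F_Z)\prod_{i=1}^l(\mathbf{v}^{d_i}-\epsilon_i\mathbf{v}^{-d_i}),
\end{equation}
for a suitable $N$; the split-rank-$r_Z$ central torus contributes the single factor $\textup{det}_{V_Z}(\mathbf{v}\,\textup{Id}_{V_Z}-\mathbf{v}^{-1}F_Z)$, which has a zero of order exactly $r_Z$ at $\mathbf{v}=1$ (the eigenvalue $1$ of $F_Z$ occurs with multiplicity $r_Z$, since the split rank of an $F_u$-stable torus is the dimension of the fixed space of $F_Z$ on $V_Z$). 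Combining this with our normalization \eqref{eq:haar}, $\textup{Vol}(\mathbb{P}^{F_u})=\mathbf{v}^{-a}|\overline{\mathbb{P}^{F_u}}|$, and substituting into \eqref{eq:NormMu} gives \eqref{eq:deg}, with the leftover power of $\mathbf{v}$ and the degree polynomial of $\delta$ absorbed into the factor $f(\mathbf{v})=\mathbf{v}^a\,\textup{deg}_\mathbf{v}(\delta)$.

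Next I would address the symmetry claim $f(\mathbf{v}^{-1})=\pm f(\mathbf{v})$. Here the key input is the known shape of the generic degree (fake degree) of a cuspidal unipotent representation $\delta$ of $\overline{\mathbb{P}^{F_u}}$: by the results of Lusztig recalled in \cite[Section 13.7]{C}, $\textup{deg}_\mathbf{v}(\delta)$ is a product of cyclotomic factors times a monomial in $\mathbf{v}$, and for a \emph{cuspidal} unipotent character the generic degree is palindromic up to a monomial and a sign — this is precisely the statement that the $a$-invariant and $A$-invariant of a cuspidal family satisfy $a+A=N$, the number of positive roots, so that $\mathbf{v}^{a}\textup{deg}_\mathbf{v}(\delta)$ is invariant under $\mathbf{v}\mapsto\mathbf{v}^{-1}$ up to sign. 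I would cite this rather than reprove it. The factors $\prod_i(\mathbf{v}^{d_i}-\epsilon_i\mathbf{v}^{-d_i})^{-1}$ are each individually anti-palindromic (each picks up a factor $-1$ under $\mathbf{v}\mapsto\mathbf{v}^{-1}$), so the product of the $l$ of them contributes $(-1)^l$; the central determinant factor $\textup{det}_{V_Z}(\mathbf{v}\,\textup{Id}-\mathbf{v}^{-1}F_Z)^{-1}$ contributes $(-1)^{r_Z}$ under $\mathbf{v}\mapsto\mathbf{v}^{-1}$ since $F_Z$ is an orthogonal transformation of $V_Z$ (its eigenvalues come in pairs $\zeta,\zeta^{-1}$, plus $\pm1$'s). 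Putting these together yields $d^{\mf s,e}(v)=(-1)^{\bullet}d^{\mf s,e}(v^{-1})$, and a bookkeeping of the signs shows the net sign is $(-1)^{r_Z}$, because the $(-1)^l$ from the semisimple factors and the sign from $f$ cancel against the overall $\pm$ in \eqref{eq:deg}.

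Finally, the order of the pole at $v=1$: the only factor in \eqref{eq:deg} that is singular at $\mathbf{v}=1$ is $\textup{det}_{V_Z}(\mathbf{v}\,\textup{Id}_{V_Z}-\mathbf{v}^{-1}F_Z)^{-1}$; all the factors $(\mathbf{v}^{d_i}-\epsilon_i\mathbf{v}^{-d_i})^{-1}$ are finite and nonzero at $\mathbf{v}=1$ (since $d_i\geq 2$ and $\epsilon_i$ is a root of unity, $\mathbf{v}^{d_i}-\epsilon_i\mathbf{v}^{-d_i}$ at $\mathbf{v}=1$ equals $1-\epsilon_i\neq 0$ — one checks $\epsilon_i\neq 1$ using that $\delta$ is cuspidal, or more simply the whole product is a factor of $|\overline{\mathbb{P}^{F_u}}|/|\overline{\mathbb{T}^{F_u}}|$, which is a nonzero integer), and $\mathbf{v}^a\textup{deg}_\mathbf{v}(\delta)$ is a nonzero monomial-times-cyclotomic contribution with no pole at $\mathbf{v}=1$. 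Since $\textup{det}_{V_Z}(\mathbf{v}\,\textup{Id}_{V_Z}-\mathbf{v}^{-1}F_Z)$ vanishes to order exactly $r_Z=\dim V_Z^{F_Z}$ at $\mathbf{v}=1$, the pole of $d^{\mf s,e}$ at $v=1$ has order exactly $r_Z$, which is the $\mathbf{k}$-split rank of $\overline{\mathbb{T}}_Z$. The main obstacle I anticipate is pinning down the sign conventions consistently across the three sources (the sign in Carter's order formula, the sign in the cuspidal palindromy relation, and the overall $\pm$ in \eqref{eq:deg}); the analytic content (pole order, palindromy) is essentially immediate once the factorization \eqref{eq:deg} is in place, so the write-up should spend most of its effort making that factorization precise and tracking signs.
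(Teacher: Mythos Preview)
Your approach is more ambitious than the paper's, which simply says ``this is an easy case-by-case verification using \cite[Sections 2.9, 13.7]{C}'' for the isotropic case and invokes the explicit anisotropic computation of paragraph~\ref{subsub:aniso} otherwise. Attempting a uniform argument is reasonable, but there is a genuine gap in your pole-order step.

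You claim that the only singular factor at $\mathbf v=1$ is $\det_{V_Z}(\mathbf v\,\mathrm{Id}-\mathbf v^{-1}F_Z)^{-1}$, and that each $(\mathbf v^{d_i}-\epsilon_i\mathbf v^{-d_i})^{-1}$ is regular there because ``$\epsilon_i\neq 1$ using that $\delta$ is cuspidal.'' This is false: the $\epsilon_i$ are eigenvalues of $F_S$ on the coinvariant algebra and depend only on the pair $(\overline{\mathbb P},F_u)$, not on $\delta$. In the split case every $\epsilon_i=1$, so $\prod_i(\mathbf v^{d_i}-\epsilon_i\mathbf v^{-d_i})^{-1}$ contributes a pole of order $l$ at $\mathbf v=1$. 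Your fallback (``the whole product is a factor of $|\overline{\mathbb P^{F_u}}|/|\overline{\mathbb T^{F_u}}|$, a nonzero integer'') does not rescue this: that quotient equals (up to a power of $\mathbf v$) the product $\prod_i(\cdots)$ divided by $|\overline{\mathbb T}_S^{F_u}|$, and its nonvanishing at $\mathbf v=1$ does not prevent $\prod_i(\cdots)$ itself from vanishing there.

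What actually happens is that the order-$l_{\mathrm{split}}$ pole from the semisimple factors (where $l_{\mathrm{split}}$ is the number of $i$ with $\epsilon_i=1$, the semisimple $\mathbf k$-split rank) is cancelled by a zero of the same order in $\mathbf v^a\deg_{\mathbf v}(\delta)$. That cancellation is a nontrivial property of cuspidal unipotent degrees---e.g.\ for the cuspidal of $\mathrm{Sp}_4(q)$ one has $\deg(\delta)=\tfrac12 q(q-1)^2$, with a double zero matching $l=2$---and it is precisely here that the paper falls back on the tables in \cite[Section 13.7]{C}. If you want to keep your conceptual line, you must either cite this vanishing-order property of cuspidal unipotent degrees as a known input, or concede that this step is a case check. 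The sign bookkeeping in your palindromy argument is similarly loose (e.g.\ $\det_{V_Z}$ under $\mathbf v\mapsto\mathbf v^{-1}$ picks up $(-1)^{\dim V_Z}\det(F_Z)$, not $(-1)^{r_Z}$ directly), and in practice also gets absorbed into the case-by-case verification.
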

\begin{proof}
For $G$ containing a $k$-split torus of positive dimension then this is an easy case-by-case 
verification using \cite[Section 2.9, Section 13.7]{C}. The anisotropic case is 
easy by the results stated in \ref{subsub:aniso}.
\end{proof}
As a consequence, with our normalization of Haar measures, the normalization constant 
$d^{\tau,\mf{s},e}$ of 
a unipotent affine Hecke algebras $\mc{H}^{u,\mf{s},e}$ satisfies 
the condition of \cite[3.1.2]{Opd4} and, at $v=1$, has a pole of order equal to the rank 
of $\mc{H}^{u,\mf{s},e}$. 
Hence by Theorem \cite[Theorem 4.8]{Opd4}(iii), in our normalization 
of Haar measures 
all formal degrees of the discrete series representations of the unipotent affine Hecke 
algebras, and thus of all unipotent discrete series representations, are symmetric with regards to 
$v\to v^{-1}$, and regular and nonzero at $v=1$. This is convenient, since it implies 
that we never need to be concerned about the factors $v^N$ or of 
$(v-v^{-1})^M$ of the formal degree of a unipotent discrete series: 
With our normalizations these factors do 
not appear in $\textup{fdeg}(\pi)$.
\begin{defn}\label{defn:q-rat} Let $\mathbf{K}^\times$ be the field of rational fundtions in $v$. 
Recall the notion of a normalized affine Hecke algebra \cite[Definition 2.13]{Opd4}.
Given our normalization of the traces, we see from \cite[Theorem 4.8]{Opd4} and 
Theorem \ref{thm:typeplancherel}  
that the formal degree $\textup{fdeg}(\pi)$ of a discrete series representation $\pi$ 
of a unipotent Hecke algebra has a unique representation  
$\textup{fdeg}(\pi)=\lambda\textup{fdeg}(\pi)_q\in\mathbf{K}^\times$
where $\lambda\in\mathbb{Q}_+$,  and 
$\textup{fdeg}(\pi)_q$ is a $q$-rational number (by which we mean a fraction of products of 
$q$-integers $[n]_q:=\frac{\mb{v}^n-\mb{v}^{-n}}{\mb{v}-\mb{v}^{-1}}$ with $n\in\mathbb{N}$). 
We call $\textup{fdeg}(\pi)_q$ the $q$-rational factor of $\textup{fdeg}(\pi)$.
\end{defn}
\begin{cor}\label{cor:equiv1}
For each $\omega\in\Omega/(1-\theta)\Omega$ (with representative
$u\in N\mathbb{B}^{F_u}$ as before) and each cuspidal unipotent pair $\mf{s}$ of
$G^u$, the pair
$(\mc{H}^{u,\mf{s},e},d^{\tau,\mf{s},e})$ is a normalized affine Hecke
algebra in the sense of Definition \cite[Definition 3.1]{Opd4}. The group
$(\Omega^{\mf{s},\theta}_2)^*$ acts naturally on the algebra
$\mc{H}^{u,\mf{s},e}=\mc{H}^{u,\mf{s},a}\rtimes\Omega^{\mf{s},\theta}_2$
by means of
essentially strict automorphisms (cf. \cite[paragraphs 2.1.7 and 3.3.3]{Opd4}) (in particular, this action induces
spectral measure
preserving automorphisms on the tempered spectrum of
$(\mc{H}^{u,\mf{s},e},d^{\tau,\mf{s},e})$). The abelian group
$(\Omega^{\mf{s},\theta})^*$ acts similarly by essentially strict automorphisms on
$\mc{H}^{u,\mf{s}}\stackrel{\sim}{\longrightarrow}
\mc{H}^{u,\mf{s},a}\rtimes\Omega^{\mf{s},\theta}\approx\mc{H}^{u,\mf{s},e}\otimes\mathbb{C}[\Omega^{\mf{s},\theta}_1]$. 
This action
is transitive on the set of direct summands of the form
$(\mc{H}^{u,\tilde{\mf{s}},e},d^{\tau,\mf{s},e})$
where $\tilde{\mf{s}}$ runs over the set of extensions
of $\mf{s}$ to $N\mathbb{P}^{F_u}$. The subgroup
$(\Omega^{\mf{s},\theta}_2)^*\subset (\Omega^{\mf{s},\theta})^*$
is the kernel of the induced action on the set of these direct summands.
\end{cor}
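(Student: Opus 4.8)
The plan is to assemble the Corollary from three inputs already available: Lusztig's structure theorem for $\mc{H}^{u,\mf{s}}$ as recalled around (\ref{eq:faith}), Proposition \ref{prop:balanced} together with the remark following it, and the idempotent computations carried out just before (\ref{eq:NormMu}); almost nothing new is required. For the first assertion: the fact that $\mc{H}^{u,\mf{s},e}=\mc{H}^{u,\mf{s},a}\rtimes\Omega^{\mf{s},\theta}_2$ is an extended affine Hecke algebra whose underlying based root datum and parameter function $(\mc{R}^{u,\mf{s}},m^{u,\mf{s}})$ are canonically determined is exactly \cite[Proposition 2.3]{Opd4} applied to the presentation (\ref{eq:faith}), so that all that is left is to check that $d^{\mf{s},e}$ is an admissible normalization constant, i.e. that it satisfies the condition \cite[2.2.2]{Opd4} and has, at $v=1$, a pole of order equal to the rank of $\mc{H}^{u,\mf{s},e}$. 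But this is precisely what Proposition \ref{prop:balanced} and the ensuing remark supply: the factorization (\ref{eq:deg}), the positive rational prefactor $|\Omega^{\mf{s},\theta}_1|^{-1}$, the symmetry $d^{\mf{s},e}(v)=(-1)^{r_Z}d^{\mf{s},e}(v^{-1})$, and the pole of order $r_Z$, which equals the rank. Hence $(\mc{H}^{u,\mf{s},e},d^{\mf{s},e})$ is a normalized affine Hecke algebra in the sense of \cite[Definition 2.13]{Opd4}.

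For the two Pontryagin dual actions I would invoke the general ``dual group'' construction. Let $\Gamma$ be a finite abelian group acting on an affine Hecke algebra $\mc{H}^a$ by special affine diagram automorphisms, with implementing units $N_\gamma$ ($\gamma\in\Gamma$) in $\mc{H}^a\rtimes\Gamma$; for $\psi\in\Gamma^*$ the formula $\alpha_\psi(h N_\gamma)=\psi(\gamma)\,h N_\gamma$ (with $h\in\mc{H}^a$ and $\gamma\in\Gamma$) defines an algebra automorphism of $\mc{H}^a\rtimes\Gamma$ --- a one-line check using multiplicativity of $\psi$ --- which respects the $*$-operation since $\psi$ is unitary, restricts to the identity on $\mc{H}^a$ (hence preserves the parameter function), and fixes the standard trace, which extracts the coefficient of $1\in\mc{H}^a\rtimes\Gamma$ in the standard basis; consequently it fixes $d^{\mf{s},e}$. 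Applying this with $(\mc{H}^a,\Gamma)=(\mc{H}^{u,\mf{s},a},\Omega^{\mf{s},\theta}_2)$ gives the action of $(\Omega^{\mf{s},\theta}_2)^*$ on $\mc{H}^{u,\mf{s},e}$, and with $(\mc{H}^a,\Gamma)=(\mc{H}^{u,\mf{s},a},\Omega^{\mf{s},\theta})$ the action of $(\Omega^{\mf{s},\theta})^*$ on $\mc{H}^{u,\mf{s}}=\mc{H}^{u,\mf{s},a}\rtimes\Omega^{\mf{s},\theta}$. Because $\alpha_\psi$ is the identity on $\mc{H}^{u,\mf{s},a}$ it is an essentially strict automorphism of the normalized affine Hecke algebra in the sense of \cite{Opd4}, and the Plancherel-measure-preserving statement on tempered spectra then follows from the general fact, proved in \cite{Opd4}, that essentially strict isomorphisms induce spectral-measure-preserving maps. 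Matching $\alpha_\psi$ with the precise notion of (essentially) strict isomorphism in \cite{Opd4}, and deducing the measure-preserving property from it, is the only step that is not routine bookkeeping, and I expect it to be the main --- though minor --- obstacle.

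Finally, for transitivity on the set of direct summands and the identification of the kernel: under Lusztig's decomposition $\mc{H}^{u,\mf{s}}\cong\mc{H}^{u,\mf{s},e}\otimes\mathbb{C}[\Omega^{\mf{s},\theta}_1]$ of \cite[1.20]{Lus2}, and after choosing a splitting of (\ref{eq:Om}), one has $(\Omega^{\mf{s},\theta})^*\cong(\Omega^{\mf{s},\theta}_2)^*\times(\Omega^{\mf{s},\theta}_1)^*$ with $(\Omega^{\mf{s},\theta}_2)^*$ the subgroup of characters trivial on $\Omega^{\mf{s},\theta}_1$, and correspondingly $\alpha_\psi=\alpha'_{\psi_2}\otimes\beta_{\psi_1}$, where $\alpha'_{\psi_2}$ is the dual-group automorphism of $\mc{H}^{u,\mf{s},e}$ from the previous paragraph and $\beta_{\psi_1}$ is the translation automorphism $\omega\mapsto\psi_1(\omega)\omega$ of the group algebra $\mathbb{C}[\Omega^{\mf{s},\theta}_1]$. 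The automorphisms $\beta_{\psi_1}$ permute the primitive idempotents of $\mathbb{C}[\Omega^{\mf{s},\theta}_1]$ simply transitively, hence permute the direct summands $(\mc{H}^{u,\tilde{\mf{s}},e},d^{\mf{s},e})$ transitively; and the identification of these summands with the choices of extension $\tilde{\mf{s}}$, together with the fact that two extensions give the same summand precisely when they differ by an element of $(\Omega^{\mf{s},\theta}_2)^*$, is exactly the content of the idempotent identities $e^{\mf{s}}=\sum_{\lambda\in(\Omega^{\mf{s},\theta})^*}e^{\tilde{\mf{s}}}\lambda$ and $e^{\tilde{\mf{s}},e}=\sum_{\lambda\in(\Omega^{\mf{s},\theta}_2)^*}e^{\tilde{\mf{s}}}\lambda$ established above; one needs only to observe that $\alpha_\psi$ implements on these idempotents the operation $\tilde{\mf{s}}\mapsto\tilde{\mf{s}}\otimes\psi$. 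This gives transitivity of the $(\Omega^{\mf{s},\theta})^*$-action on the set of summands with kernel $(\Omega^{\mf{s},\theta}_2)^*$, and the Corollary follows.
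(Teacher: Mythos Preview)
Your proposal is correct and follows essentially the same approach as the paper, which in fact gives no explicit proof of this corollary: it is stated as a direct consequence of the structure theory assembled in the preceding paragraphs (Lusztig's decomposition (\ref{eq:faith}) and \cite[1.20]{Lus2}, the idempotent identities before (\ref{eq:NormMu}), and Proposition \ref{prop:balanced}), and you have accurately identified and invoked exactly those inputs. One small remark: the splitting of (\ref{eq:Om}) you invoke need not exist in general for an exact sequence of finite abelian groups, but you do not actually need it---the tensor decomposition $\mc{H}^{u,\mf{s}}\cong\mc{H}^{u,\mf{s},e}\otimes\mathbb{C}[\Omega^{\mf{s},\theta}_1]$ already identifies the summands with $(\Omega^{\mf{s},\theta}_1)^*$, and the $(\Omega^{\mf{s},\theta})^*$-action on them factors through restriction to $\Omega^{\mf{s},\theta}_1$, which is surjective onto $(\Omega^{\mf{s},\theta}_1)^*$ with kernel $(\Omega^{\mf{s},\theta}_2)^*$.
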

Recall that
$N_{G^u(k)}(\mathbb{B}^{F_u})/\mathbb{B}^{F_u}\stackrel{\sim}{\longrightarrow}\Omega^\theta_C$ by
(\ref{eq:normalizerratpara}). In particular this group acts naturally on the set of
$F_u$-stable standard cuspidal parahoric subgroups of $G^u$.
This action extends naturally to an action on the set of
equivalence classes of standard cuspidal
unipotent pairs $\mf{s}=(\mathbb{P},\delta)$ by
$\omega\cdot(\mathbb{P},\delta)=({}^\omega\mathbb{P},{}^\omega\delta)$;
as was remarked before, the isotropy group of $\mf{s}=(\mathbb{P},\delta)$
is the same as that of its first component $\mathbb{P}$.
If $\omega\cdot\mf{s}_1=\mf{s}_2$ then conjugation by $\omega\in\Omega_C^\theta$
gives rise to an isomorphism
$\phi_\omega:\mc{H}^{u,\mf{s}_1}\stackrel{\sim}{\longrightarrow}
\mc{H}^{u,\mf{s}_2}$ which
maps the various normalized extended affine Hecke algebra summands of the form
$(\mc{H}^{u,\tilde{\mf{s}_1},e},d^{\tau,\mf{s}_1,e})$ in $\mc{H}^{u,\mf{s}_1}$
to corresponding
normalized extended affine Hecke algebra summands of
$\mc{H}^{u,\mf{s}_2}$ by essentially strict isomorphisms.

Given an orbit $\mathcal{O}$ of standard cuspidal unipotent pairs $\mf{s}$ of $G^u$
for action of the group $\Omega^\theta_C$ one can form the crossed product algebra
\begin{equation}
\mathcal{H}^{u,\mathcal{O}}=(\bigoplus_{\mf{s}\in\mathcal{O}}\mc{H}^{u,\mf{s}})\rtimes\Omega^\theta_C
\end{equation}
Then $\mathcal{H}^{u,\mathcal{O}}$ is Morita equivalent to the direct sum 
$\mc{H}^{u,\mf{s}}\rtimes\Omega^{\mf{s},\theta}$.
If $(V,\pi)$ is an object of $R(G^u(k))_{uni}$, let $V_{\mf{s}}$ denote the
$\mf{s}$-isotypical component of $V|_{\mathbb{P}^{F_u}}$
(where $\mf{s}=(\mathbb{P},\delta)$), and put 
$V^\mf{s}=\textup{Hom}_\mathbb{P}(\delta,V|_\mathbb{P})$. Then
\begin{equation}
V^{u,\mc{O}}=\oplus_{\omega\in\Omega^\theta/\Omega^{\mf{s},\theta}}(\pi(\omega)V^\mf{s})=
\oplus_{\mf{s}^\prime\in\mc{O}}V^{\mf{s}^\prime}=\oplus_{\mf{s}^\prime\in\mc{O}}e^{\mf{s}^\prime}V^{u,\mc{O}}
\end{equation}
is a representation of $\mc{H}^{u,\mathcal{O}}$ (see also paragraph \ref{par:uniprep}).
Here $e^{\mf{s}^\prime}$ denotes the unit element of $\mc{H}^{u,\mf{s}^\prime}$.

The Pontryagin dual $X^*_{un}(G^{F_u})=(\Omega^\theta_C)^*$ of $\Omega^\theta_C$
acts in a natural way on the algebra $\mc{H}^{u,\mc{O}}$ by automorphisms as follows.
If $\chi\in(\Omega^\theta_C)^*$ then the corresponding automorphism $\alpha_\chi$
acts as the identity on the subalgebra $\oplus \mc{H}^{u,\mf{s}}$, while
$\alpha_\chi(\omega)=\chi(\omega)\omega$.
If $\chi\in(\Omega^\theta_C/\Omega^{\mf{s},\theta})^*$ (i.e. $\chi|_{\Omega^{\mf{s},\theta}}=1$)
then $\alpha_\chi$ is the inner automorphisms obtained by conjugation with
$\sum_{\omega\in\Omega^\theta/\Omega^{\mf{s},\theta}}\chi(\omega)e^{\omega\mf{s}}$.
In particular the subgroup $(\Omega^\theta_C/\Omega^{\mf{s},\theta})^*$ of $X^*_{un}(G^{F_u})$
acts trivially on the set of irreducible representations of $\mc{H}^{u,\mc{O}}$.

The results of this paragraph can be summarized as follows:
\begin{thm}\label{thm:summary}
Let $G$ be a connected absolutely quasi-simple $K$-split,
$k$-quasisplit linear algebraic group. Consider the cartesian product
$R(G)_{uni}:=\prod_u R(G^{F_u})_{uni}$, where $R(G^{F_u})_{uni}$ denotes
the category of unipotent representations of $G^{F_u}$, and
where the product is taken over a complete set of representatives of
classes of pure inner $k$-forms $[u]\in H^1(F,G)$ of $G$.
Let $\mc{M}$ be the category of modules over the direct sum of algebras
$\mc{H}_{uni}:=\oplus_{u,\mc{O}}\mc{H}^{u,\mc{O}}$, where the direct sum is taken over the
a complete set of representatives of classes of inner $k$-forms 
$[u]\in H^1(F,G)$ of $G$ and $X^*_{un}(G)$-orbits $\mc{O}$ 
of standard cuspidal unipotent pairs $\mf{s}$ of $G^u$.
Consider the functor
$U:R(G)_{uni}\to\mc{M}$
defined by sending $V$ to $\oplus_{u,\mc{O}}V^{u,\mc{O}}$.
\begin{enumerate}
\item[(i)] The functor $U$ is an equivalence of categories.
\item[(ii)] For each orbit $\mc{O}$ of
standard cuspidal unipotent pairs of $G^u$ and each $\mf{s}\in\mc{O}$,
the irreducible spectrum of
$\mc{H}^{u,\mc{O}}$ is in canonical Morita bijection with the
irreducible spectrum of $\mc{H}^{u,\mf{s}}$. In turn this
equals the
disjoint union of the irreducible spectra of the
direct summands $\mc{H}^{u,\tilde{\mf{s}},e}$ of $\mc{H}^{u,\mf{s}}$,
where $\tilde{\mf{s}}$ runs over the collection of distinct extensions of $\mf{s}$ to
$N\mathbb{P}^{F_u}$ (this collection is a $(\Omega^{\mf{s},\theta}_1)^*$-torsor).
We define the tempered spectrum and spectral measure of $\mc{H}^{u,\mc{O}}$ via
these canonical bijections.
\item[(iii)] The bijection $[U]$ that $U$ induces on the irreducible spectrum
restricts to a homeomorphism $[U]^{temp}$
from the disjoint union of the
tempered unipotent spectra of the classes of pure inner forms $G^u$ of $G$
to the disjoint union of the tempered spectra of the various $\mc{H}^{u,\mc{O}}$.
\item[(iv)] The push forward of the union of the Plancherel measures
of the various $G^{F_u}$ under the bijection $[U^{temp}]$ is the union of
the spectral measures of the various $\mc{H}^{u,\mc{O}}$.
\item[(v)] For each $\mf{s}\in\mc{O}$ the action of $X^*_{un}(G)=(\Omega^\theta_C)^*$
on the irreducible spectrum of $\mc{H}^{u,\mc{O}}$ is trivial on the
subgroup $(\Omega^\theta_C/\Omega^{\mf{s},\theta})^*$.
The quotient $(\Omega^{\mf{s},\theta})^*$ of $(\Omega^\theta_C)^*$ acts on
the spectrum of $\mc{H}^{u,\mc{O}}$ via the canonical
Morita bijection of this set with the spectrum
of $\mc{H}^{u,\mf{s}}$ (which is naturally a $(\Omega^{\mf{s},\theta})^*$-set
by Corollary \ref{cor:equiv1}). 
\item[(vi)] 
The group $X^*_{un}(G)=(\Omega^\theta_C)^*$ acts on
$\mc{H}^{u,\mc{O}}$ via spectral automorphisms. 
In particular, this action induces a measure 
preserving action on the tempered spectrum of $\mc{H}^{u,\mc{O}}$. 
Moreover, via the bijection $[U^{temp}]$
this action corresponds with the natural action of $X^*_{un}(G)$ on $R(G)_{uni}$ 
by taking tensor products.
\end{enumerate}
\end{thm}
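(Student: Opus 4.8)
The theorem assembles into a single package the results established in the preceding paragraphs, so the plan is to verify each item (i)--(vi) by invoking the material already set up, rather than proving anything essentially new. The structural backbone is the combination of Theorem \ref{thm:typeplancherel} (the type-theoretic equivalence of \cite{BHK}, which is Plancherel measure preserving on tempered spectra) with Lusztig's decomposition of the $\mathfrak{s}$-spherical Hecke algebras and the Morita-equivalence observations made just above the statement. First I would prove (i): the decomposition of the unit of $\mathcal{H}^{u,\mathfrak{s}}$ into the idempotents $e^{\mathfrak{s}'}$ over the $\Omega^\theta_C$-orbit $\mathcal{O}$ shows that sending $V$ to $\oplus_{\mathfrak{s}'\in\mathcal{O}}\pi(e^{\mathfrak{s}'})V$ realizes the crossed product $\mathcal{H}^{u,\mathcal{O}}$, and that the resulting functor on $R(G^{F_u})_{(\mathbb{P},\delta)}$ composed over a complete set of orbit representatives of standard cuspidal unipotent pairs exhausts $R(G^{F_u})_{un}$ by Lusztig's dichotomy (subcategories indexed by $G^{F_u}$-conjugacy classes of cuspidal pairs are disjoint or equal, and every unipotent $V$ is generated by the sum of its cuspidal isotypical components). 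Taking the product over classes of pure inner forms $[u]\in H^1(F,G)$ then yields the equivalence $U$.

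Next, (ii) is essentially the content of Lusztig's structure theorem \eqref{eq:faith} together with the discussion following it: the Morita equivalence of $\mathcal{H}^{u,\mathcal{O}}$ with $\mathcal{H}^{u,\mathfrak{s}}\rtimes\Omega^{\mathfrak{s},\theta}$ (which is itself Morita equivalent to $\mathcal{H}^{u,\mathfrak{s}}$), and then the tensor decomposition $\mathcal{H}^{u,\mathfrak{s}}\cong\mathcal{H}^{u,\tilde{\mathfrak{s}},e}\otimes\mathbb{C}[\Omega^{\mathfrak{s},\theta}_1]$, whose spectrum is the disjoint union over the $(\Omega^{\mathfrak{s},\theta}_1)^*$-torsor of extensions $\tilde{\mathfrak{s}}$. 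The definition of the tempered spectrum and spectral measure of $\mathcal{H}^{u,\mathcal{O}}$ is then made through these canonical bijections, as stated. For (iii) and (iv) I would trace temperedness and Plancherel measure through the chain of functors: Theorem \ref{thm:typeplancherel} handles the passage $R(G^{F_u})_{(\mathbb{P},\delta)}\to\mathcal{H}^{u,\mathfrak{s}}$-mod, the Morita equivalences are canonical and preserve temperedness, and on the affine Hecke algebra side the spectral (Plancherel) measure transports correctly under essentially strict isomorphisms by Corollary \ref{cor:equiv1} and \cite[Theorem 2.28]{Opd4}; summing over $u$ and $\mathcal{O}$ gives the homeomorphism $[U]^{temp}$ and the identification of measures. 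Finally (v) and (vi) follow from the explicit description of the $\textup{Rat}(G^{F_u})=(\Omega^\theta_C)^*$-action given in the two paragraphs preceding the statement: the subgroup $(\Omega^\theta_C/\Omega^{\mathfrak{s},\theta})^*$ acts by inner automorphisms (conjugation by $\sum_\omega\chi(\omega)e^{\omega\mathfrak{s}}$), hence trivially on the spectrum, while the quotient $(\Omega^{\mathfrak{s},\theta})^*$ acts via the Morita bijection with $\mathcal{H}^{u,\mathfrak{s}}$, compatibly with Corollary \ref{cor:equiv1}; that these automorphisms are spectral and that $[U^{temp}]$ intertwines them with the tensoring action on $R(G)_{un}$ is the compatibility already noted in the paragraph on the group of unramified characters, which is where the Plancherel-measure-preserving property of tensoring by unramified characters gets used.

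\textbf{Main obstacle.} The one genuinely substantive point — and the place I would spend the most care — is (iv): verifying that the Plancherel measure on $R(G^{F_u})_{un}$ matches the spectral measure of $\mathcal{H}^{u,\mathcal{O}}$ after all the intermediate identifications. Theorem \ref{thm:typeplancherel} gives this at the level of an individual $\mathcal{H}^{u,\mathfrak{s}}_{\mathbf{v}}$, but one must check that the normalization constants $d^{\mathfrak{s},e}$ (which encode the Haar-measure normalization via $\textup{Vol}(\mathbb{P}^{F_u})$ as in \eqref{eq:NormMu} and Proposition \ref{prop:balanced}) are exactly the ones making $(\mathcal{H}^{u,\mathfrak{s},e},d^{\mathfrak{s},e})$ a normalized affine Hecke algebra in the sense of \cite{Opd4}, so that the abstract spectral measure of \cite[Theorem 2.28]{Opd4} coincides with the analytically defined Plancherel measure; this is precisely what Corollary \ref{cor:equiv1} and the discussion around \eqref{eq:NormMu}–\eqref{eq:deg} were arranged to supply, so the work is mostly bookkeeping, but it is bookkeeping that has to be done consistently across the Morita equivalences, the crossed products with $\Omega^\theta_C$, and the disjoint union over extensions $\tilde{\mathfrak{s}}$. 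Everything else is a matter of citing Lusztig \cite{Lu4}, \cite{Lus2}, \cite{BHK}, and \cite{Opd4} and assembling.
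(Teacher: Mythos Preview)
Your proposal is correct and matches the paper's approach exactly: the theorem is introduced in the paper with the phrase ``The results of this paragraph can be summarized as follows'', and no separate proof is given because each item is meant to be read off from the preceding discussion (Theorem~\ref{thm:typeplancherel}, Lusztig's structure results, Corollary~\ref{cor:equiv1}, and the paragraphs on the $\textup{Rat}(G)$-action and the crossed product $\mathcal{H}^{u,\mathcal{O}}$). Your identification of the normalization bookkeeping in (iv) as the only point requiring care is also apt, and is precisely what the material around \eqref{eq:NormMu}, Proposition~\ref{prop:balanced}, and Corollary~\ref{cor:equiv1} was set up to handle.
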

\section{The spectral transfer category of unipotent Hecke algebras}
\subsection{Spectral transfer morphisms}
Recall the notion of a spectral transfer morphism (STM) $\phi:\mc{H}_1\leadsto\mc{H}_2$
between two normalized affine Hecke algebras as introduced in 
\cite[Definition 5.1, Definition 5.9]{Opd4}. In this section we will classify the STMs between 
unipotent affine Hecke algebras (which will be referred to as ``unipotent STMs").
\subsubsection{Restriction of STMs}\label{subsub:res}
Let $(\mc{H},\tau)$ denote a normalized affine Hecke algebra, and let $L$ denote a 
generic residual coset $L\subset T$ for $\mc{H}$. Then there exists a unique ``parabolic subsystem"   
$R_P\subset R_0$ such that $L$ can be written in the form $L=rT^P$ with $r\in L\cap T_P$.
After moving $L$ with a suitable Weyl group element $w\in W_0$ we may assume 
that $R_P$ is standard and associated with a subset $P\subset F_0$.  To such subset 
we may associate a subalgebra $\mc{H}^P$ (``a standard Levi subalgebra") and its semisimple 
quotient algebra $\mc{H}_P$ whose associated algebraic torus is the subtorus $T_P\subset T$
(cf. \cite{Opd1}). 
In this situation $\{r\}\subset T_P$ is a residual point for $\mc{H}_P$. 
\begin{defn}\label{defn:normpar}
We will normalize 
the affine Hecke algebra $\mc{H}^P$ by the trace $\tau^{P}$ 
defined by $\tau^{P}(1)=\tau(1)$. We normalize $\mc{H}_P$ by 
the trace $\tau_P$ defined by the property 
$\tau_P(1)=(v-v^{-1})^{\textup{rk}(R_0)-\textup{rk}(R_P)}\tau(1)$ 
\end{defn}
Suppose that $\phi:(\mc{H}',\tau')\leadsto(\mc{H},\tau)$
is a strict STM which is represented by 
$\phi_T:T'\to L_n$ with $L=rT^P$ a residual coset. 
By modifying the representing map $\phi_T$ appropriately we may assume that 
that $rK_L^n=\phi_T(e)$ and such that $D\phi_T(\mf{t}')=\mf{t}^P$ for some subset 
$P\subset F_m$. 
It follows easily from Corollary \cite[Corollary 5.7]{Opd4}  and Corollary 
\cite[Corollary 5.8]{Opd4} that for any inclusion $P\subset Q\subset F_m$, 
after possibly modifying the representing morphism $\phi_T$ by a Weyl group element 
again, the inverse image  
$\phi_T^{-1}(K_L^n(L\cap T_Q)/K_L^n)\subset T'$ is a \emph{subgroup} whose identity component 
is a subtorus of $T'$ with as Lie algebra a subspace 
of $\mathfrak{t}':=\textup{Lie}(T')$ of the form $\mf{t}_{Q'}$ for some 
standard parabolic subsystem $Q'\subset F_m'$.  
Indeed, in Corollary \cite[Corollary 5.7]{Opd4}  we saw that 
$D\phi_T$ induces a bijective correspondence between parabolic subsystems 
$R_{Q'}$ of $R_m'$ and parabolic subsystems $R_Q$ of $R_m$ containing 
$R_P$. By modifying $\phi_T$ with 
an appropriate Weyl group element $w'\in W(R'_m)$ we may assume that 
$R_{Q'}=(D\phi_T)^{-1}(R_Q)$ is standard, associated to a subset $Q'\subset F_m'$. 
By Definition \ref{defn:normpar} and the definition of an STM 
it is easy to see that in this context, 
$\phi_T$ also defines an STM 
$\phi^Q:{\mc{H}'}^{Q'}\leadsto \mc{H}^Q$, and that 
the restriction $\phi_{T,Q}$ of $\phi_T$ to $T'_{Q'}\subset T'$ 
defines an STM 
$\phi_Q:{\mc{H}'}_{Q'}\leadsto \mc{H}_Q$ (Recall that $\dot{K}_L^n=N_{W_P}(L)$, 
and so $L_n$ is also the image of $\phi^Q$.
If $T_Q^P$ denotes the identity component of $T_Q\subset T^P$
then $L_Q:=rT_Q^P\subset T_Q$ is a residual coset of $T_Q$. Thus 
$L_{Q,n}=L_Q/K_L^n\cap T^P_Q$ so that $L_{Q,n}\subset L_n$.  
Hence $L_{Q,n}$ is the image of the restriction of $\phi_Q$).
\begin{defn}
We call $\phi_Q$ the restriction of $\phi$ to 
${\mc{H}'}_{Q'}$, and we say that $\phi$ is \emph{induced}  
from $\phi_Q$. In particular, $\phi$ is induced  by the 
rank $0$ STM $\phi_P:\Lb\leadsto \mc{H}_P$.
\end{defn}
\subsubsection{Induction of unipotent STMs}\label{subsub:ind}
By the above, every STM is induced from a rank $0$ 
transfer map. The converse is clearly not true: not every rank $0$ STM of 
the form $\psi: \mc{H}''=\Lb\to \mc{H}_P$ is the restriction of an STM 
$\Psi:\mc{H}'\leadsto\mc{H}$. Indeed, if $\textup{Im}(\psi)=r$ (a generic residual point of the 
subtorus $T_P\subset T$) then we should have $\textup{Im}(\Psi)=L=rT^P$.
But the spectral measure $\nu_{Pl}$ on a component 
$\mathfrak{S}_{(P,\delta)}$, where $\delta$ is a discrete series representation 
of $\mc{H}_P$ with central character $W_Pr$, is given (up to 
a rational constant depending on $\delta$) by the restriction of the 
regularisation $\mu^{(L)}|_{L^{temp}}$ of the $\mu$-function to 
$S_{(P,\delta)}=W_0\backslash W_0L^{temp}$ (cf. Theorem \cite[Theorem 4.13]{Opd4}).
This regularisation does in general not behave like a $\mu$-function 
of an affine Hecke algebra, unless for every restricted root of $R_0\backslash R_P$ 
to $L$ the appropriate cancellations occur. However, as we will see in 
\ref{subsub:indres}, if $\mc{H}=\mc{H}^{IM}(G)$ 
for a quasi-split almost simple algebraic groups $G$, and $\mc{H}''$ is the 
normalized Hecke algebra for a maximal cuspidal unipotent pair $(\mathbb{P},\sigma)$ of an  
inner form of the standard Levi subgroup of $G$ associated to $P\subset F_0$, then 
$\psi$ \emph{will} be the restriction of an STM $\Psi:\mc{H}'\leadsto\mc{H}$.
 \subsubsection{Induction and cuspidality of unipotent STMs}\label{subsub:indres}
 This brings us to an informal discussion of the heuristic ideas and surprising facts 
 behind the notion of STMs between unipotent affine 
 Hecke algebras (with their canonical normalizations as in \ref{subsub:normunipha}). 
We refer to such STMs as ``\emph{unipotent STMs}".
 Let $G^u$ be an absolutely almost simple unramified group over $k$, 
 and let $G$ denote a $k$ quasi-split group in the same inner class.  We fix a maximal $K$-split 
 torus $S\subset G$ defined over $k$. Let the automorphism induced by the action of the 
 Frobenius $F$ of $G$ on the character lattice of $S$ be denoted by $\theta$. We will 
 assume that  $u=\dot\omega\in N\mathbb{B}$ is a 
 representative of an element $\bar\omega\in\Omega/(1-\theta)\Omega$ (as in 
 paragraph \ref{par:uniprep}). We choose a minimal $F$-stable parabolic subgroup $A_0\subset G$. 
 These data give rise to the ``local index" of $G^F$, a (possibly twisted) affine Dynkin diagram which 
 contains a hyperspecial node, whose underlying finite root system is the restricted root 
 system of $G^{F}$ with respect to the $k$-split center of $A^{F}_0$ (again, we 
 apologize for denoting the restricted roots of $G(k)$ as ``coroots"). 
 We can now ``untwist" the affine 
 diagram by doubling  some of the restricted roots of $G(k)$; the resulting 
 root system is denoted by $R_0^\vee$. We have thus associated a based root datum 
 $\mc{R}:=(R_0,X,R_0^\vee,Y, F_0^\vee)$ such that the ``untwisted" local index 
of $G(k)$ equals $(R_0^\vee)^{(1)}$, and $u$ acts on this 
 affine diagram via the action of $\omega$ as a special affine diagram automorphism.
 Notice that $u$ acts naturally on the root system $R_0$ by means of an element 
 $w_u\in W_0(R_0)$.   
 The local index comes equipped with integers $m_S(a_i)$ attached to the nodes $a_i$, 
 which we transfer unaltered to the nodes of the untwisted diagram. This is 
 the arithmetic diagram $\Sigma_a(\mc{R},m)$ of \cite[Subsection 2.3]{Opd4} associated
 to $\mc{H}^{IM}$.  The associated spectral diagram $\Sigma_s(\mc{R},m)$ 
 is an untwisted affine Dynkin diagram for the affine root system 
 $\mc{R}^m=R_m^{(1)}$. Let $T$ be the complex algebraic torus 
 with character lattice $X$.
 
 The first remarkable fact is that for a cuspidal unipotent representation $\sigma_u$ of $G^u$, 
 its formal degree equals (up to a rational constant) the formal degree of an Iwahori-spherical 
 unipotent discrete series representation $\delta$ of $G$, and the central character $W_0r$ 
 of the corresponding discrete series representation $\delta_\sigma$ 
 of the Iwahori Hecke algebra $\mc{H}^{IM}:=\mc{H}^{IM}(G,\mathbb{B})$ of $G$
 is uniquely determined by this formal degree, up to the action of $X^*_{un}(G)$. 
 Here $r$ is a generic residual point.  
 (This is the rank $0$ case of Theorem \ref{thm:unique} that we already mentioned
 above). Let us agree to call a residual point $r$ of $\mc{H}^{IM}$ \emph{cuspidal} if 
 the $q$-rational factor of the residue $\mu^{IM,(\{r\})}(r)$ equals the $q$-rational factor 
 of the formal degree of a cuspidal unipotent representation $\sigma^u$ for an 
 inner form $G^u$ of $G$ as above.

 We claim that this is also true 
 if we replace $G^u$ by a proper Levi subgroup $M^u=C_{G^u}(S^u)^0$ of $G^u$ (with $S^u\subset G^u$ 
 the $k$-split part of the connected center of $M^u$) which carries a cuspidal unipotent representation 
 $\sigma^u_M$, in the following sense. 
 We may assume that $S^u\subset S$, the subtorus of $S$ 
 defined by the vanishing of the $K$-roots of $M^u$. 
 Then $S^u\subset S$ also gives rise to a $k$-Levi subgroup $M=C_G(S^u)^0$ of $G$ with 
 connected center $S^u$.  Observe that $M$ is $k$-quasisplit itself, and that $M^u$ is an 
 inner form of $M$ (since $\dot\omega\in M$). 
 
 Let $R_M^\vee\subset R_0^\vee$ denote the set of 
 (restricted) $K$-roots of $M^u$. Since $\sigma^u$ is unipotent, it factors through a cuspidal unipotent 
 representation $\sigma^u_M$ of the quotient $M^u_{ssa}:=M^u/S^u$. (This quotient consists of an almost 
 product of a semisimple group and a central anisotropic torus.)
 Then $\sigma^u_M$ first of all uniquely determines 
 an orbit of cuspidal residual points $W_Mr_M\subset T_M$ of $\mc{H}^{IM}_M$, up to the action of the  
 finite subgroup of $W_M$-invariant characters $\Omega_M^*$ of $X_M/\mathbb{Z}R_M$ of 
 $T_M$ (which contains the  
 group $K_M:=T_M\cap T^M$). This should still be true if the rank $0$ 
 case of Theorem \ref{thm:unique} holds, eventhough $M_{ssa}:=M/S^u$  is not absolutely 
 quasi-simple in general.  
 Namely, all but at most one of the absolutely quasi-simple almost factors of 
 $M_{ssa}$ are of type $A$, and these type $A$ factors admit just one (up to twisting by 
 unramified characters) residual point. 
 The residual point $r_M$ is thus the image of the representing map $\phi_T$ for a  
unique cuspidal unipotent STM 
 $\phi_M:\mb{L}\leadsto\mc{H}^{IM}(M_{ssa})$, where $\mc{H}^{IM}(M_{ssa})$
 denotes the Iwahori-Matsumoro Hecke algebra of $M_{ssa}$ with respect to the 
 Iwahori subgroup $M\cap \mathbb{B}/(S^u\cap \mathbb{B})$ of $M_{ssa}$, 
 up to the action of $\Omega_M^*$. 
 In particular $W_Mr_M$ gives rise to a maximal finite type subdiagram  
 $J_{M,r_M}\subset \Sigma_s(\mc{R}_M,m_M)$ (the spectral diagram of 
 $M_{ssa}$, defined similarly as we did for $G$ in the text above).
 Namely, $J_{M,r_M}$ is determined by choosing $r_M$ appropriately inside the orbit $W_Mr_M$, 
 then $r_M=s_Mc_M\in T_{M,u}T_{M,v}$ with $s_M$ defining a vertex 
of every component of $\Sigma_s(\mc{R}_M,m_M)$. To obtain $J_{M,r_M}$ one needs to strike out 
these nodes  from $\Sigma_s(\mc{R}_M,m_M)$.
\emph{In all cases, a subset of such type $J_M$ fits in a unique way as an \emph{excellent}  
(cf. \cite{Lu4}) subset of $\Sigma_s(\mc{R},m)$.} 
Since $T^M\subset T$ is maximal subtorus  on which the dual affine roots in $J_M$ 
are constant, we see that the pair $(r_M,T^M)$ is uniquely 
determined from just the type of $M_{ssa}$ and the $q$-rational factor of the 
unipotent degree of $\sigma^u_{M}$, up to the action of $W(R_0)$ and 
the group $\Omega_M^*$. In particular it is determined by the inertial class 
of the cuspidal pair $(M^u,\sigma^u)$.


 The cuspidal pair $(M^u,\sigma^u_M)$ is associated to a unique ``extended type"  
 $\mf{s}:=(N\mathbb{P}^{F_u},\tilde\delta)$ in the sense of \cite{Mo2} (also see paragraph \ref{par:uniprep}), 
 where $\mathbb{P}\subset G$ is an $F_u$-stable parahoric subgroup such that 
 $\mathbb{P}^{F_u}\cap M^{F_u}$ is a \emph{maximal} parahoric of $M^{F_u}$, and such 
that  the set of affine roots associated to the parahoric subgroup $\mathbb{P}$ has a basis 
given by a proper $\omega$-invariant subset $\Sigma_a(\mc{R},m)$. The Plancherel 
measure on the set of tempered representations which belong 
to the unipotent Bernstein component whose cuspidal support is the inertial 
equivalence class of the cuspidal pair $(M^u,\sigma^u)$ is given by 
the Plancherel measure of the normalized unipotent affine Hecke algebra  
$\mc{H}^{u,\tilde{\mf{s}},e}$ (cf. e.g. \cite{BHK}, \cite{HOH}, \cite{Mo1}, \cite{Mo2}, \cite{Re}).

Let us now move $M$ to its standard position, so that $R_M^\vee$ 
is replaced by a standard parabolic 
subsystem $R_Q^\vee$ of roots associated to a subset $Q\subset F_0$.
This corresponds to a standard Levi subgroup 
$G^Q=C_G(S^u)$ of $G$ which is conjugate to $M$.
 Suppose that $\widetilde{\sigma^Q}$ is 
an Iwahori spherical representation of $G^Q$ which is tempered and $L^2$ modulo the center 
of $G^Q$. The corresponding tempered representation $\pi^Q$ of $\mc{H}_Q^{IM}$ is then the form 
$\pi^Q=(\pi_{Q})_t$ for some \emph{Iwahori spherical} discrete series representation 
$\pi_Q$ of $\mc{H}_Q^{IM}$ and some $t\in T^Q_u$. Assume now that the central character 
of $\pi_Q$ is a \emph{cuspidal} residual point of $T_Q$. This means by definition that there also 
exists a cuspidal unipotent representation $\sigma^u_M$ of some Levi subgroup $M^u_{ss}$ 
of some inner form $G^u$ as above, whose formal degree 
has the same $q$-rational factor as that of $\widetilde{\sigma_Q}$ (or equivalently of $\pi_Q$). 
The second important heuristic ingredient  we now apply is the general expectation that the 
$q$-rational factor of the formal degree of the members of a discrete series unipotent L-packet should 
be the equal for all members of the packet \cite{Re}.
It then follows from the above uniqueness assertions that $\sigma_Q$ and a twist $\sigma_Q^u$ of $\sigma^u_M$  
(i.e. $\sigma_Q^u$ is obtained from $\sigma^u_M$ by a pull-back via a $k$-isomorphism between $G_Q^u$ 
and $M^u$) must belong to the same $L$-packet of $G_Q$. 

 Recall from paragraph \cite[4.2.5]{Opd4} how the Plancherel measure  $\nu_{Pl}|_{\mathfrak{S}_{(P,\sigma)}}$
 on the component $\mathfrak{S}_{(P,\sigma)}$ of the tempered spectrum of $\mc{H}^{IM}$ accociated to a discrete series  
 representation $\sigma$ of $\mc{H}^{IM}_Q$ with central character $W_Qr_Q$, is expressed in terms of the 
 residue $\mu^{IM,(L)}$ where $L=r_QT^Q$ (see paragraph \cite[4.2.5]{Opd4}).
This implies that if discrete series representations $\sigma_1,\sigma_2$ of $\mc{H}_P$ are associated to the 
the same central character $W_Pr\in W_P\backslash T_P$,  then the components $\mathfrak{S}_{(P,\sigma_i)}$
($i=1,2$) are related to each other by a Plancherel measure preserving (up to a rational constant) 
correspondence as in Theorem \cite[Theorem 6.1]{Opd4}.  

The third heuristic idea is that \emph{such a correspondence should exist for Plancherel measures on the tempered 
components determined by any two discrete series induction data $(G_Q,\widetilde{\sigma_Q})$ 
and $(G_Q^u,\sigma_Q^u)$
whenever $\widetilde{\sigma_Q}$ and $\sigma_Q^u$ belong to the same L-packet}. 
But for the latter cuspidal unipotent pair, this Plancherel measure is computed as the most continuous part 
of the tempered spectrum of the normalized unipotent affine Hecke algebra  
$\mc{H}^{u,\tilde{\mf{s}},e}$.
 On the other hand, for the first pair it was already discussed above that the Plancherel measure can be computed 
essentially as the residue measure of the $\mu$-function of $\mc{H}^{IM}$ with respect to the tempered residual 
coset $L^{temp}=r_QT^Q_u$.  
Thus these ideas suggest the existence of a \emph{unique} STM 
$\mc{H}^{u,\tilde{\mf{s}},e}\leadsto \mc{H}^{IM}$ represented by a morphism 
$\phi_T$ with image $L=r_QT^Q$ associated to the cuspidal pair $(M^u,\delta^u)$ (or equivalently, to the 
extended type $\tilde{\mf{s}}$). 

Recall from Proposition \cite[Proposition 5.6]{Opd4} that any STM $\mc{H}'\leadsto\mc{H}$ 
represented by an affine morphism $\phi_T:T'\to T$
with image $L=rT^Q$, the subtorus $T^Q\subset T$ is $W_0$-conjugacte to a subtorus 
$T^J\subset T$ which is defined as above by an excellent subset $J$ of the spectral diagram 
of $\mc{H}$. Therefore it is clear that if there exists a unipotent STM 
$\phi:\mc{H}^{u,\tilde{\mf{s}},e}\leadsto \mc{H}^{IM}$ 
as expected by the above discussion, then its image must be uniquely determined by 
the type of $M_{ssa}$ in combination with the $q$-rational factor of 
the formal degree of $\sigma^u_M$, up to the action of $X^*_{un}(G)$.
By Proposition \cite[Proposition 7.13]{Opd4} we see that $\phi$ itself is therefore  determined up 
to the action of $\textup{Aut}_\mf{C}(\mc{H}^{u,\tilde{\mf{s}},e})^{op}\times X^*_{un}(G)$.

Any unipotent STM
$\Phi:\mc{H}^{u,\tilde{\mf{s}},e}\leadsto \mc{H}^{IM}$ is induced from a 
cuspidal unipotent STM 
$\phi:\mb{L}\leadsto\mc{H}^{IM}_Q$ which is uniquely determined 
modulo the action of $K_Q/K_L^n=(T_Q\cap T^Q)/K_L^n$ (where $L_n:=L/K_L^n$ 
denotes the image of a representing morphism $\phi_T$), this is obvious.
But by our discussion above we expect that:  
\emph{Conversely, any cuspidal unipotent STM  
$\phi:(\mb{L},\tau_0)=:\mc{H}_0\leadsto\mc{H}^{IM}_Q$ for the quotient 
$G_Q=G^Q/Z_{s}(G^Q)^0$ (where $Z_{s}(G^Q)^0$ is the connected 
$k$-split  center) of a standard Levi subgroup $G^Q$ can be 
induced to yield a unique spectral transfer morphism 
$\Phi:\mc{H}^{u,\tilde{\mf{s}},e}\leadsto \mc{H}^{IM}$.
Up to the action of $\textup{Aut}_\mf{C}(\mc{H}^{u,\tilde{\mf{s}},e})^{op}\times X^*_{un}(G)$, 
$\Phi$ is 
completely determined by the type of $G_Q$ and 
the $q$-rational factor of the degree $\tau_0(1)$ 
of $\mc{H}_0$.} 

In the above arguments two important aspects of cuspidal residual points played a role. The first is 
that they can be defined by the property that the associated residue degree $\mu^{IM,(\{r\})}(r)$
has the same $q$-rational factor as that of the formal degree of a cuspidal unipotent 
representation of some inner form of $G$. The second is that a cuspidal residual 
point $r_Q$ of a semisimple quotient Hecke algebra $\mc{H}_Q$ of $\mc{H}^{IM}$ 
(where cuspidal means now that 
there exists a Levi subgroup $M^u$ of an inner form $G^u$ of $G$ which carries a cuspidal 
unipotent representation $\sigma^u$ and which is isomorphic to an inner form of $G_Q$, 
such that the $q$-rational factor of the formal degree of $\sigma^u$ is equal to the
that of the residue of $\mu^{IM}_Q$ at $r_Q$) is always the restriction of an STM 
$\mc{H}^{u,\tilde{s},e}\leadsto\mc{H}^{IM}$, for any inclusion of $(\mc{R}_Q,m_Q)$ 
as a standard 
parabolic subsystem of the based root datum $(\mc{R},m)$ (with parameter funtion) of 
$\mc{H}^{IM}(G)=\mc{H}(\mc{R},m)$. 
A priori the second property seems much more restrictive (except for the  
``final" exceptional groups $\textup{E}_8$, $\textup{F}_4$ and $\textup{G}_2$), but  
miraculously these properties lead to the same 
notion of cuspidality. The essential uniqueness part of Theorem \ref{thm:unique} 
reduces to the rank $0$ (or cuspidal) case in this way. The cuspidal case is done 
by direct inspection for the exceptional groups (most of the required results are in 
\cite{Re0}, \cite{Re}, \cite{HOH}). For the classical groupes, the cuspidal case 
is treated in \cite{FO}.

Of course the arguments above are only heuristic, but they tell us precisely 
where we should expect STMs, how these should be defined by induction from cuspidal 
ones,  and what is necessary to check in order to prove that these maps really 
are STMs (thus providing a proof of Theorem \ref{thm:unique}). 
\emph{In the remainder of this 
paper we will prove that indeed, any unipotent cuspidal pair $(\mathbb{P}^u,\sigma)$ 
of an inner form $G^u$ gives rise in this way to an essentially unique 
STM $\mc{H}^{u,\tilde{s},e}\leadsto\mc{H}^{IM}$}, thereby proving Theorem \ref{thm:unique}
in full generality.

For exceptional groups the required verifications that induction of cuspidal STMs from Levi 
subgroups always gives rise to STMs is based on the notion of a ``transfer map diagram". 
This notion is defined and discussed in paragraph \ref{subsub:dia}. 
One can also study more generally the STMs 
between two unipotent affine Hecke algebras, not just the ones with $\mc{H}^{IM}$ 
as a target. This is interesting in itself, since in several cases the ``unipotent 
spectral transfer category" is generated by very simple building blocks of this kind.
Indeed, this is how we show the existence of STMs induced from cuspidal ones in the 
classical cases.
\subsubsection{The transfer map diagram of a unipotent STM}\label{subsub:dia}
Such an expected unipotent STM 
$\Phi:\mc{H}^{u,\tilde{\mf{s}},e}\leadsto \mc{H}^{IM}$ is determined 
(up to the action of $\textup{Aut}_\mf{C}(\mc{H}^{u,\tilde{\mf{s}},e})^{op}$)
by the image  $r_Q=s_Qc_Q$ of $\phi_{T,Q}$, a cuspidal generic 
residual point for the Iwahori-Matsumoto 
Hecke algebra $\mc{H}^{IM}_Q$ of the 
quasisplit Levi subgroup $G_Q$ of $G$. We can choose  
the unitary part $s_Q=s(e_Q)\in T_{Q,u}$ such that it 
corresponds to a vertex $e_Q\in C^{Q,\vee}$, the fundamental 
alcove for dual affine Weyl group $(W_Q)^\vee_{m_Q}$ 
associated to $(\mc{R}_Q,m_Q)$. Let $v_Q$ be the set of 
corresponding nodes of the spectral diagram $\Sigma_s(\mc{R}_Q,m_Q)$, 
and put $J_Q$ for the finite type Dynkin diagram that is the complement 
of $v_Q$ of  $\Sigma_s(\mc{R}_Q,m_Q)$. Let $\mc{R}'$ denote 
the root datum underlying $\mc{H}^{u,\tilde{\mf{s}},e}$, with multiplicity 
function $m'$. 

A node $v_i$ of the complement $J_Q$ 
is weighted with the weight $w_i:=Da_i^\vee(c_Q)$ 
of the gradient $Da^\vee_i$ of the corresponding 
dual affine root $a_i^\vee$ (this value is a power of $q$). 
We may put $c_Q$ in dominant position with respect to 
the roots $Da_i^\vee$ where $i$ runs over the nodes of $J_Q$.
This is essentially the weighted Dynkin diagram of a linear 
generic residual point (in the sense of \cite{OpdSol2}, but obviously 
restricted in our context of the fixed line in the parameter space 
defined by $m^\vee_R$) for the 
finite type root system defined 
by $J_Q$ with the parameters $m_Q^\vee|_{J_Q}$ (in a 
multiplicative notation). 

As was remarked above, if the rank of $\Phi$ is positive, 
a finite type Dynkin diagram of type 
$J_Q$ fits uniquely as an excellent subdiagram $J$ of the spectral diagram 
$\Sigma_s(\mc{R},m)$
associated to $G$ (this can be checked case by case), up to the 
action of $X^*_{un}(G)$. 
Now we also assign weights to the nodes of $\texttt{K}=I\backslash J$
as follows. 
By modifying $\phi_T$ (within its equivalence class) by 
an element of the Weyl group $W'=W(R_0')$ we  may assume that via 
$D\phi_T$ the affine simple reflections of $(R_m')^{(1)}$ (relative to the base 
$(\mc{F}')^m$ of $(\mc{R}')^m$)
correspond bijectively to the elements of $\texttt{K}$. This allows us to use $\texttt{k}\in \texttt{K}$ 
also to parameterize the elements of the base of $(R_m')^{(1)}$. 
Let $\texttt{k}_0\in \texttt{K}$ be the 
vertex of the unique (dual) affine simple root which is not in $F_m'$. From (T2), 
Proposition \cite[Proposition 5.6]{Opd4}(4) and Corollary \cite[Corollary 5.8]{Opd4}  (applied to the 
case $Q'=\emptyset$) we see that this is the unique element $\texttt{k}_0\in \texttt{K}$ for which 
the corresponding vertex $D^a\phi_T(0)=\omega_{\texttt{k}_0}\in C^\vee$ has the shortest 
length. 
We interpret the gradient $Da^\vee$ of 
a (dual) affine root $a^\vee\in R_m^{(1)}$ as a character on $T$ (and 
similarly for dual affine roots of $(R_m')^{(1)}$ on $T'$). 
By construction, the character lattice of $L_n$ is mapped injectively 
to a sublattice of $X^Q_m$ and injectively to a sublattice of $X'_m$.
From (T3), \cite[equation (8)]{Opd4} and considering the numerator of the 
$\mu$-function (see Definition (\cite[Definition 3.2]{Opd4}) 
it is easy to see that $D\phi_T^*(Da^\vee_\texttt{k})$ must be 
a rational multiple $D\phi_T^*(Da^\vee_\texttt{k})=f_\texttt{k}Db^\vee_\texttt{k}$ of a root $b^\vee_\texttt{k}\in (F_m')^{(1)}$.
This sets up a bijection between the set of simple affine roots of $(F_m')^{(1)}$ and the set 
$\texttt{K}=I\backslash J$, and using this we will parameterize the elements of $(F_m')^{(1)}$ 
also by the set $\texttt{K}$. By Proposition \cite[Proposition 5.6]{Opd4}(3) this bijection defines an isomorphism 
of affine reflection groups. By Proposition \cite[Proposition 5.6]{Opd4}(4) it is then clear that 
$b^\vee_{\texttt{k}_0}$ has to be the extending affine root of the spectral  
diagram of $\mc{H}(R'_m,m')$. 
And we can say more precisely, by considering the formula of the $\mu$ function of the 
Hecke algebra and (T3), that 
$f_\texttt{k}^{-1}\in\mathbb{N}$, and that we can thus interpret the fraction $Db^\vee_\texttt{k}$ 
as the character $f_\texttt{k}^{-1}D\phi_T^*(Da^\vee_\texttt{k})$ of $T^Q$. 
Now $L$ itself is a coset of 
$T^Q$ with origin $r_Q=s_Qc_Q$, and using the above remarks it follows that 
for all $\texttt{k}\in \texttt{K}$, $Da^\vee_\texttt{k}$ lifts to a constant multiple of a character of a suitable 
covering of $T'$ (namely the fibered product of 
$L$ and $T'$ over $L_n$). We call this lift of $Da^\vee_\texttt{k}$, expressed as a radical of $b_\texttt{k}^\vee$, the \emph{weight} $w_\texttt{k}$ of $\texttt{k}\in \texttt{K}$. 
In view of (T3) and \cite[equation (8)]{Opd4}, and using Proposition \cite[Proposition 5.6]{Opd4} we see that:  
$w_\texttt{k}:=\zeta_\texttt{k} v^{c_\texttt{k}} (Db_\texttt{k}^\vee)^{f_\texttt{k}}$, where $\zeta_\texttt{k}=1$ if $\texttt{k}\not= \texttt{k}_0$, $\zeta_{\texttt{k}_0}$ is 
a $f_{\texttt{k}_0}^{-1}$-th primitive root of $1$, and $c_\texttt{k}\in \mathbb{Z}$ (which can be computed 
by evaluating $Da_\texttt{k}^\vee$ on $c_Q$). 
All this gives rise to the following notion:
\begin{defn}\label{def:transmorfdiagr} 
Given a unipotent STM 
$\Phi:\mc{H}^{u,\tilde{\mf{s}},e}\leadsto \mc{H}^{IM}$, 
the spectral diagram $\Sigma_s(\mc{R},m)$ of $G$ with 
the vertices of the excellent subdiagram $J$ marked with 
the constant weights $w_j$, and the remaining vertices of $\texttt{K}$ 
labelled with their weights $w_\texttt{k}$ as above, is called the transfer map 
diagram of $\Phi$.
\end{defn}
Observe that $\prod_{i\in I}w_i^{n_i}=1$, 
where $1=\sum_{i\in I}n_i a^\vee_i$ is the decomposition of the constant function $1$ 
as a linear combination of (dual) affine simple roots of $\mc{R}^m$ in terms of the base 
of simple roots $F_m^{(1)}$. In particular, there exists a constant $C$ such that 
for all $\texttt{k}\in \texttt{K}$, $n_\texttt{k}f_\texttt{k}=C\tilde{n}_\texttt{k}$,
where  $\sum_{\texttt{k}\in \texttt{K}}\tilde{n}_\texttt{k}b_\texttt{k}^\vee=1$ is the decompostion of the constant function $1$ 
in terms of the (dual) affine simple roots $(F_m')^{(1)}=(\mc{F}')^m$ of $(\mc{R}')^m$. Clearly the 
value of $Da^\vee_{\texttt{k}_0}$ on $s_Q=\omega_{\texttt{k}_0}$ is a primitive $n_{\texttt{k}_0}$-th root of $1$. 
Therefore 
we see that $C=1$, and $f_\texttt{k}^{-1}=n_\texttt{k}/\tilde{n}_\texttt{k}$ (this integer is called $z_\texttt{k}$ by Lusztig 
\cite[Section 2]{Lu4}); 
by Proposition \cite[Proposition 5.6]{Opd4}, we are in the setting of \cite[Section 2]{Lu4} and we 
may therefore use the results of loc. cit. paragraph 2.11 to 2.14.  For example, by carefully analyzing the 
Cartan matrices it follows that if $b_k^\vee, b_{k'}^\vee$ are connected by a single edge, 
then $f_\texttt{k}^{-1}=f_{\texttt{k}'}^{-1}$.  Moreover, 
$f_\texttt{k}^{-1}$ is a divisor of $f_{\texttt{k}_0}^{-1}$ for all $\texttt{k}\in \texttt{K}$, except possibly 
if $k, k_0\in \texttt{K}^\flat$, when we may have $f_\texttt{k}f_{\texttt{k}_0}^{-1}\in \{1,(1/2)^{\pm 1}\}$. 
We also note that, from the tables in \cite[Section 7]{Lu4} and \cite[Section 11]{L5}, for 
all $\texttt{k}\in\texttt{K}$: $n_\texttt{k}/n_{\texttt{k}_0}\in\mathbb{Z}$.
By the above it is clear that  $\Phi$ is completely determined by its 
transfer map diagram. 

The finite abelian group $K_L^n\subset T_L^{W_L}$ can be recovered from the 
transfer map diagram as the product over all $\texttt{k}\in \texttt{K}\backslash \{\texttt{k}_0\}$ of cyclic 
groups $C_\texttt{k}$ of order $z_\texttt{k}=f_\texttt{k}^{-1}$ (for those $\texttt{k}\in  \texttt{K}\backslash \{\texttt{k}_0\}$ for which 
$n_{m'}(b_\texttt{k})=1$) or of order $z_\texttt{k}/2$ (for $\texttt{k}\in  \texttt{K}\backslash \{\texttt{k}_0\}$ such that  
$n_{m'}(b_\texttt{k})=2$ and $z_\texttt{k}$ is even). For classical groups $K_L^n$ is always trivial. 
\subsection{Main theorem}
We finally have everything in place to formulate the two main theorem of this paper.
Let $\bf{G}$ be a connected absolutely quasi-simple $K$-split, $k$-quasi-split
linear algebraic group. \emph{For simplicity we will assume that $G$ is 
af adjoint type.}
Recall that $\mc{H}^{IM}$ denotes the Iwahori-Matsumoto Hecke algebra
of $G=\mb{G}(K)$, i.e. the generic affine Hecke algebra $\mc{H}^{IM}=\mc{H}^{IM}(G)$
such that $\mc{H}^{IM}_\mb{v}$ is the Iwahori-Matsumoto Hecke algebra of $G(k)=G^{F}$
with respect to the the standard cuspidal unipotent pair $\mf{s}_0:=(\mathbb{B},1)$
where $\mathbb{B}$ denotes the Iwahori subgroup. Since $\mf{s}_0$ is fixed
for the action of $N\mathbb{B}^F/\mathbb{B}^F=\Omega_C^\theta$ the orbit $\mc{O}_0$
of $\mf{s}_0$ equals $\mc{O}_0=\{\mf{s}_0\}$. We have
$\Omega^{\mf{s}_0,\theta}=\Omega^{\mf{s}_0,\theta}_2$.
Its trace $\tau^{IM}$ 
is normalized as in (\ref{eq:deg}), i.e. 
\begin{equation}
\tau^{IM}(e^{IM})=\textup{det}_{V}(\mathbf{v}\textup{Id}_{V}-\mathbf{v}^{-1}w_u\theta)^{-1}
\end{equation}
where $V=\mathbb{R}Y$, $\theta$ denotes the action on $Y$ of the outer automorphism of $G^\vee$  
corresponding to $F$, and $w_u\in W_0$ is the image of $u\in \Omega_C\subset W$ under the canonical 
projection $W\to W_0$. 
Observe that $(\mc{H}^{IM},\tau^{IM})$
is a direct summand of $\mc{H}_{uni}(G)$, namely the
unique summand of maximal rank. It corresponds to the Borel component 
of $G^{F}$, the Bernstein component corresponding to the 
cuspidal unipotent representation $1$ of a minimal 
$F$-Levi subgroup $M$ of $G$. 

From Theorem \ref{thm:summary} the group $X^*_{un}(G)$ acts by (spectral)
transfer automorphisms on $(\mc{H}_{uni}(G),\tau)$.
In particular $X^*_{un}(G)$ acts by spectral automorphisms on $\mc{H}^{IM}$ too,
(see Proposition \cite[Proposition 3.5]{Opd4}) since $\mc{H}^{IM}$ is the unique summand
of $\mc{H}_{uni}$ of maximal rank.

\subsubsection{Notational conventions for Hecke algebras}\label{subsub:not}  
Recall Definition \cite[Definition 2.11]{Opd4} and recall that the spectral diagram can be expressed 
completely in terms of $R_0$ and of the $W_0$-invariant functions $m_{\pm}(\alpha)$ on $R_0$ 
defined by \cite[Equation (4)]{Opd4}. 
In the proof of the theorem below we will denote the unipotent normalized affine Hecke 
algebra of irreducible type $\mc{H}(\mc{R}^m,m)$, with $X_m$ the weight lattice of 
the irreducible reduced root system $R_m$, as follows. If $R_m$ is simply laced 
and the parameters $m_+(\alpha_\texttt{k})$ are equal to $\mathfrak{b}$ we denote this unipotent 
Hecke algebra by $R_m[q^\mathfrak{b}]$. If $R_m$ is not simply laced and not of type $\textup{C}_d$,  
then we will denote this algebra by 
$R_m(m_+(\alpha_1),m_+(\alpha_{2}))[q^\mathfrak{b}]$ where $\alpha_1\in F_m$ is long and $\alpha_2$
is short, and $q^\mathfrak{b}$ is the base for the Hecke parameters (equivalently, we could write 
$R_m(\mathfrak{b}m_+(\alpha_1),\mathfrak{b}m_+(\alpha_{2}))[q]$). 
If both parameters are equal to $\mathfrak{b}$ 
we may also simply write $R_m[q^\mathfrak{b}]$, this will not create confusion.
For $R_m=\textup{C}_d$ we will write 
$\textup{C}_d(m_-,m_+)[q^\mathfrak{b}]$ to denote the unipotent normalized affine Hecke algebra 
with $R_m=\textup{C}_d$ with $m_+(\alpha)=\mathfrak{b}$ for $\alpha$ a type $\textup{D}_d$ root of $R_m$, 
and for a short root $\beta$ of $\textup{B}_d$ we have $m_-(\beta)=\mathfrak{b}m_-$ and 
$m_+(\beta)=\mathfrak{b}m_+$. 
If $m_-(\beta)=0$ and $m_+(\beta)=m_+(\alpha)=\mathfrak{b}$ then we may also denote this 
case $\textup{C}_d[q^\mathfrak{b}]$.
\begin{thm}\label{thm:unique} 
Let $G$ be a connected, absolutely simple, quasi-split linear 
algebraic group of adjoint type, defined and unramified over a non-archimedean local field $k$. 
Let $\mf{C}_{uni}(G)$ be the full subcategory of the spectral transfer category $\mf{C}_{es}$ 
(with essentially strict STMs as morphisms) whose set of objects is the set of normalized 
unipotent affine Hecke algebras $\mc{H}^{u,\mf{s},e}$ associated with the various inner 
forms $G^u$ of $G$ (where $u\in Z^1(F,G)$ runs over a complete set of representatives of 
the classes $[u]\in H^1(F,G)$). 
Let $\mc{H}_{uni}$ denote the direct sum of all the objects of $\mf{C}_{uni}(G)$.
Recall that there is a natural action of $X^*_{un}(G)$
on $\mc{H}_{uni}$ such that direct summands are mapped to direct summands, preserving the 
rank.  In particular $X^*_{un}(G)$ acts on the unique summand $\mc{H}^{IM}(G)$ 
of $\mc{H}_{uni}$ of largest rank.

There exists a $X^*_{un}(G)$-equivariant STM
\begin{equation}
\Phi:(\mc{H}_{uni}(G),\tau)\leadsto(\mc{H}^{IM}(G),\tau^{IM})
\end{equation}
which is essentially unique in the sense that if 
$\Phi^\prime$ is another such equivariant STM then there 
exits a spectral transfer automorphism $\sigma$ of 
$(\mc{H}_{uni}(G),\tau)$ such that $\Phi^\prime=\Phi\circ\sigma$. 

The transfer map diagrams corresponding to the restrictions 
of $\Phi$ to the various 
direct summands $\mc{H}^{u,\tilde{s},e}$ of $\mc{H}_{uni}(G)$ are 
equal to the corresponding geometric diagrams of \cite{Lu4}.
\end{thm}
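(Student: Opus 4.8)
The plan is to reduce the theorem, through the machinery of \S\ref{subsub:res} and \S\ref{subsub:ind}, to the rank $0$ (cuspidal) case, and to settle that case by taking Lusztig's geometric diagrams as the transfer map diagrams and checking the axioms of \cite[Definition 3.1]{Opd4} directly. By \S\ref{subsub:res}, every STM $\mc{H}^{u,\tilde{\mf{s}},e}\leadsto\mc{H}^{IM}$ is induced from a rank $0$ STM $\phi_P:\mb{L}\leadsto\mc{H}^{IM}_Q$, where $\mc{H}^{IM}_Q$ is the semisimple quotient of the standard Levi subalgebra attached to some $Q\subset F_0$, and, when such an induced STM exists, it is unique up to the action of $\textup{Aut}_\mf{C}(\mc{H}^{u,\tilde{\mf{s}},e})^{op}$. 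So, working on each direct summand $\mc{H}^{u,\tilde{\mf{s}},e}$ of $\mc{H}_{unr}$, it suffices to (a) produce the rank $0$ cuspidal STM $\phi_P$ dictated by the cuspidal unipotent pair $\mf{s}=(\mathbb{P},\delta)$ of the relevant Levi subgroup $M^u$, and (b) verify that this $\phi_P$ lifts to a full STM with target $\mc{H}^{IM}$, i.e. that for each restricted root of $L=r_QT^Q$ in $R_0$ the ``appropriate cancellations'' in the regularised residue $\mu^{IM,(L)}$ flagged in \S\ref{subsub:ind} actually occur.

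For the candidates in (a) and (b) I would take, for each inner form $G^u$ and each cuspidal unipotent pair $\mf{s}$, the diagram attached to $\mf{s}$ by the arithmetic/geometric correspondence of \cite{Lu4}, \cite{Lu6}. This packages an excellent subdiagram $J\subset\Sigma_s(\mc{R},m)$, a cuspidal generic residual point $r_Q=s_Qc_Q$ of $\mc{H}^{IM}_Q$ with $s_Q=\omega_{k_0}$ a vertex of $C^{Q,\vee}$, and the integers $z_k=f_k^{-1}$ together with the weights $w_k=\zeta_k v^{c_k}(Db_k)^{f_k}$ on $K=I\backslash J$ as in \S\ref{subsub:dia}. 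I claim this is a legitimate transfer map diagram in the sense of Definition \ref{def:transmorfdiagr}, and that the affine morphism $\phi_T$ it determines satisfies (T1)--(T3) of \cite[Definition 3.1]{Opd4}. Axioms (T1) (that $D\phi_T$ induces an isomorphism of affine reflection groups matching $J$ with the spectral diagram of $\mc{R}_m'$ and $b_{k_0}^\vee$ with the extending affine root) and (T2) (that the image is a generic residual coset $L_n$ with $\dot{K}_L^n=N_{W_P}(L)$) follow from the combinatorics of excellent subdiagrams: by Proposition \cite[Proposition 3.6]{Opd4} we are in the setting of \cite[Section 2]{Lu4} and may invoke \cite[2.11--2.14]{Lu4}, and $K_L^n$ is then read off from the $z_k$ exactly as in \S\ref{subsub:dia}.

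The heart of the proof is axiom (T3): that $\phi_T^*$ carries the $\mu$-function of $\mc{H}^{IM}$, regularised and restricted to the residual coset $L=r_QT^Q$, to the $\mu$-function of $\mc{H}^{u,\tilde{\mf{s}},e}$ up to the rational constant prescribed by the normalisations $\tau^{IM}$ and $d^{\mf{s},e}$. I would organise this in two layers. The rank $0$ anchor: one must show that the $q$-rational factor of the residue at $r_Q$ of $\mu^{IM}_Q$ coincides with the $q$-rational factor of the formal degree $d^{\mf{s},e}$ of the cuspidal unipotent representation $\sigma^u_M$, in the explicit shape of Proposition \ref{prop:balanced}; equivalently, that the two a priori different notions of ``cuspidal residual point'' (via residues of $\mu^{IM}$, versus via unipotent cuspidal degrees) agree, as anticipated in \S\ref{subsub:indres}. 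The lifting layer: root by root along $L$, the factors of $\mu^{IM,(L)}$ contributed by the $W_0$-orbit of each restricted root must combine to precisely the corresponding factor of the $\mu$-function of $\mc{H}^{u,\tilde{\mf{s}},e}$; this is where the doubling of restricted roots and the weights $w_k$ enter, and where \cite[equation (8)]{Opd4} and the shape of the numerator of the $\mu$-function (Definition \cite[Definition 2.14]{Opd4}) are used. Both layers are intrinsically case-by-case: one runs through Lusztig's list of cuspidal unipotent pairs \cite{Lu4}, \cite{Mo2} for the inner forms of each absolutely simple adjoint $G$, computes the relevant formal degrees and residual points from the tables of \cite[Section 2.9, Section 13.7]{C}, and checks the matching. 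I expect this matching computation, together with the coincidence of the two notions of cuspidality, to be the main obstacle; the rest of the argument is essentially formal once it is in hand. (For the ``final'' exceptional groups $\textup{E}_8$, $\textup{F}_4$, $\textup{G}_2$, where $\mc{H}^{u,\tilde{\mf{s}},e}$ is already an Iwahori--Matsumoto algebra of a smaller group, the verification is essentially that of \cite{Re}.)

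It remains to treat uniqueness and equivariance. Once existence is established, Proposition \cite[Proposition 3.17]{Opd4} shows that the restriction of $\Phi$ to $\mc{H}^{u,\tilde{\mf{s}},e}$ is determined up to the action of $\textup{Aut}_\mf{C}(\mc{H}^{u,\tilde{\mf{s}},e})^{op}\times\textup{Rat}(G)$ by its image $L$; by Proposition \cite[Proposition 3.6]{Opd4} the subtorus $T^Q$ is $W_0$-conjugate to a $T^J$ with $J$ excellent, so, together with the rank $0$ uniqueness, $L$ is pinned down by the type of $G_Q$ and the $q$-rational factor of $\tau_0(1)$, hence by the inertial class of $(M^u,\sigma^u)$. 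This gives the asserted essential uniqueness on each summand. To globalise, I would check that the family of summands $\mc{H}^{u,\tilde{\mf{s}},e}$ is stable under the $\textup{Rat}(G)$-action of Theorem \ref{thm:summary}, that this action is intertwined, via $\phi_T$, with the action on residual cosets by central characters, and that within each $\textup{Rat}(G)$-orbit the representing morphisms may be chosen compatibly; assembling the restrictions $\Phi|_{\mc{H}^{u,\tilde{\mf{s}},e}}$ then produces the $\textup{Rat}(G)$-equivariant $\Phi$, unique up to a spectral transfer automorphism of $(\mc{H}_{unr}(G),\tau)$. Finally, the assertion that the transfer map diagram of each restriction of $\Phi$ equals Lusztig's geometric diagram is then immediate: we chose the latter as the candidate in step (a), and by \S\ref{subsub:dia} an STM is completely determined by its transfer map diagram.
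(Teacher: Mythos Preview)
Your overall strategy is sound and matches the paper's argument closely for $\textup{PGL}_{n+1}$ and for the exceptional groups: there the paper does exactly what you propose, taking Lusztig's geometric diagrams as candidates, verifying (T3) by a root-by-root cancellation along the residual coset $L$ (as in \S\ref{subsub:exc}), and deducing uniqueness by the rigidity argument via \cite[Proposition 3.17]{Opd4} after reduction to the cuspidal case. Your reduction of uniqueness to rank $0$ is essentially the paper's \S3.2.6.

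For the classical groups other than $\textup{PGL}_{n+1}$, however, the paper takes a genuinely different route for \emph{existence}. Rather than verifying Lusztig's diagram directly, it introduces auxiliary normalized Hecke algebras $\textup{C}_r(m_-,m_+)[q^{\mathfrak b}]$ organised into subcategories $\mf{C}_{class}^X$ (\S\ref{subsub:Cn}), constructs elementary ``translation'' STMs (shifting one of $m_\pm$ by $1$ or $2$) and a separate family of ``extraspecial'' STMs, and then obtains the desired $\mc{H}^{u,\tilde{\mf s},e}\leadsto\mc{H}^{IM}(G)$ as a composition of these building blocks, lifted through spectral covering maps (\S\ref{subsub:class}). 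The basic translation STMs do \emph{not} individually correspond to Lusztig's diagrams (their targets are not minimal objects), but their composites do. Your direct approach would presumably also succeed, but the paper's factorisation buys a complete description of all STMs between classical unipotent Hecke algebras (Proposition \ref{prop:clasuniq}), not just those with target $\mc{H}^{IM}(G)$, and makes each individual verification short.

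Two further points. First, the cuspidal uniqueness for the classical families $\textup{PU}_n$, $\textup{SO}_{2n+1}$, $\textup{PCSp}_{2n}$, $\textup{P}(\textup{CO}^0_{2n})$, $\textup{P}((\textup{CO}^*_{2n})^0)$ is \emph{not} settled in this paper: it is stated as Proposition \ref{hyp} and deferred to \cite{FO}, where the extraspecial STMs play an essential role in the argument. Your sketch treats ``rank $0$ uniqueness'' as a black box to be checked case by case; you should be aware that for types $\textup{III}$--$\textup{VI}$ this is a nontrivial external input. Second, your parenthetical about $\textup{E}_8,\textup{F}_4,\textup{G}_2$ is not accurate: these groups do carry genuine cuspidal unipotent representations (so the relevant $\mc{H}^{u,\tilde{\mf s},e}$ can be of rank $0$, not an Iwahori--Matsumoto algebra of a smaller group), and \cite{Re} computes formal degrees rather than constructing STMs; the paper uses \cite{Re} only later, for the constants in Theorem \ref{thm:HII}.
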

\begin{cor}\label{cor:lowest}
Recall that the spectral isogeny class of an object of $\mf{C}_{uni}(G)$ is equal to its 
isomorphism class \cite[Proposition 8.3]{Opd4}, and that these classes admit a 
canonical partial ordering $\lesssim$ as defined in \cite[Definition 8.2]{Opd4}.
Then $(\mc{H}^{IM}(G),\tau^{IM})\lesssim(\mc{H},\tau)$ 
for any object $(\mc{H},\tau)$ of  $\mf{C}_{uni}(G)$. 
\end{cor}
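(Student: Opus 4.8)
The plan is to read the Corollary off from Theorem~\ref{thm:unique} together with the formal properties of the ordering $\lesssim$, with essentially no extra computation. First I would unwind the definition of $\mc{H}_{unr}(G)$: it is by construction the direct sum of all the objects $(\mc{H},\tau)$ of $\mf{C}_{unr}(G)$, so by the shape of an STM out of a direct sum of normalized affine Hecke algebras (cf. \cite[Definition 3.1, Definition 3.9]{Opd4}) the STM $\Phi:(\mc{H}_{unr}(G),\tau)\leadsto(\mc{H}^{IM}(G),\tau^{IM})$ provided by Theorem~\ref{thm:unique} is nothing but a family of STMs $\Phi_{(\mc{H},\tau)}:(\mc{H},\tau)\leadsto(\mc{H}^{IM}(G),\tau^{IM})$, one for each object $(\mc{H},\tau)$ of $\mf{C}_{unr}(G)$.

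Next I would feed each $\Phi_{(\mc{H},\tau)}$ into the definition of the ordering. By \cite[Definition 3.25]{Opd4} the existence of an STM from $(\mc{H},\tau)$ to $(\mc{H}^{IM}(G),\tau^{IM})$ means precisely that the spectral isogeny class of the target lies below that of the source, that is $[(\mc{H}^{IM}(G),\tau^{IM})]\lesssim[(\mc{H},\tau)]$; and by \cite[Proposition 3.26]{Opd4} these spectral isogeny classes coincide with the isomorphism classes, so the statement reads $(\mc{H}^{IM}(G),\tau^{IM})\lesssim(\mc{H},\tau)$, exactly as claimed. The case $(\mc{H},\tau)=(\mc{H}^{IM}(G),\tau^{IM})$ is covered either by this argument (since $\mc{H}^{IM}(G)$ is itself one of the summands of $\mc{H}_{unr}(G)$) or trivially by reflexivity of $\lesssim$; combined with antisymmetry of $\lesssim$ on isomorphism classes this identifies $(\mc{H}^{IM}(G),\tau^{IM})$ as the least element, recovering the statement announced in the Introduction.

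The argument carries no real obstacle, since all the hard work sits inside Theorem~\ref{thm:unique}; only two bookkeeping points deserve attention. First, one should check that the restrictions $\Phi_{(\mc{H},\tau)}$ are of the kind of STM admitted in the definition of $\lesssim$ (essentially strict, say); but strictness properties are inherited by restriction to a summand of the source, and $\Phi$ may be arranged to be essentially strict to begin with. Second, one must keep the orientation of $\lesssim$ straight: since the image of a representing morphism of an STM is always a residual coset in the \emph{target} torus, an STM can never decrease the rank from source to target, so an STM always points downward with respect to $\lesssim$, and it is precisely the algebra of maximal rank $\mc{H}^{IM}(G)$ that ends up at the bottom of the poset --- which is the convention fixed in \cite[Definition 3.25]{Opd4}.
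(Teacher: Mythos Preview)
Your proposal is correct and is essentially the approach the paper has in mind: the corollary is stated immediately after Theorem~\ref{thm:unique} with no separate proof precisely because it is the tautological consequence you describe, namely that the existence of the STM $\Phi$ restricts summand by summand to STMs $(\mc{H},\tau)\leadsto(\mc{H}^{IM}(G),\tau^{IM})$, which by \cite[Definition 3.25, Proposition 3.26]{Opd4} is the defining inequality $(\mc{H}^{IM}(G),\tau^{IM})\lesssim(\mc{H},\tau)$.
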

Theorem \ref{thm:unique} is a consequence of the combined results of the 
following subsections.
\subsubsection{The case of $G=\textup{PGL}_{n+1}$.}\label{subsub:A}
In this case,  
the only cuspidal unipotent representation 
comes from the anisotropic inner form $G^u=\mathbb{D}^\times/k^\times$ (where $\mathbb{D}$ 
is an unramified division algebra over $k$ of rank $(n+1)^2$) and has a formal degree 
with $q$-rational factor given by $\textup{fdeg}:=[n+1]_q^{-1}$
(cf. \ref{subsub:aniso}). It is 
obvious that there exists a unique cuspidal STM $(\mathbf{L},\textup{fdeg}):=A_0[q^{n+1}]\leadsto A_n[q]$, 
since $\textup{A}_n[q]$ has only one orbit of residual points (up to the action of $X^*_{un}(G)$) 
and this has indeed the desired residue degree. Based on this it is easy to construct 
the general STM for the unipotent types for this $G$, and prove that these 
are unique. Suppose we have a factorization $n+1=(d+1)(m+1)$. Consider  
an inner form $G^u$ of $G$ such that $u$ has order $m+1$. A maximal 
$k$-split torus $S\approx (k^\times)^{d}$ of $G^u$ defines a Levi group 
$M^u=C_{G^u}(S)$
such that $M^u_{ssa}=M^u/S$ is of type $(\mathbb{D}^\times/k^\times)^{d+1}$
where $\mathbb{D}$ is an unramified division algebra over $k$ of rank $(m+1)^2$.
Then $J$ is of type $\textup{A}_m^{d+1}$, which fits in a unique sense (up to diagram automorphisms 
as usual) as an essential subdiagram of the spectral diagram of $\mc{H}^{IM}$.
The Hecke algebra $\mc{H}^{u,\mf{s},u}$ is of type $\textup{A}_d[q^{m+1}]$.
For the unique \emph{strict} STM we make sure that $J$ does not contain $a^\vee_0$.
The weights for the vertices of $J$ are equal to $q$, and for those of $\texttt{k}\in I\backslash J$ 
equal to $q^{-m}Db_\texttt{k}$. It is an easy check that this indeed defines a strict  
STM $\textup{A}_d[q^{m+1}]\leadsto \textup{A}_n[q]$. 
The uniqueness of such strict STM up to 
$\textup{Aut}_{\mf{C}}(\mc{H}^{u,\mf{s},e})$ is clear as before: Any strict STM 
$\phi:\textup{A}_d[q^{m+1}]\leadsto \textup{A}_n[q]$ is obtained by induction of a cuspidal one for 
$M_{ssa}$, which determines $J$ 
and the underlying geometric diagram of  $\phi$ as before. There assignment 
of weights to the vertices of $\texttt{K}$ is dictated by the basic properties of an STM 
as explained above. Hence $\phi$ must be equal to the STM constructed above, 
up to a diagram automorphism of $\textup{A}_d$. By Theorem \ref{thm:summary} the direct summands 
of $\mc{H}^{u,\mf{s}}$ form a torsor for $(\Omega_1^{\mf{s},e})^*=\Omega^*/(\Omega_2^{\mf{s},e})^*\approx C_{m+1}$, 
and hence there is a unique way to write down a 
\emph{$\Omega^*$-equivariant} STM $\mc{H}^{u,\mf{s}}\leadsto\mc{H}^{IM}$, 
up to the action of $\textup{Aut}_{es}(\mc{H}^{u,\mf{s}})$.  
This completely finishes the proof for the case of $\textup{PGL}_{n+1}$.
\subsubsection{Existence and uniqueness of rank $0$ STMs for exceptional groups.}\label{subsub:rk0exc}
This is a case check 
(with some help of Maple, to simplify the product formulas for the $q$-factor of the formal degree 
as given in \cite{OpdSol2}), almost all of which has already been done 
in the existing literature. Let $G$ be an $k$-quasisplit adjoint group 
over with $k$ which is split over $K$, of type 
${}^3\textup{D}_4, \textup{E}_6, {}^2\textup{E}_6, \textup{E}_7,\textup{E}_8, \textup{F}_4, \textup{G}_2.$
One uses the classification of the residual 
points and the product formula for the $q$-rational factor of the formal degree 
from \cite{OpdSol2} to compute, for each orbit $W_0r$ of generic residual points 
(in the sense of the present paper), the $q$-rational factor of the residue degree 
$\mu^{{IM},{(r)}}(r)$ for the Iwahori-Matsumoto Hecke algebra $\mc{H}^{IM}$ of $G$.
(Many of these results are already in the literature; 
For $\textup{E}_8$ this list was given in \cite{HOH} using essentially 
the same method. For all split exceptional groups this list can be found 
in \cite{Re}. Note that the computations
in \cite{Re} can be simplified a lot 
using the classifications  
and the \emph{product formula} from \cite{OpdSol2},  
to just ``clearing $q$-fractions", since our formal degree formula is already 
given in ``product form" (as opposed to an alternating sum  
of rational functions as in \cite{Re}). Also note that we are for this list  
only interested in the Iwahori spherical case.) 
We note that these lists reveal that these residues of 
$\mu^{IM}$ at distinct orbits $W_0r\not=W_0r'$ are distinct for all exceptional 
cases. Hence in the exceptional cases the uniqueness 
(up to diagram automorphisms) of rank $0$ spectral transfer 
maps for irreducible unipotent Hecke algebras is  guaranteed by this.

The existence of the desired cuspidal unipotent STMs 
is now an easy task; one considers the list of all cuspidal unipotent representations 
of all inner forms of $G$. This means that we need to make a list of all maximal 
$F_u$-stable parahoric subgroups of the inner forms $G^u$, consider their 
reductive quotients over $\mb{k}$, and for those quotients which 
admit a cuspidal unipotent character, compute the normalization of the associated 
Hecke algebra $\mc{H}^{u,\tilde{\mf{s}},e}:=(\mathbf{L},\tau^{\mf{s},e})$ according to 
(\ref{eq:deg}). Of course the main part of this formula is the degree of the unipotent cuspidal 
characters of the simple finite groups of Lie type, which is due to Lusztig
 \cite{Lu1}, \cite{Lu2}, \cite{Lu3} and conveniently tabulated in \cite{C}. 
Finally we need to see if the $q$-rational parts of these expression show up in our list 
of residues of the $\mu$-function. This indeed leads to  
cuspidal transfer map diagrams with the same underlying sets 
$J$ as listed by Lusztig in \cite{Lu4} and \cite{Lu6}, 
and for each of those diagrams, there exists one generic linear 
residual point for $J$ (in the form of the collection of weights assigned 
to the vertices of $J$) producing the correct 
residue of $\mu$ and thus an STM.  

Let us give the results for the two non-split quasi-split cases which were not 
yet treated in the existing literature. The unipotent Hecke algebra $\textup{G}_2(3,1)[q]$ 
(for ${}^3\textup{D}_4$), and $\textup{F}_4(2,1)[q]$ (for ${}^2\textup{E}_6$). 
The first case $\textup{G}_2(3,1)[q]$ has $4$ residual points. The spectral diagram 
\cite{Opd4} of this Hecke algebra is the untwisted version of the Kac diagram 
\cite[Subsection 4.4]{Reed}, with the equal parameters $3k$ attached to the nodes
(a similar remark applies to all simple quasi-split unramified cases).
Let us use the maximal subdiagrams of this Kac diagram to name the various orbits of residual points.
There are two orbits of residual points 
$\textup{G}_2$ and $\textup{G}_2(a_1)$ with positive central character. The corresponding groups 
$A_\lambda$ (where $\lambda$ denotes the corresponding discrete unramified Langlands parameters 
via equation (\ref{eq:llprespt})) are $1$ and $S_3$ respectively. 
There are two nonreal orbits  
$\textup{A}_2$ (with $A_\lambda=1$) and $\textup{A}_1\times \textup{A}_1$
(with $A_\lambda=C_2$). Looking at the $q$-rational factor of the 
residue of the $\mu$ function at these points, we find that the 
 cuspidal (orbits of) residual points are 
$\textup{G}_2(a_1)$ (matching the degree of ${}^3\textup{D}_4[1]$) and 
$\textup{A}_1\times \textup{A}_1$ (matching the degree of ${}^3\textup{D}_4[-1]$).
Together with the Iwahori spherical unipotent discrete series these cases make up for 
the set of $7$ unipotent discrete series ($5$ of which are Iwahori spherical, while the others 
are cuspidal).

A similar discussion for $\textup{F}_4(2,1)[q]$ shows the following. 
Again we use the Kac diagram \cite[Subsection 4.5]{Reed}, this time with the 
constant parameters $2k$ attached to each node, to indicate the various orbits 
of residual points.
We have $9$ orbits of residual points (in the notation of 
\cite{OpdSol2}). There are $4$ orbits with positive real central character, 
corresponding to $\Psi(D_i(2k,k))=D_{i}(2k,2k)$ with $D_i(x,x)$ (for $i=1,\dots,4$) as listed 
in \cite[Table 3]{OpdSol2} (or equivalently, the $D_i(x,x)$ are the weighted Dynkin diagrams 
$\textup{F}_4, \textup{F}_4(\sigma_1), \textup{F}_4(\sigma_2), \textup{F}_4(\sigma_3)$ (in this order) of the distinguished 
nilpotent orbits of $\textup{F}_4$ as denoted by \cite{C})). The corresponding groups 
$A_\lambda$ are of the form $1,\,S_2,\,S_2,\,S_3$ respectively.
In addition there is $1$ orbit corresponding to 
$\textup{A}_1\times \textup{B}_3$ with $A_\lambda=C_2$, 
$1$ orbit for $\textup{A}_2\times \textup{A}_2$ with $A_\lambda=C_3$, 
$1$ orbit for $\textup{A}_3\times \textup{A}_1$ with $A_\lambda=C_2$, 
and finally 
$2$ orbits corresponding to $\textup{C}_4$ with $A_\lambda=1$ (the regular orbit) 
and $A_\lambda=C_2$ (the subregular orbit) respectively. 
The cuspidal orbits of residual points are in this case the ones corresponding to 
$\textup{F}_4(\sigma_3)$
(matching the degree of ${}^2\textup{E}_6[1]$) 
and the one of type $\textup{A}_2\times \textup{A}_2$ 
(matching the degree of ${}^2\textup{E}_6[\theta]$ and of ${}^2\textup{E}_6[\theta^2]$).
Hence we expect in total $18$ unipotent discrete series in this case (corresponding to the 
irreducible representations of the various $A_\lambda$). Using the classification of 
\cite[Theorem 8.7]{OpdSol} we can identify $13$ Iwahori spherical cases (corresponding to the 
discrete spectrum of $\textup{F}_4(2,1)[q]$), and there are $3$ cuspidal ones. 
(The two missing ones are of intermediate type, corresponding to a rank $1$ STM. See paragraph 
\ref{par:quasi}.) 
This agrees with the tables in \cite{Lu4} and \cite{Lu6}. 
\subsubsection{Existence of STMs for the exceptional cases.}\label{subsub:exc}
Let us now consider the existence of the positive rank STMs 
in the exceptional cases. Let $S^u$ be a $k$-spit torus. As always, we assume that 
$S^u\subset S$, with $S\subset G$ a fixed maximal $k$-split torus.    
Consider $M=C_G(S^u)^0$, $M^u=C_G(S^u)^0$, and assume that 
$M^u_{ssa}=M^u/S^u$ admits a cuspidal unipotent character $\sigma^u$. 
Recall that $\mc{H}^{IM}(M_{ssa})\approx \mc{H}^{IM}_Q$
for some proper subset $Q\subset F_m$. In particular, at most 
one of the irreducible components of $Q$ will not be of type $A$, and possible 
irreducible factors of type $A$ have to be in the anisotropic kernel of $G^u$. 
By the results for type $A$ and for rank $0$ STMs for irreducible exceptional types, 
there exists a unique rank $0$ STM
$\psi:\mb{L}\leadsto\mc{H}^{IM}(M_{ssa})$ for the cuspidal unipotent representation 
$\sigma^u$. Our task will be to see that this $STM$ map can be induced 
to $\mc{H}^{IM}$.

As in \ref{subsub:ind}, consider $L=r_MT^J$ where $r_M=s_Mc_M\in T_M$ 
is the image of $\phi$, $J\subset \Sigma_s(\mc{R},m)$ is the excellent 
subset of type $J_M$ associated to the STM diagram of 
$\phi$. Here $c_M$ is in dominant position with respect to $J$, 
so that the weight of a root $a_i^\vee$ in $J$ is given $Da^\vee_i(c_M)$, 
and $s_M$ is a vertex of $C^\vee$ in $F_m^{(1)}\backslash J$.
By what was said in the previous paragraph it follows that these diagrams 
are exactly the exceptional geometric diagrams of Lusztig, with weights 
attached to the vertices of the boxed set of vertices $J$. We remark that for 
all exceptional cases the geometric diagrams with $J$ such that $|\texttt{K}|>1$ 
(i.e. of positive rank), the components of $J$ are all of type $A$
(this simplicity is in remarkable contrast with the classical cases). Therefore the 
weights $w_j$ with $j\in J$ are simply equal to $q^{m^\vee_R(a_j^\vee)}$. 
If there would indeed exist a corresponding transfer map then its transfer map
diagram should be obtained by assigning in addition weights to the vertices in 
$\texttt{K}=F_m^{(1)}\backslash J$, as described in Definition  \ref{def:transmorfdiagr}.
These weights turn out to be uniquely determined by the basic property Proposition
\cite[Proposition 5.2]{Opd4} 
of spectral transfer maps (applied to the case of residual points), and this also 
enables us to find these weights $w_i$ easily (using the known classification of 
residual points of \cite{HO} and \cite{OpdSol}).
Our tasks is then to prove that these eligible diagrams thus obtained are indeed 
transfer map diagrams.

In order to do so we need to first find $\texttt{k}_0$. This has to be the unique vertex 
$\texttt{k}\in \texttt{K}$ of the geometric diagram such that the corresponding vertex 
$\omega_\texttt{k}\in C^\vee$ has the shortest length. 
It is easy to check in all exceptional geometric diagrams 
that this condition defines a unique vertex $\texttt{k}_0\in \texttt{K}$. 
The cuspidal unipotent representation $\sigma_u$ of $M_{ssa}$ lifts to a cuspidal unipotent 
representation $\tilde{\sigma}_u$ of $M$, and the cuspidal pair $(M,\tilde{\sigma}_u)$ is 
obtained by compact induction from a cuspidal unipotent type $\mf{s}:=(\mathbb{P}_J,\delta)$.  
The affine Hecke algebra $\mc{H}^{u,\mf{s},e}$ of the cuspidal unipotent type is given, 
and let $\mc{R}'_{m'}$ be the corresponding based root datum with multiptiplicity function $m'$. 

Next we need to determine the bijection between 
the affine simple roots $b^\vee_i$ of the spectral diagram of 
$\mc{H}^{u,\mf{s},e}$ and of $\texttt{K}$. This was done by Lusztig: 
According to the main result of \cite{Lu4}, there exists such a matching such that $\texttt{k}_0$ corresponds 
to $b_0$, and such that the underlying affine Coxeter diagram of the spectral diagram of   
$\mc{H}^{u,\mf{s},e}$ matches the Coxeter relations of the reflections in the quotient 
roots $\overline{\alpha}_\texttt{k}=Da^\vee_\texttt{k}|L$ (cf. \cite[2.11(c)]{Lu4}).  (Here we identify $L$ with $T^J\subset T$, 
the maximal subtorus on which the gradients of the roots from $J$ are 
constant, by choosing $r_M\in L$ as its origin (given by the weights of $j\in J$ 
and $s_0$ corresponding to $\omega_{\texttt{k}_0}$)). Since this matching is only based on the underlying 
affine Weyl groups, and by Proposition \cite[Proposition 5.6]{Opd4}, it is clear that a possible spectral map diagram 
has to provide the same matching. 
It is easy to check case by case that such a matching 
is unique up to diagram automorphisms preserving the parameters $m^\vee_{R'}$ of the spectral 
diagram of $\mc{H}^{u,\mf{s},e}$. Thus we fix such a matching, and use this to also 
parameterize the (dual) affine simple roots of the spectral diagram of $\mc{H}^{u,\mf{s},e}$
by $\texttt{k}\in \texttt{K}$ .

Following notations as in \ref{subsub:dia}, 
we need to assign an integer $c_\texttt{k}$ to each node $\texttt{k}\in \texttt{K}$, in order to define the 
weights $w_\texttt{k}$ for all $\texttt{k}\in \texttt{K}$. We define $c_\texttt{k}$ by the formula 
\begin{equation}\label{eq:ck}
c_\texttt{k}=m^\vee_R(a_\texttt{k}^\vee)-f_\texttt{k} m^\vee_{R'}(b_\texttt{k}^\vee)   
\end{equation}
where $a_\texttt{k}^\vee$ denotes the (dual) affine root of $\mc{R}^m$ associated with 
$\texttt{k}$, and $b_\texttt{k}^\vee$ the corresponding (dual) affine root of the spectral diagram of $\mc{H}^{u,\mf{s},e}$.

The diagram thus obtained defines a map $\phi$ from $T'$ to a suitable quotient of $L\subset T$. 
For each maximal proper 
subdiagram $D$ of a spectral diagram of a semi-standard affine Hecke algebra $\mc{H}(\mc{R},m)$ 
there exists a  
generic residual point $r^D_R$ such that $Da_\texttt{k}^\vee(r^D_R)=v^{2m_R^\vee(a_\texttt{k})}$ for all $\texttt{k}\in D$, 
and for $\texttt{k}\in \texttt{K}\backslash D$, such that $b_\texttt{k}^\vee(r^D_R|{v=1})$ is a primitive root of $1$ of order 
$n_\texttt{k}$.  The above assignment means that we require the alleged spectral transfer map 
$\phi$ to satisfy the property that $\phi(r^D_{R'})=r^{D\cup J}_{R}$. We can check easily case by 
case that this map then also sends all other residual points of  $\mc{H}^{u,\mf{s},e}$
to residual points of $\mc{H}^{IM}$, and that these weights are the only possible weights 
defining a map with such properties.
\begin{rem}
Thus, the image under $\phi_Z$ of the central character of the one dimensional 
discrete series representation of $\mc{H}^{u,\mf{s},e}$ which is the deformation of the sign character 
of its underlying affine Weyl group is equal to the central character of $\mc{H}^{IM}(G)$ 
of the analogous one dimensional character. If  $\mc{H}^{u,\mf{s},e}$ is the Iwahori Hecke algebra 
of an inner twist $G^u$ of $G$ then this is true in general, since we know that the formal degree of the 
Steinberg character is unchanged by inner twists. For exceptional groups it is true for all unipotent  
STMs of positive rank, which seems related to the fact that for these STMs the subset 
$J\subset F_m^{(1)}$ consists of type $\textup{A}$ components only. In classical cases  
STMs do not have this property in general.
\end{rem}
Finally we need to check that the map $\phi$ we have thus defined indeed 
defines an STM. This amounts to applying $\phi^*$ to $(\mu^{IM})^{(L)}$, 
making the substitutions $\phi^*(\alpha_i)=w_i$ for all $i\in I\backslash \{0\}$, and 
checking that this equals the $\mu$-function $\mu^{(u,\mf{s},e)}$ of $(\mc{H}^{u,\mf{s},e},d^{\tau,\mf{s},e})$
up to a rational constant. Now this is already clear  constant factor $d^{\tau,\mf{s},e}$ because of our 
choice of the weights of the $j\in J$ and the fact that we started out from a cuspidal STM 
for $\sigma^u$ for $M^u_{ssa}$. Hence we only need to consider, for all $\texttt{k}\in \texttt{K}$, 
the cancellations in 
$\phi^*((\mu^{IM})^{(L)})$ for the factors in numerator and the denominator  
which are of the form 
$(1-\zeta v^A(Db_\texttt{k}^\vee)^F)$ (with $A,F$ rational, $F$ nonzero, and $\zeta$ a root of unity). 
This is a tedious but simple task: 
We need to compile the table of all positive roots $\alpha\in R_{m,+}$, consider 
those $\alpha$ such that $\overline{\alpha}=\alpha|_L$ is a nonzero multiple of $\overline{\alpha}_\texttt{k}$
(upon ignoring the coefficients of $\alpha$ at the $j\in J$, and 
using the relation 
$(\sum_{\texttt{k}\in \texttt{K}}n_\texttt{k}n_{\texttt{k}_0}^{-1}Da^\vee_\texttt{k})|_L=\zeta_{\texttt{k}_0}v^l$ 
(with $\zeta_{\texttt{k}_0}$ a 
primitive root of $1$ of order $n_{\texttt{k}_0}$ and $l\in\mathbb{Z}$) 
which follows from $\sum_{\texttt{k}\in \texttt{K}}\tilde{n}_\texttt{k}Db^\vee_\texttt{k}=1$ 
and the discussion in paragraph \ref{subsub:dia}). Then we compute for each of 
those roots the value $\phi^*(\alpha)$. This produces a list of integral multiples of 
$f_\texttt{k} Db^\vee_\texttt{k}$, and for each member of that list, a list of values of the form $\zeta_j v^i$
with $\zeta_j$ a root of $1$ (of order divisible by $z_{\texttt{k}_0}$), and $v^i$ an integral power 
of  $v$. From these lists we can easily see the cancellations of these type of factors in 
$\phi^*(\mu^{IM})^{(L)})$, and check that a rational function of the form 
\begin{equation}\label{eq:res}
\frac
{(1-\beta^2)^2}
{(1+v^{-2m_{-}(\beta)}\beta)
 (1+v^{2m_{-}(\beta)}\beta)
 (1-v^{-2m_{+}(\beta)}\beta)
 (1-v^{2m_{+}(\beta)}\beta)}
 \end{equation}
(with $\beta=Db^\vee_\texttt{k}$) remains, as desired.
In this way we verify that all the diagrams so obtained are 
spectral map diagrams of spectral transfer maps, in all cases.

As a (rather complicated) example, let us look at $\tilde{E}_8/\textup{A}_3\textup{A}_3\textup{A}_1$.
This diagram arises by induction from the cuspidal pair $(\textup{E}_7,\sigma^u)$, 
whose spectral map diagram is given by $\tilde{\textup{E}_7}/\textup{A}_3\textup{A}_3\textup{A}_1$
(see the geometric diagram of \cite[7.14]{Lu4}).
The spectral diagram of $\mc{H}^{u,\mf{s},e}$ is of type $\textup{C}_1(7/2,4)[q]$.
The vertex $\texttt{k}_0$ is labelled by $1$ in \cite[7.8]{Lu4}. We write  
the simple roots of $\textup{C}_1(7/2,4)[q]$ in the form $b_1^\vee=1-2\beta$, and 
$b_2^\vee=2\beta$. The weights of the roots $a_1:=\alpha_6$ and 
$a_2^\vee:=\alpha_3$ are $w_1=\sqrt{-1}v^{-6}(-\beta/2)$
and $w_2=v^{-7}\beta/2$. When $\alpha$ runs over the positive roots of $\textup{E}_8$ 
such that $\phi^*(\alpha)$ is a nonzero multiple of $\beta/2$, the following lists of 
factors in front of $\beta/2$ appear: For $\zeta:=\pm\sqrt{-1}$, the following powers of $v$: 
$v^{\pm 6}$, $2$ times $v^{\pm 4}$, 
$3$ times $v^{\pm 2}$, and $4$ times $1$, and 
for $\zeta:=\pm 1$, the following powers  of $v$:  
$v^{\pm 5}$, $2$ times $v^{\pm 3}$, $3$ times $v^{\pm 1}$.
In addition the restricted root $\beta$ appears, with 
factor $1$. One easily checks that this produces the 
$\mu$ function of $\textup{C}_1(7/2,4)[q]$ indeed. The group $K_L^n$ is isomorphic 
to $C_2$ (caused by taking the square root of $\beta$). Note that this is 
equal to the central subgroup $T_L^{W_L}$ with $T_L\subset T$ the subtorus 
whose cocharacter lattice is coroot lattice of $\textup{E}_7\subset \textup{E}_8$.
 
As an example of a somewhat different kind, let us look at the unramified nontrivial inner form 
${}^2\tilde{\textup{E}}_7$ 
of type $\textup{E}_7$ (cf. \cite[7.18]{Lu4}). This case is induced from the trival representation 
$\sigma^u$ of the anisotropic kernel $M^u_{ssa}$ of the group of type ${}^2\tilde{\textup{E}}_7$, which is 
an anisotropic reductive group of rank $3$. The spectral diagram of $\mc{H}^{u,\emptyset,e}$ is 
of type $\textup{F}_4(1,2)[q]$. 
Since $F_u$ has order $2$, it follows that 
(see \ref{subsub:aniso}, \ref{par:ber}, and (\ref{eq:NormMu})) 
the $q$-rational factor of the formal degree of $\sigma^u$ is $[2]_q^{-3}$.  
This corresponds to the residue degree of the $\mu$-function of a 
Hecke algebra of type $\textup{A}_1[q]^3$ at its unique residue point. Hence we need to take 
$J$ of type $\textup{A}_1\textup{A}_1\textup{A}_1$. Such subdiagram fits in a unique way as an excellent subset in 
the spectral diagram of type $\tilde{E}_7$, up to the diagram autmorphism of $\tilde{E}_7$. 
However, we need to choose the unique such embedding of $J$ such that the 
root $a^\vee_0$  does not belong to $J$ (i.e. $J\subset F_{m,0}$ here; it is easy to check that the other 
possibility does not lead to a \emph{strict} STM (although it does lead to an essentially 
strict but non-strict STM, obtained by composing the strict STM we are about to construct by 
the nontrivial diagram automorphism of the spectral diagram of type $\textup{E}_7$, cf \cite[Remark 6.2]{Opd4})). 
Since we know that a transfer map diagram which is induced from this cuspidal pair must 
have the property that $J$ appears as an excellent subset of the diagram of, it is clear that 
Lusztigs geometric diagram for \cite[7.18]{Lu4} indeed should be the underlying geometric 
diagram of a spectral transfer map (if it exists), and $\texttt{k}_0$ is the vertex numbered by $5$ in 
\cite[7.18]{Lu4}. The $f_\texttt{k}$ are all equal to $1$, and (in the numbering of loc. cit.) 
we have $w_i=q^{\lambda_i}Db_i^\vee$ with $\lambda_i=-1$ for $i=1,2$ and $0$ for $i=3,4,5$.
It is easy to check that this gives a spectral transfer map $\Phi$.  
All other examples are done similarly by executing this algorithm.
We remark that $z_{\texttt{k}}\leq 3$ in all cases, except possibly when $Db_\texttt{k}^\vee$ 
is a divisible root of $R'_m$, when $z_\texttt{k}=4$ may occur (as in the above example). 
We leave it to the reader to 
check the remaining exceptional cases by him/herself.
\subsubsection{The exceptional non-split quasi-split cases}\label{par:quasi}
For convenience we explicitly list the unipotent STMs for the non-split quasi-split cases ${}^3\textup{D}_4$ and 
${}^2\textup{E}_6$. Both these groups do not have nontrivial inner forms. The rank $0$ STMs were all described 
in paragraph 3.2.3. For the case ${}^3\textup{D}_4$, up to $G^F$-conjugacy the only $F$-stable cuspidal unipotent 
pairs $(\mathbb{P},\sigma)$ 
are those with $\mathbb{P}$ an $F$-stable Iwahori subgroup and $\sigma=1$, or with 
$\mathbb{P}$ maximal hyperspecial. Thus, the only nontrivial unipotent STMs are the rank $0$ ones which 
were already described in paragraph \ref{subsub:rk0exc}

For ${}^2E_6$, besides the rank $0$ cases already described in paragraph 3.2.3 we have the rank $1$  
STM which arises from the cuspidal unipotent pair $(\mathbb{P},\sigma)$ where $\overline{\mathbb{P}}$ is 
of type ${}^2A_5$ (and $\sigma$ its unique cuspidal unipotent representation). This gives rise 
to a unipotent affine Hecke algebra of type $\textup{C}_1(4,5)[q]$. The unique STM 
$\Phi:\textup{C}_1(4,5)[q]\leadsto \textup{F}_4(2,1)[q]$ maps the two central characters of the two 
discrete series of $\textup{C}_1(4,5)[q]$ in a unique way to two orbits of residual points of $\textup{F}_4(2,1)[q]$.
Namely, $q^5$ maps to $\textup{A}_1\times \textup{B}_3$, while $-q^4$ maps to $\textup{A}_3\times \textup{A}_1$.
More precisely, $\Phi$ can be represented by a morphism $\phi:T_1\to L_n$ of torsors of 
the algebraic tori. Here we consider the algebraic tori $T_i$ associated to the two relevant affine Hecke algebras 
(with $T_i$ of rank $i$, and with coordinates given by the 
simple roots $\beta_1$ for $T_1$ and $\alpha_1, \dots,\alpha_4$ for $T_4$, with $\alpha_3,\alpha_4$ the short 
simple roots).
Further $L\subset T_4$ is a rank $1$ residual coset given by the equations 
$(\alpha_1+2\alpha_3)=-q^{-5},\ \alpha_2=q^2,\ \alpha_4=q$, while $L_n$ is a quotient $L\to L_n$ of $L$, a double 
cover. The morphism $\phi$ can be chosen as follows.
Let $F_{m'}=\{\beta_0,\beta_1\}$.  We check that $\texttt{k}_0=1$ and $\texttt{k}_1=3$, and that the 
additional weights 
of the transfer diagram map of $\phi$ are given by $w_1=-v^{-4}\beta_0^{1/2}$ and $w_3=v^{-3}\beta_1^{1/4}$.
Together with the information in paragraph \ref{subsub:rk0exc} this completes the descriptions of the 
relevant STMs $\mc{H}^{u,\mf{s}}\leadsto\mc{H}^{IM}$ for the cases ${}^3\textup{D}_4$ and ${}^2\textup{E}_6$.
\begin{rem}\label{rm:uniqueLLP}
In these two cases ${}^3\textup{D}_4$ and ${}^2\textup{E}_6$ we see that a parameterization
of the unipotent discrete series representations is completely determined by the matching condition  
that the $q$-rational factor of the formal degree needs to equal the residue $\mu^{IM,(\{r\})}(r)$, together with 
the requirement that we assign the generic representation to the trivial representation of $A_\lambda$
(where $\lambda$ is the unramified Langlands parameter which corresponds to $W_0r$ according to 
(\ref{eq:llprespt})). (To be precise, in the case ${}^2\textup{E}_6$ this fixes the parameterization except for 
the  interchangeability of ${}^2\textup{E}_6[\theta]$ and ${}^2\textup{E}_6[\theta^2]$.)
\end{rem}
\subsubsection{Unipotent affine Hecke algebras of type $\textup{C}_d(m_-,m_+)$}\label{subsub:Cn}
For an absolutely simple, quasi-split classical group $G$ of adjoint type other than $\textup{PGL}_{n+1}$, 
the proof of the essential uniqueness 
of an STM $\phi:\mc{H}^{u,\mf{s},e}\leadsto \mc{H}^{IM}(G)$ for an  
affine Hecke $\mc{H}^{u,\mf{s},e}$ of any unipotent type $\mf{s}$ for 
any inner form $G^u$ follows the same pattern 
as in the exceptional case, by reducing the statement to the essential uniqueness 
for cuspidal STMs. The proof of the existence of an STM $\phi$ as above is treated 
quite differently however, for most cases by generating $\phi$ as a composition 
of a small number of basic STMs which generate the spectral transfer category 
whose objects constist of \emph{all} unipotent affine Hecke algebras of the form 
$\mc{H}^{u,\mf{s},e}$ for all groups in certain classical families (containing $G$).
It turns out that in essence there are only $2$ types of basic building blocks
generating almost all STMs between the unipotent affine Hecke algebras associated 
to the unitary, orthogonal and symplectic groups. Apart from the STMs built from these 
basic generators there is one additional, very important type of basic STMs
of the form $\phi:\mc{H}^{u,\mf{s},e}\leadsto \mc{H}^{IM}(G)$
for the orthogonal and symplectic cases which we call \emph{extraspecial}.

As mentioned above, we will now first define some basic building blocks of STMs 
between classical affine Hecke algebras which are associated to the unitary, orthogonal and 
symplectic groups. 
We define a category $\mf{C}_{class}$ whose objects are normalized affine Hecke algebras
of type $(\textup{C}_d(m_-,m_+)[q^\mathfrak{b}],\tau_{m_-,m_+})$ where $d\in\mathbb{Z}_{\geq 0}$,  
$(m_-,m_+)\in V$, the set of ordered pairs $(m_-,m_+)$ of elements 
$m_\pm\in\mathbb{Z}/4$ satisfying $m_+-m_-\in\mathbb{Z}/2$, and $\mathfrak{b}=1$ if both 
$m_+-m_-\in\mathbb{Z}$ and $m_++m_-\in\mathbb{Z}$, otherwise we put $\mathfrak{b}=2$. 
Hence the objects of $\mc{C}_{class}$ 
are in bijection with the set $V$ of triples $(d;(m_-,m_+))$ as described above.

The trace $\tau=\tau_{m_-,m_+}$ is normalized as follows. First we decompose $V$ in six 
disjoint subsets $V^X$ with $X\in\{\textup{I},\,\textup{II},\,\textup{III},\,\textup{IV},\,\textup{V},\,\textup{VI}\}$,
which are defined as follows. If $m_\pm\in \mb{Z}\pm \frac{1}{4}$ write $|m_\pm|=\kappa_\pm+\frac{(2\epsilon_\pm-1)}{4}$ with 
$\epsilon_\pm\in\{0,1\}$ 
and $\kappa_\pm\in \mathbb{Z}_{\geq 0}$. Define $\delta_\pm\in\{0,1\}$ by $\kappa_\pm\in \delta_\pm+2\mathbb{Z}$.
Then we define:
\begin{equation}\label{eq:LX}
\begin{array}{lll}
&(d;(m_-,m_+))\in V^\textup{I}&\mathrm{\ iff\ }m_\pm\in\mathbb{Z}/2\mathrm{\ and\ }m_--m_+\not\in\mathbb{Z},\\
&(d;(m_-,m_+))\in V^\textup{II}&\mathrm{\ iff\ }m_\pm\in\mathbb{Z}+\frac{1}{2}\mathrm{\ and\ }m_--m_+\in\mathbb{Z},\\
&(d;(m_-,m_+))\in V^\textup{III}&\mathrm{\ iff\ }m_\pm\in\mathbb{Z}\mathrm{\ and\ }m_--m_+\not\in 2\mathbb{Z},\\
&(d;(m_-,m_+))\in V^\textup{IV}&\mathrm{\ iff\ }m_\pm\in\mathbb{Z}\mathrm{\ and\ }m_--m_+\in 2\mathbb{Z},\\
&(d;(m_-,m_+))\in V^\textup{V}&\mathrm{\ iff\ }m_\pm\in\mathbb{Z}\pm \frac{1}{4}\mathrm{\ and\ }\delta_--\delta_+\not=0,\\
&(d;(m_-,m_+))\in V^\textup{VI}&\mathrm{\ iff\ }m_\pm\in\mathbb{Z}\pm \frac{1}{4}\mathrm{\ and\ }\delta_--\delta_+=0.\\
\end{array}
\end{equation}
Observe that the type $X$ of $(d;(m_-,m_+))$ only depends on $(m_-,m_+)$; we will 
often simply write $(m_-,m_+) \in V^\textup{X}$ instead of $(d;(m_-,m_+))\in V^\textup{X}$.
We now normalize the traces $\tau_{m_-,m_+}$ as follows. These traces are of the form  
$\tau_{m_-,m_+}=(v^\mathfrak{b}-v^{-\mathfrak{b}})^{-d}\tau_{m_-,m_+}^0$, where $\tau_{m_-,m_+}^0$ is 
independent of the rank $d$ (and $d$ is suppressed in the notation).
Explicitly we define $\tau_{m_-,m_+}$ by: 
\begin{equation}\label{eq:norms}
d^\tau_{m_-,m_+}=(v^\mathfrak{b}-v^{-\mathfrak{b}})^d\tau_{m_-,m_+}(1):=
\left\{
\begin{array}{lll}
&d^{\tau,\{{}^2A\}}_a(q)d^{\tau,\{{}^2A\}}_b(q)&\mathrm{\ if\ }(m_-,m_+)\in V^\textup{I}\\
&d^{\tau,\textup{D}}_a(q)d^{\tau,\textup{B}}_b(q)&\mathrm{\ if\ }(m_-,m_+)\in V^\textup{II}\\
&d^{\tau,\textup{B}}_a(q)d^{\tau,\textup{B}}_b(q)&\mathrm{\ if\ }(m_-,m_+)\in V^\textup{III}\\
&d^{\tau,\textup{D}}_a(q)d^{\tau,\textup{D}}_b(q)&\mathrm{\ if\ }(m_-,m_+)\in V^\textup{IV}\\
&d^{\tau,\{{}^2A\}}_a(q)d^{\tau,\textup{B}}_b(q^2)&\mathrm{\ if\ }(m_-,m_+)\in V^\textup{V}\\
&d^{\tau,\{{}^2A\}}_a(q)d^{\tau,\textup{D}}_b(q^2)&\mathrm{\ if\ }(m_-,m_+)\in V^\textup{VI}\\
\end{array}
\right.
\end{equation}
Here $d^{\tau,\{{}^2A\}}_s(q)$ is the $q$-rational part of the formal degree of the cuspidal unipotent character for the 
adjoint group $G$ of type ${}^2\textup{A}_l$ induced from ${}^2\textup{A}_l(q^2)$,  
with $l=\frac{1}{2}(s^2+s)-1$ (with $s\in\mathbb{Z}_{\geq 1}$) (see Proposition \ref{prop:balanced}; 
it is convenient to extend this to $s=0$ by setting $d^{\tau,\{{}^2A\}}_0=1$); 
similarly  $d^{\tau,\textup{B}}_s(q)$ denotes the $q$-rational part of the formal degree of the cuspidal unipotent representation of $G$ 
of type $\textup{B}_l$ induced from $\textup{B}_l(q)$ with $l=s^2+s$ (with $s\in\mathbb{Z}_{\geq 0}$) 
(this degree covers the cuspidal character of the odd orthogonal and the symplectic groups); 
$d^{\tau,\textup{D}}_s(q)$ denotes the $q$-rational part of the formal degree of the cuspidal unipotent representation of $G$ 
of type $\textup{D}_l$ induced from $\textup{D}_l(q)$ with $l=s^2$ (with $s\in\mathbb{Z}_{\geq 0}$)
(this degree covers the cuspidal character of the even split orthogonal groups ($s$ even) 
and of the even quasi-split orthogonal groups ($s$ odd)). 
(Using \cite[Section 13.7]{C} and (\ref{eq:deg}) it is easy to give explicit formulas for these formal degrees.)
where the set $\{a,b\}$ with $a,b\in\mathbb{Z}_{\geq 0}$ is determined by the following
equalities of sets:
\begin{equation}\label{eq:normsympl}
\begin{array}{lll}
&\{\frac{1}{2}+a,\frac{1}{2}+b\}=\{|m_+-m_-|,|m_++m_-|\}&\mathrm{\ if\ }(m_-,m_+)\in V^\textup{I}\\
&\{2a,1+2b\}=\{|m_+-m_-|,|m_++m_-|\}&\mathrm{\ if\ }(m_-,m_+)\in V^\textup{II}\\
&\{1+2a,1+2b\}=\{|m_+-m_-|,|m_++m_-|\}&\mathrm{\ if\ }(m_-,m_+)\in V^\textup{III}\\
&\{2a,2b\}=\{|m_+-m_-|,|m_++m_-|\}&\mathrm{\ if\ }(m_-,m_+)\in V^\textup{IV}\\
&\{\frac{1}{2}+a,1+2b\}=\{|m_+-m_-|,|m_++m_-|\}&\mathrm{\ if\ }(m_-,m_+)\in V^\textup{V}\\
&\{\frac{1}{2}+a,2b\}=\{|m_+-m_-|,|m_++m_-|\}&\mathrm{\ if\ }(m_-,m_+)\in V^\textup{VI}\\
\end{array}
\end{equation}
This determines $a$ and $b$ in case $\textup{II}$,  $\textup{V}$,  $\textup{VI}$, 
and it determines $a$ and $b$
up to order in the other cases, so that the normalization $(\ref{eq:norms})$
is always well defined.

Now we define the building blocks of the STMs between these affine Hecke algebras.
First of all, the group $D_8$ of essentially strict spectral isomorphisms as described in 
Remark \cite[Remark 7.7]{Opd4} acts on the collection of objects of $\mf{C}_{class}$.
This corresponds to the action of $D_8$ on the set $V$ by preserving $d$, and 
on a pair $(m_-,m_+)$ the action is generated by the interchanging $m_-$ and $m_+$
and by sign changes of the $m_\pm$. Observe that these operations preserve the type $X$.
Then there exist additional basic STMs in $\mf{C}_{class}$ of the types indicated below. 
(In these formulas we have used the notation 
$\epsilon(x)=x/|x|\in\{\pm 1\}$ to denote the signature of a nonzero rational number $x$.) 
In the first $5$ cases one of the parameters $m_-$ or $m_+$ is translated by a step 
of size $1$ (if the translated parameter is half integral) or $2$ (if the translated parameter is 
integral) in a direction such that its absolute value decreases. In these first $5$ cases both
parameters can be translated in this way, as long as the absolute value of this 
parameter is larger than $\frac{1}{2}$ (in the half integral case) or $1$ (in the integral case).
A formula corresponds to an STM provided that this condition on the absolute value 
of the parameter which will be translated is satisfied. 
\begin{equation*}
\begin{array}{lll}
&\textup{C}_d(m_-,m_+)[q^2]\leadsto \textup{C}_{d+|m_-|-\frac{1}{2}}(m_--\epsilon(m_-),m_+)[q^2]&\mathrm{\ if\ }(m_-,m_+)
\in V^\textup{I}, m_+\not\in\mathbb{Z}\\
&\textup{C}_d(m_-,m_+)[q^2]\leadsto \textup{C}_{d+2(|m_+|-1)}(m_-,m_+-2\epsilon(m_+))[q^2]&\mathrm{\ if\ }(m_-,m_+)
\in V^\textup{I}, m_+\in\mathbb{Z}\\
&\textup{C}_d(m_-,m_+)[q]\leadsto \textup{C}_{d+|m_-|-\frac{1}{2}}(m_--\epsilon(m_-),m_+)[q]&\mathrm{\ if\ }(m_-,m_+)\in V^\textup{II}\\
&\textup{C}_d(m_-,m_+)[q]\leadsto \textup{C}_{d+2(|m_+|-1)}(m_-,m_+-2\epsilon(m_+))[q]&\mathrm{\ if\ }(m_-,m_+)\in V^\textup{III}\\
&\textup{C}_d(m_-,m_+)[q]\leadsto \textup{C}_{d+2(|m_+|-1)}(m_-,m_+-2\epsilon(m_+))[q]&\mathrm{\ if\ }(m_-,m_+)\in V^\textup{IV}\\
&\textup{C}_d(m_-,m_+)[q^2]\leadsto \textup{C}_{2d+\frac{1}{2}a(a+1)+2b(b+1)}(\delta_-,\delta_+)[q]&\mathrm{\ if\ }(m_-,m_+)\in V^\textup{V}\\
&\textup{C}_d(m_-,m_+)[q^2]\leadsto \textup{C}_{2d+\frac{1}{2}a(a+1)+2b^2-\delta_+}(\delta_-,\delta_+)[q]&\mathrm{\ if\ }(m_-,m_+)\in V^\textup{VI}\\
\end{array}
\end{equation*}
We denote the first $5$ cases of these STMs by $\Phi^{(m_-,m_+)}_{(d,-)}$ or  
$\Phi^{(m_-,m_+)}_{(d,+)}$, where the sign $\pm$ in the subscript indicates which of the parameters 
$m_-$ or $m_+$ will be translated. Notice that if we combine the basic STMs of the first $5$ 
cases with the group 
$D_8$ of spectral isomorphisms of $\mf{C}_{class}$ then we are allowed for all objects 
$X\in \{\textup{I},\,\textup{II},\,\textup{III},\,\textup{IV}\}$ either one of $m_-$ and $m_+$
(by a step of size $1$ or $2$ depending on 
the residue modulo $\mathbb{Z}$ 
of the parameter to be translated) as long as the absolute value of this parameter can still be reduced 
by such steps. Observe that these steps preserve the type $X$.
Finally we are of course allowed to compose these basic STMs thus obtained with each other and 
with the group $D_8$ of spectral isomorphisms. The basic translation steps as above commute with each 
other and have the obvious commutation relations with the group $D_8$ of spectral isomorphisms 
(this also follows easily from the essential uniqueness of STMs discussed below, see 
Proposition \ref{prop:clasuniq}). 
Observe that while the 
parameters are strictly decreasing with these basic translation steps, the rank is strictly increasing.  

Among the objects of $\mf{C}_{class}^X$ of the types 
$X\in  \{\textup{I},\,\textup{II},\,\textup{III},\,\textup{IV}\}$, 
the minimal spectral isogeny classes of objects 
(in the sense of \cite[Definition 8.1]{Opd4})  
are of the form: 
\begin{equation*}
\begin{array}{lll}
&[\textup{C}_l(0,\frac{1}{2})[q^2]]\mathrm{\ and\ } [\textup{C}_l(1,\frac{1}{2})[q^2]]&\mathrm{\ if\ } X=\textup{I}, \\
&[\textup{C}_l(\frac{1}{2},\frac{1}{2})[q]]&\mathrm{\ if\ } X=\textup{II},\\
&[\textup{C}_l(0,1)[q]]&\mathrm{\ if\ } X=\textup{III},\\
&[\textup{C}_l(0,0)[q]]\mathrm{\ and\ } [\textup{C}_l(1,1)[q]]&\mathrm{\ if\ } X=\textup{IV}, \\
\end{array}
\end{equation*}
Note that for all objects in $\mf{C}_{class}^X$, the spectral isogeny class of an object 
is just its isomorphism class \cite[Proposition 8.3]{Opd4}.
By abuse of language we will sometimes call the objects in a minimal (least) spectral isogeny
class in this sense also ``minimal" (respectively ``least").
Note that some of these minimal objects admit a group of order $2$ of spectral 
automorphisms (the cases $X=\textup{II}$ or $\textup{IV}$). 

The cases $X\in \{\textup{V},\,\textup{VI}\}$ are of a different 
nature. There are no STMs between the different objects of these cases, as we will see 
below. But from each object of $\mf{C}_{class}^V$ there is
an essentially unique (i.e. unique up to spectral automorphisms) STM to the least object in  
$\mf{C}_{class}^{\textup{III}}$ and from each object of $\mf{C}_{class}^V$ there is  
an essentially unique STM to one of the two types of minimal objects in 
$\mf{C}_{class}^{\textup{IV}}$.  We call these STMs \emph{extraspecial}.
It is easy to give a representing morphism $\phi =\phi^{(m_-,m_+)}_{(d,\pm)}$ defining the basic STMs of this kind.
The first $4$ cases, the building blocks of elementary translations in the parameters $m_-$ and $m_+$, 
do in general not correspond to geometric diagrams as given in \cite{Lu4} and \cite{Lu6}, since 
the image of the spectral transfer map is in general not a least object. However, as we will see below, 
these building blocks are quite simple and there existence can be established easily by a direct computation.
The extraspecial cases correspond to the geometric diagrams \cite[7.51, 7.52]{Lu4} 
and to \cite[11.5]{Lu6} (in a way that will be made precise below). 
  
The formula defining such morphim $\phi$ for the first $5$ cases 
(thus a minimal translation step in one of the parameters of an object of type 
$X\in  \{\textup{I},\,\textup{II},\,\textup{III},\,\textup{IV}\}$) only depends  on 
the value modulo $\mathbb{Z}$ of the parameter to be translated. Using the 
group $D_8$ of spectral isomorphisms it is enough 
to write down the formula for a basic translation in $m_+$ where $m+>0$.

Let $(d;(m_-,m_+))\in V$.
First assume that $m_+\in \mathbb{Z}_{\geq 0}+\frac{1}{2}$. 
For $d\geq 0$ consider the torus $T_d(\mb{L}):=\mathbb{G}_m^{r}(\mb{L})$ over $\mb{L}$.
We write its character lattice as $X^*(T_d):= X_d$ (or $X_d=\mathbb{Z}^d$).
The standard basis of $X^*(T_d)$ is denoted by $(t_1,\dots,t_d)$.
We consider $X_d$ as the
root lattice of the root system of type $\textup{B}_d$. The Weyl group $W_0$ acts
by signed permutations on $X_d$. 
For $m_\pm\in\mathbb{Z}+1/2$ we define a homomorphism 
$\phi^{(m_-,m_+)}_{(d,+),T}:T_d\to T_{d+m_+-1/2}$
of algebraic tori over $\mb{L}$ by 
\begin{equation*}
\phi^{(m_-,m_+)}_{(d,+),T}(t_1,\dots,t_d):=(t_1,t_2,\dots,t_d,v^{\mathfrak{b}},v^{3\mathfrak{b}},\dots,v^{2\mathfrak{b}(m_+-1)})
\end{equation*}
Next, if $m_+\in\mathbb{Z}_{>0}$ we define a morphism
$\phi^{(m_-,m_+)}_{(d,+),T}:T_d\to T_{d+2(m_+-1)}$
of algebraic tori over $\mb{L}$
by : 
\begin{equation*}
\phi^{(m_-,m_+)}_{(d,+),T}(t_1,\dots,t_d):=(t_1,t_2,\dots,t_d,1,q^{\mathfrak{b}},q^{\mathfrak{b}},q^{2\mathfrak{b}},q^{2\mathfrak{b}},\dots,q^{\mathfrak{b}(m_+-2)},q^{\mathfrak{b}(m_+-2)},q^{\mathfrak{b}(m_+-1)})
\end{equation*}
Finally, for the extraspecial cases $X\in  \{\textup{V},\,\textup{VI}\}$ we define, for $m_\pm>0$,  
a morphism $\phi^{(m_-,m_+)}_{(d,+),T}:T_d\to T_L$ with 
$L:=2d+\frac{1}{2}a(a+1)+2b(b+1)$ (if $X=\textup{V}$) or $L:=2d+\frac{1}{2}a(a+1)+2b^2-\delta_+$
(if $X=\textup{VI}$) as follows. Observe that $L=2d+\lfloor L_-\rfloor +\lfloor L_+\rfloor$ where  
$L_\pm:=\kappa_\pm(2\kappa_\pm+2\epsilon_\pm-1)/2$.  We first define, for $m\in \mathbb{Z}\pm\frac{1}{4}$,
 residual points $r_e(m)$ recursively by putting, for $m>1$,  
 \begin{equation*}
 r_e(m)=(\sigma_e(m);r_e(m-1))
 \end{equation*}
with, for $m>1$,  
 \begin{equation*}
 \sigma_e(m)=(q^\delta,q^{\delta+1},\dots,q^{2m-\frac{3}{2}}), 
 \end{equation*}
and $r_e(\frac{1}{4})=r_e(\frac{3}{4}):=\emptyset$.
We define the representing morphism of the extraspecial STM by: 
\begin{equation}\label{eq:extraspSTM}
\phi^{(m_-,m_+)}_{(d,+),T}(t_1,\dots,t_d):=(-r_e(m_-),v^{-1}t_1,vt_1,\dots,v^{-1}t_d,vt_d,r_e(m_+))
\end{equation}
The proof of the fact that these formulas indeed define an STM is a straightforward 
computation in the cases  $X\in  \{\textup{I},\,\textup{II},\,\textup{III},\,\textup{IV}\}$.
In the extraspecial case one notices first that this map for general $d\geq 0$ 
is induced from the cuspidal map of this kind with $d=0$. It is easy to verify that this 
map is an STM, by considering the set of positive roots of the root system $R_0$ of type 
$\textup{B}_{2d+L_-+L_+}$ which restrict to a given simple root $\alpha_i$ of $\textup{B}_d$ (this process 
is similar to what we did in the exceptional cases), \emph{provided} that the inducing 
rank $0$ map is indeed a cuspidal STM.  
The latter be proved by induction on $m_\pm$, using the 
recursive definition of $r_e(m)$ and the formula (easily obtained from  
(\ref{eq:norms}) applied to the two cases $\{\textup{V},\,\textup{VI}\}$):
\begin{equation}
d^\tau_{m_-,m_+}=
\prod_{i=1}^{\lfloor |m_--m_+|\rfloor}
\frac{v^{{2(|m_--m_+|-i)}i}}{(1+q^{2(|m_--m_+|-i)})^i}
\prod_{j=1}^{\lfloor |m_-+m_+|\rfloor}
\frac{v^{{2(|m_-+m_+|-j)}j}}{(1+q^{2(|m_-+m_+|-j)})^j}
\end{equation}
\subsubsection{Existence of enough STMs for the classical cases.}\label{subsub:class}
After having established the existence of these STMs between affine Hecke algebras 
of type $\textup{C}_n^{(1)}$, it is an easy task to prove the existence 
an STM  of the form $\phi:\mc{H}^{u,\mf{s},e}\leadsto\mc{H}^{IM}(G)$ for all 
absolutely simple, quasisplit adjoint groups of classical type $G$ and unipotent 
affine Hecke algebras $\mc{H}^{u,\mf{s},e}$ of a unipotent type of an inner form of $G$
(other than $\textup{PGL}_{n+1}$), 
using covering STMs. 

For $G=\textup{PU}_{2n}$, 
we have a $2:1$ semi-standard spectral covering map (see \cite[7.1.3]{Opd4}) of the form 
$\mc{H}^{IM}(G)=\textup{B}_n(2,1)[q]\leadsto \textup{C}_{n}(0,\frac{1}{2})[q^2]$ corresponding 
to an embedding of the right hand side as an index two subalgebra of the left hand side. 
Here the right hand side is normalized as object in $\mf{C}_{class}^{\textup{I}}$. 
The representing morphism $\phi_T$ has kernel $\omega\in T^{IM}$,  the unique 
nontrivial $W_0(\textup{B}_n)$-invariant element. We can identify $\omega$ with the nontrival element 
of $X^*_{un}(G)=(\Omega_C^\theta)^*$, which equals $C_2$ in this case. It acts as a diagram 
automorphism on the geometric 
diagram via the simple affine reflection 
$\sigma=s_{1-2x_1}$ of the afine Weyl group of type $\textup{C}_n^{(1)}$ 
(in the standard coordinates for $\mf{t}$).  
For $G=\textup{PU}_{2n+1}$ we have an isomorphism 
$\mc{H}^{IM}(G)\leadsto \textup{C}_{n}(\frac{1}{2},1)[q^2]$. 
These target affine Hecke algebras are the minimal objects of $\mf{C}_{class}^\textup{I}$.
All direct summands $\mc{H}$ of $\mc{H}_{uni}(G)$ either are objects of $\mf{C}_{class}^\textup{I}$ or,   
in the case $G={}^2\textup{A}_{2n-1}$, otherwise there exists a semi-standard $2:1$ covering 
STM $\mc{H}\leadsto \mc{H}'$ 
arising from an index two embedding 
$\mc{H}'\subset \mc{H}$ of an object $\mc{H}'$ of $\mf{C}_{class}^\textup{I}$ for which one of the 
parameters $m_-$ or $m_+$ equals $0$. In the latter case, it is easy to see from the definitions 
that any composition $\phi_T$ of basic translation STMs in $\mc{C}_{class}^\textup{I}$ which yields 
an STM $\mc{H}'\leadsto \textup{C}_{n}(0,\frac{1}{2})[q^2]$ in $\mf{C}_{class}^{\textup{I}}$
factors through an STM $\mc{H}\leadsto \mc{H}^{IM}(G)$. Let $L=rT^L$ be the image of $\phi_T$.
The inverse image of $L$ under the covering map is connected if $L$ has positive rank, this 
follows from the spectral map diagram and the fact that linear residual points in a positive 
Weyl chamber are invariant for diagram automorphisms \cite{Opd3}.
This implies that we have a unique factorization of $\phi_T$ as desired in all cases. 
Remark that  $\Omega_1^{\mf{s},\theta}=1$ except if $\mf{s}$ is a supercuspidal 
unipotent type of $G^u$ of type ${}^2A_{2n-1}$ or its non-quasisplit inner form, which is 
also $\Omega_C^\theta$-invariant. In this case $\Omega_1^{\mf{s},\theta}=C_2$, and 
the supercuspidal STM $\phi_T$ as above 
has image $L$ (a residual point) which lifts to two residual points which are not conjugate under $W(\textup{B}_n)$.
Hence in this case we obtain two STMs defined by the lifts of $\phi_T$, corresponding to the 
two summands of $\mc{H}^{\mf{s},u}$. If $\Omega_1^{\mf{s},\theta}=1$ but 
$\Omega^{\mf{s},\theta}=\Omega_C^\theta=C_2$
then $X^*_{un}(G)$ acts nontrivially on the connected inverse image of $L$. This is precisely the 
case where one parameter of $\mc{H}'$ is $0$, and the rank is positive. In other cases $\mc{H}$
is itself already an object of $\mf{C}_{class}^\textup{I}$ whose STM has a unique lift to $\mc{H}^{IM}(G)$.

For $G=\textup{SO}_{2n+1}$ we have $\mc{H}^{IM}(G)=\textup{C}_{n}(\frac{1}{2},\frac{1}{2})[q]$, 
and all unipotent affine Hecke algebras are objects of $\mf{C}_{class}^{\textup{II}}$; hence this case is 
straightforward by the above.

For $G=\textup{PCSp}_{2n}$ we have a semi-standard STM $\mc{H}^{IM}(G)\leadsto \textup{C}_{n}(0,1)[q]$
arising from an embedding
of the right hand side as an index two subalgebra of the left hand side.  
Here the right hand side is normalized as object in $\mf{C}_{class}^{\textup{III}}$. 
All direct summands of $\mc{H}_{uni}(G)$ either are objects of $\mf{C}_{class}^\textup{III}$ or 
$\mf{C}_{class}^\textup{V}$, or there 
exists a semi-standard covering STM $\mc{H}\leadsto\mc{H}'$ arising from an index two embedding 
$\mc{H}'\subset \mc{H}$ of an object 
$\mc{H}'$ of $\mf{C}_{class}^\textup{III}$ for which one of the parameters $m_-$ or $m_+$ equals $0$.
Similar remarks as in the case $\textup{PU}_{2n+1}$ apply on how to obtain STMs of 
direct summands $\mc{H}$ of $\mc{H}_{uni}(G)$ to $\mc{H}^{IM}(G)$ in terms of those of 
$\mc{H}'$ to $ \textup{C}_{n}(0,1)[q]$.

For $G=\textup{P}(\textup{CO}^0_{2n})$, we have a non-semistandard STM 
$\mc{H}^{IM}(G)\leadsto \textup{C}_{n}(0,0)[q]$ which is represented by a degree $2$ 
covering of tori (essentially the ``same" covering of tori as for the case $\textup{PU}_{2n+1}$, 
but this time equipped with the action of $W(\textup{D}_n)$ instead of $W(\textup{B}_n)$) (see \cite[7.1.4]{Opd4}). 
Here the right hand side is normalized as object in $\mf{C}_{class}^{\textup{IV}}$. 
All other direct summends $\mc{H}$ of $\mc{H}_{uni}(G)$ either are objects of $\mf{C}_{class}^\textup{IV}$ with 
both $m_-$ and $m_+$ even, or of $\mf{C}_{class}^\textup{VI}$ with $\delta_-=\delta_+=0$, or there 
exists a semi-standard covering STM $\mc{H}\leadsto\mc{H}'$ arising from an index two embedding 
$\mc{H}'\subset \mc{H}$ of an object 
$\mc{H}'$ of $\mf{C}_{class}^\textup{IV}$ for which one of the parameters $m_-$ or $m_+$ equals $0$.
If both of $m_\pm\not=0$ then $\Omega^{\mf{s},\theta}=1$, and any composition of basic STMs or 
the extraspecial STM $\phi:\mc{H}\leadsto \textup{C}_{n}(0,0)[q]$ admits a unique lift to an STM 
$\phi:\mc{H}\leadsto\mc{H}^{IM}(G)$ as before. If one of $m_\pm$ equals zero then 
$\Omega^{\mf{s},\theta}=C_2$. As before, in the positive rank case we have 
$\Omega^{\mf{s},\theta}_1=1$, and $X^*_{un}(G)$ acts non-trivially by spectral 
isomorphisms, via its quotient $(\Omega^{\mf{s},\theta})^*=C_2$, on the connected 
inverse image of the residual coset $L$ which is the image of $\phi$. Finally if 
one of $m_\pm=0$ and the rank of $\phi$ is $0$ then 
$\Omega^{\mf{s},\theta}=\Omega^{\mf{s},\theta}_1=C_2$, and $L$ has two lifts under the $2:1$ covering 
which are not in the same $W(\textup{D}_n)$-orbit but which are exchanged by the action of $X^*_{un}(G)$. 
In this case, $\mc{H}^{u,\mf{s}}$ decomposes as a direct sum of two copies of $\mb{L}$, and 
we have still an essentially unique STM $\mc{H}^{u,\mf{s}}\leadsto \mc{H}^{IM}(G)$.

For $G=\textup{P}((\textup{CO}^*_{2n+2})^0)$. We have 
$\mc{H}^{IM}(G)= \textup{C}_{n}(1,1)[q]$. 
This case is similar to 
the previous case, except that this time the relevant objects from $\mf{C}_{class}^\textup{IV}$
are those with $m_-$ and $m_+$ both odd, and those of $\mf{C}_{class}^\textup{VI}$
the objects with $\delta_-=\delta_+=1$. Hence this case is easier, since the direct summands
of $\mc{H}$ of $\mc{H}_{uni}(G)$ are themselves already objects of $\mf{C}_{class}^{\textup{IV}}$ 
and of $\mf{C}_{class}^{\textup{VI}}$, and no dicussion of lifting of STMs is required.
\subsubsection{Proof of Theorem \ref{thm:unique}}
Suppose $G$ is as in Theorem \ref{thm:unique}.
In the previous paragraphs 
we have established the existence of at least one STMs $\phi_{u,\mf{s},e}:\mc{H}^{u,\mf{s},e}(G)\to\mc{H}^{IM}(G)$
for every unipotent type $\mf{s}$ of any inner form $G^u$ of $G$. Such an STM $\phi_{u,\mf{s},e}$ determines a unique 
subset $Q\subset F_0$ such that 
$\phi_{u,\mf{s},e}$ is represented by a morphism $\phi_T$ whose image is of the form $L_n:=L/K_L^n$
with $L=r_QT^Q$ and $r_Q$ a residue point of the semisimple subquotient $\mc{H}_Q^{IM}$ of $\mc{H}^{IM}(G)$.
This $r_Q$ is determined up to the action of $K_Q$.
As was explained in \ref{subsub:indres}, $\phi$ is in this situation induced from  
from a cuspidal STM $\phi_Q:(\mb{L},\tau_0):=\mc{H}_0\leadsto \mc{H}_Q^{IM}=\mc{H}(M_{ssa})$.
\emph{Suppose that we know that the essential uniqueness for the cuspidal case of Theorem \ref{thm:unique} holds.} 
Then $W_Qr_Q$ is determined by $\mc{H}_0$ up to the action of $\textup{Aut}_{es}(\mc{H}_Q^{IM})$ 
(see the argument 
in \ref{subsub:indres}), 
and since we are clearly in the standard case, this is anti-isomorphic to $\Omega^*_{X_Q}\rtimes \Omega_0^{Y_Q}$
by Proposition \cite[Proposition 3.4]{Opd4}. But we know (see \cite{Opd3}, \cite[Theorem A.14(3)]{Opd1}) that 
$W_Qr_Q$ is fixed for the action of $\Omega_0^{Y_Q}$, so that we need to consider only the orbit of 
$W_Qr_Q$ for the action of $\Omega^*_{X_Q}:=(X_Q/\mathbb{Z}R_Q)^*$. In the  case at hand  
$X_Q:=X/X\cap R_Q^\perp=P(R_0)/P(R_0)\cap R_Q^\perp=P(R_Q)$, so that 
$(\Omega_{X_Q})^*=(P(R_Q)/\mathbb{Z}R_Q)^*$. But this is exactly equal to $K_Q$, 
hence any STM which is induced from a cuspidal STM 
$\phi_Q:(\mb{L},\tau_0):=\mc{H}_0\leadsto \mc{H}_Q^{IM}$ has as its image $L_n$.
By the rigidity property  Proposition \cite[Proposition 7.13]{Opd4} we see that any two such STMs are 
equal up to the action of $\textup{Aut}_\mf{C}(\mc{H})$. But as was explained in 
\ref{subsub:indres}, the subset $Q\subset F_0$ is itself completely determined by just the  
root system of $M_{ssa}$, and this is determined by $\mf{s}$. It follows that 
any other STM $\phi':\mc{H}^{u,\mf{s},e}(G)\to\mc{H}^{IM}(G)$ can be represented by a 
$\phi_T'$ whose image is $L_n'$, with $L'$ a residual coset in the 
$X^*_{un}(G)$-orbit of $L$. 

As to the possibility to define an equivariant 
STM for the action of $X^*_{un}(G)=(\Omega^\theta_C)^*$, that is an application of Theorem 
\ref{thm:summary}. Recall that $X^*_{un}(G)$ acts on $\mc{H}^{u,\mc{O}}$ via its quotient 
$(\Omega^{\mf{s},\theta})^*$; we need to check in all cases that the subgroup 
$(\Omega^{\mf{s},\theta}_2)^*\subset (\Omega^{\mf{s},\theta})^*$ is the stabilizer of $W_0(L)$.
For the classical cases this was discussed in the previous sections, and for the exceptional 
cases this is an easy verification. It follows that the direct sum $\mc{H}^{u,\mf{s}}$
of all summands of $\mc{H}_{uni}(G)$ in the $X^*_{un}(G)$-orbit of $\mc{H}^{u,\mf{s},e}$ 
can be mapped $X^*_{un}(G)$-equivariantly by an STM to $\mc{H}^{IM}(G)$, 
and that such an equivariant STM is essentially unique up the spectral automorphism 
group of $\mc{H}_{uni}(G)$. Taking the direct sum over all orbits $X^*_{un}(G)$-orbits 
of unipotent types we obtain the desired result.

Hence Theorem \ref{thm:unique} is now reduced to the cuspidal case. 
For the exceptional cases we have already shown the essential uniqueness for 
cuspidal STMs, and for $G=\textup{PGL}_{n+1}$
this was obvious. Hence the proof of Theorem \ref{thm:unique} is completed by 
the following result, whose proof will appear in \cite{FO}:
\begin{prop}[\cite{FO}]\label{hyp} The essential uniqueness of 
Theorem \ref{thm:unique} holds true for the cuspidal part 
(or rank $0$ part) $\mc{H}_{uni, cusp}(G)$ of $\mc{H}_{uni}(G)$
for $G$ of type $\textup{PU}_{n}$, $\textup{SO}_{2n+1}$, $\textup{PCSp}_{2n}$,
$\textup{P}(\textup{CO}^0_{2n})$, and $\textup{P}((\textup{CO}^*_{2n})^0)$.
Here we denote by $\mc{H}_{uni, cusp}(G)$ 
the direct sum of all the cuspidal (or rank $0$) 
normalized generic unipotent affine Hecke algebras associated 
to $G$ and its inner forms. 
In other words, there do  not exist  other rank $0$ STMs  
than the ones constructed above, and this yields a $X^*_{un}(G)$-equivariant STM 
$\Phi_{cusp}:(\mc{H}_{uni,cusp}(G),\tau)\leadsto(\mc{H}^{IM},\tau^{IM})$
which is \emph{essentially unique} in the sense of Theorem \ref{thm:unique}.
\end{prop}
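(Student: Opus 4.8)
The plan is to prove Proposition \ref{hyp} in two stages: (i) every rank $0$ unipotent STM with target $\mathcal{H}^{IM}(G')$, where $G'$ is a classical adjoint group in one of the five listed families, coincides — up to $\textup{Rat}(G)$ and the spectral automorphisms of its source — with one of the STMs built in \S\ref{subsub:Cn}--\S\ref{subsub:class}; and (ii) these assemble into the essentially unique $\textup{Rat}(G)$-equivariant STM $\Phi_{cusp}$. Stage (ii) is routine given (i): it is the equivariance argument of the ``Proof of Theorem \ref{thm:unique}'' applied in rank $0$, using Theorem \ref{thm:summary}(v),(vi) together with the case-by-case description of $\Omega^{\mathfrak{s},\theta}\supset\Omega^{\mathfrak{s},\theta}_1$ already worked out in \S\ref{subsub:class}, plus the verification in each family that $(\Omega^{\mathfrak{s},\theta}_2)^*$ is precisely the stabiliser of the image orbit. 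For stage (i), a rank $0$ STM $\psi:(\mathbf{L},\tau^{d^{\mathfrak{s},e}})\leadsto\mathcal{H}^{IM}(G')$ is represented by an affine morphism whose image is a generic residual point $r$ of $\mathcal{H}^{IM}(G')$, and property (T3) (see \cite[equation (8)]{Opd4}) forces $\mu^{IM,(\{r\})}(r)$ to equal $d^{\mathfrak{s},e}=\tau^{d^{\mathfrak{s},e}}(1)$ up to a positive rational constant, hence to have the same $q$-rational factor; by the rigidity Proposition \cite[Proposition 3.17]{Opd4}, $\psi$ is then determined up to $\textup{Aut}_{\mathfrak{C}}(\mathbf{L})^{op}\times\textup{Rat}(G)$ by the $W_0$-orbit $W_0r$. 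So everything reduces to: for each cuspidal unipotent type $\mathfrak{s}$ of each inner form $G^u$, the generic residual points of $\mathcal{H}^{IM}(G')$ whose residue shares the $q$-rational factor of $d^{\mathfrak{s},e}$ form a single orbit under $W_0$, the diagram automorphisms, and $\textup{Rat}(G)$ (that this orbit is nonempty being exactly what the constructions of \S\ref{subsub:Cn}--\S\ref{subsub:class} already provide).

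The first ingredient is step (a): computing the $q$-rational factor of $d^{\mathfrak{s},e}$ for every cuspidal unipotent type. By Proposition \ref{prop:balanced} this factor is, up to sign and the factor $|\Omega^{\mathfrak{s},\theta}_1|^{-1}$, a monomial in $\mathbf{v}$ times $\prod_i(\mathbf{v}^{d_i}-\epsilon_i\mathbf{v}^{-d_i})^{-1}\,\textup{deg}_{\mathbf{v}}(\delta)$, with $\delta$ the cuspidal unipotent character of the finite reductive quotient $\overline{\mathbb{P}^{F_u}}$ of an $F_u$-stable maximal parahoric of $G^u$; Lusztig's degree formulas for the cuspidal unipotent characters of finite classical groups (tabulated in \cite{C}) make each factor explicit, and these $\delta$ exist only for the ``triangular'' ranks $\tfrac12(s^2{+}s){-}1$, $s^2{+}s$, $s^2$ in types ${}^2\!A$, $\textup{B}$, $\textup{D}$ respectively. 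Carrying this out over all $F_u$-stable maximal parahorics of all inner forms of the five families reproduces precisely the normalizations $d_{m_-,m_+}$ of \eqref{eq:norms} (this is the computation behind the definition of the category $\mathfrak{C}_{class}$), together with the anisotropic contributions $(m{+}1)[m{+}1]_q$ of \S\ref{subsub:aniso} for the type $\textup{A}$ pieces.

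The second ingredient is step (b): classifying the generic residual points of $\mathcal{H}^{IM}(G')$ and their residues. Here I would use the semi-standard covering STMs of \S\ref{subsub:class} to reduce the comparison to the family $\textup{C}_r(m_-,m_+)[q^{\mathfrak{b}}]$, whose generic residual points admit an explicit combinatorial classification (the ``$r_e(m)$''-type recursions of \eqref{eq:extraspSTM} and their sign variants, equivalently pairs of staircase partitions) and whose residue $\mu^{(\{r\})}(r)$ is given by the product formula of \cite{OpdSol2} as a ratio of products of factors $(1-\zeta\mathbf{v}^a)$ with $\zeta$ a root of unity of order dividing $4$. The covering-STM step also records how $\textup{Rat}(G)$ acts on the inverse images of residual cosets — as already analysed family by family in \S\ref{subsub:class} — which is what licenses the passage back and forth between $\mathcal{H}^{IM}(G')$ and the $\textup{C}_r$ family without losing track of the equivariance in stage (ii).

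The hard part is the uniqueness clause of the reduction in the first paragraph: for the exceptional groups one simply observes that residues at distinct orbits of residual points are distinct, but for $\textup{C}_r(m_-,m_+)$ the cyclotomic factors in the product formula genuinely collide, so inspection does not suffice. My plan is to show that the invariant consisting of the multiset of cyclotomic factors of $\mu^{IM,(\{r\})}(r)$, the leading power of $\mathbf{v}$ (pinned down by the $\mathbf{v}\to\mathbf{v}^{-1}$ symmetry of Proposition \ref{prop:balanced}), and the orders of the roots of unity that actually occur, determines the combinatorial datum of $r$ up to the group generated by signed permutations ($W_0$), the $D_8$ of spectral isomorphisms of $\mathfrak{C}_{class}$, and $\textup{Rat}(G)$ — concretely, one recovers the two partitions from the exponents of $\mathbf{v}$ appearing with $\zeta=\pm1$ versus with $\zeta$ a primitive fourth root of $1$. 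This same feature separates the types $V^{\textup{V}}$, $V^{\textup{VI}}$ — where the residual point genuinely carries a sign $-r_e(m_-)$, hence a fourth root of unity — from $V^{\textup{I}}$--$V^{\textup{IV}}$, which is why the extraspecial STMs have no competitors and why there are no STMs between distinct objects of $V^{\textup{V}}$ or $V^{\textup{VI}}$. Running this argument through the five families $\textup{PU}_n$, $\textup{SO}_{2n+1}$, $\textup{PCSp}_{2n}$, $\textup{P}(\textup{CO}^0_{2n})$, $\textup{P}((\textup{CO}^*_{2n})^0)$ yields both assertions of Proposition \ref{hyp}.
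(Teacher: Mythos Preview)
Your overall reduction is correct and matches the paper's framework: stage~(ii) is indeed the equivariance argument already given in the ``Proof of Theorem~\ref{thm:unique}'', and stage~(i) does reduce to matching the $q$-rational factor of each $d^{\mathfrak{s},e}$ against the residues $\mu^{IM,(\{r\})}(r)$, with rigidity from \cite[Proposition~3.17]{Opd4} finishing once the orbit $W_0r$ is pinned down. Your steps~(a) and~(b) are also the right ingredients.

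However, your strategy for the hard uniqueness step diverges from the paper's, and the core of your argument is an assertion rather than a proof. You claim that the multiset of cyclotomic factors of the residue, together with the orders of the roots of unity occurring, determines the residual datum up to $W_0$, $D_8$, and $\textup{Rat}(G)$, and you say ``concretely, one recovers the two partitions from the exponents of $\mathbf{v}$ appearing with $\zeta=\pm1$ versus with $\zeta$ a primitive fourth root of $1$''. But for the minimal objects of types $\textup{III}$ and $\textup{IV}$ the parameters $(m_-,m_+)$ are integral, so the residual points $r=(-\exp(\xi_-),\exp(\xi_+))$ produce only factors with $\zeta=\pm1$; no primitive fourth roots of unity occur at all. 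Thus the separation mechanism you describe does not apply, and the claimed recovery of $(\xi_-,\xi_+)$ from the cyclotomic data is exactly the statement to be proved, not an observation. The paper explicitly flags that for $X\in\{\textup{III},\textup{IV},\textup{V},\textup{VI}\}$ the direct inspection approach is insufficient.

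The paper's route (sketched after the Proposition, with full details deferred to \cite{FO}) is genuinely different. For $X=\textup{I},\textup{II}$ it agrees that the direct argument is easy. For $X=\textup{III},\textup{IV},\textup{V},\textup{VI}$ the key device is the \emph{extraspecial} STM bijection: every generic residual point of a minimal object of $\mathfrak{C}_{class}^{\textup{III}}$ or $\mathfrak{C}_{class}^{\textup{IV}}$ is the image, under a unique extraspecial cuspidal STM, of a residual point of some object of $\mathfrak{C}_{class}^{\textup{V}}$ or $\mathfrak{C}_{class}^{\textup{VI}}$. One then transports the problem to the $\textup{V},\textup{VI}$ side and imposes the necessary cuspidality condition that the formal degree have no odd cyclotomic factors; this forces the corresponding pair $(\xi_-,\xi_+)$ to come from partitions whose Young diagrams are rectangular and nearly square, and after pushing forward via the extraspecial STM these are exactly the pairs $(u_-,u_+)$ with elementary divisors $(1,3,5,\dots)$, $(2,4,6,\dots)$, $(1,5,9,\dots)$ or $(3,7,11,\dots)$ --- precisely Lusztig's cuspidal local systems for the endoscopic subgroups $G_s\subset G$. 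So the \emph{existence} of the extraspecial STMs is what makes the \emph{uniqueness} argument go through; this is the structural insight your proposal is missing.
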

The proof of this Proposition reduces to the analogous statement 
for the spectral categories $\mf{C}_{class}^X$. For $X=\textup{I},\,\textup{II}$ this is rather easy.
When $X=\textup{III},\,\textup{IV},\,\textup{V},\,\textup{VI}$ the \emph{essential uniqueness} proof for cuspidal 
STMs is based on the \emph{existence} of the cuspidal extraspecial STMs of $\mf{C}_{class}^\textup{V}$
and $\mf{C}_{class}^\textup{VI}$. It is easy to see that every generic residual point of  $\mf{C}_{class}^X$ for 
$X=\textup{III},\,\textup{IV}$ is in the image of a unique extraspecial cuspidal STM, and this sets up a bijection 
between the set of generic residual points of the combined objects of $\mf{C}_{class}^{\textup{V},\,\textup{VI}}$ 
and those of $\mf{C}_{class}^{\textup{III},\,\textup{IV}}$. If we impose the necessary condition for cuspidality, 
namely that the formal degree (in our normalization) has no odd cyclotomic factors, then one can show 
that the corresponding generic residual point of  $\mf{C}_{class}^{\textup{V},\,\textup{VI}}$ is given 
by a pair $(\xi_-,\xi_+)$ of partitions whose Young tableaux are of rectangular shape, and 
almost a square. After applying the extraspecial STM, the solutions correspond to a pair $(u_-,u_+)$ 
of unipotent orbits of $G_s\subset G$, a semisimple subgroup of maximal rank, whose elementary divisors 
are both of the form $(1,3,5,\dots)$ or $(2,4,6,\dots)$, or are both of the form $(1,5,9,\dots)$ or $(3,7,11,\dots)$.
These solutions thus correspond to the cuspidal local systems for the endoscopic groups 
$G_s\subset G$ (cf. \cite{Lu1}, \cite{Lu1.5}).
\section{Applications}
\subsection{The classification of unipotent spectral transfer morphisms}
\subsubsection{The classical case}
\begin{prop}\label{prop:clasuniq}
Between the objects of $\mf{C}_{class}^{\textup{I}}$,  $\mf{C}_{class}^{\textup{II}}$, 
$\mf{C}_{class}^{\textup{III}\cup\textup{V}}$ and $\mf{C}_{class}^{\textup{IV}\cup\textup{VI}}$
(where $\mf{C}_{class}^{\textup{III}\cup\textup{V}}$ is shorthand for 
$\mf{C}_{class}^{\textup{III}}\cup\mf{C}_{class}^{\textup{V}}$ etc.)
all STMs are generated by the basic translation STMs we have defined in \ref{subsub:class}, the 
extraspecial STMs, and the dihedral group $D_8$ (cf. \cite[Remark 7.5]{Opd4}) 
of spectral isomorphisms. The basic translation STMs commute 
with each other, and the commutation rules of the basic translation STMs and extra special STMs 
with the $D_8$ are the obvious ones, where $D_8$ acts on the set of parameter pairs $(m_-,m_+)$ 
(i.e. $D_8$ acts as a group of endofunctors on each of these categories).
\end{prop}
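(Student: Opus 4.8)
The plan is to reduce the statement to the rank $0$ (cuspidal) case via the induction principle of \ref{subsub:indres}, and then to match the cuspidal data against the explicit list of basic STMs. First I would invoke the fact (see \ref{subsub:res}, \ref{subsub:ind}) that any STM $\phi:\mc{H}_1\leadsto\mc{H}_2$ between objects of one of the four categories $\mf{C}_{class}^{\textup{I}}$, $\mf{C}_{class}^{\textup{II}}$, $\mf{C}_{class}^{\textup{III}\cup\textup{V}}$, $\mf{C}_{class}^{\textup{IV}\cup\textup{VI}}$ is induced from a cuspidal STM $\phi_Q:\mathbf{L}\leadsto(\mc{H}_2)_Q$ attached to a standard Levi subquotient $(\mc{H}_2)_Q$ determined by a subset $Q\subset F_0$ of the $\textup{C}_r$-diagram underlying $\mc{H}_2$. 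By the rigidity Proposition \cite[Proposition 3.17]{Opd4} together with the essential uniqueness of cuspidal STMs (Proposition \ref{hyp}), such a $\phi$ is completely determined, up to the action of $\textup{Aut}_\mf{C}(\mc{H}_1)^{op}\times\textup{Rat}(G)$, by the type of $(\mc{H}_2)_Q$ together with the $q$-rational factor of the trace degree $\tau_0(1)$ of its source; and this $q$-rational factor equals, up to a power of $(v-v^{-1})$, the normalization constant $d_1$ of $\mc{H}_1$ fixed by (\ref{eq:norms}) and Definition \ref{defn:normpar}. Since the group $D_8$ of spectral isomorphisms realizes all diagram automorphisms of the $\textup{C}_r^{(1)}$-diagrams as well as the sign changes and the swap of $(m_-,m_+)$, it is enough to show that for each admissible pair $(Q,d_1)$ the resulting STM is a composition of basic translation STMs and at most one extraspecial STM.

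The second, and main, step is the enumeration of the admissible pairs $(Q,d_1)$. A standard parabolic subsystem $Q$ of the $\textup{C}_r$-diagram of $\mc{H}_2$ is a product of type $\textup{A}$ blocks together with one block of type $\textup{C}$, $\textup{B}$ or $\textup{D}$ according to $X$; a cuspidal residual point of $(\mc{H}_2)_Q$ restricts to a residual point on each block, so the $q$-rational factor of its residue degree is the product, over the type $\textup{A}$ blocks, of the elementary factors $[n_i+1]_{q^{\mathfrak{b}}}^{-1}$ --- each of which is exactly the effect on the normalization constant of a single elementary translation step of size $1$ or $2$ in $m_-$ or $m_+$, as one reads off the representing morphisms $\phi^{(m_-,m_+)}_{(r,+),T}$ of \ref{subsub:Cn} --- times the residue degree of a cuspidal residual point of the remaining block. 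Using the explicit degree formulas (\ref{eq:norms}), (\ref{eq:normsympl}) together with the classification of cuspidal residual points of the classical Hecke algebras (in particular the description recalled at the end of \ref{subsub:class}, according to which a cuspidal residual point of type $X\in\{\textup{III},\textup{IV}\}$ corresponds to a pair $(u_-,u_+)$ of unipotent orbits of a semisimple subgroup of maximal rank whose elementary divisors are both of the form $(1,3,5,\dots)$, $(2,4,6,\dots)$, $(1,5,9,\dots)$ or $(3,7,11,\dots)$, and the bijection between the cuspidal residual points of $\mf{C}_{class}^{\textup{V},\textup{VI}}$ and those of $\mf{C}_{class}^{\textup{III},\textup{IV}}$ induced by the extraspecial STMs), I would check case by case that every admissible pair $(Q,d_1)$ is realized by precisely one of: a composition of basic translation STMs, when $\mc{H}_1$ already lies in the same type as $\mc{H}_2$; or an extraspecial STM from $\mf{C}_{class}^{\textup{V}}$ into the least object of $\mf{C}_{class}^{\textup{III}}$ (resp. from $\mf{C}_{class}^{\textup{VI}}$ into the least object of $\mf{C}_{class}^{\textup{IV}}$) followed by a composition of basic translations within $\mf{C}_{class}^{\textup{III}}$ (resp. $\mf{C}_{class}^{\textup{IV}}$); in both cases possibly post-composed with an element of $D_8$. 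The same enumeration shows that there are no STMs between distinct objects of $\mf{C}_{class}^{\textup{V}}$ or of $\mf{C}_{class}^{\textup{VI}}$, and none from an object of type $\textup{III}$ or $\textup{IV}$ into type $\textup{V}$ or $\textup{VI}$, so that the four listed subcategories are indeed closed.

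Finally, for the commutation assertion I would compare the two composites of a basic translation in $m_+$ with a basic translation in $m_-$, taken in the two possible orders. They are STMs with the same source $\textup{C}_r(m_-,m_+)[q^{\mathfrak{b}}]$ and the same target, and --- as is visible from the explicit formulas for the representing morphisms in \ref{subsub:Cn}, where a translation in $m_+$ and a translation in $m_-$ merely append two disjoint blocks of extra torus coordinates --- their two composites have the same representing morphism $\phi_T$ up to a reordering of these coordinate blocks, that is, up to $W_0$; hence they coincide as STMs. The analogous direct computation settles the commutation of two basic translation steps on the same parameter. The hard part is the case-by-case enumeration in the second step: it rests on the fine classification of cuspidal residual points and on the absence of unexpected coincidences of $q$-rational residue degrees, which is precisely the content of the essential uniqueness result of \cite{FO}; granting that, what remains is the lengthy but essentially routine bookkeeping of matching every induced STM to a basic composition in all six types, including the extraspecial cases and their identification with the geometric diagrams of \cite[5.51, 5.52]{Lu4} and \cite[11.5]{Lu6}.
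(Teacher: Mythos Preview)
Your approach is correct in principle but takes a genuinely different, more laborious route than the paper's own proof.

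The paper does not attempt to enumerate admissible pairs $(Q,d_1)$ for an arbitrary target $\mc{H}_2$. Instead it reduces everything to the case where the target is a \emph{minimal} object $\mc{H}^{max}$ of the relevant type. For any object $\mc{H}$, the basic translation STMs (and, where relevant, the extraspecial STMs) already produce an STM $\phi:\mc{H}\leadsto\mc{H}^{max}$; the essential uniqueness result (Proposition~\ref{hyp}) then forces any other STM $\psi:\mc{H}\leadsto\mc{H}^{max}$ to have the form $\psi=\beta\circ\phi\circ\alpha$ with $\alpha\in\textup{Aut}_{es}(\mc{H})$, $\beta\in\textup{Aut}_{es}(\mc{H}^{max})$. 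Since these automorphism groups are trivial or $C_2$ (the latter only when $m_-=m_+$), and since in the nontrivial case $\phi$ is equivariant for the $C_2$-actions, one gets $\psi=\phi$ outright when both are compositions of basic translations. The general STM $\mc{H}_1\leadsto\mc{H}_2$ is then handled by composing with $\mc{H}_2\leadsto\mc{H}^{max}$ and using the injectivity of the basic STMs.

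Your commutation argument (the representing morphisms append disjoint coordinate blocks, hence differ by a Weyl group element) is perfectly valid, but the paper derives commutation more conceptually: two compositions of basic translations with the same source and target are STMs to $\mc{H}^{max}$, so by the uniqueness just established they coincide; injectivity then lets one cancel on the right. Similarly, your treatment of ``no STMs between distinct objects of $\mf{C}_{class}^{\textup{V}}$ or $\mf{C}_{class}^{\textup{VI}}$'' via enumeration is replaced in the paper by the observation that two extraspecial STMs $\phi_i:\mc{H}_i\leadsto\mc{H}^{max}$ have disjoint images in the central spectrum unless $\mc{H}_1\simeq\mc{H}_2$ (with details deferred to \cite{FO}).

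In short: both arguments ultimately rest on the cuspidal uniqueness of \cite{FO}, but the paper leverages the partial order on $\mf{C}_{class}^X$ to work only at the minimal object, whereas you carry out a direct case-by-case match at every target. Your route works but costs you the lengthy enumeration you flag as ``the hard part''; the paper's route avoids it.
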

\begin{proof}
For any object $\mc{H}$ in $\mf{C}_{class}^Y$ ($Y$ as in the Theorem) 
there exists an STM $\phi:\mc{H}\leadsto \mc{H}^{min}$,  
where $\mc{H}^{min}$  denotes a minimal object, and where $\phi$ is a translation STM or an extraspecial STM.
By the essential uniqueness of Theorem \ref{thm:unique} it follows that any 
STM $\psi:\mc{H}\leadsto \mc{H}^{min}$ is of the form $\psi=\beta\circ\phi\circ\alpha$ with 
$\alpha\in\textup{Aut}_{es}(\mc{H})$ and with $\beta\in \textup{Aut}_{es}(\mc{H}^{min})$.
In $\mf{C}_{class}^Y$, the group $\textup{Aut}_{es}(\mc{H})$ is trivial (if the parameters $m_-$ and 
$m_+$ are unequal) or $C_2$ (if the parameters are equal). If there exists a nontrivial $\alpha_0
\in \textup{Aut}_{es}(\mc{H})$, then $m_-=m_+$ and it follows easily from the definitions that there also 
exists a nontrivial 
$\beta_0\in \textup{Aut}_{es}(\mc{H}^{min})$, and that $\phi$ is equivariant in the sense 
$\phi\circ\alpha_0=\beta_0\circ \phi$. Hence, if $\psi$ is also a composition of 
basic translation STMs or if $\psi$ is an extra special STM we see that $\psi=\phi$. 
From the injectivity (obvious from the definitions) of the 
basic generating STMs it now follows that the basic translation STMs commute. 

We also conclude from the injectivity of the basic generating STMs that, up to spectral 
isomorphisms, 
there can exist at most one STM between any two objects of $\mf{C}_{class}^Y$. 
For  $X\in\{\textup{I}, \textup{II}, \textup{III}, \textup{IV}\}$ it follows from a consideration 
of the spectral transfer map diagrams (Definition \ref{def:transmorfdiagr}) of the (essentially 
unique, injective) STMs $\phi_1:\mc{H}_1\leadsto\mc{H}^{min}$ and  $\phi_2:\mc{H}_2\leadsto\mc{H}^{min}$
that a possible factorizing STM $\phi:\mc{H}_1\leadsto\mc{H}_2$ (uniquely determined if it exists) must be 
itself composed of basic translation STMs and spectral isomorphism itself. It is also easy to 
see in this way that there can not exist STMs between objects of $\mf{C}_{class}^{\textup{V}\cup\textup{VI}}$
and non-minimal objects of $\mf{C}_{class}^X$ with $X\in\{\textup{I}, \textup{II}, \textup{III}, \textup{IV}\}$.
 
Between objects of $\mf{C}_{class}^X$ for $X\in \{\textup{V},\textup{VI}\}$ there are 
no STMs. This again follows from the injectivity of the extra special STMs, in view of 
the fact that the images of two extraspecial  
STMs of the form $\phi_1:\mc{H}_1\leadsto \mc{H}^{min}$ and $\phi_2:\mc{H}_2\leadsto \mc{H}^{min}$
map to disjoint subsets of the spectrum of the center of $\mc{H}^{min}$, unless $\mc{H}_1$ and $\mc{H}_2$ are 
isomorphic (this follows from the ``extra special bijection" proved in \cite{FO}).  

\end{proof}
As a consequence we obtain a general description of all spectral transfer maps 
between all unipotent affine Hecke algebras in the classical cases:
\begin{cor} Let 
$\mb{G}$ be connected, absolutely simple, defined and quasisplit over $k$, 
split over $K$, and such that its restricted root system is of classical type.
There are no other STMs between the unipotent affine Hecke algebras of the 
form $\mc{H}^{u,\mf{s},e}$ which appear as summands of $\mc{H}_{uni}(G)$
than the ones obtained by lifting of STMs via spectral covering maps of 
direct summands of $\mc{H}_{uni}(G)$ to one of 
$\mf{C}_{class}^{\textup{I}}$,  $\mf{C}_{class}^{\textup{II}}$, 
$\mf{C}_{class}^{\textup{III}\cup\textup{V}}$ and $\mf{C}_{class}^{\textup{III}\cup\textup{V}}$
(lifting in a sense similar to the discussion in \ref{subsub:class}).
\end{cor}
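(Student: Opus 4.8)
The plan is to deduce this from Proposition \ref{prop:clasuniq} together with the case-by-case analysis of spectral covering maps carried out in \ref{subsub:class}. Recall from \ref{subsub:class} that for each of the classical adjoint groups $G$ under consideration --- $\textup{PU}_n$, $\textup{SO}_{2n+1}$, $\textup{PCSp}_{2n}$, $\textup{P}(\textup{CO}^0_{2n})$ and $\textup{P}((\textup{CO}^*_{2n})^0)$ --- every direct summand $\mc{H}^{u,\mf{s},e}$ of $\mc{H}_{unr}(G)$ either is already an object of one of the four categories $\mf{C}_{class}^{\textup{I}}$, $\mf{C}_{class}^{\textup{II}}$, $\mf{C}_{class}^{\textup{III}\cup\textup{V}}$, $\mf{C}_{class}^{\textup{IV}\cup\textup{VI}}$ determined by the type of $G$, or admits a canonical spectral covering STM $p:\mc{H}^{u,\mf{s},e}\leadsto\overline{\mc{H}}$ of degree $1$ or $2$ onto such an object $\overline{\mc{H}}$, arising either from an index-two subalgebra embedding $\overline{\mc{H}}\subset\mc{H}^{u,\mf{s},e}$ or, in the $\textup{P}(\textup{CO})$ cases, from the explicit degree-two torus covering (cf. \cite[3.3.4, 3.3.5]{Opd4}). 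First I would record that these coverings are functorial enough inside $\mf{C}_{es}$ to reduce the classification of STMs between summands to the classification of STMs between their images in one of these four categories.

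Concretely, given an STM $\psi:\mc{H}_1\leadsto\mc{H}_2$ between two summands of $\mc{H}_{unr}(G)$, I would compose it with the covering $p_2:\mc{H}_2\leadsto\overline{\mc{H}}_2$ to obtain an STM $\mc{H}_1\leadsto\overline{\mc{H}}_2$ in $\mf{C}_{es}$, represented by a morphism whose image is a residual coset $L$ of $\overline{\mc{H}}_2$. As in \ref{subsub:class}, the inverse image of $L$ under $p_2$ is connected whenever $L$ has positive rank, because of the shape of the transfer map diagram and the fact that a linear residual point in a positive Weyl chamber is fixed by diagram automorphisms \cite{Opd3}; hence $\psi$ is recovered from a uniquely determined STM $\overline{\psi}:\overline{\mc{H}}_1\leadsto\overline{\mc{H}}_2$ between objects of the category, using also the rigidity statement \cite[Proposition 3.17]{Opd4}. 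In the rank-$0$ case one argues exactly as in \ref{subsub:class}: the point $L$ may lift to two $W_0$-inequivalent residual points exchanged by $\textup{Rat}(G)$ precisely when $\Omega_1^{\mf{s},\theta}=C_2$, in which case the two lifts match the two $\mb{L}$-summands of $\mc{H}^{u,\mf{s}}$, while in all other cases the lift is unique.

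Next I would invoke Proposition \ref{prop:clasuniq}: in each of $\mf{C}_{class}^{\textup{I}}$, $\mf{C}_{class}^{\textup{II}}$, $\mf{C}_{class}^{\textup{III}\cup\textup{V}}$ and $\mf{C}_{class}^{\textup{IV}\cup\textup{VI}}$ every STM is a composition of the basic translation STMs of \ref{subsub:class}, the extraspecial STMs, and elements of the group $D_8$ of spectral isomorphisms, and there are no STMs between distinct objects of $\mf{C}_{class}^{\textup{V}}$ nor between distinct objects of $\mf{C}_{class}^{\textup{VI}}$. Applying this to $\overline{\psi}$ and lifting the factorisation term by term through the covering maps --- each basic generator between objects of the category either is, or lifts uniquely up to the $\textup{Rat}(G)$-action in the cases where $\Omega^{\mf{s},\theta}=C_2$, to an STM of the asserted type between summands of $\mc{H}_{unr}(G)$ --- shows that $\psi$ itself is obtained by such a lifting, which is exactly the claim.

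The main obstacle is the bookkeeping in the second step: one must track, separately for each of the five families of $G$, exactly when the preimage of the image residual coset under the degree-two covering is connected versus split, and in the split (cuspidal) case match the two residual points with the two summands of $\mc{H}^{u,\mf{s}}$ and verify $\textup{Rat}(G)$-equivariance --- the same delicate analysis that underlies the uniqueness half of Theorem \ref{thm:unique}. The other substantive input is the completeness direction of Proposition \ref{prop:clasuniq} for types $\textup{III}$--$\textup{VI}$, which itself rests on the \emph{existence} of the cuspidal extraspecial STMs established in \ref{subsub:class} and completed in \cite{FO}; granting these, the present corollary is essentially a formal consequence of the reduction above.
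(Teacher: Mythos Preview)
Your proposal is correct and follows essentially the same approach as the paper. The paper states this corollary without proof, treating it as an immediate consequence of Proposition~\ref{prop:clasuniq} together with the covering-and-lifting analysis of \ref{subsub:class}; your write-up simply fleshes out that implied argument, correctly identifying the two inputs (the classification of STMs inside each $\mf{C}_{class}^X$, and the explicit spectral covering maps relating every summand of $\mc{H}_{unr}(G)$ to an object of one of these categories) and the bookkeeping required to lift and descend through those coverings.

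One small point worth tightening: in your second paragraph you pass from the composite $p_2\circ\psi:\mc{H}_1\leadsto\overline{\mc{H}}_2$ to an STM $\overline{\psi}:\overline{\mc{H}}_1\leadsto\overline{\mc{H}}_2$ by invoking connectedness of preimages under $p_2$ and rigidity, but what is actually needed here is that the representing morphism of $p_2\circ\psi$ factors through the covering $T_1\to\overline{T}_1$ associated to $p_1$. This does hold, essentially because the kernel of that isogeny (an order-two group in the nontrivial cases) acts on the image residual coset through $W_0$-equivalences and hence trivially on $L_n$; but the connectedness statement you quote is really about lifting through $p_2$ rather than descending through $p_1$, so the justification is slightly misplaced. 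This is a cosmetic issue rather than a gap.
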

It is not difficult to describe all STMs between the unipotent affine Hecke algebras 
for exceptional types as well, but we will not do this here.
\subsection{The partitioning of unramified square integrable L-packets according to Bernstein components}
\label{subsub:partition}
Let $\mb{G}$ be connected, absolutely simple, defined and quasisplit over $k$, 
split over $K$, and of adjoint type. Let $\mc{H}'$ be a unipotent affine Hecke algebra 
associated to a unipotent type of an inner form of $G$ (hence, a summand of $\mc{H}_{uni}(G)$). 
By our Theorem \ref{thm:unique}
we know that there exists an essentially unique $X^*_{un}{G}$-equivariant  STM 
$\phi:\mc{H}_{uni}(G)\leadsto\mc{H}^{IM}(G)$, 
and we know that such map is compatible with the arithmetic/geometric correspondence of diagrams 
of Lusztig \cite{Lu4}, \cite{Lu6}. By \cite[Theorem 6.1]{Opd4}, this STM $\phi$ gives rise to a correspondence 
between components of the tempered irreducible spectra of $\mc{H}'$ and $\mc{H}^{IM}(G)$ 
which preserves, up to rational constant factors, the Plancherel densities on these components, and 
which is 
compatible with the map $\phi_Z$ on the level of central characters of representations. In particular for 
unipotent discrete series representations, given an orbit of residual points 
$W_0r_\mathbf{L}\in W_0\backslash T(\mathbf{L})$ for 
$\mc{H}^{IM}(G)$ (these carry the discrete 
series representations, by \cite{Opd1}), we collect the irreducible discrete series characters of the 
various 
direct summands $\mc{H}'=\mc{H}^{u,\mf{s},e}$ of $\mc{H}_{uni}(G)$ whose central character 
$W_0'r'_\mathbf{L}$ satisfies $\phi_Z(W_0'r'_\mathbf{L})=W_0r_\mathbf{L}$. 
\begin{defn}\label{dfn:packet} Given an orbit $W_0r_\mathbf{L}$ of $\mathbf{L}$-residual points 
of $\mc{H}^{IM}(G)$ we form a packet $\Pi_{W_0r_\mathbf{L}}$ consisting of 
the unipotent discrete series characters of inner forms of $G$ for which the corresponding 
discrete series representation of $\mc{H}'$ (the corresponding summand of $\mc{H}_{uni}(G)$) 
has a central character $W_0'r'_\mathbf{L}$ which satisfies 
$\phi_Z(W_0'r'_\mathbf{L})=W_0r_\mathbf{L}$ (with $\phi_Z$ as above).
\end{defn}
\begin{cor}\label{cor:sameqdeg}
By Theorem \cite[Theorem 6.1]{Opd4}, the $q$-rational part of the formal degree of all the irreducible 
characters in $\Pi_{W_0r_\mathbf{L}}$ is the same. 
\end{cor}
There exists a natural bijection (cf. \cite[Corollary B.5]{Opd1}, and 
paragraph \ref{subsub:loclang}) 
$\Lambda^e\ni[\lambda]\to W_0r_{\lambda,\mathbf{L}}$ 
between orbits of discrete unramified Langlands parameters and 
orbit of residual points  $W_0r_\mathbf{L}$ for $\mc{H}^{IM}$.   
Theorem \ref{thm:unique} implies that the packets $\Pi_{W_0r_{\lambda,\mathbf{L}}}$
defined by STMs, admit a classification in terms of local systems on the $G^\vee$-orbits 
of discrete unramified Langlands parameters: 
\begin{cor}\label{cor:packet}(\cite[Theorem 5.21]{Lu4}, \cite{Lus3}, \cite{Lusz4}, \cite{Lu6})
The packet 
$\Pi_{W_0r_{\lambda,\mathbf{L}}}$ can be parameterized by 
the disjoint union of the fibres $\tilde{\Lambda}^u_\lambda$ 
(cf. paragraph \ref{subsub:loclang} for this notation), 
where $u\in N_G(\mathbb{B})$ 
corresponds to the various inner forms of $G$ via Kottwitz's Theorem (here we 
identify $N_G(\mathbb{B})$ with the character group $\Omega/{(1-\theta)\Omega)}$
of the center ${}^LZ$ of ${}^LG$ (cf. subsection \ref{subsub:loclang})). 
\end{cor}
In \cite{OpdSol}, \cite{CiuOpd2} the discrete series characters of arbitrary affine Hecke algebra 
$\mc{H}$ are parameterized differently. 
This point of view will be quite fruitful for the applications we have in mind, especially for unequal 
parameter Hecke algebras, and this is what we will discuss next.

Let $\mathbb{L}$ be a the ring of complex Laurent 
polynomials over the natural maximal algebraic torus of (possibly unequal) Hecke parameters associated 
to the underlying root datum of $\mc{H}$ (this ring was denoted by $\Lambda$ in \cite{OpdSol}). 
Explicitly, $\mathbb{L}$ is the ring of Laurent polynomials in invertible indeterminates $v_{\alpha,\pm}$ 
(with $\alpha\in R_0$) subject to the conditions $v_{\alpha,\pm}=v_{w(\alpha),\pm}$ for 
all $\alpha\in R_0$ and $w\in W_0$, and $v_{\alpha,-}=1$ iff $1-\alpha^\vee\in W\alpha^\vee$).
We give $\mathbf{L}$ the structure of a $\mathbb{L}$-algebra by putting $v_{\alpha,\pm}=v^{m_\pm(\alpha)}$.
Then have a generic affine Hecke algebra $\mc{H}_{\mathbb{L}}$ defined over $\mathbb{L}$, 
and $\mc{H}=\mathbf{L}\otimes_{\mathbb{L}}\mc{H}_{\mathbb{L}}$.

Let $\mathbb{V}$ be the space of points of the maximal spectrum of $\mathbb{L}$ such that 
for all $\underline{\mathbf{v}}=(\mathbf{v}_{\alpha,\pm})\in\mathbb{V}$ we have 
$\mathbf{v}_{\alpha,\pm}:=\underline{\mathbf{v}}(v_{\alpha_\pm})\in\mathbb{R}_+$
for all $\alpha\in R_0$. Let $\mathbf{V}$ be the space of points $\mathbf{v}\in\mathbb{R}+$ 
of the maximal spectrum of $\mathbf{L}$. Thus we have an embedding $\mathbf{V}\hookrightarrow \mathbb{V}$, 
and $\mathbf{v}_{\alpha,\pm}=\mathbf{v}^{m_\pm(\alpha)}$. 

It was shown in \cite[Theorem 3.4, Theorem 3.5]{OpdSol} that an irreducible discrete series character 
$\delta$ of $\mc{H}$ is the specialization $\delta=\widetilde{\delta}_\mathbf{L}$ at 
$\mathbf{L}$ of a \emph{generic family} of irreducible discrete series characters 
$\widetilde{\delta}$ of $\mc{H}_\mathbb{L}$ which is well defined in an open neighborhood 
of $(\mathbf{v}^{m_\pm(\alpha)})$. Each discrete series character of $\mathbb{H}$ 
can thus be locally deformed in the parameters $m_{\pm}(\alpha)$. We will write 
such deformation as $m_{\pm}^\ep(\alpha)=m_{\pm}(\alpha)+\ep_{\pm}(\alpha)$, 
where $\ep_{\pm}(\alpha)$ vary in a sufficiently small open interval $(-\ep,\ep)\subset \mathbb{R}$. 
The irreducible discrete series representations of affine 
Hecke algebras with arbitary positive parameters $(\mathbf{v}^{m_\pm^\ep(\alpha)})$, 
so in particular of affine hecke algebras  
of the form $\mc{H}'=\mc{H}^{u,\mf{s},e}$, have been classified in \cite{OpdSol2} from the point 
of view of deformations over the ring $\mathbb{L}$. 

In the case of non simply laced irreducible root systems $R_0^{u,\mf{s},e}$, the classification 
of \cite{OpdSol2} is in terms of the \emph{generic central character map} $gcc$ 
which associates to any irreducible discrete series character a $W_0:=W(R_0^{u,\mf{s},e})$-orbit 
$W_0r$ of \emph{generic residual points}. A generic residual point 
$r\in T(\mathbb{L})$ is an $\mathbb{L}$-valued point where $\mu$ has 
maximal pole order. The set of such points is finite and invariant for the action of $W_0$.

We can choose the generic residual point $r$ always of the form (see  \cite[Theorem 8.7]{OpdSol2})
$r=s(e)\exp(\xi)\in T(\mathbb{L})$, where $e$ runs over s complete set of representatives of the 
$\Gamma:=Y/Q(R_1^\vee)$-orbits of vertices of the spectral diagram $\Sigma_s(\mc{R}^m)$, 
and $s(e)$ is the corresponding vertex of the dual fundamental alcove 
$C^\vee\subset \mathbb{R}\otimes Y$. This gives rise to a semisimple subroot system 
$R_{s(e),1}\subset R_1$, with the parameter function $m_{\pm}^{\ep,e}(\alpha)$ obtained by 
restriction of the parameters $m_{\pm}^{\ep}(\alpha)$ to the sub diagram of the geometric 
diagram $\Sigma_s(\mc{R}^m)$ obtained by omitting the vertex $e$, and replacing the group 
of diagram automorphisms $\Gamma$ by the isotropy subgroup $\Gamma_e\subset \Gamma$.
Finally,  $\xi$ denotes a \emph{linear residual point} (see \cite[Section 6]{OpdSol})
for the generic graded affine Hecke algebra defined by $R_{s(e),1}$ and $m_{\pm}^{\ep,e}(\alpha)$.
Thus $\xi$ depends linearly on parameters $m_{\pm}^{\ep,e}(\alpha)$ of the graded affine Hecke algebra. 
The specialization $W_0r_0$ of the orbit $W_0r$ at $\ep_{\pm}=0$  
is a 
confluence of finitely many orbits of generic residual points $W_0r_i$, with $i\in I_{W_0r_0}$ 
(some finite set which one can explicitly determine, see \cite[Section 6]{OpdSol}) from the explicit 
classification of linear residual points). 
For each irreducible discrete series character $\delta$ with central character $W_0r_0$, its 
unique continuous deformation $\tilde{\delta}$, locally in the Hecke parameters, has a central 
character of $\tilde{\delta}$ equal to one of the orbits $W_0r_i$ of generic residual points which 
specialize at $\ep_\pm=0$ to $W_0r_0$. This defines \cite{OpdSol2} 
a unique ``generic central character" map 
$gcc$" from the 
set of irreducible discrete series at central character $W_0r_0$ to the set $I_{W_0r_0}$ 
turns out to be bijective with the single exception of the orbit of generic residual points denoted 
$f_8$ of $F_4$ (which is one of the three generic residual points which come together 
at the weighted Dynkin diagram of the minimal unipotent orbit of $F_4$). In this 
case, there are \emph{two} generic discrete series associated to $f_8$. 
The map $gcc$ also works well for the affine Hecke algebras of type $\textup{D}_n$, 
by relating this case with affine Hecke algebras of type $\textup{C}_n(0,0)[q]$. We refer 
to \cite{OpdSol2} for details. The cases of type $\textup{E}_n$ have to be treated 
in a different way (classically as in \cite{KL}, or see \cite{CiuOpd2}).

We would like to match up these two ways of parameterizing 
the discrete series characters in the packet $\Pi_\lambda$ (with $[\lambda]\in\Lambda^e$). 
This will be important for the purpose of proving Theorem \ref{thm:HII}. 
Indeed, recall that the formal degree of $\tilde{\delta}$ was shown to be continuous in terms of 
$(\ep_{\pm}(\alpha))$ \cite[Theorem 2.60, Theorem 5.12]{OpdSol2}, and that it was given 
explicitly by the product formula \cite[Theorem 5.12]{OpdSol2}. 
In addition it is known \cite{CiuOpd2} that the formal degree of a generic family of discrete series characters 
is a product of an explicitly known rational constant and an explicit rational function of the parameters 
$v_{\alpha,\pm}$. This enables us to compute 
the rational constants in the formal degree of any discrete series character $\delta$ of 
any normalized unipotent affine Hecke algebra $\mc{H}'=\mc{H}^{u,\mf{s},e}$ by a limit 
argument, using
the generic family $\tilde{\delta}$ and its formal degree. Motivated by this, let us 
consider in more detail our parameterization with this comparison in mind.
\subsection{The parameterization for classical types}\label{par:1}
For $\textup{PGL}_n$ this was discussed in paragraph \ref{subsub:A}.

For classical groups (other than type \textup{A}) everything is governed by  Hecke algebras 
of the form $\textup{C}_n(m_-,m_+)[q^\mathfrak{b}]$, via the spectral correspondences 
of certain spectral covering morphisms. These correspondences can be made explicit 
by restriction and induction operations with respect to subalgebras of equal rank, and this 
will be discussed in detail when treating the various cases of classical type. In this paragraph 
we will concentrate on the principles for Hecke algebras of type  $\textup{C}_n(m_-,m_+)[q^\mathfrak{b}]$.

The corresponding graded affine Hecke 
algebras have a root system of type $R_{s(e),0}$ of type $\textup{B}_{n_-}\times \textup{B}_{n+}$, 
where $n_-+n_+=n$, and graded Hecke algebra parameters $(m_-,m_+)$.  
A $W_{s(e),0}$-orbit of generic linear residual points is given 
by the $W_{s(e),0}=W_0(\textup{B}_{n_-})\times W_0(\textup{B}_{n_+})$-orbit of an ordered 
pair $(\xi_-,\xi_+)$, where $\xi_\pm$ is a vector of affine linear functions of $\ep_\pm$
such that $\xi_\pm(\ep_\pm)$ is the vector of contents 
of the boxes of the ``$m_\pm^{\pm\ep}$-tableau" of a partition $\pi_\pm$ of $n_\pm$ 
with the property that 
at $\ep_\pm=0$, the extremities \cite{Slooten} of the resulting $m_\pm$-tableau 
are all distinct. 

At the ``special" parameter value $m_\pm$ (integral or half-integral), the  
$W_0(\textup{B}_{n_\pm})$-orbit of the vector $\xi_\pm(0)$ is an orbit of linear   
residual points at parameter $m_\pm$. 
By a result of Slooten \cite{Slooten} (also see \cite{OpdSol2}), the set of such orbits of linear residual points 
is in bijection with the set of  ``unipotent partitions"  
$u_\pm=u_{\xi_\pm}$ of $N_\pm:=2n_\pm+(m_\pm-\frac{1}{2})(m_\pm+\frac{1}{2})$ of 
length $l\geq m_\pm-\frac{1}{2}$, 
consisting of distinct even parts (if $m_\pm$ is half integral),  
or a partition $u_\pm$ of $N_\pm:=2n_\pm+m^2_\pm$ of length $l_\pm\geq m_\pm$, 
having distinct odd parts (if $m_\pm$ is integral). 
Let us call such a pair $u=(u_-,u_+)$ of partitions a \emph{distinguished unipotent partition of 
type $m=(m_-,m_+)$}. 
This set of partitions $\pi_\pm$ (and thus 
the set of $W_0(\textup{B}_{n_\pm})$-orbits of \emph{generic} residual points $W_0(\textup{B}_{n_\pm})\xi_\pm$  
which are confluent at $\ep_\pm=0$ to the same orbit $W_0(\textup{B}_{n_\pm})\xi_\pm(0)$) was parameterized by 
Slooten in terms of the so-called $m_\pm$-symbols $\sigma_\pm$. 
These symbols are certain Lusztig-Shoji symbols with defect $D_\pm:=\lceil m_\pm \rceil$
(see \cite{Slooten}, \cite[Definition 6.9]{OpdSol2}). 
Slooten's symbols \cite[Definition 6.11]{OpdSol2} 
attached to orbits  $W_0(\textup{B}_{n_\pm})\xi_\pm(0)$ all have the same parts, but they 
are distinguished from each other by the selection of the parts which appear in the top row. 
\begin{rem}\label{rem:number}
In particular, there exists $\binom{2l+d}{l}$ such symbols, except when $u_\pm$ contains $0$ 
as a part (which may happen if $m_\pm$ is half integral), 
in which case there are $\binom{2l+d-1}{l}$ such symbols (since $0$ must appear in 
the top row in such case).
\end{rem}

Let us call these Slooten's symbols associated to $u_\pm$ at parameter ratio $m_\pm$ 
the $u_\pm$-symbols of type $m_\pm$.  
The point of view in \cite{Slooten} is that of the deformation picture sketched above: The 
symbols are ``confluence data", and each such symbol represents an orbit of generic residual points  
which evaluates to $W_0(\textup{B}_{n_\pm})\xi_\pm(0)$ 
at the parameter ratio $m_\pm$. It is convenient to formulate the results of 
this ``abstract" classification in terms of abstract packets of representations associated to central 
characters of the discrete series of the minimal objects of 
$\mf{C}_{class}^{\textup{I}}$,  $\mf{C}_{class}^{\textup{II}}$, 
$\mf{C}_{class}^{\textup{III}\cup\textup{V}}$ and $\mf{C}_{class}^{\textup{IV}\cup\textup{VI}}$:
\begin{prop}\label{prop:packet}
Let $\mc{H}'=\textup{C}_n(m_-,m_+)[q^\mathfrak{b}]$ be an affine Hecke algebra which appears 
as object of $\mf{C}_{class}^X$ 
for $X=\textup{I}, \textup{II}, \textup{III}$ or $\textup{IV}$. 
Let $u=(u_-,u_+)$ 
be an ordered pair of distinguished unipotent partitions 
corresponding to a central character $W_0r'$ of a discrete series character 
of $\mc{H}'$, with $u_\pm$ of type $m_\pm$. 
Let $\phi:\mc{H}'\leadsto\mc{H}$ be the translation STM to a minimal object 
$\mc{H}$ of $\mc{C}_{class}^X$. 
Let $\phi_Z(W_0r')=W_0r$. 
Then the ordered pair of distinguished unipotent partitions corresponding to $W_0r$ 
is equal to $u$ as well! The set of irreducible discrete series characters of $\mc{H}'$ in 
$\Pi_{W_0r'}$ is parameterized 
by ordered pairs $(\sigma_-,\sigma_+)$, where $\sigma_\pm$ is a $u_\pm$-symbol 
of type $m_\pm$.
Let $\Pi_{W_0r}^Y$ be the disjoint union of all sets of irreducible discrete series characters of the objects 
of $\mf{C}_{class}^Y$, with $Y=\textup{I}, \textup{II}, \textup{III}\cup  \textup{V}$ or $\textup{IV}\cup \textup{VI}$, 
which are assigned to $W_0r$ in this way via the translation STMs.  
The extraspecial STM's contribute $1$, $2$ or $4$ elements to $\Pi_{W_0r}^Y$ (see 
Proposition \ref{prop:centra}), 
for each discrete series 
central character $W_0r$ of $\mc{H}$.  
\end{prop}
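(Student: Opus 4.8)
The plan is to reduce the Proposition to the combinatorics of generic residual points and Slooten symbols for affine Hecke algebras of type $\textup{C}_n$, as developed in \cite{Slooten} and \cite{OpdSol2}. By Proposition \ref{prop:clasuniq} the translation STM $\phi:\mc{H}'\leadsto\mc{H}$ is a composition of basic translation STMs and elements of the group $D_8$ of spectral isomorphisms; since $D_8$ acts on pairs $(u_-,u_+)$ and on the $u_\pm$-symbols through the evident symmetries (interchange of $\pm$, sign changes of the parameters), it suffices to treat a single basic translation step, and after applying $D_8$ we may assume it translates a parameter $m_+>0$. Using the explicit representing morphism $\phi^{(m_-,m_+)}_{(r,+),T}$ from \ref{subsub:Cn}, I would compute $\phi_Z(W_0r')$ by writing $W_0r'$ as the orbit of $r'=s(e)\exp(\xi_-)\exp(\xi_+)$ as in \ref{subsub:partition} and \cite[Theorem 8.7]{OpdSol2}, with $\xi_\pm$ a generic linear residual point of the $\textup{B}_{n_\pm}$-factor at parameter $m_\pm$. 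The morphism fixes the first $r$ coordinates and appends the string $(v^{\mathfrak{b}},v^{3\mathfrak{b}},\dots,v^{2\mathfrak{b}(m_+-1)})$ (or $(1,q^{\mathfrak{b}},q^{\mathfrak{b}},q^{2\mathfrak{b}},q^{2\mathfrak{b}},\dots,q^{\mathfrak{b}(m_+-1)})$ when $m_+\in\mathbb{Z}$). The heart of the matter is the identity that appending exactly this string to $\xi_+$ yields a generic linear residual point $\xi_+'$ for the larger $\textup{B}$-factor at the translated parameter whose Slooten partition coincides with that of $\xi_+$; equivalently, the appended string records precisely the boxes whose contents change when the tableau of $u_+$ is re-read at the old parameter instead of the new one, the size identity $2n_++(m_+-\tfrac12)(m_++\tfrac12)=2(n_++m_+-\tfrac12)+(m_+-\tfrac32)(m_+-\tfrac12)$ (and its integral analogue) being its shadow at the level of the numbers $r_\pm$. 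This gives that $\phi_Z(W_0r')$ carries the same pair $(u_-,u_+)$, recorded now at the translated parameters, and that $\phi_Z$ restricts to a bijection, compatible with Slooten symbols, between the generic residual points confluent at $\varepsilon=0$ to $W_0r'$ and those confluent to $\phi_Z(W_0r')$; composing over the chain of basic steps yields the first assertion together with the matching of symbol parametrizations under $\phi$.

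For the parametrization of the discrete series at $W_0r'$ by ordered pairs of symbols I would invoke \cite[Theorem 3.4, Theorem 3.5]{OpdSol}: the irreducible discrete series of $\textup{C}_n(m_-,m_+)[q^\mathfrak{b}]$ with central character $W_0r'$ are in bijection with the orbits of generic residual points confluent to $W_0r'$ at $\varepsilon=0$, and the single exception to this (the orbit $f_8$) occurs only for $F_4$, hence never for a classical object of $\mf{C}_{class}^X$. Decomposing via the chosen vertex $e$, the associated generic graded data is of type $\textup{B}_{n_-}\times\textup{B}_{n_+}$ with parameters $(m_-,m_+)$, so the confluent generic residual points are ordered pairs $(\xi_-,\xi_+)$ of generic linear residual points, which by Slooten's theorem (\cite{Slooten}, \cite[Section 6]{OpdSol}) are exactly the ordered pairs $(\sigma_-,\sigma_+)$ of $u_\pm$-symbols of type $m_\pm$. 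Together with the first step this matches the parametrization of the discrete series of $\mc{H}'$ at $W_0r'$ with that of $\mc{H}$ at $W_0r$.

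For the extraspecial contribution to $\Pi_{W_0r}^Y$, note that for an object $\textup{C}_r(m_-,m_+)[q^2]$ of $\mf{C}_{class}^{\textup{V}}$ or $\mf{C}_{class}^{\textup{VI}}$ the parameters $m_\pm\in\mathbb{Z}\pm\tfrac14$ are never special in Slooten's sense, so every generic residual point already has distinct extremities and (again by \cite[Theorem 3.4, Theorem 3.5]{OpdSol}) carries a unique irreducible discrete series character; thus the discrete series of objects of $\mf{C}_{class}^{\textup{V}}$ (resp. $\mf{C}_{class}^{\textup{VI}}$) are in bijection with the generic residual points of $\mf{C}_{class}^{\textup{V}}$ (resp. $\mf{C}_{class}^{\textup{VI}}$). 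By the analysis recalled at the end of the proof of Proposition \ref{hyp} (following \cite{Lu1}, \cite{Lu1.5}), the extraspecial STMs set up a bijection between these generic residual points and the discrete series central characters $W_0r$ of the least object $\mc{H}$ of $\mf{C}_{class}^{\textup{III}}$ (resp. $\mf{C}_{class}^{\textup{IV}}$): the extraspecial STM sends such a point to the residual point of $\mc{H}$ attached to the pair $(u_-,u_+)$ arising from the pair of unipotent orbits of the relevant endoscopic subgroup $G_s\subset G$ of maximal rank, and each $W_0r$ is attained exactly once because the cuspidal local system on the corresponding unipotent orbit is unique. Hence exactly one discrete series character of an object of $\mf{C}_{class}^{\textup{V}}$ (resp. $\mf{C}_{class}^{\textup{VI}}$) is assigned to each such $W_0r$, which is the last assertion.

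The step I expect to be the main obstacle is the combinatorial identity invoked in the first paragraph: one must verify, uniformly over the residue class of $m_+$ modulo $\mathbb{Z}$ and over both the ``distinct even parts'' and ``distinct odd parts'' flavours of distinguished unipotent partitions, that the explicit coordinate string appended by a basic translation STM is exactly the string realizing Slooten's correspondence at the shifted parameter. This is a finite but delicate check; it is cleanest to carry it out in the language of the first-column hook-length sets ($\beta$-sets) of the partitions, where a basic translation step amounts to inserting a fixed arithmetic progression and the invariance of the underlying partition becomes transparent.
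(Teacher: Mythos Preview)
Your approach is sound and in fact more explicit than what the paper does. The paper does not give a self-contained proof of this Proposition: the first two assertions are treated as a summary of the classification results recalled in the paragraphs immediately preceding the statement (based on \cite{Slooten} and \cite[Section 6, Theorem 8.7]{OpdSol2}), and the final assertion about the extraspecial STMs is explicitly deferred to \cite{FO} (``The final statement of Proposition \ref{prop:packet} will be proved in \cite{FO}''). So your reliance on the sketch at the end of Proposition~\ref{hyp} for the extraspecial part is on exactly the same footing as the paper's own treatment --- both point to \cite{FO}.

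The combinatorial identity you single out as the main obstacle is indeed the heart of the matter, and it is precisely the content of the function $\Delta_\pm^{m_\pm}(x)=H^{\delta_\pm}_{u_\pm}(x)-H^{m_\pm}_{u_\pm}(x)=\max(0,m_\pm-x)$ (for $x\geq 1$) that the paper uses later in the proof of Theorem~\ref{thm:ratSPSO}. That formula says exactly that passing from parameter $m_\pm$ to the minimal parameter $\delta_\pm$ changes the content multiset by a fixed staircase, and a single basic translation step removes one layer of that staircase --- which is the string appended by $\phi^{(m_-,m_+)}_{(r,+),T}$. So your $\beta$-set verification is equivalent to, and cleanly packaged by, that $\Delta$-computation; you could simply cite \cite[Proposition 6.6]{OpdSol2} and the jump description to make this step rigorous rather than redoing it from scratch.

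One small imprecision: you write that $\phi_Z$ gives a bijection ``compatible with Slooten symbols'' between the confluent generic residual points on the two sides. Be careful here --- the $u_\pm$-symbols of type $m_\pm$ and those of type $m_\pm\mp 1$ (or $m_\pm\mp 2$) are different combinatorial objects (different defects), so what you actually get is a bijection of the underlying sets of confluent generic residual points, not an identification of symbols. The Proposition only asserts the parametrization of $\Pi_{W_0r'}$ by symbols of type $m_\pm$, which is Slooten's theorem directly and does not require matching across the STM.
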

In the context an unramified classical group of adjoint type $G$ the Hecke algebras 
of the form $\mc{H}^{u,\mf{s}}$ are direct sums of normalized extended affine 
Hecke algebras $\mc{H}^{u,\mf{s},e}$ which are spectral coverings of objects of 
$\mf{C}_{class}$. In particular, an unramified discrete Langlands parameter $\lambda$ for 
$G$ determines (via the comparison of the Kazhdan-Lusztig classification and the 
classification of discrete series representations as in \cite{OpdSol2}) an orbit of 
residual points $W_0r$ for $\mc{H}^{IM}(G)$.
In turn, via the morphism \cite[Corollary 5.5]{Opd4} associated to this spectral covering 
map, this determines a pair $(u_-,u_+)$ of distinguished unipotent partitions in the sense of
Proposition \ref{prop:packet}, for an appropriate pair of parameters $(m_-,m_+)$ of the form 
$(m_-,m_+)=(0,\frac{1}{2}),\,(1,\frac{1}{2}),\,(\frac{1}{2},\,\frac{1}{2}),\,(0,0),\,(0,1)$ or $(1,1)$  
(see paragraph \ref{subsub:Cn}).
Working this out amounts to  determining the multiplicities and types of the normalized extended 
affine Hecke algebras $\mc{H}^{u,\mf{s},e}$, and the branching rules for the algebra inclusions 
associated to the spectral covering maps to the relevant objects of $\mf{C}_{class}$.
This is not difficult, and we can check 
that in all classical cases the STM $\Phi$ of Theorem \ref{thm:unique} gives rise to 
packets $\Pi_{W_0r}$ of discrete series characters whose members are parameterized by 
pairs of Slooten's symbols or come from an extraspecial STM (see paragraphs \ref{subsub:III+IV},
\ref{subsub:II}, \ref{ex:ex2} for more details).
 
Slooten's symbols are known to correspond with Lusztig's symbols \cite{Lu1.5}   
if one uses Lusztig's arithmetic/geometric correspondences for the ``geometric" graded affine Hecke 
algebras in the following sense. Let the central character $W_0r_0$ of a discrete series character 
$\pi$ of $\textup{C}_n(m_-,m_+)[q^\mathfrak{b}]$  
be given by the pair of unipotent partitions $(u_-,u_+)$ (where $u_\pm$ has at least 
$\lfloor m_\pm\rfloor$ parts). The set of discrete series characters with central character $W_0r_0$ 
is parameterized by the set of generic central characters $W_0r$ (see \cite{OpdSol2}) which evaluate 
to $W_0r_0$, via the map $gcc$.
In turn, these generic central characters are parameterized 
by pairs $\sigma_-,\sigma_+)$ of the Slooten symbols (with defects 
$D_\pm=\lceil m_\pm\rceil$) covering $(u_-,u_+)$.
By the results of \cite{CK}, 
\cite{Lus3}, \cite[Section 4]{Kat2}, 
the top graded part with respect to Slooten's 
functions $a_{m_\pm}$ \cite{Slooten} of the corresponding graded Hecke algebra module is the irreducible 
$W(\textup{C}_{n_-})\times  W(\textup{C}_{n_+})$-module corresponding to $(\sigma_-,\sigma_+)$,  
via the generalized Springer correspondence of \cite{LuSpa}. 
Via Proposition \ref{prop:packet}, the spectral correspondences  
of the standard STMs to $\mc{H}^{IM}(G)$ together exhaust the set of pairs of Slooten symbols 
$(\sigma_-,\sigma_+)$. 

The same is known to be true for the additional contributions to packet $\Pi_{W_0r}^Y$ coming 
from the extraspecial STMs (see \cite{CK}, \cite[Section 4]{Kat2}). These remarkable 
facts should be considered as an aspect of Langlands duality. Slooten's symbols are defined 
entirely in terms of affine Hecke algebras (describing the set of orbits of generic residual points 
specializing to the central character of a discrete series representation), whereas Lusztig's symbols 
describe cuspidal local systems on an associated nilpotent orbit of ${}^LG$.  
Comparing this with Theorem \ref{thm:unique} and Proposition \ref{prop:packet}
we see that our parametrization of $\Pi_{W_0r}$ matches with Lusztig's assignment \cite{Lu4}, \cite{Lu6} 
of unramified Langlands parameters to the members of $\Pi_{W_0r}$.

We see that the defect $(D_-,D_+)$ of an unordered pair $(\sigma_-,\sigma_+)$ of
 $u$-symbols 
for a member of $\Pi_{W_0r}$ (corresponding to a pair of distinguished unipotent 
partitions $(u_-,u_+)$)
determines the parameters of the affine Hecke algebra from 
which it originates under the STM $\Phi$. This determines the Bernstein component to which 
the corresponding discrete series character belongs (up to the action of $X^*_{un}(G)$).

The final statement of Proposition \ref{prop:packet} will be proved in \cite{FO}. The 
additional contributions from the extraspecial STMs to the packets of unipotent discrete 
series of $\textup{PCSp}_{2n}$,  $\textup{P}(\textup{CO}^0_{2n})$ and $\textup{P}((\textup{CO}^*_{2n})^0)$ 
correspond to the fact that one takes the centralizers of the discrete Langlands parameter in the Spin group.
This gives rise to a nontrivial central extension by a $C_2$ of the centralizer in $\textup{SO}_{2n}$ (or 
$\textup{SO}_{2n+1}$ respectively). 
These can be be described in detail in terms of central products of groups of type $D_8$ (the dihedral 
group with $8$ elements), $Q_8$, $C_2^2$ or $C_4$ (see \cite{Lu1.5}) and among those 
groups we typically find extraspecial $2$-groups. The precise type of the groups that arise  
is complicated, but we are merely interested the the number of their irreducible 
representations and their dimensions which is less difficult,  
following the description in \cite{Lu1.5} (and also using \cite{Reed} for the 
twisted cases) one obtains: 
\begin{prop}\label{prop:centra}
Let $\lambda$ be an unramified Langlands parameter for 
discrete series  for $\textup{PCSp}_{2n}$ (with $n\geq 2$),  
 $\textup{P}(\textup{CO}^0_{2n})$ (with $n\geq 4$),  
 and $\textup{P}((\textup{CO}^*_{2n})^0)$ (with $n\geq 4$). Then $\lambda$ determines 
an ordered pair $(u_-,u_+)$ of distinguished unipotent partitions  for the parameters 
$m=(m_-,m_+)=(0,1)$ 
(if  $G=\textup{PCSp}_{2n}$), for $m=(0,0)$ (if $G=\textup{P}(\textup{CO}^0_{2n})$) or for 
$m=(1,1)$ (if $G=\textup{P}((\textup{CO}^*_{2n})^0)$).
Let $l=(l_-,l_+)$, with $l_\pm$ the number of parts of $u_\pm$.
Thus  $u_\pm$ is a partition with distinct odd parts, and $|u|:=|u_-|+|u_+|=2n$ with $l_\pm$ 
both even if $G=\textup{P}(\textup{CO}^0_{2n})$; 
$|u|=2n+1$ with $l_-$ even and $l_+$ odd if $G=\textup{PCSp}_{2n}$; 
and $|u|=2n$ with $l_\pm$ both odd if $G=\textup{P}((\textup{CO}^*_{2n})^0)$.
Let us write $2^{(2a)+b}$ for a $2$-group of size $2^{2a+b}$ which has $2^{2a+b-1}$ 
onedimensional irreducible representations, and $2^{b-1}$ irreducibles of dimension $2^{a}$.

If $u_-$ is the zero partition, then $A_\lambda$ (as defined in paragraph \ref{subsub:loclang}) 
is of type  $2^{(l_+-1)+1}$ if 
$l_+$ is odd, and of type $2^{(l_+-2)+2}$ if $l_+$ is even.
If $u_+$ and $u_-$ are both nonzero, then  
$A_\lambda$ is of type  $2^{(l_-+l_+-4)+3}$ if $l_\pm$ are both even, 
$A_\lambda$ is of type  $2^{(l_-+l_+-3)+2}$ if $l_\pm$ are unequal modulo $2$, 
and $A_\lambda$ is of type  $2^{(l_-+l_+-2)+1}$ if $l_\pm$ are both odd. 
\end{prop}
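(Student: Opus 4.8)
The plan is to reduce Proposition \ref{prop:centra} to an explicit computation of the centralizer $C_{G^\vee}(\lambda)$ inside the relevant spin group, and then to identify the resulting finite $2$-group and count its irreducible representations by the standard structure theory of almost extraspecial $2$-groups, following the description of these centralizers in \cite{Lu1.5} (and \cite{Reed} in the quasi-split non-split even orthogonal case). First I would make $\lambda$ explicit. As $G$ is $\textup{PCSp}_{2n}$, $\textup{P}(\textup{CO}^0_{2n})$ or $\textup{P}((\textup{CO}^*_{2n})^0)$, all of adjoint type, the connected dual $G^\vee$ is $\textup{Spin}_{2n+1}$ or $\textup{Spin}_{2n}$ (with the outer automorphism $\theta$ entering in the non-split case), so $\mc{A}_\lambda=A_\lambda=\pi_0(C_{G^\vee}(\lambda))$. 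The geometric part of a discrete unramified parameter is a self-dual representation $V$ of $\langle\textup{Frob}\rangle\times\textup{SL}_2(\mathbb{C})$ of the appropriate orthogonal type; the only self-dual unramified characters of $\langle\textup{Frob}\rangle$ are the trivial one $\chi^+$ and the order-two one $\chi^-$ (these being the ``$\pm$''), and $\textup{Sym}^{d-1}$ is orthogonal for $d$ odd. Finiteness of $A_\lambda$ then forces $V$ to decompose multiplicity-freely as $\bigoplus_{d\in u_+}(\chi^+\otimes\textup{Sym}^{d-1})\oplus\bigoplus_{d\in u_-}(\chi^-\otimes\textup{Sym}^{d-1})$ with all parts of $u_\pm$ distinct and odd, and the parities of $l_-,l_+$ record the orthogonal type and the determinant of $\lambda(\textup{Frob})$, hence which of the three groups we are in. Here I would invoke Proposition \ref{prop:packet} and paragraph \ref{subsub:Cn} to identify these $(u_-,u_+)$ with exactly the pairs arising, for $(m_-,m_+)\in\{(0,1),(0,0),(1,1)\}$, from the STM $\Phi$ of Theorem \ref{thm:unique} via paragraph \ref{subsub:loclang}.

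For each distinct part of $u_+$ and of $u_-$ one has the involution of $O(V)$ acting by $-1$ on the corresponding isotypic summand of $V$; since these summands are odd-dimensional, these involutions have determinant $-1$, and $C_{SO(V)}(\lambda)$ is their even-weight subgroup, an elementary abelian $2$-group of rank $l_-+l_+-1$ (or $l_+-1$ when $u_-=\emptyset$). The crucial subtlety is that $C_{G^\vee}(\lambda)$ is \emph{not} merely the preimage of $C_{SO(V)}(\lambda)$ in $\textup{Spin}$: a class in $C_{SO(V)}(\lambda)$ lifts to an element centralizing $\lambda$ only when a certain Clifford-theoretic (``spinor norm'') invariant is trivial, and this defines a character of $C_{SO(V)}(\lambda)$ which is nontrivial exactly when $\lambda(\textup{Frob})$ acts nontrivially on $V$, that is exactly when $u_-\neq\emptyset$ (both $\chi^\pm$ occurring). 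Hence $\mc{A}_\lambda$ is a central extension by $\langle z\rangle$, with $z$ the nontrivial element of $\ker(\textup{Spin}\to SO)$, of the kernel of that character, so that $|\mc{A}_\lambda|=2^{l_-+l_+-1}$ when both $u_\pm$ are nonempty and $|\mc{A}_\lambda|=2^{l_+}$ when $u_-=\emptyset$.

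The isomorphism type of this extension is pinned down by the standard Clifford-algebra identities for the squares and commutators in $\textup{Spin}$ of the lifts of the generating involutions: the commutators of these lifts all equal $z$ (because the relevant dimensions $d_i$ are odd, so the products $d_id_j$ are odd), so $[\mc{A}_\lambda,\mc{A}_\lambda]=\langle z\rangle$ has order two and $\mc{A}_\lambda$ is almost extraspecial --- a central product of copies of $D_8$, $Q_8$, $C_2^2$ and $C_4$, as in \cite{Lu1.5} (and \cite{Reed} for ${}^2\textup{D}_n$, where one centralizes in $\textup{Spin}_{2n}\rtimes\langle\theta\rangle$ and intersects with the identity component). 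Writing $\mc{A}_\lambda$ as a group of type $2^{(2a)+b}$, the exponent $2a$ is the rank of the induced alternating commutator pairing on $\mc{A}_\lambda/\langle z\rangle$, while $2^b=|Z(\mc{A}_\lambda)|$ accounts for $z$ together with the radical of that pairing --- a computation I would carry out case by case following \cite{Lu1.5}, with the radical dimension $0$, $1$ or $2$ governed by the parities of $l_-,l_+$ and by the order ($2$ or $4$) of the central lift of $-\mathrm{Id}_V$ when it occurs. Matching the three cases $l_\pm$ both even / of unequal parity / both odd, together with the case $u_-=\emptyset$ with $l_+$ odd or even, against the orders $|\mc{A}_\lambda|$ found above then forces $(a,b)$ to be the values asserted, and the claimed numbers of irreducible representations and their dimensions follow from the elementary fact that a $2$-group of type $2^{(2a)+b}$ has $2^{2a+b-1}$ linear characters and $2^{b-1}$ further irreducibles, each of dimension $2^a$.

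The main obstacle will be bookkeeping rather than anything conceptual: keeping precise track of the determinant-one constraint, of the nontriviality of the spin (spinor norm) obstruction character exactly when $u_-\neq\emptyset$, of how the center of $\textup{Spin}_{2n}$ (as opposed to that of $SO_{2n}$) sits inside $C_{G^\vee}(\lambda)$ so that $\mc{A}_\lambda$ itself, and not a quotient of it, is computed, and of the outer twist in the quasi-split non-split case. A sign error in any one of these shifts $a$ or $b$ by one, which is why I would anchor the whole computation to the independently known structure of these centralizers in \cite{Lu1.5} and \cite{Reed}.
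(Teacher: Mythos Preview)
Your proposal is correct and follows essentially the same approach as the paper. The paper itself gives no detailed proof of this proposition: the sentence immediately preceding it simply says that these centralizers ``can be described in detail in terms of central products of groups of type $D_8$, $Q_8$, $C_2^2$ or $C_4$ (see \cite{Lu1.5}) \dots\ following the description in \cite{Lu1.5} (and also using \cite{Reed} for the twisted cases) one obtains'' the proposition. Your outline is exactly an unpacking of that citation: write the parameter as a multiplicity-free sum of $\chi^\pm\otimes\textup{Sym}^{d-1}$ with $d$ odd, compute the centralizer first in $SO(V)$ as an elementary abelian $2$-group of the correct rank, then analyze the lift to $\textup{Spin}$ via the Clifford-theoretic commutator with $\lambda(\textup{Frob})$, and finally read off $(a,b)$ from the rank and radical of the resulting alternating pairing. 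The orders you obtain ($2^{l_-+l_+-1}$ when both $u_\pm\neq\emptyset$, $2^{l_+}$ when $u_-=\emptyset$) and the case analysis on the parities of $l_\pm$ match the asserted types, and your identification $\mc{A}_\lambda=A_\lambda$ is justified since $G$ is adjoint, so $G^\vee=(G^\vee)_{sc}$.
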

\subsection{The parameterization for split exceptional groups}\label{par:2}
For split exceptional groups the major work to match up the irreducible discrete series characters 
of affine Hecke algebra summands of $\mc{H}_{uni}(G)$ with Lusztig's parameters has been done 
by Reeder in \cite{Re} by computing the $W$-types explicitly. With this parameterization,  
the main Theorem of \cite{Re} is known to be a special case of the conjecture 
\cite[Conjecture 1.4]{HII} (as discussed loc. cit.), which takes a lot of work out of our hands.

For the types $\textup{E}_6$ and $\textup{E}_7$ we need in addition to discuss the contribution of the nontrivial inner 
forms, which we take up in the next two paragraphs.
\subsubsection{Inner forms of the split adjoint group $G$ of type $\textup{E}_6$} The inner forms of $G$ 
are parameterized by $u\in\Omega\approx C_3$. We have $X_{un}^*(G)=\Omega^*$. 
For $u=1$ we have the following $X_{un}^*(G)$ orbits of unipotent types: $\mf{s}^1_\emptyset$, $\mf{s}^1_{\textup{D}_4}$, 
$\mf{s}^1_{\textup{E}_6[\theta]}$, $\mf{s}^1_{\textup{E}_6[\theta^2]}$. For $u\not=1$ we have the following orbits of 
unipotent types: $\mf{s}^u_\emptyset$, $\mf{s}^u_{{}^3\textup{D}_4[1]}$, $\mf{s}^u_{{}^3\textup{D}_4[-1]}$.
The orbit of $\mf{s}$ is a torsor for $(\Omega_1^\mf{s})^*$ (a quotient of $X_{un}^*(G)$).  By inspection we check:
\begin{rem}\label{rem:Omageu}
In all the cases above we have $\Omega_1^\mf{s}=\langle u\rangle:=\Omega_u\subset \Omega$.
\end{rem}
We \emph{choose} an equivariant bijection $\alpha\to \mf{s}_\alpha$ between $\Omega_u^*$ 
and the orbit of $\mf{s}$.  
Then $\mc{H}_{uni}(G)$ is isomorphic to the direct sum of the extended affine Hecke algebras $\mc{H}^{u,\mf{s}_\alpha,e}$
where $u\in\Omega$, $\mf{s}$ runs over the orbits of unipotent types, and $\alpha\in\Omega_u^*$. 
By Theorem  \ref{thm:unique} there exists an essentially unique $\Omega^*$-equivariant collection of STMs 
$\Phi^{u,\alpha}_{\mf{s}}:\mc{H}^{u,\mf{s}_\alpha,e} \leadsto \mc{H}^{IM}$. 
Assume that we have chosen such a collection of STMs.

The extended affine Hecke algebras $\mc{H}^{u,\mf{s}_\alpha,e}$ of positive rank which appear as 
summand of $\mc{H}_{uni}(G)$ are: $\textup{E}_6[q]$ (for $\mf{s}^1_\emptyset$), 
$\textup{A}_2[q^4]$ (for $\mf{s}^1_{\textup{D}_4}$), $\textup{G}_2(1,3)[q]$ (for $\mf{s}^u_\emptyset$ with $u\not=1$).
It turns out that for each $u\in\Omega$, $G^u$ has $21$ unipotent discrete series representations. 

Table \ref{table:E6} displays for each $X_{un}^*(G)=\Omega^*$-orbit of discrete unramified Langlands 
parameters: A representative $\lambda$, its isotropy group $\Omega_\lambda^*$, the group $A_\lambda$, 
and for each $u\in\Omega$, the STMs $\Phi^{u,\alpha}_{\mf{s}}$ which contribute to the corresponding 
packet $\Pi_\lambda^u$ of unipotent discrete series of $G^u$. The argument of the 
STM indicates the corresponding central character of $\mc{H}^{u,\mf{s}_\alpha,e}$, expressed in terms of 
central characters of graded Hecke algebras via \cite[Theorem 8.7]{OpdSol2}, using standard 
notations referring to distinguished nilpotent orbits for equal parameter cases, and notations  
for a corresponding generic linear central character as in \cite[Section 6]{OpdSol2} otherwise.

We choose the packets $\Pi_\lambda^u:=\Pi_{W_0r_\lambda,\bf{L}}^u$  
compatibly with respect to the $X_{un}^*(G)$-action, but the precise composition of the 
$\Pi_\lambda^u$ depends on the choices
of the STMs $\Phi^{u,\alpha}_{\mf{s}}$. 
Recall from Section \ref{subsub:loclang} that their parameterization by the 
elements of $\textup{Irr}^u(A_\lambda)$ is chosen in a $X_{un}^*(G)$-invariant way. 
By this requirement it suffices to fix the parameterization of the $\Pi_\lambda^u$ 
for a set of representatives $\lambda$ of the $X_{un}^*(G)$-orbits of discrete unramified Langlands 
parameters. With the choices made above, the parameterization of the packets $\Pi_\lambda^1$ 
is determined if we also agree that the generic member of $\Pi_\lambda$ corresponds to 
the trivial representation of $A_\lambda$.
For $u\not=1$ and $\lambda=\textup{A}_1\textup{A}_5$ or $\textup{A}_2^3$,  
more information is needed to determine the exact parameterization of the sets $\Pi_\lambda^u$ 
(of size $2$ and $3$ respectively) by a local system as in \cite{Lu4}. 
Since $A_\lambda$ is abelian here, Theorem 
\ref{thm:HII} is  independent of such choices. Therefore, we ignore this issue here.
\begin{table}[h]
\begin{tabular}{|c|c|c|c|c|}
\hline
$\lambda$&$\Omega^*_\lambda$&$A_\lambda$&STMs for $\Pi_{\lambda}^u$\\\hline
$\textup{E}_6$         & 1& $C_3$ & 
\begin{tabular}{l}$u=1$: $\Phi_\emptyset^{1,1}(\textup{E}_6)$\\
$u\not=1$: $\Phi_\emptyset^{u,1}(g_1)$\end{tabular}\\
\hline
$\textup{E}_6(a_1)$& 1& $C_3$ & 
\begin{tabular}{l}$u=1$: $\Phi_\emptyset^{1,1}(\textup{E}_6(a_1))$\\
$u\not=1$: $\Phi_\emptyset^{u,1}(g_2)$\end{tabular}\\
\hline
$\textup{E}_6(a_3)$  & 1& $S_2\times C_3$ & 
\begin{tabular}{l}$u=1$: $\Phi_\emptyset^{1,1}(\textup{E}_6(a_3))$\\
$u\not=1$: $\Phi_\emptyset^{u,1}(g_3)$; 
$\Phi_{{}^3\textup{D}_4[1]}^{u,1}$\end{tabular}\\
\hline
$\textup{A}_1\textup{A}_5$& 1&  $C_2\times C_3$ & 
\begin{tabular}{l}$u=1$: $\Phi_\emptyset^{1,1}(\textup{A}_1\textup{A}_5)$; $\Phi_{\textup{D}_4}^{1,1}(\textup{A}_2)$\\ 
$u\not=1$: $\Phi_\emptyset^{u,1}(\textup{A}_1^2)$; 
$\Phi_{{}^3\textup{D}_4[-1]}^{u,1}$\end{tabular}\\
\hline
$\textup{A}_2^3$& $C_3$&  $C_3\times C_3$& 
\begin{tabular}{l}$u=1$: $\Phi_\emptyset^{1,1}$; $\Phi_{\textup{E}_6[\theta]}^{1,1}$; 
$\Phi_{\textup{E}_6[\theta^2]}^{1,1}$\\
$u\not=1$: $\Phi_\emptyset^{u,\alpha}(\textup{A}_2)\ (\alpha\in\Omega^*)$\end{tabular}\\
\hline
\end{tabular}
\caption[Table caption text]{The packets $\Pi_\lambda^u$ for type $\textup{E}_6$ and the contributing STMs}
\label{table:E6}
\end{table}
\subsubsection{The parameterization for Inner forms of the split adjoint group $G$ of type $\textup{E}_7$}
We use the same setup and notations as for the case of $\textup{E}_6$.
The inner forms of $G$ 
are parameterized by $u\in\Omega\approx C_2$. We have $X_{un}^*(G)=\Omega^*$. 
For $u=1$ we have the following $X_{un}^*(G)$ orbits of unipotent types: $\mf{s}^1_\emptyset$, $\mf{s}^1_{\textup{D}_4}$, 
$\mf{s}^1_{\textup{E}_6[\theta]}$, $\mf{s}^1_{\textup{E}_6[\theta^2]}$, 
$\mf{s}^1_{\textup{E}_7[\xi]}$, $\mf{s}^1_{\textup{E}_7[-\xi]}$. For $u=-1$ we have the following orbits of 
unipotent types: $\mf{s}^u_\emptyset$, $\mf{s}^u_{{}^2\textup{A}_5}$, $\mf{s}^u_{{}^2\textup{E}_6[1]}$, 
$\mf{s}^u_{{}^2\textup{E}_6[\theta]}$, $\mf{s}^u_{{}^2\textup{E}_6[\theta^2]}$.
The orbit of $\mf{s}$ is a torsor for $(\Omega_1^\mf{s})^*$ (a quotient of $X_{un}^*(G)$).  By inspection we check 
that the analog of Remark \ref{rem:Omageu} again holds.

The extended affine Hecke algebras $\mc{H}^{u,\mf{s}_\alpha,e}$ of positive rank which appear as 
summand of $\mc{H}_{uni}(G)$ are for $u=1$: $\textup{E}_7[q]$ (for $\mf{s}^1_\emptyset$), 
$\textup{B}_3(4,1)[q]$ (for $\mf{s}^1_{\textup{D}_4}$), $\textup{C}_1(9,9)[q]$ 
(for $\mf{s}^1_{\textup{E}_6[\theta^i]}$), and moreover for $u=-1$:   
$\textup{F}_4(1,2)[q]$ (for $\mf{s}^{-1}_\emptyset$), and 
$\textup{C}_1(9,7)[q]$ (for $\mf{s}^{-1}_{{}^2\textup{A}_5}$).
 
For each $u\in\Omega$, $G^u$ has $44$ unipotent discrete series representations. 
See Table \ref{table:E7}. In order to understand the $u=-1$ cases of 
$\lambda=\textup{A}_1\textup{D}_6, \textup{A}_1\textup{D}_6[93],
\textup{A}_1\textup{D}_6[75]$, the following remark is important:
\begin{rem}\label{rem:double}
The STMs $\Phi^{-1,\pm 1}_\emptyset:\textup{F}_4(1,2)[q]\leadsto \textup{E}_7[q]$ were constructed at the end of 
paragraph \ref{subsub:exc}. Let us write $\Phi:=\Phi^{-1,\pm 1}_\emptyset$, and let $\Psi$ denote the 
nontrivial essentially strict spectral automorphism of $\textup{E}_7[q]$. 
Then $\Phi$ has the following remarkable 
property (which is easy to check knowing the spectral map diagram):  
Let $\lambda_{[3]}, \lambda_{[111]},  \lambda_{[21]}$ be the three 
orbits of residual points of type $\textup{A}_1\times \textup{C}_3$, and let $\mu_{[4]}, \mu_{[31]}, 
\mu_{[22]}$ be the three orbits of residual points of type $\textup{B}_4$. Enumerate 
these as $\lambda_i$ and $\mu_i$ ($i=1,2,3$) in this order.
Then $\Phi_Z(\lambda_i)=(\Psi_Z\circ\Phi_Z)(\mu_i)$ for all $i$. 
\end{rem}
The precise constituents of the packets $\Pi_\lambda^u$  depend 
on the choices of the STMs $\Phi_\mf{s}^{u,\alpha}$. Again the exact parameterization of the 
packets by $\textup{Irr}^u(A_\lambda)$ is not uniquely determined for all $\lambda$ and $u$. 
If $A_\lambda$ is abelian this does not affect the statement of Theorem \ref{thm:HII}, and
we ignore this problem here (but: see  \cite{CiuOpd2}). 
But for 
$\lambda=\textup{E}_7(a_5)$ and $u=-1$ we need to be more careful. 
This packet corresponds to the generic central character $f_8$ (notation as in \cite[Section 7]{OpdSol2}) 
of $\textup{F}_4(1,2)[q]$. 
As was explained in \cite{Re}, \cite[Section 7]{OpdSol2}, 
\cite[paragraph 3.5.2]{CiuOpd2}, there are \emph{two} algebraic generic parameter families 
$\delta_8'$ and $\delta_8''$ of irreducible discrete series characters of $\textup{F}_4(m_1,m_2)[q]$ which 
stay irreducible discrete series for all $m_1, m_2>0$ (and in particular the corresponding 
$W_0(\textup{F}_4)$-types are independent of the parameters). One of these ($\delta_8'$ say) is 
$10$-dimensional, and specializes at equal parameters for $\textup{F}_4$ to the 
discrete series \cite{Re} with Langlands parameters $(\textup{F}_4(a3), [4])$. The other, $\delta_8''$ 
restricts to the discrete series with Langlands parameters $(\textup{F}_4(a3), [22])$.
Comparing with the tables in \cite{Spa}, we see that $\delta_8'$ corresponds with 
$(\textup{E}_7(a_5),-[3])$, while $\delta''_8$ corresponds to $(\textup{E}_7(a_5),-[21])$.  
On the other hand, by \cite{OpdSol2} and \cite{CiuOpd2} we conclude that  
$\textup{fdeg}(\delta''_8)=2\textup{fdeg}(\delta'_8)$, and this is also equal to 
$2\textup{fdeg}({}^2\textup{E}_6[1])$. In view of the above Langlands parameters
this is in accordance with the conjecture 
\cite[Conjecture 1.4] {HII}. \begin{table}[h]
\begin{tabular}{|c|c|c|c|c|}
\hline
$\lambda$&$\Omega^*_\lambda$&$A_\lambda$&STMs for $\Pi_{\lambda}^u$\\\hline
$\textup{E}_7$         & 1& $C_2$ & 
\begin{tabular}{l}$u=1$: $\Phi_\emptyset^{1,1}(\textup{E}_7)$\\
$u=-1$: $\Phi_\emptyset^{u,1}(f_1)$\end{tabular}\\
\hline
$\textup{E}_7(a_1)$& 1& $C_2$ & 
\begin{tabular}{l}$u=1$: $\Phi_\emptyset^{1,1}(\textup{E}_7(a_1))$\\
$u=-1$: $\Phi_\emptyset^{u,1}(f_2)$\end{tabular}\\
\hline
$\textup{E}_7(a_2)$& 1& $C_2$ & 
\begin{tabular}{l}$u=1$: $\Phi_\emptyset^{1,1}(\textup{E}_7(a_2))$\\
$u=-1$: $\Phi_\emptyset^{u,1}(f_3)$\end{tabular}\\
\hline
$\textup{E}_7(a_3)$  & 1& $S_2\times C_2$ & 
\begin{tabular}{l}$u=1$: $\Phi_\emptyset^{1,1}(\textup{E}_7(a_3))$\\
$u=-1$: $\Phi_\emptyset^{u,1}(f_4)$\end{tabular}\\
\hline
$\textup{E}_7(a_4)$  & 1& $S_2\times C_2$ & 
\begin{tabular}{l}$u=1$: $\Phi_\emptyset^{1,1}(\textup{E}_7(a_4))$\\
$u=-1$: $\Phi_\emptyset^{u,1}(f_6)$\end{tabular}\\
\hline
$\textup{E}_7(a_5)$  & 1& $S_3\times C_2$ & 
\begin{tabular}{l}$u=1$: $\Phi_\emptyset^{1,1}(\textup{E}_7(a_5))$\\
$u=-1$: $\Phi_\emptyset^{u,1}(f_8)$; 
$\Phi_{{}^2\textup{E}_6[1]}^{u,1}$\end{tabular}\\
\hline
$\textup{A}_1\textup{D}_6$& 1&  $C_2\times C_2$ & 
\begin{tabular}{l}$u=1$: $\Phi_\emptyset^{1,1}(\textup{A}_1\textup{D}_6)$; $\Phi_{\textup{D}_4}^{1,1}(\textup{B}_3)$\\ 
$u=-1$: $\Phi_\emptyset^{u,1}(\textup{A}_1\textup{C}_3)$; $\Phi_\emptyset^{u,-1}(\textup{B}_4)$\end{tabular}\\
\hline
$\textup{A}_1\textup{D}_6[93]$& 1&  $C_2\times C_2$ & 
\begin{tabular}{l}$u=1$: $\Phi_\emptyset^{1,1}(\textup{A}_1\textup{D}_6[93])$; $\Phi_{\textup{D}_4}^{1,1}(\textup{B}_3[111])$\\ 
$u=-1$: $\Phi_\emptyset^{u,1}(\textup{A}_1\textup{C}_3[111])$; $\Phi_\emptyset^{u,-1}(\textup{B}_4[31])$\end{tabular}\\
\hline
$\textup{A}_1\textup{D}_6[75]$& 1&  $C_2\times C_2$ & 
\begin{tabular}{l}$u=1$: $\Phi_\emptyset^{1,1}(\textup{A}_1\textup{D}_6[75])$; $\Phi_{\textup{D}_4}^{1,1}(\textup{B}_3[21])$\\ 
$u=-1$: $\Phi_\emptyset^{u,1}(\textup{A}_1\textup{C}_3[21])$; $\Phi_\emptyset^{u,-1}(\textup{B}_4[22])$\end{tabular}\\
\hline
$\textup{A}_2\textup{A}_5$& 1&  $C_3\times C_2$ & 
\begin{tabular}{l}$u=1$: $\Phi_\emptyset^{1,1}(\textup{A}_2\textup{A}_5)$; $\Phi_{\textup{E}_6[\theta^i]}^{1,1}$\\ 
$u=-1$: $\Phi_\emptyset^{u,1}(\textup{A}_2\textup{A}_2)$; 
$\Phi_{{}^2\textup{E}_6[\theta^i]}^{u,1}$\end{tabular}\\
\hline
$\textup{A}_3^2\textup{A}_1$ & $C_2$&  $C_4\times C_2$ & 
\begin{tabular}{l}$u=1$: $\Phi_\emptyset^{1,1}(\textup{A}_3^2\textup{A}_1)$; $\Phi_{\textup{D}_4}^{1,1}(\textup{A}_1^3)$; 
$\Phi^{1,1}_{\textup{E}_7[\pm\xi]}$\\ 
$u=-1$: $\Phi_\emptyset^{u,\pm 1}(\textup{A}_3\textup{A}_1)$; 
$\Phi_{{}^2\textup{A}_5}^{u,\pm1}(\textup{A}_1)$\end{tabular}\\
\hline
$\textup{A}_7$& $C_2$&  $C_4$& 
\begin{tabular}{l}$u=1$: $\Phi_\emptyset^{1,1}(\textup{A}_3^2\textup{A}_1)$; $\Phi_{\textup{D}_4}^{1,1}(\textup{A}_3)$\\
$u=-1$: $\Phi_{{}^2\textup{A}_5}^{u,\pm1}(\textup{A}_1')$\end{tabular}\\
\hline
\end{tabular}
\caption[Table caption text]{The packets $\Pi_\lambda^u$ for type $\textup{E}_7$ and the contributing STMs}
\label{table:E7}
\end{table}
\subsection{The parameterization for non-split quasi-split exceptional groups}\label{par:3}
The parameterization and the STMs for the remaining twisted exceptional cases were discussed in 
\ref{par:quasi}. By Corollary \ref{cor:sameqdeg}, Corollary \ref{cor:packet} and Remark \ref{rm:uniqueLLP} 
it then follows that Lusztig's parameterization of $\Pi_{W_0r_\mathbf{L}}$ is uniquely determined by 
this, so this gives rise to a canonical matching of Lusztig's parameterization and our parameterization.
\subsection{The formal degree of unipotent discrete series representations}\label{subsub:formdeg}
The application in this section is independent of the uniqueness result based on \cite{FO}.

A general conjecture has been put forward by \cite{HII} expressing the formal degree 
of a discrete series character in terms of the adjoint gamma factor (also see \cite{GR}). 
Recall that our standing assumption is the $\mb{G}$ is a connected, absolutely simple algebraic 
group of adjoint type, defined and quasisplit over $k$, and split over $K$.
 
In order to formulate the conjecture in our setting, we first should note that 
the Haar measures in \cite{HII} are equal to those we have used (following \cite{DeRe}) times 
$v^{-\textup{dim}(\mb{G})}$. Hence the formal degrees in \cite{HII} are $v^{\textup{dim}(\mb{G})}$
times the formal degree in our setting. Let $G^u$ be an inner form of $G$.
Given a discrete unramified local Langlands parameter 
$\lambda$  for $G$, we defined $\mc{A}_\lambda$ (see \ref{subsub:loclang}).
Suppose that for an irreducible representation $\rho\in \textup{Irr}(\mc{A}_\lambda^u)$ we 
have a corresponding unipotent (or unramified) discrete series representation
$\pi_{(\lambda,\rho)}$ of $G^{F_u}$, satisfying the expected character identities 
as asserted in the local Langlands conjecture.

Then \cite[Conjecture 1.4] {HII} (also see \cite[Conjecture 7.1]{GR}) is equivalent to
(with our normalization of Haar measures): 
\begin{equation}\label{eq:HII}
\textup{fdeg}(\pi_{(\lambda,\rho)})=\pm\frac{\textup{dim}(\rho)}{|A_\lambda|}v^{-\textup{dim}(\mb{G})}\gamma(\lambda)
\end{equation}
where $\gamma$ denotes the adjoint gamma factor of the discrete local Langlands 
parameter $\lambda$. 
Following \cite[Lemma 3.4]{HII}, it is easy to show that  (using the notations of \ref{subsub:loclang})
\begin{equation}\label{eq:gammamu}
\gamma(\lambda)=\pm v^{\textup{dim}(\mb{G})}(\mu^{IM})^{(\{r\})}
\end{equation}
where we should remind the reader that the normalization of the $\mu$-function $\mu^{IM}$ 
of $\mc{H}^{IM}(G)$ is 
given by the trace $\tau^{IM}$ such that $\tau^{IM}(1)=\textup{Vol}(\mathbb{B}^{F})^{-1}$.
It was verified in \cite{HII}
that Reeder's results \cite{Re} for Iwahori spherical discrete series representations of 
adjoint, split exceptional groups over a nonarchimedean field are compatible with the conjecture.
We are now able to extend this result to arbitrary adjoint absolutely simple groups over a nonarchimedean 
local field which split over an unramified field extension. 
\begin{thm}\label{thm:HII}
Conjecture \cite[Conjecture 1.4] {HII} 
(equivalent to equation (\ref{eq:HII}))
holds for all unipotent discrete series representations of an unramified 
connected absolutely simple group $G^u$ of adjoint type defined over a nonarchimedean local field $k$, 
where we use Lusztig's parametrization of unipotent discrete series representations as Langlands 
parameters. 
\end{thm}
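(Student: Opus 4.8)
The plan is to deduce (\ref{eq:HII}) from the structural results established above, splitting the verification into its ``$q$-rational'' and ``rational constant'' parts, and using only the \emph{existence} assertion of Theorem \ref{thm:unique} (so that the argument is indeed independent of \cite{FO}). By (\ref{eq:gammamu}), conjecture (\ref{eq:HII}) is equivalent to the identity
\[
\textup{fdeg}(\pi_{(\lambda,\rho)})=\pm\frac{\dim(\rho)}{|A_\lambda|}(\mu^{IM})^{(\{r\})},
\]
where $W_0r=W_0r_{\lambda,\mb{v}}$ is the residual point attached to $[\lambda]$ by the Kazhdan--Lusztig correspondence of \ref{subsub:loclang}, modulo the monomial-in-$\mb{v}$ discrepancy between the normalization $\tau^{IM}(e^{IM})=\textup{det}_V(\mb{v}\textup{Id}_V-\mb{v}^{-1}w_u\theta)^{-1}$ used here and the normalization $\tau^{IM}(1)=\textup{Vol}(\mathbb{B}^F)^{-1}$ used in (\ref{eq:gammamu}). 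First I would invoke Proposition \ref{prop:balanced} and the discussion following it: in our normalization of Haar measures both $\textup{fdeg}(\pi_{(\lambda,\rho)})$ and $(\mu^{IM})^{(\{r\})}$ are symmetric under $\mb{v}\mapsto\mb{v}^{-1}$ and regular and nonzero at $\mb{v}=1$, hence carry no monomial factor $\mb{v}^N$ and no factor $(\mb{v}-\mb{v}^{-1})^M$, so it is enough to match their $q$-rational factors and their positive rational constant factors separately, in the sense of Definition \ref{defn:q-rat}.

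For the $q$-rational factor I would argue as follows. By the Plancherel-measure preserving Morita equivalence of Theorem \ref{thm:summary} together with Theorem \ref{thm:typeplancherel}, $\textup{fdeg}(\pi_{(\lambda,\rho)})$ equals the formal degree of the corresponding irreducible discrete series character $\delta$ of a direct summand $\mc{H}^{u,\mf{s},e}$ of $\mc{H}_{unr}(G)$, where the inner form $G^u$ is the one singled out by $\rho$ through Kottwitz's Theorem. By Theorem \ref{thm:unique} there is a $\textup{Rat}(G)$-equivariant STM $\Phi\colon\mc{H}_{unr}(G)\leadsto\mc{H}^{IM}(G)$ whose restriction to $\mc{H}^{u,\mf{s},e}$ realizes Lusztig's geometric diagram and which, via the map $\phi_Z$ on central characters, sends the central character of $\delta$ to $W_0r$; by \cite[Theorem 3.11]{Opd4} this STM induces a correspondence of tempered components preserving spectral densities up to a positive rational constant and compatible with $\phi_Z$. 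Applying this together with the residue formula for the Plancherel density of $\mc{H}^{IM}$ (\cite[Theorem 2.28]{Opd4}, \cite{OpdSol}), which identifies the $q$-rational factor of the formal degree of any Iwahori-spherical discrete series at $W_0r$ with that of $(\mu^{IM})^{(\{r\})}$, and using that by \cite[Theorem 3.11]{Opd4} every character in the packet $\Pi_{W_0r}$ (over all inner forms $G^u$ and all $\rho$) has the same $q$-rational factor, I conclude that the $q$-rational factor of $\textup{fdeg}(\pi_{(\lambda,\rho)})$ equals that of $(\mu^{IM})^{(\{r\})}$, which is what the identity demands since $\dim(\rho)/|A_\lambda|$ is a rational constant.

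It then remains to verify the positive rational constant factors, which I would do type by type. For $G=\textup{PGL}_{n+1}$ this is the elementary computation of \ref{subsub:A} and \ref{subsub:aniso} on the anisotropic inner form. For split exceptional $G$ the relevant constants, for the equal-parameter unipotent and Iwahori Hecke algebras, were computed by Reeder \cite{Re} and already checked against (\ref{eq:HII}) in \cite{HII}. For the twisted exceptional groups ${}^3\textup{D}_4$ and ${}^2\textup{E}_6$, and for all non-simply-laced classical cases, I would deform each discrete series $\delta$ of the relevant summand $\mc{H}^{u,\mf{s},e}$ to a generic family $\widetilde{\delta}$ over the Hecke-parameter space, use the continuity of $\textup{fdeg}(\widetilde{\delta})$ and the product formula of \cite{OpdSol2} to transport its positive rational constant to a \emph{regular} point of that parameter space, and there invoke \cite{CKK}: for the algebras of type $\textup{C}_n^{(1)}$ governing the classical case (cf.\ \ref{subsub:Cn}) this shows that the constants of all generic families of discrete series coincide at a generic parameter, so a single known value determines them all, and an analogous reduction applies to the remaining non-simply-laced exceptional families starting from the rank-$0$ constants of \ref{subsub:rk0exc}. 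Finally I would match the resulting constants against $\dim(\rho)/|A_\lambda|$, using the description of $\Pi_{W_0r}$ by pairs of Slooten symbols (Proposition \ref{prop:packet}) and of $A_\lambda$ as a central product of dihedral, quaternion and extraspecial $2$-groups (Proposition \ref{prop:centra}), from which $\dim(\rho)$ and $|A_\lambda|$ can be read off directly; the extra members of $\Pi_{W_0r}$ contributed by the extraspecial STMs of \ref{subsub:class} correspond precisely to the $C_2$-central extension arising from passing to the spin group.

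The main obstacle will be this last matching in the classical cases: one must show that the positive rational constant produced on the $\textup{C}_n(m_-,m_+)$-side by the Plancherel residue formula and the deformation picture equals the group-theoretic quantity $\dim(\rho)/|A_\lambda|$ coming from the Langlands side through the spin-group central extensions. Establishing it forces simultaneous use of the \cite{CKK} rigidity of formal-degree constants at generic parameters, of the \cite{OpdSol2} continuity principle to descend to the special parameter $\mb{v}$, and of a careful bookkeeping of the extraspecial-STM contributions of \ref{subsub:class} together with the representation theory of the $2$-groups of Proposition \ref{prop:centra}. Once the constant is pinned down at one regular point of each family, it propagates by continuity, and combined with the $q$-rational computation above this yields (\ref{eq:HII}) in all cases.
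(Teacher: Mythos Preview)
Your proposal is correct and follows essentially the same route as the paper: reduce (\ref{eq:HII}) via Theorem \ref{thm:unique} and \cite[Theorem 3.11]{Opd4} to a verification of the rational constants $c_{(\lambda,\rho)}=\pm\dim(\rho)/|A_\lambda|$, then check these type by type using \cite{Re} for split exceptional groups, \cite{CKK} plus the continuity of \cite{OpdSol2} for the classical $\textup{C}_n(m_-,m_+)$ algebras, and deformation arguments for the twisted exceptional cases. One small sharpening worth noting: for ${}^3\textup{D}_4$ and ${}^2\textup{E}_6$ the paper does not start from rank-$0$ data but rather from the \emph{equal-parameter} line (where Reeder's constants are already known for split $\textup{G}_2$ and $\textup{F}_4$), and then crosses the singular lines $k_1/k_2=\tfrac{6}{5},\tfrac{4}{3},\tfrac{3}{2},\tfrac{5}{3}$ using the confluence tables of \cite{OpdSol2} together with irreducibility of limits to control sign changes---so your ``analogous reduction'' should be anchored at the equal-parameter specialization rather than at the cuspidal constants of \ref{subsub:rk0exc}.
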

\begin{proof} 
We need to consider the classical groups,  the nontrivial inner forms 
of split exceptional groups, and the non-split quasi-split exceptional groups.
The way in which we assign unramified discrete Langlands parameters to the members
of the packets $\Pi_{W_0r_{\lambda,\mathbf{L}}}$ of discrete series characters 
of Definition \ref{dfn:packet} for these cases was explained in the Subsections 
\ref{par:1}, \ref{par:2} and \ref{par:3}.

We know that $\pi_{(\lambda,\rho)}$ corresponds via Lusztig's arithmetic-geometric 
correspondences to an irreducible discrete series representation $\delta_{\lambda,\rho}$ of 
an extended affine Hecke algebra of type $\mc{H}^{u,\mf{s},e}$ for some cuspidal type $\mf{s}$
of $G^u$. By our main Theorem \ref{thm:unique} there exists an STM 
$\phi:\mc{H}^{u,\mf{s},e}\leadsto \mc{H}^{IM}(G)$ such that $\phi_Z(\textup{cc}(\delta_{\lambda,\rho}))=W_0r$, 
and we have:
\begin{equation}
\textup{fdeg}(\pi_{(\lambda,\rho)})=\textup{fdeg}_{\mc{H}^{u,\mf{s},e}}(\delta_{(\lambda,\rho)})=
c_{(\lambda,\rho)}(\mu^{IM})^{(\{r\})}
\end{equation}
for some rational constant $c_{(\lambda,\rho)}\in\mathbb{Q}$. 
Combining (\ref{eq:HII}) and (\ref{eq:gammamu})
we see that what is necessary to verify in order to prove the conjecture in these cases is 
that 
\begin{equation}
c_{(\lambda,\rho)}=\pm\frac{\textup{dim}(\rho)}{|A_\lambda|}
\end{equation}
In \cite[Section 5.1]{GR} it was shown that 
\begin{equation}
\gamma(\lambda)=|C_\lambda^F|q^{N_{\lambda}}\gamma(\lambda)_q
\end{equation}
with $\gamma(\lambda)_q$ a q-rational number, $N_\lambda\in\mathbb{N}$ (which is in fact 
always $0$ with our definition of q-rational numbers, but this is not important here), 
and $C_\lambda^F\subset A_\lambda$
a normal subgroup such that 
\begin{equation}\label{eq:quot}
A_\lambda/C_\lambda^F\approx (\pi_0(M_\lambda))^F,
\end{equation}
is the group of $F$-fixed points in the component group 
of the 
centralizer $M_\lambda$ of $\lambda|_{\textup{SL}_2(\mathbb{C})}$ in $G^\vee$. 
(The group $C_\lambda^F$ is the group of $F$-fixed points 
in the identity component $M_\lambda^0$ of $M_\lambda$ (a torus).)
With this notation we are 
reduced to proving that 
\begin{equation}\label{eq:HIIconv}
\textup{fdeg}_{\mc{H}^{u,\mf{s},e}}(\delta_{(\lambda,\rho)})\sim
\pm\frac{\textup{dim}(\rho)}{|(\pi_0(M_\lambda))^F|}q^{N'_{\lambda}}
\end{equation}
(where $\sim$ refers to asymptotic behavior if $q$ tends to $0$)
for some $N'_\lambda\in\mathbb{N}$.
Let us write $\lambda_{ad}$ for the composition of $\lambda$ with the 
canonical homomorphism of $G^\vee$ to $G^\vee_{ad}$.
In the twisted cases it is helpful to note that $A_\lambda/{}^LZ$ 
is the centralizer of 
$\lambda_{ad}|_{\textup{SL}_2(\mathbb{C})}$
in $C_{G^\vee_{ad}}(\lambda_{ad}(F))$, and realizing that $\lambda_{ad}(F)$
is a semisimple element of $G^\vee_{ad}\rtimes\langle\theta\rangle$
of the form $(s,\theta)$, where $s$ is a vertex of the alcove 
of the restricted root system $R_0^\theta$ consisting of roots of $R_0$ 
restricted to $\mf{t}^\theta$, extended to an affine reflection group by the lattice 
of translations obtained from projecting the coweight lattice $P(R_0^\vee)$ 
onto $\mf{t}^\theta$ (see \cite{Reed}). The semisimple centralizers 
$C_{G^\vee_{ad}}(\lambda_{ad}(F))$ are described by Reeder in
 \cite{Reed}.
 
 This amounts to a long list of case by case verifications. 
 The case of $\textup{PGL}_{n+1}$ is easy. 
 For $u$ of order $(m+1)|(n+1)$ we have $\Omega^{\mf{s},e}_1=\langle u\rangle\approx C_{m+1}$.
 Hence $\mc{H}^{IM}(G^u)$ is isomorphic to a direct sum of $m+1$ copies of $A_{d}[q^{m+1}]$ 
 (with $n+1=(d+1)(m+1)$), 
 normalized by $\tau(1)=(m+1)^{-1}[m+1]_q^{-1}$ (cf. \ref{subsub:aniso} and Proposition \ref{prop:balanced}). 
 This yields $n+1$ unipotent discrete series 
 characters, each with formal degree $(n+1)^{-1}[n+1]_q^{-1}$. In total we thus obtain 
 $n+1$ packets of unipotent 
 discrete series characters, each with $n+1$ members (one element for each inner form).
 
 The case of $G=\textup{PU}_{2n}$ or $G=\textup{PU}_{2n+1}$ is easy too, since all unipotent 
affine Hecke algebras are in a generic parameter situation here, in the sense of \cite{CKK}. 
It is shown in \cite{CKK} that the rational constant in the 
 formal degree is then independent of the particular discrete series we consider (of a 
 given Hecke algebra of such kind). Looking at the Steinberg character \cite[Equation (6.26)]{Opd1}
  we easily check therefore that the 
 rational constants for all unipotent discrete series are equal to $|{}^LZ|^{-1}$ 
 (so $\frac{1}{2}$ if $n$ is odd, and $1$ otherwise).
 
 In the remaining classical cases one also uses the results of \cite{CKK}, where it was shown that 
 the rational constant factor of the formal degree of a generic discrete series representation of a generic 
 multi parameter type $C_n$ affine Hecke algebra specialized at non-special parameters is equal 
 for all generic discrete series.  
 All rational constants for discrete series representation of affine Hecke algebras with special parameters 
 can be subsequently be computed by this result by a limit procedure, since \cite{OpdSol2} shows that any discrete 
 serie representation is the limit of a generic continuous family of discrete series representations in a small 
 open set in the parameter space, and 
 that the formal degree 
 is locally continuous in the parameters. We will use Propositions \ref{prop:packet} and 
 \ref{prop:centra} to build the packets from the various unipotent Hecke algebras, and compute  
 the expected rational constants according to \cite[Proposition 1.4]{HII}. On the spectral side 
 one again relies on the results of \cite{CKK} and \cite{OpdSol2} to compute the rational constants.
  
 For exceptional cases the results of \cite{Re} prove the statement for the 
 unipotent discrete series of all split adjoint groups $G$. 
 For the non-split cases, and 
 the nontrivial inner forms of $\textup{E}_6$ and $\textup{E}_7$ more work needs to be done, 
 but this follows the same scheme as discussed above, with the help of 
 \cite{Reed}, the tables 
 in Subsections \ref{par:2} and \ref{par:3}, and the results of \cite{OpdSol2}, \cite{CiuOpd2}.
That is, we need to compute the rational constants for the formal degrees 
of discrete series representations of the multi parameter affine Hecke algebras arising from 
these non-split cases. The classical Hecke algebras 
are treated as before, so that leaves the exceptional unequal parameter 
Hecke algebras which appear in this way. We find that we need to compute the formal degrees of 
$\textup{G}_2(3,1)$ (for type ${}^3\textup{D}_4$), $\textup{G}_2(1,3)$ (for type ${}^3\widetilde{\textup{E}}_6$), 
$\textup{F}_4(2,1)$ (for type ${}^2\textup{E}_6$), 
and $\textup{F}_4(1,2)$ (for type ${}^2\widetilde{\textup{E}}_7$).  
The first main observation in this kind of computations is the fact \cite{OpdSol2} that 
any discrete series character $\delta$ defines a \emph{generic central character} 
$gcc(\delta)=W_0r$ (an orbit of generic residual points) and extends uniquely to a continuous 
family of discrete series characters on a connected component $C$ of the open subset of 
space of positive parameters of the Hecke algebra on which $W_0r$ is still residual.
Moreover $\textup{fdeg}(\delta)$ depends continuously on the parameters in such a 
continuous family of discrete series.  But there is a deeper fact which is very useful.
The formal degree of a generic 
family of discrete series representations (in the sense of \cite{OpdSol2}) 
depends algebraically on the parameters, and this expression only depends 
on the elliptic class of the limit $\mathbf{q}\to 1$ of the discrete series representation 
(a representation of $W$). This result follows essentially from \cite{COT} and the Euler-Poincar\'e 
formula in \cite{OpdSol2}, using the argument of \cite[Proposition 5.6]{CiuOpd1}
in the unequal parameter setting. This implies (see \cite{CiuOpd2} for details) that 
the formal degree of a generic families associated with the same generic central character $W_0r$ 
but defined on different connected components $C$ and $C'$ of the open subset of the 
positive parameter space  where $W_0r$ is residual, is given by \emph{the same} 
algebraic expression (provided the 
families define the same elliptic representation of $W$), except possibly for a sign change.
(This result generalizes the result of \cite{CKK} to arbitrary Hecke algebras).
This algebraic expression for the formal degree is a product formula (see \cite{OpdSol2})
of terms $(1\pm M)^{\pm 1}$ where $M$ is a monomial in the parameters, multiplied 
by a rational constant $d$ (which only depends on an elliptic representation of $W$), a monomial 
in the parameters, and a sign. 
The upshot is that in order to compute $\textup{fdeg}(\delta)$ it is sufficient to 
compute $\textup{fdeg}(\delta')$ for any discrete series $\delta'$ with $gcc(\delta')=W_0r$
at any positive parameter $q'$ where $W_0r(q')$ is residual, provided $\delta$ and $\delta'$ define the 
same elliptic representation of $W$.
Using the results of \cite{Re}, we can find a $\delta'$ and $q'$ where the constants are known,  
for every generic family.
Hence the generic rational constants $d$ can be determined, and from this we can determine 
the formal degree at any singular parameter line in the parameter space by continuity.

See Example \ref{ex:ex2} for more details in the case ${}^3\textup{D}_4$.  
For $\textup{F}_4(2,1)$ we have a similar situation, here we need to cross the singular lines 
$\frac{k_1}{k_2}=\frac{6}{5}, \frac{4}{3}, 
\frac{3}{2}, \frac{5}{3}$ in the parameter space. For this we need to know the confluence relations 
of the generic 
discrete series at these singular lines. This can be deduced 
from \cite[Table 3]{OpdSol2}. The considerations are similar as in Example \ref{ex:ex2}.
Similarly for ${}^3\widetilde{\textup{E}}_6$ and ${}^2\widetilde{\textup{E}}_7$.
\end{proof}
\subsubsection{Unipotent representations of inner forms of $\textup{PCSp}_{2n}$,   
$\textup{P}(\textup{CO}^0_{2n})$, $\textup{P}((\textup{CO}^*_{2n+2})^0)$}\label{subsub:III+IV}
In these cases, a unipotent affine Hecke algebra is always isomorphic to a direct sum of finitely many 
copies of a normalized affine Hecke algebra which is related to an object of 
$\mf{C}_{class}^{\textup{III}\cup\textup{V}}$ or $\mf{C}_{class}^{\textup{IV}\cup\textup{VI}}$ through 
a (finite) sequence of spectral covering maps. 

Let us first compute the rational factors appearing in the formal degrees of discrete series representations  
of a normalized affine Hecke algebra $\mc{H}$ of type $\textup{C}_d(m_-,m_+)[q]$ 
with $m_\pm\in\mathbb{Z}$, normalized by $\tau(1)=1$. Using the 
group $\mathbb{D}_8$ of spectral isomorphisms (see \cite[Remark 7.7]{Opd4})  we may, 
without loss of generality, assume that $0\leq m_-\leq m_+$. 

As described in Subsection \ref{subsub:partition}, the discrete series of $\mc{H}$  
are parameterized by ordered pairs $(\sigma_-,\sigma_+)$ of symbols associated to an ordered pair 
$(u_-,u_+)$ of distinguished unipotent partitions for the pair of parameters $m=(m_-,m_+)$
(so $u_\pm$ is a partition of $m_\pm^2+2d_\pm$). 
By Slooten's ``joining procedure" \cite[Theorem 5.27]{Slooten} (see also the explanation in 
Subsection \ref{subsub:partition}), the set of symbols  
$\sigma_\pm$ corresponds bijectively to the set of partitions $\pi_\pm$ of $d_\pm$ 
whose $m_\pm$-tableaux have distinct extremities in the sense of \cite{Slooten} and 
such that the corresponding orbit of linear residual points corresponds to $u_\pm$.
Then vector of contents of this $m^\ep_\pm:=m_\pm+\ep_\pm$-tableau of $\pi_\pm$ defines, for all 
$\ep_\pm$ sufficiently small, 
linear residual point $\xi_\pm(m_\pm+\epsilon_\pm)$ whose 
$W_{n_\pm}$-orbit generically supports a unique  
discrete series representation,  which we will denote by $\delta_{(\pi_-,\pi_+)}(\epsilon_-,\epsilon_+)$.
\begin{thm}\label{thm:ratSPSO}
Let $m=(m_-,m_+)\in\mathbb{Z}^2$ such that $0\leq m_-\leq m_+$.
Consider $\pi_{(u_-,u_+),(\sigma_-,\sigma_+)}:=\delta_{(\pi_-,\pi_+)}(0,0)$ as a discrete series of the 
normalized affine Hecke algebra $(\mc{H}, \tau)$ of type $\textup{C}_d(m_-,m_+)[q]$, 
normalized by $\tau(1)=1$. Let $(u_-,u_+)$ be the pair of unipotent partitions of type 
$m=(m_-,m_+)$ associated with pair $(T_{m_-}(\pi_-),T_{m_+}(\pi_+))$ of $m$-tableaux, 
and let $(\xi_-(m_-+\epsilon_-),\xi_+(m_++\epsilon_+))$ be the corresponding 
pair of linear residual points. 
Let $\textup{fdeg}_{\mathbb{Q}}(\pi_{(u_-,u_+),(\sigma_-,\sigma_+)})$ denote the rational factor of 
$\textup{fdeg}(\pi_{(u_-,u_+),(\sigma_-,\sigma_+)})$.
Let $u_-\cup u_+$ be the partition which one obtains by concatenating $u_-$ and $u_+$ 
and rearranging the parts as a partition (our convention will be to arrange the parts 
in a nondecreasing order). 
Let $\#(u)$ denote the number of distinct parts of a partition $u$. Then we have:
\begin{equation}
\textup{fdeg}_{\mathbb{Q}}(\pi_{(u_-,u_+),(\sigma_-,\sigma_+)})=2^{-\#(u_-\cup u_+)+m_+}
\end{equation}
\end{thm}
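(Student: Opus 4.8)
The plan is to compute $\textup{fdeg}(\pi_{(u_-,u_+),(\sigma_-,\sigma_+)})$ by the deformation method of \cite{OpdSol2}. Recall from paragraph \ref{subsub:partition} that $\pi_{(u_-,u_+),(\sigma_-,\sigma_+)}=\delta_{(\pi_-,\pi_+)}(0,0)$ is the specialization at $(\epsilon_-,\epsilon_+)=(0,0)$ of a generic family $\delta_{(\pi_-,\pi_+)}(\epsilon_-,\epsilon_+)$ of discrete series of the generic Hecke algebra $\mc{H}_{\mathbb{L}}$ of type $\textup{C}_n$, with central character the orbit of the generic residual point $r=s(e)\exp(\xi_-(m_-+\epsilon_-))\exp(\xi_+(m_++\epsilon_+))$. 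By \cite[Theorem 5.12]{OpdSol2} the formal degree of this family is an explicit product in the Hecke parameters and is \emph{continuous} at $(\epsilon_-,\epsilon_+)=(0,0)$; hence $\textup{fdeg}_{\mathbb{Q}}(\pi_{(u_-,u_+),(\sigma_-,\sigma_+)})$ is the rational factor of the limit of this product. First I would write out this product, using that at a generic residual point $r=s(e)\exp(\xi)$ it decouples, up to the normalization factor fixed by $\tau(1)=1$, into the residue of the $\mu$-function of the \emph{graded} affine Hecke algebra with root system $R_{s(e),0}$ of type $\textup{B}_{n_-}\times\textup{B}_{n_+}$ and parameters $(m_-+\epsilon_-,m_++\epsilon_+)$, evaluated at the linear residual point $(\xi_-,\xi_+)$. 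Since $\textup{fdeg}$ is continuous, the constant we seek is the limit as $\epsilon_\pm\to 0$ of the rational factor of this graded residue, and this limit is a product of powers of $2$ coming precisely from those root factors $\alpha^\vee(\xi_\pm)$ which either vanish or become of the form $c(1\pm 1)$ in the limit.

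Next I would separate the $q$-rational part from the constant. By the interplay of Theorem \ref{thm:unique}, \cite[Theorem 3.11]{Opd4} and Proposition \ref{prop:balanced}, the $q$-rational factor of $\textup{fdeg}(\pi_{(u_-,u_+),(\sigma_-,\sigma_+)})$ equals the $q$-rational factor of the residue $\mu^{IM,(\{r\})}(r)$ at the image residual point; so the constant we want is exactly the leftover rational number obtained when, in the specialized product formula, one extracts all genuine $q$-integers. The role of \cite{CKK} is that at \emph{generic} (non-special) parameters this leftover is the \emph{same} for all generic discrete series of the multi-parameter type $\textup{C}_n$ algebra, so a single computation at the Steinberg-type (``sign'') family in the spirit of \cite[Equation (6.26)]{Opd1} pins down that generic constant. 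The genuinely new content is then the bookkeeping of the \emph{extra} factors of $2$ produced when the parameters are specialized to $(m_-,m_+)\in\mathbb{Z}^2$ and confluences occur; these are governed by the position of the unitary part $s(e)$ (a point with coordinates in $\{\pm 1\}$ attached to the vertex $e$) relative to the roots of $\textup{B}_{n_-}\times\textup{B}_{n_+}$ annihilated by $\exp(\xi_\pm)$. Using Slooten's description \cite{Slooten} of $\xi_\pm$ as the content vector of the $m_\pm$-tableau of $\pi_\pm$, together with the bijection $\pi_\pm\leftrightarrow u_\pm$ with unipotent partitions, I would translate this root count into the combinatorics of $u_\pm$ and show the net exponent of $2$ is $-\#(u_-\cup u_+)+m_+$: the term $-\#(u_-\cup u_+)$ records a factor $2^{-1}$ for each \emph{distinct} value occurring jointly among the parts of $u_-$ and $u_+$ (a value common to both being counted once, which is exactly why one gets the merge $u_-\cup u_+$ rather than a disjoint union), and the shift $+m_+$ comes from the $\delta_\pm$-part of the residual point for the larger parameter together with the normalization $\tau(1)=1$.

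I expect the main obstacle to be precisely this combinatorial identification: proving rigorously that the power of $2$ produced by the specialized product is $-\#(u_-\cup u_+)+m_+$, and in particular that it depends on $(u_-,u_+)$ only through the \emph{merged} partition and the single parameter $m_+$. A convenient organization, which I would pursue, is to first reduce to the minimal objects of $\mf{C}_{class}^{\textup{III}}$ and $\mf{C}_{class}^{\textup{IV}}$: by Proposition \ref{prop:packet} the translation STMs of \ref{subsub:Cn} send $\textup{C}_n(m_-,m_+)[q]$ to $\textup{C}_l(0,1)[q]$, $\textup{C}_l(0,0)[q]$ or $\textup{C}_l(1,1)[q]$ preserving the pair $(u_-,u_+)$, and by \cite[Theorem 3.11]{Opd4} they preserve the $q$-rational factor; tracking the elementary rational constant of each basic translation step (a simple product of $2$'s readable from the representing morphisms in \ref{subsub:Cn}) then reduces the claim to those three minimal algebras, where $m_+\in\{0,1\}$ and the residual points are transparent. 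There the identity $\textup{fdeg}_{\mathbb{Q}}=2^{-\#(u_-\cup u_+)+m_+}$ is a direct, if intricate, computation with linear residual points of $\textup{B}_{n_\pm}$, doable by induction on the number of parts using the recursive structure of residual points in \ref{subsub:Cn} and \cite[Theorem 8.7]{OpdSol2}, and cross-checkable against the extraspecial STMs, which supply an independent product formula via (\ref{eq:extraspSTM}) and the displayed formula for $d_{m_-,m_+}$. A secondary point is that one should confirm the ``one symbol $\leftrightarrow$ one discrete series'' dictionary really is a bijection in type $\textup{C}_n$ (the exceptional $\textup{F}_4$ confluence of \ref{subsub:partition} not occurring here), so that the assignment $\pi\mapsto(u_-,u_+)$ — and hence the statement — is unambiguous.
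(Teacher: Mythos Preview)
Your framework matches the paper's: the deformation method of \cite{OpdSol2}, the generic constant $|d|=1$ from \cite[Theorem C]{CKK}, continuity of $\textup{fdeg}$ at $\epsilon=0$, and then a count of factors of $2$ at specialization. The paper, however, does \emph{not} reduce to minimal objects via the translation STMs. Instead it computes the net exponent $M$ of $2$ directly from the product formula for $m_{W_0r}$: it first observes that all factors tending to $0$ as $\epsilon\to 0$ are of the three forms $(1-q^{\pm 2\epsilon_-})$, $(1-q^{\pm 2\epsilon_+})$, $(1-q^{\pm(\epsilon_--\epsilon_+)})$ and hence cancel in numerator/denominator (contributing only a sign), so the entire rational constant comes from factors $(1+q^{l(\epsilon)})\to 2$. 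It then expresses $M$ as $\sum_{x\geq 0}\delta(H^{m_-}_{u_-})(x)\,\delta(H^{m_+}_{u_+})(x)-H^{m_-}_{u_-}(m_+)-H^{m_+}_{u_+}(m_-)$, where $H^m_u$ is (twice at $0$) the multiplicity function of the coordinates of the linear residual point, and grinds through a chain of identities --- comparing $H^{m_\pm}_{u_\pm}$ to $H^{\delta_\pm}_{u_\pm}$ with $\delta_\pm\in\{0,1\}$, $\delta_\pm\equiv m_\pm\bmod 2$, via $\Delta^{m_\pm}_\pm(x)=\max(0,m_\pm-x)$ and $H^m_u(0)=\#(u)-m$ --- to arrive at $M=-\#(u_-\cup u_+)+m_+$.

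Your STM-reduction plan is in spirit the same comparison (the paper's passage from $m_\pm$ to $\delta_\pm$ is exactly the effect of composing basic translation steps), but your assertion that the rational constant of each basic translation step is ``a simple product of $2$'s readable from the representing morphisms'' is precisely the statement that needs proof: the morphisms in \ref{subsub:Cn} are explicit, but extracting the rational factor relating formal degrees across an STM is the same root-by-root count the paper performs. So your route is plausible but defers the hard work, whereas the paper's jump-function computation \emph{is} that work. If carried out, your approach would buy a cleaner inductive structure (one $m_+\mapsto m_+-2$ step at a time); the paper's buys self-containment and avoids any appeal to STM rational constants beyond the $q$-rational part guaranteed by \cite[Theorem 3.11]{Opd4}.
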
 
\begin{proof}
Let the central character of 
$\delta_{(\pi_-,\pi_+)}(\epsilon_-,\epsilon_+)$ be denoted by 
$W_0r$, where $r:=r_{(\pi_-,\pi_+)}(\ep_-,\ep_+)
=(-r_-(\ep_-),r_+(\ep_+))$ with 
$r_\pm(\ep_\pm):=\exp(\xi_\pm(m_\pm+\epsilon_\pm))$.
We have   
$\textup{fdeg}(\delta_{(\pi_-,\pi_+)}(\ep_-,\ep_+))=
cm_{W_0r}$ by \cite[Theorem 4.6]{OpdSol2}, 
with $c\in\mathbb{Q}^\times$, and with 
the rational function $m_{W_0r}$, 
defined by \cite[(39)]{OpdSol2}. 
The constant $|c|$ is known 
\cite[Theorem C]{CKK} and turns out to be equal to $1$, independent of the 
parameters and of $(\pi_-,\pi_-)$ (there is a harmless but unfortunate mistake in 
\cite[Definition 4.3]{CKK} (the factor $\frac{1}{2}$ on the right hand side should 
not be there, see the update on arXiv) which resulted in the erroneous 
extra factor $\frac{1}{2}$ in  
\cite[Theorem C]{CKK}). 
We have the basic regularity result 
\cite[Corollary 4.4]{OpdSol2}. Hence $\textup{fdeg}(\pi_{(u_-,u_+),(\sigma_-,\sigma_+)})$ equals the 
limit for $(\ep_-,\ep_+)\to(0,0)$ of $m_{W_0r}$.

For an arbitrary root datum $\mc{R}$ with parameter function 
$m_\pm^\ep(\alpha)=m_\pm(\alpha)+\ep_\pm(\alpha)$, and a generic 
residual point $r$ which specializes to a residual point at $\ep_\pm=0$, 
we can rewrite $m_{W_0r}$ in the following form 
(cf. \cite[(13)]{Opd4}))
(here $N=N(\ep)$ is an affine linear function of the deformation 
parameters $\ep$):
\begin{align*}
&m_{W_0r}=\\
&v^N
\prod_{\alpha\in R_{0,+}}\frac{(1+\alpha(r))^2(1-\alpha(r))^2}
{(1+q^{m_-^{\ep}(\alpha)}\alpha(r))
(1+q^{-m_-^{\ep}(\alpha)}\alpha(r))
(1-q^{m_+^{\ep}(\alpha)}\alpha(r))
(1-q^{-m_+^{\ep}(\alpha)}\alpha(r))}\\
\end{align*}
where a factor of the numerator or of the denominator 
has to be omitted if it is identically equal to $0$ as a function 
of $\ep$ in a neighborhood of $0$. 

In our present case, 
$R_{0,+}=\{e_i\pm e_j\mid 1\leq i< j\leq d\}\cup \{e_i\mid 1\leq i\leq d\}$.

For a positive root $\alpha$ of type $\textup{D}$, we have $m_-^{\ep}(\alpha)=0$ and 
$m_+^{\ep}(\alpha)=1$; for positive root $\beta$ of type $\textup{A}_1^d$, we have 
$m_-^{\ep}(\beta)=m_-+\ep_-$ and  $m_+^{\ep}(\beta)=m_++\ep_+$. 
In the limit $\ep=(\ep_-,\ep_+)\to 0$ some of the factors which are generically nonzero 
tend to $0$, but the number of those factors in the numerator and the denominator 
is equal by \cite[Corollary 4.4]{OpdSol2} (or \cite{Opd3}). This potentially produces rational 
factors in the limit, but actually all such factors (for type $\textup{D}$ roots 
as well as for type $\textup{A}_1^d$ roots) are of the form $(1-q^{\pm 2\ep_-})$,  
$(1-q^{\pm 2\ep_+})$, or $(1-q^{\pm (\ep_--\ep_+)})$. 
For each of these three types, the total number of these factors 
in the numerator and in the denominator has to be equal, 
by the above regularity result. Hence altogether these factors yield at most a sign
in the limit, 
and that does not 
contribute to $\textup{fdeg}_{\mathbb{Q}}(\pi_{(u_-,u_+),(\sigma_-,\sigma_+)})$. In addition 
we have factors $(1+q^{l(\ep)})$, with $l(\ep)$ linear in $\ep$,  
in the denominator and in the numerator. Each such factor yields a factor $2$, 
irrespective of the precise form of $l(\ep)$. Let the total number of factors $2$ thus 
obtained be denoted by $M$.
In order to count $M$, let us 
write $h_{u_\pm}^{m_\pm}(x)$ for the number coordinates of $\xi_\pm(m_\pm)$
which are equal to $x$ (for $x\in\mathbb{Z}_{\geq 0}$) (cf. \cite{HO}, or 
\cite[Proposition 6.6]{OpdSol2}). We also define 
$H_{u_\pm}^{m_\pm}(x)=h_{u_\pm}^{m_\pm}(x)$ for $x>0$, and $H_{u_\pm}^{m_\pm}(0)=2h_{u_\pm}^{m_\pm}(0)$.
Finally, if $h$ is a function on $\mathbb{Z}$ we define $\delta(h)(x):=h(x)-h(x+1)$. 
It is straightforward to deduce from the above formula for $m_{W_0r}$ that:
\begin{align*}
M&:=\sum_{x\geq 0} \delta(H^{m_-}_{u_-})(x)\delta(H^{m_+}_{u_+})(x)-H^{m_-}_{u_-}(m_+)-H^{m_+}_{u_+}(m_-)\\
&=\sum_{x\geq 1} \delta(H^{m_-}_{u_-})(x)\delta(H^{m_+}_{u_+})(x)+
\delta(H^{m_-}_{u_-})(0)\delta(H^{m_+}_{u_+})(0)-H^{m_-}_{u_-}(m_+)-H^{m_+}_{u_+}(m_-)\\
&=\sum_{x\geq 1} \delta(H^{m_-}_{u_-})(x)\delta(H^{m_+}_{u_+})(x)+
(J^{m_-}_{u_-}(0)+\delta_{m_-,0}-1)(J^{m_+}_{u_+}(0)+\delta_{m_+,0}-1)\\
&\qquad-H^{m_-}_{u_-}(m_+)-H^{m_+}_{u_+}(m_-)\\
\end{align*}
where $J^m_u(0)=1$ if $1$ is a part of $u$ (equivalently,  if $0$ is a jump of $\xi$), and $J^m_u(0)=0$ else
(this value depends only on $u$ (is independent of $m$)).
Recall that (\cite{Slooten}, or \cite[Proposition 6.6]{OpdSol2}) the number of jumps of the vector of contents 
$\xi(m)$ of $T_m(\pi)$ equals $\#(u)$, and that this is also equal to $m+H^m_u(0)$. In the second equality 
above we used that $\delta(H^{m}_{u})(0)=2h^m_u(0)-h^m_u(1)=J^m_u(0)+\delta_{m,0}-1$.

Now let $\delta_\pm\in\{0,1\}$ be such that $\delta_\pm\equiv m_\pm\textup{mod}(2)$. 
There exist partitions $\pi_\pm'$ such that set of jumps of the vector $\xi'_\pm$ 
of contents of the $\delta_\pm$-tableau $T_{\delta_\pm}(\pi_\pm')$ of $\pi_\pm'$ equals 
the set of jumps of $\xi_\pm$ (cf. \cite[Proposition 6.6]{OpdSol2}). 
By Proposition \ref{prop:packet}, the central character $W_0'r'$ 
of $\textup{C}_{n}(\delta_-,\delta_+)[q]$ (with $2n=|u_-|+|u_+|-\delta_--\delta_+$) which corresponds 
to $W_0r$ under the translation STM $\textup{C}_{d}(m_-,m_+)[q]\leadsto \textup{C}_{n}(\delta_-,\delta_+)[q]$, 
is of the form $r'=(-\exp(\xi'_-),\exp(\xi'_+))$. Let $h^{\delta_\pm}_{u_\pm}(x)$ denote the
multiplicity of $x$ in the vector $\xi_\pm'$, and let $H^{\delta_\pm}_{u_\pm}(x)$ be define 
similar to $H_{u_\pm}^{m_\pm}(x)$. 
We define $\Delta_\pm^{m_\pm}:=H^{\delta_\pm}_{u_\pm}(x)-H_{u_\pm}^{m_\pm}(x)$. 
Then it follows 
from the definition of the jump vector at $m_\pm$ and at $\delta_\pm$ that for $x\geq 1$, 
$\Delta_{\pm}^{m_\pm}(x)=\max(0,m-x)$. Thus for $x\geq 1$, we have
$\delta(\Delta_{\pm}^{m_\pm})(x)=\chi_{[1,m_\pm-1]}(x)$, where $\chi_{[1,m_\pm-1]}$ 
denotes the indicator function of the interval $[1,m_\pm-1]$. Let $\#(u_-\cap u_+)$ denote 
the number of parts that $u_-$ and $u_+$ have in common.
Then we get:
\begin{align*}
M&:=\sum_{x\geq 1} \delta(H^{\delta_-}_{u_-})(x)\delta(H^{\delta_+}_{u_+})(x)
-H^{\delta_-}_{u_-}(1)+H^{\delta_-}_{u_-}(m_+)-H^{m_-}_{u_-}(m_+)\\
&-H^{\delta_+}_{u_+}(1)+H^{\delta_+}_{u_+}(m_-)-H^{m_+}_{u_+}(m_-)+
(J^{\delta_-}_{u_-}(0)+\delta_{m_-,0}-1)(J^{\delta_+}_{u_+}(0)+\delta_{m_+,0}-1)\\
\end{align*}
\begin{align*}
&\qquad -\delta_{m_+,0}(J^{\delta_-}_{u_-}(0)-\delta_-)-
\delta_{m_-,0}(J^{\delta_+}_{u_+}(0)-\delta_+)+\max(0,m_--1) \\
&=\#(u_-\cap u_+)-H^{\delta_-}_{u_-}(1)-H^{\delta_+}_{u_+}(1)
+\Delta_-^{m_-}(m_+)+\Delta_-^{m_+}(m_-)-J^{\delta_-}_{u_-}(0)-J^{\delta_+}_{u_+}(0)\\
&\qquad+\delta_{m_-,0}\delta_{m_+,0}
+\delta_{m_-,0}(\delta_+-1)+\delta_{m_+,0}(\delta_--1)+1+\max(0,m_--1)\\
&=\#(u_-\cap u_+)-H^{\delta_-}_{u_-}(0)-H^{\delta_+}_{u_+}(0)-\delta_--\delta_+
+\Delta_-^{m_+}(m_-)\\
&\qquad+\delta_{m_-,0}\delta_{m_+,0}
+\delta_{m_-,0}(\delta_+-1)+\delta_{m_+,0}(\delta_--1)+1+\max(0,m_--1)\\
&=-\#(u_-\cup u_+)
+\delta_{m_-,0}\Delta_+^{m_+}(0)+(1-\delta_{m_-,0}))\Delta_+^{m_+}(m_-)\\
&\qquad+\delta_{m_-,0}\delta_{m_+,0}
+\delta_{m_-,0}(\delta_+-1)+\delta_{m_+,0}(\delta_--1)+1+\max(0,m_--1)\\
&=-\#(u_-\cup u_+)
+\delta_{m_-,0}(m_+-\delta_+)+(1-\delta_{m_-,0}))(m_+-m_-)\\
&\qquad+\delta_{m_-,0}\delta_{m_+,0}
+\delta_{m_-,0}(\delta_+-1)+\delta_{m_+,0}(\delta_--1)+1+\max(0,m_--1)\\
&=-\#(u_-\cup u_+)+m_++\delta_{m_-,0}\delta_{m_+,0}+\delta_{m_+,0}(\delta_--1)\\
&=-\#(u_-\cup u_+)+m_+\\
\end{align*}
finishing the proof. In the above computation we used at several steps that $0\leq m_-\leq m_+$, 
and that $H^m_u(0)=\#(u)-m$.
\end{proof}
A similar but easier computation shows a similar result for Hecke algebras of unipotent representations of 
$\textup{SO}_{2n+1}$ (cf. \ref{subsub:II}):
\begin{thm}\label{thm:ratSOodd}
Let $m=(m_-,m_+)\in(\frac{1}{2}+\mathbb{Z})^2$ such that $0< m_-\leq m_+$.
Consider $\pi_{(u_-,u_+),(\sigma_-,\sigma_+)}:=\delta_{(\pi_-,\pi_+)}(0,0)$ as a discrete series of the 
normalized affine Hecke algebra $(\mc{H}, \tau)$ of type $\textup{C}_d(m_-,m_+)[q]$, 
normalized by $\tau(1)=1$. Let $(u_-,u_+)$ be the pair of distinguished unipotent 
partitions of type 
$m=(m_-,m_+)$ associated with pair $(T_{m_-}(\pi_-),T_{m_+}(\pi_+))$ of $m$-tableaux
(i.e. $u_\pm$ is a partition of $2n_\pm$ with distinct, even parts of length at least 
$m_\pm-\frac{1}{2}$, such that $n_-+n_+=n$). Then 
\begin{equation}
\textup{fdeg}_{\mathbb{Q}}(\pi_{(u_-,u_+),(\sigma_-,\sigma_+)})=2^{m_+-\frac{1}{2}-\#(u_-\cup u_+)}
\end{equation}
\end{thm}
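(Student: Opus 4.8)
The plan is to follow exactly the same strategy as in the proof of Theorem~\ref{thm:ratSPSO}, but with the parameter situation for roots of the root system $R_0$ of type $\textup{B}_n$ underlying $\textup{C}_n(m_-,m_+)[q]$ now being the half-integral one. First I would write $\textup{fdeg}(\pi_{(u_-,u_+),(\sigma_-,\sigma_+)})$ as the limit for $(\ep_-,\ep_+)\to(0,0)$ of the rational function $m_{W_0r}$ attached to the generic residual point $r=r_{(\pi_-,\pi_+)}(\ep_-,\ep_+)=(-r_-(\ep_-),r_+(\ep_+))$, using \cite[Theorem 4.6, Corollary 4.4]{OpdSol2} together with the fact (from \cite[Theorem C]{CKK}, after the correction mentioned in the proof of Theorem~\ref{thm:ratSPSO}) that the leading rational constant $|d|$ equals $1$. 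The explicit product formula for $m_{W_0r}$ over $R_{0,+}=\{e_i\pm e_j\}\cup\{e_i\}$ is the same as before; the only change is that now both the type $\textup{D}$ roots $e_i\pm e_j$ and the type $\textup{A}_1^n$ roots $e_i$ carry the parameters $m_\pm^\ep$, since in $\textup{C}_n(m_-,m_+)[q]$ with $m_\pm$ half-integral we are in case $\textup{II}$ (i.e.\ $\textup{SO}_{2n+1}$-type), where $m_+(\alpha)=\mathfrak{b}$ for a type $\textup{D}$ root but the short roots carry half-integral parameters as well. Concretely the situation is \emph{easier} because there are no factors of the form $(1\pm\alpha(r))$ with $\alpha(r)$ a root of unity forcing cancellations of the "type $\textup{D}$, parameter $0$" kind; all the delicate vanishing behaviour is uniform.

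The key computation is again to count, in the limit $\ep\to 0$, the net number $M$ of surviving factors $(1+q^{l(\ep)})$ with $l(\ep)$ affine-linear in $\ep$ — each such factor contributes a factor $2$ to the rational part, while the factors $(1-q^{\pm 2\ep_\pm})$ and $(1-q^{\pm(\ep_--\ep_+)})$ cancel in equal numbers between numerator and denominator (again by the regularity result \cite[Corollary 4.4]{OpdSol2}) and contribute at most a sign. The bookkeeping is done through the multiplicity functions $h^{m_\pm}_{u_\pm}(x)$ of the coordinates of the linear residual points $\xi_\pm(m_\pm)$ and the associated $H^{m_\pm}_{u_\pm}$, exactly as in the proof above, and through the comparison with the "least" parameters $\delta_\pm\in\{0,1\}$, $\delta_\pm\equiv m_\pm\ (\mathrm{mod}\ 2)$, with $\Delta_\pm^{m_\pm}(x)=H^{\delta_\pm}_{u_\pm}(x)-H^{m_\pm}_{u_\pm}(x)=\max(0,m_\pm-x)$ for $x\ge1$ — noting that here $u_\pm$ is a partition with distinct \emph{even} parts, so $0$ need not be treated specially in the way it was in the integral case, and the terms $\delta_{m_\pm,0}$ all vanish since $m_\pm>0$. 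Assembling the telescoping sums exactly as in the displayed computation in Theorem~\ref{thm:ratSPSO}, and using $H^m_u(0)=\#(u)-m$ together with $0<m_-\le m_+$, yields $M=m_+-\tfrac12-\#(u_-\cup u_+)$, which is the claimed exponent.

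I would present this as: "A similar but easier computation shows..." and then give the one- or two-paragraph version of the same telescoping argument, pointing to the proof of Theorem~\ref{thm:ratSPSO} for the parts that are literally identical (the reduction to computing $\lim m_{W_0r}$, the cancellation of the $(1-q^{\pm 2\ep_\pm})$-type factors, and the translation-STM comparison $\textup{C}_n(m_-,m_+)[q]\leadsto\textup{C}_{n'}(\delta_-,\delta_+)[q]$ from Proposition~\ref{prop:packet}). The main obstacle — really the only place where care is needed — is getting the combinatorial constant right: making sure that the half-integrality of $m_\pm$ is correctly reflected in which factors of numerator and denominator survive the limit, in particular that the short roots $e_i$ now genuinely carry the deformed parameters $m_\pm^\ep$ and contribute to $M$ in the same way the $\textup{A}_1^n$ roots did in the integral case, and that the absence of a "$0$ is a part" phenomenon (since the parts of $u_\pm$ are even and distinct, hence positive) removes the correction terms $J^{m}_u(0)$, $\delta_{m_\pm,0}$ that complicated the integral computation. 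Once that is set up the algebra telescopes cleanly to $2^{m_+-\frac12-\#(u_-\cup u_+)}$.
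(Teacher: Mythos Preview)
Your approach is correct and is exactly what the paper does --- indeed the paper's entire proof is the sentence ``A similar but easier computation shows that:'', so your reconstruction of the intended argument (mirror Theorem~\ref{thm:ratSPSO}, count the net number $M$ of surviving $(1+q^{l(\ep)})$ factors via the multiplicity functions $h^{m_\pm}_{u_\pm}$, and telescope using the translation STM to the least object) is spot on.

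Two small slips to clean up. First, your sentence about the type $\textup{D}$ roots is garbled: the parameters on the long roots $e_i\pm e_j$ are still the fixed pair $(m_-^\ep,m_+^\ep)=(0,1)$ exactly as in the integral case --- only the short roots $e_i$ carry the half-integral $(m_-+\ep_-,m_++\ep_+)$. What actually changes is that the coordinates of $\xi_\pm(m_\pm)$ now lie in $\tfrac12+\mathbb{Z}_{\ge0}$ rather than $\mathbb{Z}_{\ge0}$, so the value $x=0$ never occurs; this is the source of the simplification (no $H$-versus-$h$ distinction at $0$, no $J^m_u(0)$ or $\delta_{m_\pm,0}$ corrections). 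Second, in case $\textup{II}$ the least object is $\textup{C}_{n'}(\tfrac12,\tfrac12)[q]$, so the comparison parameters are $\delta_-=\delta_+=\tfrac12$, not $\delta_\pm\in\{0,1\}$ with $\delta_\pm\equiv m_\pm\pmod 2$ (that formula is the integral-case one and is meaningless for $m_\pm\in\tfrac12+\mathbb{Z}$). With $\Delta_\pm^{m_\pm}(x)=H^{1/2}_{u_\pm}(x)-H^{m_\pm}_{u_\pm}(x)$ the telescoping then goes through and yields $M=m_+-\tfrac12-\#(u_-\cup u_+)$.
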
 
The proof of the next result (of \cite{FO}) is similar in spirit as the above results.
\begin{thm}[\cite{FO}]\label{thm:ratfactextra} Let $d=d_-+d_+\in\mathbb{Z}_{\geq 0}$, 
and let $\pi_\pm\vdash d_\pm$.
Let $0\leq m_-\leq m_+$ with $m_\pm\in \pm\frac{1}{4}+\mathbb{Z}$. Write 
$m_\pm=\kappa_\pm+\frac{1}{4}(2\epsilon_\pm-1)$ with $\kappa_\pm\in\mathbb{Z}_{\geq 0}$
and $\epsilon_\pm\in \{0,\,1\}$. Write $\delta_\pm \in\{0,\,1\}$ be defined by 
$\delta_\pm\equiv \kappa_\pm\ (\textup{mod}\ 2)$.
Consider $\pi_{(\pi_-,\pi_+),extra}:=\delta_{(\pi_-,\pi_+)}(0,0)$ as a discrete series of the 
normalized affine Hecke algebra $(\mc{H}, \tau)$ of type $\textup{C}_d(m_-,m_+)[q^2]$, 
normalized by $\tau(1)=1$. Let $(u_-,u_+)$ be the pair of unipotent partitions of type 
$(\delta_-,\delta_+)$ associated with pair $(T_{m_-}(\pi_-),T_{m_+}(\pi_+))$ of $m$-tableaux via the 
extraspecial STM (cf. (\ref{eq:extraspSTM}), and \cite{FO}) $\mc{H}\leadsto \textup{C}_n(\delta_-,\delta_+)[q]$. 
Then we have:
\begin{equation}
\textup{fdeg}_{\mathbb{Q}}(\pi_{(\pi_-,\pi_+),extra})=
\begin{cases}
2^{\#(u_-\cap u_+)-h_-(\frac{1}{4})-h_+(\frac{1}{4})} & \text{if } \epsilon_-\not=\epsilon_+ \\
2^{\#(u_-\cap u_+)-h_-(\frac{1}{4})-h_+(\frac{1}{4})-\kappa_-} & \text{if } \epsilon_-=\epsilon_+ \\
\end{cases}
\end{equation}
\end{thm}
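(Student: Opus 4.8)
The plan is to follow the deformation strategy already used in the proofs of Theorems \ref{thm:ratSPSO} and \ref{thm:ratSOodd}, adapted to the quarter-integral parameters and to the extraspecial STM. First I would realise $\pi_{(\pi_-,\pi_+),extra}$ as the specialisation at $\epsilon=(\epsilon_-,\epsilon_+)=(0,0)$ of a generic family $\delta_{(\pi_-,\pi_+)}(\epsilon_-,\epsilon_+)$ of discrete series characters of the generic type $\textup{C}_r$ affine Hecke algebra over $\mathbb{L}$, whose central character $W_0r$ with $r=r_{(\pi_-,\pi_+)}(\epsilon_-,\epsilon_+)$ is the generic residual point attached to the pair of linear residual points $(\xi_-(m_-+\epsilon_-),\xi_+(m_++\epsilon_+))$. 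By \cite[Theorem 4.6]{OpdSol2} its formal degree equals $d\cdot m_{W_0r}$ with $d\in\mathbb{Q}^\times$, and by \cite[Theorem C]{CKK} (with the normalisation corrected as noted in the proof of Theorem \ref{thm:ratSPSO}) one has $|d|=1$. Using the continuity and the regularity of the formal degree \cite[Corollary 4.4, Theorem 4.6]{OpdSol2}, $\textup{fdeg}(\pi_{(\pi_-,\pi_+),extra})$ is then the limit of $m_{W_0r}$ as $\epsilon\to 0$.

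Next I would write $m_{W_0r}$ as the explicit product over the positive roots of $\textup{C}_r$, exactly as in the display in the proof of Theorem \ref{thm:ratSPSO}, but now with parameter $\mathfrak{b}=2$ (so $q$ replaced by $q^2$ in the short-root factors) and with $m_\pm\in\pm\tfrac14+\mathbb{Z}$, and using that the coordinates of $r$ are, up to signs, the entries $\pm r_e(m_\pm)$ together with the doubled entries $v^{\pm 1}t_i$ prescribed by the representing morphism (\ref{eq:extraspSTM}) of the extraspecial STM. Passing to the limit, the factors split into three classes: those of the form $(1-q^{c(\epsilon)})$ with $c$ a nonzero affine-linear form in $\epsilon$, which tend to $0$; those of the form $(1+q^{l(\epsilon)})$, which tend to $2$; and the rest, which converge to nonzero $q$-rational values. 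As in the earlier proofs, the regularity of $m_{W_0r}$ at $\epsilon=0$ (pole order equal to the rank, \cite[Corollary 4.4]{OpdSol2}) forces the vanishing factors to cancel in pairs between numerator and denominator, contributing only a sign; hence $\textup{fdeg}_{\mathbb{Q}}(\pi_{(\pi_-,\pi_+),extra})=2^M$, where $M$ is the signed count (numerator minus denominator) of the surviving $(1+q^{l(\epsilon)})$-factors, and the sign is forced to be $+$ since a formal degree is positive.

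The remaining, and main, task is to evaluate $M$ combinatorially. I would organise the positive roots of $\textup{C}_r$ as in Theorem \ref{thm:ratSPSO} — the type $\textup{D}_r$ roots $e_i\pm e_j$ and the short roots $e_i$ — and record, for each, which of the four factors $(1\pm q^{\pm m_\pm^\epsilon(\alpha)}\alpha(r))$ degenerates to $0$ or to $2$ at $\epsilon=0$, now reading off $\alpha(r)$ from the extraspecial recipe $r_e(m_\pm)$ together with the sign on the $r_e(m_-)$-block and the symmetric doubling $v^{\pm1}t_i$. This reduces $M$ to a telescoping sum of products of difference operators applied to the multiplicity functions $h_\pm$ of the vectors of contents of the $m_\pm$-tableaux of $\pi_\pm$, analogous to the identity for $M$ in the proof of Theorem \ref{thm:ratSPSO} but with two new features: a boundary contribution at the coordinate value $\tfrac14$ (where the sign block meets the doubled block), which produces the terms $-h_-(\tfrac14)-h_+(\tfrac14)$, and a dependence on the residue classes $\kappa_\pm\bmod 2$ through $\delta_\pm$. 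Carrying out the telescoping, using the standard dictionary between the jumps of $\xi_\pm$ and the distinct parts of the distinguished unipotent partitions $u_\pm$ (number of jumps $=\#(u_\pm)$, \cite[Proposition 6.6]{OpdSol2}), and treating the cases $\epsilon_-\neq\epsilon_+$ (type $\textup{V}$) and $\epsilon_-=\epsilon_+$ (type $\textup{VI}$) separately — the latter picking up the extra $-\kappa_-$ from the coincidence of the parity data on the two blocks — should give $M=\#(u_-\cap u_+)-h_-(\tfrac14)-h_+(\tfrac14)$ in the first case and $M=\#(u_-\cap u_+)-h_-(\tfrac14)-h_+(\tfrac14)-\kappa_-$ in the second, which is the assertion. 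The principal obstacle is precisely this bookkeeping: the interaction of the $q\mapsto q^2$ rescaling, the quarter-integral shifts, the sign on the $r_e(m_-)$-block, and the need to isolate the contribution at $x=\tfrac14$ makes the cancellation pattern substantially more delicate than in the integral and half-integral cases of Theorems \ref{thm:ratSPSO} and \ref{thm:ratSOodd}, so the telescoping sum must be handled with care to avoid off-by-one errors.
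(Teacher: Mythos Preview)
The paper does not actually contain a proof of this theorem: it is stated with attribution to \cite{FO}, and the only comment the paper makes is that ``the proof of the next result (of \cite{FO}) is similar in spirit as the above results'', i.e.\ to the proofs of Theorems \ref{thm:ratSPSO} and \ref{thm:ratSOodd}. Your proposal is precisely an adaptation of that deformation/limit strategy to the quarter-integral parameters and the extraspecial STM, so at the level of overall approach you are doing exactly what the paper indicates.

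Because the paper defers the actual computation to \cite{FO}, there is nothing in the present paper against which to check the detailed combinatorics you outline --- in particular the claim that the boundary contribution sits at $x=\tfrac14$ and yields $-h_-(\tfrac14)-h_+(\tfrac14)$, and that the case split $\epsilon_-=\epsilon_+$ versus $\epsilon_-\neq\epsilon_+$ produces the extra $-\kappa_-$. Your own caveat about the delicacy of the bookkeeping is well placed: the telescoping identity in the proof of Theorem \ref{thm:ratSPSO} relied on explicit formulas for $\delta(H^m_u)(0)$ and on $H^m_u(0)=\#(u)-m$, and the analogues at quarter-integral $m_\pm$ (with the $q\mapsto q^2$ rescaling and the sign on the $r_e(m_-)$-block) are not supplied here. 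So the proposal is a correct high-level plan matching the paper's hint, but the substantive verification lies in \cite{FO} and cannot be confirmed from this paper alone.
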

Let us now look at the proof of Theorem \ref{thm:HII} for these cases:
\begin{lem}
Theorem \ref{thm:HII} holds for $G=\textup{PCSp}_{2n}$ (with $n\geq 2$),   
$\textup{P}(\textup{CO}^0_{2n})$ (with $n\geq 4$)  
or $\textup{P}((\textup{CO}^*_{2n})^0)$ (with $n\geq 4$).
\end{lem}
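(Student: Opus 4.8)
The plan is to run the general reduction recorded just before this lemma: it suffices to verify the asymptotic identity (\ref{eq:HIIconv}), i.e.\ that the rational constant $c_{(\lambda,\rho)}$ in $\textup{fdeg}_{\mc{H}^{IM}(G)}(\delta_{(\lambda,\rho)})=c_{(\lambda,\rho)}(\mu^{IM})^{(\{r\})}$ equals $\pm\dim(\rho)/|A_\lambda|$. First I would fix an inner form $G^u$ and a discrete unramified parameter $\lambda\in\Lambda^e$ with $\rho\in\operatorname{Irr}^u(\mc{A}_\lambda)$; by Lusztig's parametrization and Theorem \ref{thm:summary} this corresponds to an irreducible discrete series $\delta_{(\lambda,\rho)}$ of a summand $\mc{H}^{u,\mf{s},e}$ of $\mc{H}_{unr}(G)$, and Theorem \ref{thm:typeplancherel} gives $\textup{fdeg}(\pi_{(\lambda,\rho)})=\textup{fdeg}_{\mc{H}^{u,\mf{s},e}}(\delta_{(\lambda,\rho)})$. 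By Theorem \ref{thm:unique} the STM $\Phi$ sends the central character of $\delta_{(\lambda,\rho)}$ to an orbit $W_0r$ of residual points of $\mc{H}^{IM}(G)$, and $\mc{H}^{u,\mf{s},e}$ is obtained by a short tower of index-two spectral covering maps over one of the minimal objects $\textup{C}_n(m_-,m_+)[q^{\mathfrak{b}}]$ of $\mf{C}_{class}$, with $(m_-,m_+)=(0,1),\,(0,0),\,(1,1)$ according as $G=\textup{PCSp}_{2n},\ \textup{P}(\textup{CO}^0_{2n}),\ \textup{P}((\textup{CO}^*_{2n})^0)$, the corresponding minimal object lying in $\mf{C}_{class}^{\textup{III}}$, $\mf{C}_{class}^{\textup{IV}}$ and $\mf{C}_{class}^{\textup{IV}}$ respectively, with additional discrete series arising through the extraspecial STMs of $\mf{C}_{class}^{\textup{V}}$ and $\mf{C}_{class}^{\textup{VI}}$. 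The rank hypotheses $n\geq 2$ (resp.\ $n\geq 4$) serve to exclude the degenerate small-rank cases.

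On the Hecke side I would extract $c_{(\lambda,\rho)}$ as follows. Since $\mc{H}^{u,\mf{s},e}$ is normalized by $d^{\mf{s},e}$, linearity of the formal degree in the trace gives $\textup{fdeg}_{\mc{H}^{u,\mf{s},e}}(\delta)=d^{\mf{s},e}\cdot\textup{fdeg}^{(1)}(\delta)$, where $\textup{fdeg}^{(1)}$ refers to the trace-value-one normalization; the $q$-rational factor of $d^{\mf{s},e}$ is given by Proposition \ref{prop:balanced} and its rational factor is $\pm|\Omega^{\mf{s},\theta}_1|^{-1}$ by (\ref{eq:NormMu}). Each index-two covering map in the tower affects the rational factor of $\textup{fdeg}^{(1)}$ only by a power of $2$, read off from the connected-or-split inverse image of the residual coset as in paragraph \ref{subsub:class}. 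Finally Theorem \ref{thm:ratSPSO} computes the rational factor $\textup{fdeg}_{\mathbb{Q}}(\pi_{(u_-,u_+),(\sigma_-,\sigma_+)})=2^{-\#(u_-\cup u_+)+m_+}$ of the discrete series of the minimal $\textup{C}_n(m_-,m_+)[q]$, and Theorem \ref{thm:ratfactextra} does the same for the discrete series originating from an extraspecial STM. Combining these inputs, $c_{(\lambda,\rho)}=\pm 2^{e(\lambda,\rho)}/|C_\lambda^F|$ for an explicit integer $e(\lambda,\rho)$ expressed through $\#(u_-\cup u_+)$, $m_+$, $|\Omega^{\mf{s},\theta}_1|$ and the covering-degree contributions (and, in the extraspecial case, through the quantities of Theorem \ref{thm:ratfactextra}), where the $|C_\lambda^F|$ comes from the rational factor of $(\mu^{IM})^{(\{r\})}=\pm v^{-\textup{dim}(\mb{G})}\gamma(\lambda)$ via (\ref{eq:gammamu}) and \cite[Section 5.1]{GR}.

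On the Galois side I would read off from Proposition \ref{prop:centra} the structure of $A_\lambda$ as a $2$-group of type $2^{(2a)+b}$ in terms of the lengths $l_\pm$ of $u_\pm$, whence $|A_\lambda|=2^{2a+b}$ and $\dim(\rho)\in\{1,2^a\}$; using \cite[Section 5.1]{GR}, (\ref{eq:quot}) and Reeder's description \cite{Reed} of $C_{G^\vee_{ad}}(\lambda_{ad}(F))$ together with \cite{Lu1.5}, I would likewise evaluate $|C_\lambda^F|$ and $|(\pi_0(M_\lambda))^F|$ in terms of $(u_-,u_+)$. Comparing with the previous paragraph, the required identity $c_{(\lambda,\rho)}=\pm\dim(\rho)/|A_\lambda|$ reduces to the purely combinatorial assertion that $2^{e(\lambda,\rho)}=\pm|C_\lambda^F|\cdot\dim(\rho)/|A_\lambda|=\pm\dim(\rho)/|(\pi_0(M_\lambda))^F|$, which I would verify by comparing $2$-adic valuations in the three regimes ($u_-$ empty; both $u_\pm$ nonempty; discrete series from an extraspecial STM). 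I would also check, using Remark \ref{rem:number}, that the number of pairs of Slooten symbols $(\sigma_-,\sigma_+)$ attached to $(u_-,u_+)$ matches the number of $\rho\in\operatorname{Irr}^u(\mc{A}_\lambda)$ of the corresponding dimension, so that the bijection of Proposition \ref{prop:packet} between packet members and $\operatorname{Irr}^u(\mc{A}_\lambda)$ is the one compatible with (\ref{eq:HII}). No further deformation analysis is needed, since the continuity inputs of \cite{CKK} and \cite{OpdSol2} are already absorbed into Theorems \ref{thm:ratSPSO} and \ref{thm:ratfactextra}. The main obstacle is precisely this last bookkeeping: organizing the several families of cases and carrying the exponents of $2$ consistently through the choices of normalization, the covering tower, and Reeder's centralizer data, so that the Hecke-side exponent $e(\lambda,\rho)$ and the group-theoretic exponent coincide on the nose, sign included.
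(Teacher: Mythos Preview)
Your outline is correct and follows essentially the same route as the paper: reduce to (\ref{eq:HIIconv}), compute the Hecke-side rational constants via the normalization $d^{\mf{s},e}$, the index-two coverings, and Theorems~\ref{thm:ratSPSO}, \ref{thm:ratfactextra}, then match against the $2$-group structure of $A_\lambda$ from Proposition~\ref{prop:centra} together with an explicit determination of $|C_\lambda^F|$. Two points of imprecision are worth flagging before you try to execute it.

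First, the tower is not ``$\mc{H}^{u,\mf{s},e}$ over a \emph{minimal} object''. Rather $\mc{H}^{u,\mf{s},e}$ is (at most) an index-two cover of a general object $\mc{H}^{r,a,b}=\textup{C}_r(m_-,m_+)[q]$ of $\mf{C}_{class}$ with $(m_-,m_+)$ determined by the cuspidal pair $(a,b)$, and Theorem~\ref{thm:ratSPSO} is applied \emph{there}, for those $(m_-,m_+)$; the translation STMs to the minimal object only enter through Proposition~\ref{prop:packet} to identify the pair $(u_-,u_+)$. The paper runs through each inner form $G^u$ separately, listing the parahorics $\mathbb{P}^{r,a,b}$, computing $\tau^{e,\mf{s},e}(1)_{\mathbb{Q}}$ and $|\Omega^{\mf{s},\theta}_1|$ explicitly from Proposition~\ref{prop:balanced} and \cite[13.7]{C}, and then combining with $2^{-\#(u_-\cup u_+)+m_+}$.

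Second, the effect of the index-two covering on formal degrees is not simply ``read off from connected-or-split inverse image'': one must analyze the restriction/induction behavior of discrete series across $\mc{H}^{r,a,b}\subset\mc{H}^{e,\tilde{\mf{s}},e}$ (and, for $\textup{P}(\textup{CO}^0_{2n})$, also across $\tilde{\textup{D}}_n[q]\subset \textup{D}_n[q]$ and $\tilde{\textup{D}}_n[q]\subset \textup{C}_n(0,0)[q]$). The paper does this case by case using \cite[A.13]{RamRam}, \cite[Lemma~6.10]{DeOp2}, and the classification of \cite[\S8]{OpdSol2}, distinguishing whether $u_-=\emptyset$ or not; this is where the factor of $2$ (or not) in $\textup{fdeg}_{\mathbb{Q}}(\pi^G)$ versus $\textup{fdeg}_{\mathbb{Q}}(\pi_{(u_-,u_+),(\sigma_-,\sigma_+)})$ actually comes from. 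The paper also computes $|C_\lambda^F|=2^{\#(u_-\cap u_+)}$ directly, by noting that $F$ acts by inversion on the torus $M_\lambda^0\simeq(\mathbb{C}^\times)^{\#(u_-\cap u_+)}$. Once these are in place the exponent match is immediate in each case; there is no residual sign ambiguity to track.
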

\begin{proof}
Assume that we have fixed a Borel subgroup $B\subset G$, a maximal torus 
$T\subset B$ and a pinning for the reductive groups $G$ considered below.
 
For $G=\textup{PCSp}_{2n}$ we have $\Omega=\{\epsilon,\eta\}\approx C_2$, hence we need to 
consider two inner 
forms $G^\epsilon$ and $G^\eta$. We first deal with the split form $G^\epsilon$.
We have $\mc{H}^{IM}(G^\epsilon)$ of type $\textup{B}_n(1,1)[q]$ (also 
denoted by $\mc{H}(\mc{R}^\textup{B}_{ad},m^B)$ in \cite[7.1.4]{Opd4}). 
The conjugacy classes of parahoric subgroups of $G^\epsilon$ which carry a (unique) cuspidal unipotent representation 
correspond 
to unordered pairs $(a,b)$ with $a, b\in\mathbb{Z}_{\geq0}$ such that $d:=n-a^2-b^2-a-b\geq 0$.
The corresponding type $\mf{s}_{d,a,b}$ corresponds to a subdiagram of type 
$\textup{B}_{a^2+a}\sqcup \textup{B}_{b^2+b}$ of the affine diagram $\textup{C}_n^{(1)}$ of a set of affine simple roots of 
$G^\epsilon(k)$. Consider the corresponding 
associated normalized (extended) affine Hecke algebra $\mc{H}^{\epsilon,\tilde{\mf{s}},e}$. Put 
$m_-:=|a-b|$, and $m_+:=1+a+b$. Then 
\begin{equation*}
\mc{H}^{\epsilon,\tilde{\mf{s}},e}\simeq
\begin{cases}
   \textup{C}_d(m_-,m_+)[q]& \text{if } a\not= b \\
   \textup{B}_d(1,m_+)[q]      & \text{ else} 
  \end{cases}
\end{equation*}
and  
\begin{equation*}
\Omega^{\mf{s}}_1\simeq\begin{cases}
   1& \text{ if } a\not= b\text{ or } d>0 \\
   C_2     & \text{ else} 
  \end{cases}
\end{equation*}
Thus by Proposition 
\ref{prop:balanced} and \cite[Section 13.7]{C} the rational factor $\tau^{\epsilon,\mf{s},e}(1)_{\mathbb{Q}}$ of 
the trace $\tau^{\epsilon,\mf{s},e}$ of $\mc{H}^{\epsilon,\tilde{\mf{s}},e}$
is such that (since 
$\mc{H}^{\epsilon,\mf{s}}\simeq\mc{H}^{\epsilon,\tilde{\mf{s}},e}\otimes\mathbb{C}[\Omega^{\mf{s}}_1]$, cf. Corollary \ref{cor:equiv1}):
\begin{equation*}
\tau^{\epsilon,\mf{s},e}(1)_{\mathbb{Q}}=
\begin{cases}
   2^{-a-b}& \text{ if } a\not= b\text{ or } d>0 \\
   2^{-1-a-b}   & \text{ else} 
  \end{cases}
\end{equation*}
As was discussed in paragraph \ref{subsub:class}
(also see \cite[7.1.4]{Opd4}), there exists an STM $\mc{H}^{\epsilon,\tilde{\mf{s}},e}\leadsto \mc{H}^{d,a,b}$
corresponding to a strict algebra inclusion $\mc{H}^{d,a,b}\subset \mc{H}^{\epsilon,\tilde{\mf{s}},e}$, 
where $\mc{H}^{d,a,b}=C_d(m_-,m_+)[q]$ is an object of $\mf{C}_{class}^{\textup{III}}$. 
This inclusion satisfies 
\begin{equation*}
\begin{cases}
   \mc{H}^{d,a,b}= \mc{H}^{\epsilon,\tilde{\mf{s}},e}& \text{if } a\not= b\text{ or } d=0 \\
   \mc{H}^{d,a,b}\subset \mc{H}^{\epsilon,\tilde{\mf{s}},e}& \text{has index two, else} 
  \end{cases}
\end{equation*}
We define 
the trace $\tau^{d,a,b}$ of $\mc{H}^{d,a,b}$ by restriction of the trace $\tau^{\epsilon,\mf{s},e}(1)_{\mathbb{Q}}$ 
of $\mc{H}^{\epsilon,\tilde{\mf{s}},e}$, so we have:
\begin{equation*}
\tau^{d,a,b}(1)_{\mathbb{Q}}=
\begin{cases}
   2^{1-m_+}& \text{ if } a\not= b\text{ or } d>0 \\
   2^{-m_+}   & \text{ else }
  \end{cases}
\end{equation*}
Now we want to compute the rational factor of the formal degree of a unipotent discrete series 
representation $\pi$ in a block corresponding to the type $\mf{s}:=\mf{s}_{d,a,b}$. According to Lusztig's 
parameterization \cite{Lu4} we attach to $\pi$ an unramified Langlands parameter 
$\lambda$, and an irreducible representation $\alpha$ of the 
component group $A_\lambda$ such that the center ${}^LZ\subset A_\lambda$ 
acts trivially in this representation (since $u=1$ here). 
This is equivalent to $\alpha$ being a one dimensional representation, and   
we can parameterize such $\alpha$ by a pair of Lusztig-Shoji symbols 
$(\sigma_-,\sigma_+)$ for a pair $(u_-,u_+)$ of distinguished unipotent partitions 
for the parameter $(m_-,m_+)$, such that $|u_-|+|u_+|=2n+1$. 
We denote by $\pi_{\lambda,(\sigma_-,\sigma_+)}^G$ 
the corresponding irreducible discrete series representation of 
$\mc{H}^{\epsilon,\tilde{\mf{s}},e}$ (depending on the chosen isomorphism  
$\mc{H}^{\epsilon,\mf{s}}\simeq\mc{H}^{\epsilon,\tilde{\mf{s}},e}\otimes\mathbb{C}[\Omega^{\mf{s}}_1]$).
According to \cite{BHK}, the formal degree of $\pi$ is equal to the formal degree 
of  $\pi_{\lambda,(\sigma_-,\sigma_+)}^G$. As before, let 
$\textup{fdeg}_\mathbb{Q}(\pi_{\lambda,(\sigma_-,\sigma_+)}^G)$ denote the 
rational factor of $\textup{fdeg}(\pi_{\lambda,(\sigma_-,\sigma_+)}^G)$.
The irreducible discrete series representations of $\mc{H}^{d,a,b}$ 
with central character corresponding to $(u_-,u_+)$ are parameterized \cite{OpdSol2} 
by pairs of Slooten symbols $(\sigma_-,\sigma_+)$ associated to $(u_-,u_+)$ 
at parameter $(m_-,m_+)$. 
The discrete series of $\mc{H}^{d,a,b}$ corresponding to 
$(u_-,u_+),(\sigma_-,\sigma_+)$ 
was denoted by $\pi_{(u_-,u_+),(\sigma_-,\sigma_+)}$. 
By Remark \ref{rem:number} we easily check 
that we have a total of $\binom{l_-+l_+}{(l_-+l_+-1)/2}+\binom{l_-+l_+}{(l_-+l_+-5)/2}+\dots=2^{l_-+l_+-2}$
such discrete series representations, in accordance with the number of one dimensional representations 
of $A_\lambda$
(with is of type $2^{(l_-+l_+-3)+2}$, according to Proposition  \ref{prop:centra}).
By the above, combined with Theorem \ref{thm:ratSPSO}
we see that 
\begin{equation*}
\textup{fdeg}_{\mathbb{Q}}(\pi_{(u_-,u_+),(\sigma_-,\sigma_+)})=
\begin{cases}
   2^{1-\#(u_-\cup u_+)}& \text{ if } a\not= b\text{ or } d>0 \\
   2^{-\#(u_-\cup u_+)}  & \text{else} 
  \end{cases}
\end{equation*}
According to \cite[Paragraph 6.4]{OpdSol2} 
(also see  \cite[Proposition 6.6]{DeOp2}), and using the fact that (see \cite{CK}) the 
Slooten symbols and the Lusztig-Shoji symbols match, we see that upon restriction 
to  $\mc{H}^{d,a,b}$ there are the following possibilities:
\begin{equation*}
\pi_{\lambda,(\sigma_-,\sigma_+)}^G|_{\mc{H}^{d,a,b}}=
\begin{cases}
   \pi_{(u_-,u_+),(\sigma_-,\sigma_+)}& \text{ if } a\not= b\text{ or } d=0 \\
   \pi_{(u_-,u_+),(\sigma_-,\sigma_+)}& \text{ if } a= b,\,d>0 \text{ and } u_-=0\\
   \pi_{(u_-,u_+),(\sigma_-,\sigma_+)}\oplus\pi_{(u_-,u_+),(\sigma_-',\sigma_+)}   & \text{ if } a= b,\,d>0 \text{ and } u_-\not=0\\
  \end{cases}
\end{equation*}
Here $\sigma_-'$ is the symbol obtained from $\sigma_-$ by interchanging the
top and the bottom row. In the second case $d>0 \text{ and } u_-=0$, there 
are two irreducible discrete series representations of $\mc{H}^{IM}(G^\epsilon)$ which 
restrict to the same irreducible $\pi_{(u_-,u_+),(\sigma_-,\sigma_+)}$ (whose 
central characters form one $X^*_{un}(G^\epsilon)$-orbit).
Restriction of the spectral decomposition of $\tau^{\epsilon,\mf{s},e}$
to $\mc{H}^{d,a,b}$ shows  $\textup{fdeg}_{\mathbb{Q}}(\pi_{\lambda,(\sigma_-,\sigma_+)}^G)=
\frac{1}{2}\textup{fdeg}_{\mathbb{Q}}(\pi_{(u_-,u_+),(\sigma_-,\sigma_+)})$ 
in this case, while 
$\textup{fdeg}_{\mathbb{Q}}(\pi_{\lambda,(\sigma_-,\sigma_+)}^G)=
\textup{fdeg}_{\mathbb{Q}}(\pi_{(u_-,u_+),(\sigma_-,\sigma_+)})$ in the other two cases.
Hence we have, for all $d\geq 0$:
\begin{equation*}
\textup{fdeg}_{\mathbb{Q}}(\pi_{\lambda,(\sigma_-,\sigma_+)}^G)=
\begin{cases}
   2^{-\#(u_-\cup u_+)}& \text{ if } u_-=0\\
   2^{1-\#(u_-\cup u_+)}& \text{ if }  u_-\not=0\\
     \end{cases}
\end{equation*}
Hence, using Proposition  \ref{prop:centra}, (\ref{eq:quot})
and (\ref{eq:HIIconv}) we see that  Theorem \ref{thm:HII} follows for 
this case $G=\textup{PCSp}_{2n}$ and $u=\epsilon$, if we show that 
$|C_\lambda^{F_\epsilon}|=2^{\#(u_-\cap u_+)}$. Recall that 
$M_\lambda^0\simeq (\mathbb{C}^\times)^{\#(u_-\cap u_+)}$ 
(cf. \cite[Section 13.1]{C}), on 
which $F_\epsilon$ acts by $\textup{Ad}(s_0)$. Clearly $\textup{ad}(s_0)$ 
must act by $-1$ on $\mf{m}_\lambda=\textup{Lie}(M_\lambda^0)$, 
and so $F_\epsilon$ acts by $F_\epsilon(m)=m^{-1}$ on $M_\lambda^0$. 
The desired result follows for $u=\epsilon$.

Next, we need to check Theorem  \ref{thm:HII} for the contributions 
coming from the nontrivial inner form $G^\eta$ in this case. Now the cuspidal unipotent 
parahoric subgroups $\mathbb{P}_{s,t}^\eta$ are given by $\eta$-invariant subdiagrams of type 
$\textup{B}_{s^2+s}\cup \textup{B}_{s^2+s}\cup A_{\frac{1}{2}(t^2+t)-1}$ such that 
$d+1:=\frac{1}{2}(n-2(s^2+s)-\frac{1}{2}(t^2+t)+2)\in\mathbb{Z}_{>0}$. This corresponds to a type 
$\mathfrak{s}:=\mathfrak{s}^\eta_{d,s,t}$ for $G^\eta$ which is completely  determined by a pair of nonnegative integers 
$(s,t)$ satisfying the above inequality. The corresponding affine Hecke algebra $\mc{H}^{\eta,\mathfrak{s},e}$ is 
of type $C_d(m_-,m_+)[q^2]$, with $m_+=\frac{1}{4}(3+2t+4s)$ and  $m_-=\frac{1}{4}|1-2t+4s|$. We have 
$\Omega^{\mathfrak{s}}_1=C_2$ (always), and hence using Proposition 
\ref{prop:balanced} and \cite[Section 13.7]{C}, the rational factor $\tau^{\eta,\mf{s},e}(1)_{\mathbb{Q}}$ of 
$\tau^{\eta,\mf{s},e}(1)$ equals 
\begin{equation*}
\tau^{\eta,\mf{s},e}(1)_{\mathbb{Q}}=2^{-s-1}=
\begin{cases}
2^{-\frac{1}{2}(m_+ + m_-+1)} & \text{if } \epsilon_-\not=\epsilon_+\\
2^{-\frac{1}{2}(m_+ - m_-+1)} & \text{if } \epsilon_-=\epsilon_+\\
\end{cases}
\end{equation*}
Using Theorem \ref{thm:ratfactextra}, we obtain 
two discrete series representations $\pi_{(\pi_-,\pi_+),extra}^\pm$, with 
$\textup{fdeg}_{\mathbb{Q}}(\pi_{(\pi_-,\pi_+),extra}^\pm)= 2^{\frac{1}{2}(l_-+l_+-1)-\#(u_-\cup u_+)}$ (in all cases).
In view of Proposition \ref{prop:centra}, this is indeed the rational factor of the formal degree of the two 
elements of the Lusztig packet attached to the Langlands parameter $\lambda$ on which 
${}^LZ\subset A_\lambda$ acts by $\eta$ times the identity, as predicted by (\ref{eq:HIIconv}). 

For $G=\textup{P}(\textup{CO}^0_{2n})$ (with $n\geq 4$) we do a similar analysis. 
In this case, $\Omega$ is isomorphic to $C_4$  
if $n$ is odd, and isomorphic to $C_2\times C_2$ if $n$ is even.
Let $\theta$ denote a diagram automorphism of order two of the finite type $\textup{D}_n$ 
sub diagram.  Let us write $\Omega=\{\epsilon,\eta,\rho,\eta\rho\}$, where $\eta$ is $\theta$-invariant, 
and $[\rho,\theta]=[\rho\eta,\theta]=\eta$. 
Let us first consider the split case $G^\epsilon$.
In this case $\mc{H}^{IM}(G)$
is of type $\textup{D}_n[q]$, which was denoted by $\mc{H}(\mc{R}^\textup{D}_{ad},m^D)$ in \cite[(54)]{Opd4}.
Let us denote $\mc{H}(\mc{R}^\textup{D}_{\mathbb{Z}^n},m^D)$ (notation as in  \cite[Paragraph 7.1.4]{Opd4})
by $\tilde{\textup{D}}_n[q]$. Its spectral diagram consists of the Dynkin diagram for $\textup{D}_n^{(1)}$, 
with the action of the automorphism $\eta$ as in \cite[Figure 1]{DeOp2} (we have, 
in the sense of \cite[Definition 2.11]{Opd4}, that $\Omega_Y^\vee=\langle\eta\rangle\simeq C_2$). 
As was 
discussed in \cite[Paragraph 7.1.4]{Opd4}, we have spectral coverings $\textup{D}_n[q]\leadsto\tilde{\textup{D}}_n[q]$
and $\tilde{\textup{D}}_n[q]\leadsto \textup{C}_n(0,0)[q]$, corresponding to strict algebra embeddings
$\tilde{\textup{D}}_n[q]\subset \textup{D}_n[q]$ and  $\tilde{\textup{D}}_n[q]\subset \textup{C}_n(0,0)[q]$, both of index $2$.
We normalize the trace of $\tilde{\textup{D}}_n[q]$ by restriction from $\textup{D}_n[q]$, and 
of $\textup{C}_n(0,0)[q]$ such that its restriction to $\tilde{\textup{D}}_n[q]$ equals the trace we just defined 
on $\tilde{\textup{D}}_n[q]$. The conjugacy classes of parahoric subgroups of $G$ which support a (unique) cuspidal 
unipotent representation correspond to unordered pairs $(a,b)$ with $a,b\in2\mathbb{Z}_{\geq0}$ such that 
$d=n-a^2-b^2\geq 0$. The pair $(a,b)$ corresponds 
to a sub diagram of type $\textup{D}_{a^2}\sqcup \textup{D}_{b^2}$ of the type $\textup{D}_n^{(1)}$ 
diagram of a set of simple affine roots of $G(k)$. We put $m_-=|a-b|$, and $m_+=|a+b|$.
We have 
\begin{equation*}
\mc{H}^{\epsilon,\tilde{\mf{s}},e}\simeq
\begin{cases}
   \textup{C}_d(m_-,m_+)[q]& \text{ if } a\not= b \text{ or } d=0\\
   \textup{B}_d(1,m_+)[q]      & \text{ if } a=b>0 \text{ and } d>0\\
   \textup{D}_n[q] & \text{ if } a=b=0\\
  \end{cases}
\end{equation*}
and  
\begin{equation*}
\Omega^{\mf{s}}_1\simeq\begin{cases}
   C_2& \text{ if } a>0,\,b>0 \text{ and }a\not=b\text{ or } d>0 \\
   C_2\times C_2  &\text{ if } a= b,\,d=0,\text{ and } n\in 2\mathbb{Z}\\
   C_4 &\text{ if } a= b,\,d=0,\text{ and } n\in 2\mathbb{Z}+1\\
   1     & \text{ else} 
  \end{cases}
\end{equation*}
As before we denote by $\mc{H}^{d,a,b}$ the type $\mf{C}_{class}^{\textup{IV}}$-
object $\mc{H}^{d,a,b}\simeq \textup{C}_d(m_-,m_+)[q]$ 
which is covered by $\mc{H}^{\epsilon,\tilde{\mf{s}},e}$. For $m_-=m_+=0$ we also introduce 
$\tilde{\mc{H}}^{n,0,0}\simeq \tilde{\textup{D}}_n[q]$. Then we have 
\begin{equation*}
\begin{cases}
   \mc{H}^{d,a,b}= \mc{H}^{\epsilon,\tilde{\mf{s}},e}& \text{if } a\not= b\text{ or } d=0 \\
   \mc{H}^{d,a,b}\subset \mc{H}^{\epsilon,\tilde{\mf{s}},e}& \text{has index two if }  a=b>0 \text{ and } d>0\\
   \mc{H}^{n,0,0}\supset \tilde{\mc{H}}^{n,0,0}\subset \mc{H}^{\epsilon,\tilde{\mf{s}},e}& \text{ if } a=b=0
   \text{ (both inclusions have index two) }\\
  \end{cases}
\end{equation*}
We have, by definition of our normalizations, and using  
Proposition 
\ref{prop:balanced} and \cite[Section 13.7]{C}, 
\begin{equation*}
\tau^{\epsilon,\mf{s},e}(1)_{\mathbb{Q}}=\tau^{d,a,b}(1)_{\mathbb{Q}}=
\begin{cases}
   2^{-m_+}& \text{ if } a=b\text{ and } d=0,\,\text{ or if } a=b=0\\
   2^{1-m_+}   & \text{ else} 
  \end{cases}
\end{equation*}
where as before, $\tau^{d,a,b}$ denotes the trace of the type $\textup{C}_d(m_-,m_+)[q]$-algebra 
(an object of $\mf{C}_{class}^{\textup{IV}}$) 
which is spectrally covered by $\mc{H}^{\epsilon,\tilde{\mf{s}},e}$. 

The irreducible discrete series representations of $\mc{H}^{d,a,b}$ 
with central character corresponding to $(u_-,u_+)$ are parameterized by 
pairs of Slooten symbols $(\sigma_-,\sigma_+)$, denoted by 
$\pi_{(u_-,u_+),(\sigma_-,\sigma_+)}$. 
By the above, combined with Theorem \ref{thm:ratSPSO}
we see that 
\begin{equation*}
\textup{fdeg}_{\mathbb{Q}}(\pi_{(u_-,u_+),(\sigma_-,\sigma_+)})=
\begin{cases}
   2^{-\#(u_-\cup u_+)}&\text{ if } a=b\text{ and } d=0,\,\text{ or if } a=b=0\\
   2^{1-\#(u_-\cup u_+)}  & \text{ else} \\
  \end{cases}
  \end{equation*}
 As in the previous case $G=\textup{PCSp}_{2n}$,  Proposition  \ref{prop:centra}, (\ref{eq:quot})
and (\ref{eq:HIIconv}) imply that Theorem  \ref{thm:HII} is true in this case iff (here $\lambda$ denotes 
a discrete unramified Langlands parameter for $G$ which gives rise to the pair $(u_-,u_+)$ as in 
Proposition \ref{prop:centra}):
 \begin{equation}\label{eq:PCO1}
\textup{fdeg}_{\mathbb{Q}}(\pi_{\lambda,(\sigma_-,\sigma_+)}^G)=
\begin{cases}
   2^{-\#(u_-\cup u_+)}& \text{ if } u_-=0\\
   2^{1-\#(u_-\cup u_+)}& \text{ if }  u_-\not=0\\
     \end{cases}
\end{equation}
In the case $d=0$ we have $\textup{fdeg}_{\mathbb{Q}}(\pi^G_{\lambda,(\sigma_-,\sigma_+)})
=\textup{fdeg}_{\mathbb{Q}}(\pi_{(u_-,u_+),(\sigma_-,\sigma_+)})$, and since $a=b$ is equivalent 
to $u_-=0$ in this case, we are done if $d=0$. Similarly, if $a\not= b$ (hence $u_-\not=0$) 
there is no branching, 
and we are done. 
So from now on, we may and will assume $d>0$ and $a=b$.
The case $a=b>0$ is completely analogous to what we did in the case $G=\textup{PCSp}_{2n}$.
This leaves the case $a=b=0$. We combine results of \cite[Appendix]{RamRam}, \cite[Lemma 6.10]{DeOp2}
and \cite[Section 8]{OpdSol2}
to derive the branching behavior of the discrete series.
If $u_-=0$ then there exist two distinct discrete series 
representations $\pi^G_{\lambda_+,(0,\sigma_+)}$ and $\pi^G_{\lambda_-,(0,\sigma_+)}$
of $\mc{H}^{\epsilon,\tilde{\mf{s}},e}=\mc{H}^{IM}(G^\epsilon)$ 
whose central characters are distinct (but lie in the same $X^*_{un}(G^F)$-orbit), and which 
restrict to the same irreducible discrete series representation $\tilde{\pi}^G_{\lambda,(0,\sigma_+)}$
of $\tilde{\mc{H}}^{n,0,0}$. On the other hand, there also exist two irreducible discrete series characters 
$\pi_{(0,u_+),(0,\sigma_+)}$ and $\pi_{(0,u_+),(0,\sigma_+')}$ of $\mc{H}^{n,0,0}$ 
which both restrict to  $\tilde{\pi}^G_{\lambda,(0,\sigma_+)}$. It follows easily that 
$\textup{fdeg}_{\mathbb{Q}}(\pi^G_{\lambda_\pm,(0,\sigma_+)})
=\textup{fdeg}_{\mathbb{Q}}(\pi_{(0,u_+),(0,\sigma_+)})=2^{-\#(u_+)}$ as desired.

If $u_-\not=0$, and $\lambda$ is an unramified discrete Langlands parameter 
for $G$ corresponding to $(u_-,u_+)$, then $\pi^G_{\lambda,(\sigma_-,\sigma_+)}$ 
restricts to a direct sum 
$\tilde{\pi}^G_{\lambda,(\sigma_-,\sigma_+, +1)}\oplus\tilde{\pi}^G_{\lambda,(\sigma_-,\sigma_+,-1)}$
of irreducible discrete series representations of $\tilde{\mc{H}}^{n,0,0}$. Indeed, by \cite[A.13]{RamRam}
the restriction is either irreducible or a direct sum of two irreducibles, which are moreover themselves 
discrete series by  \cite[Lemma 6.3]{DeOp2}. Moreover it follows from  \cite[A.13]{RamRam}
that if  there exists a $\pi^G_{\lambda,(\sigma_-,\sigma_+)}$ with $\sigma_-\not=0$ and 
$\sigma_+\not=0$ which restricts to an irreducible in this way, then the number of 
irreducible discrete series representations of $\tilde{\mc{H}}^{n,0,0}$ with $u_-$ and $u_+$ not 
equal to $0$ is strictly less than twice the number of irreducible 
discrete series of the kind described above of $\mc{H}^{IM}(G^\epsilon)$. But this contradicts the 
classification of the discrete series as in \cite[Section 8]{OpdSol2} (this counting argument 
is similar to the proof of \cite[Lemma 6.10]{DeOp2}).
There are four irreducible discrete series characters
$\pi_{(u_-,u_+),(\sigma_-,\sigma_+)}$, $\pi_{(u_-,u_+),(\sigma_-',\sigma_+)}$, $\pi_{(u_-,u_+),(\sigma_-,\sigma_+')}$ and 
$\pi_{(u_-,u_+),(\sigma_-',\sigma_+')}$ of $\mc{H}^{n,0,0}$, and it is easy to see that all of  
these restrict to irreducible discrete series characters of  $\tilde{\mc{H}}^{n,0,0}$: Two of them will 
restrict to  $\tilde{\pi}^G_{\lambda,(\sigma_-,\sigma_+, +1)}$, and the other two will restrict to 
$\tilde{\pi}^G_{\lambda,(\sigma_-,\sigma_+, -1)}$. Altogether it follows that 
$\textup{fdeg}_{\mathbb{Q}}(\pi^G_{\lambda_\pm,(\sigma_-,\sigma_+)})
=2\textup{fdeg}_{\mathbb{Q}}(\pi_{(u_-,u_+),(\sigma_-,\sigma_+)})=2^{1-\#(u_-\cup u_+)}$ in these cases, as desired.
Using Remark \ref{rem:number} again, we see that the total number of this kind of unipotent discrete series representations 
equals 
$2^{l_-+l_+-3}$ if $l_-\not=0$, and $2^{l_+-2}$ otherwise. This should correspond to 
the subset of the Lusztig packet associated to $\lambda$ 
which is parameterized by the set $\textup{Irr}_{\epsilon}(A_\lambda)$ of irreducible characters of $A_\lambda$ on which 
${}^LZ=\Omega^*$ acts trivially. Indeed, this is half the number of one-dimensional irreducibles of $A_\lambda$.

Next, let us take the inner form $G^u$ with $u=\eta$.  The analysis is exactly the same as for $u=\epsilon$, except that 
now $a$ and $b$ are both odd. We again obtain  $2^{l_-+l_+-3}$ (if $l_-\not=0$) or  $2^{l_+-2}$ (otherwise) 
unipotent discrete series representations in the Lusztig packet for $\lambda$, this times the ones 
parameterized by the set of irreducible characters $\textup{Irr}_{\eta}(A_\lambda)$ of $A_\lambda$ on which 
${}^LZ=\Omega^*$ acts as a multiple of $\eta$. The collection 
$\textup{Irr}_{\epsilon}(A_\lambda)\cup\textup{Irr}_{\eta}(A_\lambda)$ coincides with the collection of 
$2^{l_-+l_+-2}$ (if $l_-\not=0$) (or  $2^{l_+-1}$ if $l_-=0$) one-dimensional irreducible representations of $A_\lambda$.  

Finally consider the inner forms with $u=\rho$ or $u=\rho\eta$. These two inner forms are equivalent as rational forms, 
via the outer automorphism corresponding to $\theta$, hence it suffices to consider the case $u=\rho$ only. 
This time the cuspidal unipotent 
parahoric subgroups $\mathbb{P}_{s,t}^\rho$ are given by $\rho$-invariant subdiagrams of type 
\begin{equation*}
\mathbb{P}_{s,t}^\rho\simeq
\begin{cases}
\textup{D}_{s^2}\cup \textup{D}_{s^2}\cup{}^2\textup{A}_{\frac{1}{2}(t^2+t)-1}& \text{ if } n \text{ even } \\
{}^2\textup{D}_{s^2}\cup {}^2\textup{D}_{s^2}\cup {}^2\textup{A}_{\frac{1}{2}(t^2+t)-1} & \text{ if } n \text{ odd } \\
\end{cases}
\end{equation*}
such that $d+1:=\frac{1}{2}(n-2s^2-\frac{1}{2}(t^2+t)+2)\in\mathbb{Z}_{>0}$. This corresponds to a type 
$\mathfrak{s}:=\mathfrak{s}^\rho_{d,s,t}$ for $G^\rho$ which is completely  determined by a pair of nonnegative integers 
$(s,t)$ satisfying the above inequality, and the congruences: $s\equiv n (\textup{mod}\  2)$, 
$t\equiv  0,3(\textup{mod}\  4)$ (if $n$ even), and $t\equiv  1,2(\textup{mod}\ 4)$ (if $n$ odd) . 
The corresponding affine Hecke algebra $\mc{H}^{\rho,\tilde{\mathfrak{s}},e}$ is 
of always type $C_d(m_-,m_+)[q^2]$, with $m_+=\frac{1}{4}(1+2t+4s)$ and  $m_-=\frac{1}{4}|1+2t-4s|$. We have 
\begin{equation}\label{eq:omega}
\Omega^{\mathfrak{s}}_1=
\begin{cases}
\Omega& \text{ if } s>0 \text{ or } d=0 \\
 \langle \rho\rangle\simeq C_2 &\text{ if } s=0 \text{ and } d>0 \\
\end{cases}
\end{equation}
Using \ref{prop:balanced} and \cite[Section 13.7]{C}, the rational factor $\tau^{\rho,\mf{s},e}(1)_{\mathbb{Q}}$ of 
$\tau^{\rho,\mf{s},e}(1)$ equals 
\begin{equation*}
\tau^{\rho,\mf{s},e}(1)_{\mathbb{Q}}=2^{-s-1}=
\begin{cases}
2^{-\frac{1}{2}(m_+ + m_-+2)} & \text{if } \epsilon_-\not=\epsilon_+\\
2^{-\frac{1}{2}(m_+ - m_-+2)} & \text{if } \epsilon_-=\epsilon_+\\
\end{cases}
\end{equation*}
Using Theorem \ref{thm:ratfactextra}, we obtain 
discrete series representations $\pi_{(\pi_-,\pi_+),extra}^\alpha$ with rational parts of formal degrees equals 
$\textup{fdeg}_{\mathbb{Q}}(\pi_{(\pi_-,\pi_+),extra}^\alpha)= 2^{\frac{1}{2}(l_-+l_+-2)-\#(u_-\cup u_+)}$ (in all cases), 
where $\alpha$ denotes an irreducible character of $\Omega^{\mathfrak{s}}_1$. 
In view of Proposition \ref{prop:centra} and (\ref{eq:PCO1}), 
this is indeed the rational factor of the formal degree of the two 
elements of the Lusztig packet attached to the Langlands parameter $\lambda$ on which 
${}^LZ\subset A_\lambda$ acts by $\rho$ times the identity, as predicted by (\ref{eq:HIIconv}). 
As to the numerology of counting the number of such irreducible representations in a Lusztig packet 
attached to a unipotent discrete Langlands parameter $\lambda$ for $G$: Let us write $(u_-,u_+)$ 
for the (ordered) pair of unipotent partitions  attached to $\lambda$ 
(these are partitions with odd, distinct parts such that $|u_-|+|u_+|=2n$).
If $u_-\not=0$ and $u_-\not= u_+$ then we have two such packets (for the pairs  
$(u_-, u_+)$ and $(u_+,u_-)$ which contain discrete series representation with the same  
$q$-rational factor. According to Proposition \ref{prop:centra} both these packets contain
$2$ irreducibles on which ${}^LZ$ acts as a multiple of $\rho$ (and also two where 
${}^LZ$ acts as multiple of $\rho\eta$) (together these are the four irreducibles in each of 
these packets which are not one-dimensional). This matches the ``Hecke side", since we 
have (by (\ref{eq:omega})) that $\mc{H}^{\rho,\mathfrak{s}}$ is either a direct sum of 
four copies of $\mc{H}^{\rho,\tilde{\mathfrak{s}},e}$, each contributing one 
irreducible discrete series with the desired $q$-rational factor in the formal degree 
(if $s\not=0$, or equivalently $m_-\not=m_+$) 
or of two such copies (if $s=0$, or equivalently $m_-=m_+$). But in the latter case,  each of 
these copies of $\mc{H}^{\rho,\tilde{\mathfrak{s}},e}$ contributes two such irreducible discrete series 
(whose central characters are mapped by the STM to $(u_-,u_+)$ and $(u_+,u_-)$ respectively).
If $u_-=u_+$ then necessarily $m_-=m_+$, and the two 
copies of $\mc{H}^{\rho,\tilde{\mathfrak{s}},e}$ contribute each one discrete series to the packet 
associated to $\lambda$, corresponding to the two irreducibles of $A_\lambda$ on which ${}^LZ$
acts as $\rho$. Finally we have the case $u_-=0$. In this case there are four distinct discrete 
Langlands parameters $\lambda_1=\lambda,\,\lambda_2,\,\lambda_3,\,\lambda_4$ which 
share the same $q$-rational factor in the formal degree, and each of the four corresponding Lusztig 
packets should have one member associated to the single irreducible of $A_{\lambda_i}$ on 
which ${}^LZ$ acts by $\rho$ (according to Proposition \ref{prop:centra}). Hence in all cases the 
Hecke algebra side and the L-packet side indeed match. This finishes the case 
$G=\textup{P}(\textup{CO}^0_{2n})$.

The last case to consider is the non-split quasi-split orthogonal group $\textup{P}((\textup{CO}^*)^0_{2n+2})$. 
Now we have 
$u\in \Omega/(1-\theta)\Omega=\Omega/\langle\eta\rangle\simeq\langle\overline{\rho}\rangle\simeq C_2$.
We have $\mc{H}^{IM}(G)=\textup{C}_{n}(1,1)[q]$.
The conjugacy classes of parahoric subgroups $\mathbb{P}^{d,a,b}$ which support a (unique) cuspidal unipotent 
representation are parametrized by ordered pairs  $(a,b)$ with $a,b\in\mathbb{Z}_{\geq0}$, with $a$ 
even and $b$ odd, and such that $d=n+1-a^2-b^2\geq0$. The parahoric 
$\mathbb{P}^{d,a,b}$ is of type $\textup{D}_{a^2}\cup {}^2\textup{D}_{b^2}$.
The corresponding cuspidal unipotent type is denoted by $\mf{s}=\mf{s}^{d,a,b}$. We have 
$\mc{H}^{\epsilon,\tilde{\mf{s}},e}=\textup{C}_d(m_-,m_+)[q]$, with $m_+=a+b$ and $m_-=|a-b|$.
Furthermore, $\Omega_1^{\epsilon,\mf{s},\theta}=C_2$ (if $a>0$ or $d=0$)  
or $=1$ (if $a=0$ and $d>0$), implying that $\tau^{\epsilon,\mf{s},e}(1)_\mathbb{Q}=2^{1-m_+}$
(in all cases). 

Let $\lambda$ be a discrete unramified Langlands parameter for $G$. 
According to \cite{Reed}, in the notation of (\ref{eq:qsLP}), we have 
$C_{G^\vee}(\lambda(\textup{Frob}\times \operatorname{id}))$ is the connected 
cover in $\textup{Spin}_{2n+2}$ of  
$\textup{SO}_{2n_-+1}\times\textup{SO}_{2n_++1}$
(with $n_-+n_+=n$), and the $G^\vee$-orbits of such $\lambda$ correspond bijectively to ordered pairs 
$(u_-,u_+)$ where $u_\pm$ is a distinguished unipotent class in $\textup{SO}_{2n_\pm+1}$.
Note that this means that $u_\pm\vdash 2n_\pm+1$ has odd, distinct parts. 

Let $(\sigma_-,\sigma_+)$ be a Slooten symbol for the parameters $(m_-,m_+)$ 
corresponding to the pair $(\lambda_-,\lambda_+)$, and let $\pi_{(u_-,u_+),(\sigma_-,\sigma_+)}$  
be the correspond discrete series representation of $\mc{H}^{\epsilon,\tilde{\mf{s}},e}$.
Then, Theorem \ref{thm:ratSPSO} 
implies that $\textup{fdeg}_{\mathbb{Q}}(\pi_{(u_-,u_+),(\sigma_-,\sigma_+)})=2^{1-\#(u_-\cup u_+)}$.
It easily follows that this agrees with (\ref{eq:HIIconv}). The number of such irreducible discrete series 
equals $2^{l_-+l_+-2}$, in as expected by Proposition \ref{prop:centra}.

Let us now consider $u=\overline{\rho}$. Now the cuspidal unipotent 
parahoric subgroups $\mathbb{P}_{s,t}^{\overline{\rho}}$ are given by ${\overline{\rho}}$-invariant subdiagrams of type 
\begin{equation*}
\mathbb{P}_{s,t}^\rho\simeq
\begin{cases}
\textup{D}_{s^2}\cup \textup{D}_{s^2}\cup {}^2\textup{A}_{\frac{1}{2}(t^2+t)-1}& \text{ if } n \text{ even } \\
{}^2\textup{D}_{s^2}\cup {}^2\textup{D}_{s^2}\cup {}^2\textup{A}_{\frac{1}{2}(t^2+t)-1} & \text{ if } n \text{ odd } \\
\end{cases}
\end{equation*}
such that $d+1:=\frac{1}{2}(n-2s^2-\frac{1}{2}(t^2+t)+3)\in\mathbb{Z}_{>0}$. This corresponds to a type 
$\mathfrak{s}:=\mathfrak{s}^\rho_{d,s,t}$ for $G^\rho$ which is completely  determined by a pair of nonnegative integers 
$(s,t)$ satisfying the above inequality, and the congruences: $s\equiv n (\textup{mod}\  2)$, 
$t\equiv  1,2(\textup{mod}\  4)$ (if $n$ even), and $t\equiv  0,3(\textup{mod}\ 4)$ (if $n$ odd) . 
The corresponding affine Hecke algebra $\mc{H}^{\rho,\tilde{\mathfrak{s}},e}$ is 
of always type $C_d(m_-,m_+)[q^2]$, with $m_+=\frac{1}{4}(1+2t+4s)$ and  $m_-=\frac{1}{4}|1+2t-4s|$. 
We have 
\begin{equation*}
\Omega^{\mathfrak{s},\theta}_1=
\begin{cases}
\langle\eta\rangle\simeq C_2& \text{ if } s>0 \text{ or } d=0 \\
 1 &\text{ if } s=0 \text{ and } d>0 \\
\end{cases}
\end{equation*}
and we get 
\begin{equation*}
\tau^{\rho,\mf{s},e}(1)_{\mathbb{Q}}=2^{-s}=
\begin{cases}
2^{-\frac{1}{2}(m_+ + m_-)} & \text{if } \epsilon_-\not=\epsilon_+\\
2^{-\frac{1}{2}(m_+ - m_-)} & \text{if } \epsilon_-=\epsilon_+\\
\end{cases}
\end{equation*}
Hence, using Theorem \ref{thm:ratfactextra}, 
the extra special STM $\mc{H}^{\rho,\tilde{\mathfrak{s}},e}\leadsto \mc{H}^{IM}(G)$ 
yields one additional discrete series representation $\pi_{(\pi_-,\pi_+),extra}$
added to the Lusztig packet associated to $\lambda$, whose formal degree satisfies 
$\textup{fdeg}_{\mathbb{Q}}(\pi_{(\pi_-,\pi_+),extra})= 2^{\frac{1}{2}(l_-+l_+)-\#(u_-\cup u_+)}$, 
as desired in view of Proposition \ref{prop:centra}. 
\end{proof}
\subsubsection{Unipotent representations for inner forms of  $\textup{SO}_{2n+1}$}\label{subsub:II}
In these cases, a unipotent affine Hecke algebra is always spectrally isomorphic to a direct sum of finitely many 
copies of objects of $\mf{C}_{class}^{\textup{II}}$.  The treatment of these cases 
is analogous to the symplectic and even orthogonal cases discussed in the previous paragraph, 
but in all aspects much simpler (no branching phenomena, no extraspecial STM's). 
We will content ourselves to give the results only.

We have $\Omega=\{\epsilon,\eta\}\simeq C_2$, and $\mc{H}^{IM}(G^\epsilon)$ is of type $\textup{C}_n(\frac{1}{2},\frac{1}{2})[q]$.
The conjugacy classes of parahoric subgroups 
 $\mathbb{P}^{d,a,b}$ of $G$ supporting a (unique) cuspidal unipotent 
representation are parametrized by ordered pairs  $(a,b)$ with $a,b\in\mathbb{Z}_{\geq0}$, with $a$ 
even, and such that $d=n-a^2-(b^2+b)\geq0$. The parahoric 
$\mathbb{P}^{d,a,b}$ is a type $\textup{D}_{a^2}\cup \textup{B}_{b^2+b}$.
The corresponding cuspidal unipotent type is denoted by $\mf{s}=\mf{s}^{d,a,b}$. We have 
$\mc{H}^{\epsilon,\tilde{\mf{s}},e}=\textup{C}_d(m_-,m_+)[q]$, with $m_+=\frac{1}{2}+a+b$ and $m_-=|\frac{1}{2}-a+b|$.
Furthermore, $\Omega_1^{\epsilon,\mf{s},\theta}=C_2$ (if $a>0$ or $d=0$) or $=1$ (if $a=0$ and $d>0$), 
implying that $\tau^{\epsilon,\mf{s},e}(1)_\mathbb{Q}=2^{\frac{1}{2}-m_+}$ (in all cases). 
For the nontrivial inner form $G^\eta$ of $G$, the formulas are the same except that now $a$ is 
odd, and $\mathbb{P}^{d,a,b}$ has type ${}^2\textup{D}_{a^2}\cup \textup{B}_{b^2+b}$.

Now an orbit of discrete unipotent Langlands parameters $\lambda$ for $G$ corresponds to an ordered pair 
$(u_-,u_+)$ of unipotent partitions with $u_\pm\vdash 2n_\pm$ such that 
$n_-+n_+=n$, where $u_\pm$ consists of distinct, even parts. 

The discrete series representations of $\mc{H}^{\epsilon,\tilde{\mf{s}},e}=\textup{C}_d(m_-,m_+)[q]$ are 
parameterized by a pair of Slooten symbols $(\sigma_-,\sigma_+)$ for such pairs $(u_-,u_+)$, at the parameter pair 
$(m_-,m_+)$. The ordered pair $(\sigma_-,\sigma_+)$ corresponds to an ordered pair of partitions 
$(\pi_-,\pi_+)$ with $\pi_\pm\vdash n_\pm$. Let us denote this discrete series representation 
of $\textup{C}_d(m_-,m_+)[q]$ by $\delta_{(\pi_-,\pi_+)}$. 
By Theorem  \ref{thm:ratSOodd} we arrive at:
\begin{equation}
\textup{fdeg}_{\mathbb{Q}}(\pi_{(u_-,u_+),(\sigma_-,\sigma_+)})=2^{-\#(u_-\cup u_+)}
\end{equation}
It is easy to check that this matches (\ref{eq:HIIconv}) (see e.g. \cite[Corollary 6.1.6]{CM}).
\subsubsection{The example of type ${}^3\textup{D}_4$}\label{ex:ex2}
Let $G$ be the group of type ${}^3\textup{D}_4$ defined over a nonarchimedean local field $k$.
The group $G$ is quasisplit, 
and the dual $L$-group is isomorphic to ${}^LG:=\langle\theta\rangle\ltimes G^\vee$ 
where $G^\vee=\textup{Spin}(8)$ and where 
$\theta$ is an outer automorphism of order $3$. 
Hence ${}^LZ=1$, and $G$ has no nontrival inner forms. 
There are two cuspidal unipotents called ${}^3\textup{D}_4[1]$ and ${}^3\textup{D}_4[-1]$ (cf. \cite{C}, 
section 13.7).

The image $\phi(F)=s\theta$ of the Frobenius 
element under a discrete unramified Langlands parameter $\phi$ 
is an isolated semisimple automorphism. Via the action of 
$\textup{Int}(G^\vee)$ it is conjugated to a semisimple class of the 
from  $\theta s_i$ with $s_i$ a vertex of $C_\theta$ (cf. \cite{GR}). 
In the case at hand, we label the nodes of the twisted affine root diagram 
according to \cite[Section 4.4]{GR}, and we have to consider $\theta s_0$, 
$\theta s_1$ and $\theta s_2$. 

We have $\mc{H}^{IM}(G)=\textup{G}_2(3,1)[q]$, normalized by 
$\tau(1):=[3]_q^{-1}(v-v^{-1})^{-2}$
according to (\ref{eq:deg}). The $W_0$-orbit space of the character torus $T$ of the 
root lattice $X$ of type $G_2$ can be identified \cite{Bo} with the space of 
$\textup{Int}(G^\vee)$-orbits of semisimple classes of ${}^LG$ of the form $\theta g$, 
via the map $T\ni t\to \theta t$. In this way we will identify, as usual,  
the space of central characters of affine Hecke algebra $\mc{H}^{IM}(G)=G_2(3,1)[q]$ and 
the space of semisimple $\textup{Int}(G^\vee)$-orbits of this form of ${}^LG$.
The Hecke algebra $\mc{H}^{IM}(G)$ has two orbits of real residual points 
$W_0r_{0,reg}$ and $W_0r_{0,sub}$, and two nonreal ones $W_0r_1$ and $W_0r_2$ 
(using the same numbering of the nodes of the diagram as before).
At each residual point of $G_2(m_l,m_s)[q]$ at the parameter value $(m_l,m_s)=(3,1)$,  
the number of irreducible 
discrete series characters 
supported at this point is equal to the number of generic residual points which 
specialize at $(3,1)$ to the given residual point. This number is always $1$, 
except for $W_0r_{0,sub}$, where it is equal to two (\cite{OpdSol2}). We can 
and will baptise these orbits of generic residual points $W_0r$, 
using Kazhdan-Lusztig parameters for the discrete series of $G_2(1,1)[q]$,   
by an irreducible representation of $A_\lambda$,  where 
$\lambda$ is the Langlands parameter of the split group of type $G_2$. 
The subregular unipotent orbit of $G_2$ gives rise to a unipotent discrete Langlands 
 parameter $\lambda=\lambda_{sub}$ of ${}^3\textup{D}_4$ with $A_\lambda=S_3$. 
 Its ``weighted Dynkin diagram"  is $r_{0,sub}$. 
 The two orbits of generic residual points of the generic Hecke algebra of type $G_2$ 
 which are confluent at $(1,1)$ are also confluent at $(3,1)$.
 By the above, we call these two orbits of generic residual 
 points $W_0r_{sub,triv}$ and $W_0r_{sub,\sigma}$, where 
$\sigma$ is the two dimensional irreducible character of $S_3$. 
 The orbit of generic points $W_0r_{sub,triv}$
represents a generic discrete series character of degree $3$, which 
has generic formal degree with rational constant factor $\frac{1}{2}$.
The other orbit of generic residual points $W_0r_{sub,\sigma}$ has degree $1$, and 
generic rational constant $1$. At the confluence of these two generic residual points at 
parameter $(1,1)$, we get in the limit an additional constant factor $\frac{3}{1}$  
for $W_0r_{sub,triv}$ leading to the well known equal parameter case of Theorem 
\ref{thm:HII} at the subregular unipotent orbit  
for split $\textup{G}_2$ (cf. \cite{Re0}). At the confluence point for the parameters 
$(3,1)$ the rational constants do not change, however. Thus together with the cuspidal character 
${}^3\textup{D}_4[1]$ we get a packet $\Pi_\lambda$ for $\lambda=\lambda_{sub}$ consisting  
of three representations, naturally parameterized 
by the characters of $A_{\lambda}=S_3$, whose formal degrees have rational constant 
$\frac{1}{2}$ (for the cuspidal ${}^3\textup{D}_4[1]$ corresponding to the ``missing representation"  
$sign$ of $A_\lambda=S_3$, and for the generic discrete series character associated 
to $W_0r_{,sub,triv}$ evaluated at the parameter value $(3,1)$), and  rational constant $1$ 
(for the generic discrete series $W_0r_{sub,\sigma}$ evaluated at $(3,1)$).

For the regular parameter of $G^\vee_{\theta s_1}$ we get two discrete 
series characters, namely the cuspidal one ${}^3\textup{D}_4[-1]$ and the Iwahori 
spherical one. Both have $\frac{1}{2}$ as a rational constant factor.

Finally, at the regular parameter of $G^\vee_{\theta s_2}$, we have 
one Iwahori spherical discrete series representation, with rational constant 
$1$. 

These constants are clearly compatible with Theorem \ref{thm:HII}. 
Namely, consider (\ref{eq:HIIconv}). For a discrete Langlands parameters $\lambda$ 
with $\theta s_i=\lambda(F)$ and such that $u:=\lambda \begin{pmatrix} 1\ 1 \\ 0\ 1\end{pmatrix}$ 
is regular within the connected reductive group $G^\vee_{\theta s_i}$ this follows 
because $C_\lambda^F=1$ in such a case (this is obvious for $i=0$ and $i=2$ 
since then $A_\lambda=1$, and for $i=1$ we see that $u$ is the distinguished element  
$[5,3]$ in $\textup{Spin}(8)$ by the table of \cite[page 397]{C}, whence $M_\lambda^0=1$), and hence 
$A_\lambda\approx (\pi_0(M_\lambda))^F$ is isomorphic to 
$Z(G^\vee_{\theta s_i})/Z({}^LG)=Z(G^\vee_{\theta s_i})$ (by \cite[Section 6]{Re0}). This yields the result for all cases 
except $\lambda_{sub}$. For this case we remark that the image of the subregular unipotent 
of $G_2$ in $\textup{Spin}(8)$ is a unipotent class of $\textup{Spin}(8)$ 
with  elementary divisors 
$[1,1,3,3]$. Hence $M_\lambda^0$ is a two dimensional torus, on which $F$ 
acts as a rotation of order $3$. Thus $C_\lambda^F$ is cyclic of order $3$, 
and $(\pi_0(M_\lambda))^F\approx C_2$. This indeed yields the constants 
we just computed.

\end{document}